\documentclass{article}

    \PassOptionsToPackage{numbers, compress}{natbib}



    \usepackage[final]{neurips_2024}


\usepackage[utf8]{inputenc} 
\usepackage[T1]{fontenc}    
\usepackage[hidelinks]{hyperref}
\usepackage{url}            
\usepackage{booktabs}       
\usepackage{amsfonts}       
\usepackage{nicefrac}       
\usepackage{microtype}      
\usepackage{xcolor}         


\newcommand{\reals}{{\mbox{\bf R}}}

\newcommand{\symm}{{\mbox{\bf S}}}  
\newcommand{\BEAS}{\begin{eqnarray*}}
\newcommand{\EEAS}{\end{eqnarray*}}
\newcommand{\BEA}{\begin{eqnarray}}
\newcommand{\EEA}{\end{eqnarray}}
\newcommand{\BEQ}{\begin{equation}}
\newcommand{\EEQ}{\end{equation}}
\newcommand{\BIT}{\begin{itemize}}
\newcommand{\EIT}{\end{itemize}}

\newcommand{\range}{{\mathcal R}}

\newcommand{\Tr}{\mathop{\bf Tr}}
\newcommand{\diag}{\mathop{\bf diag}}

\newcommand{\prox}{\mathbf{prox}}


\newcommand{\argmin}{\mathop{\rm argmin}}


\newcommand{\dom}{\mathop{\bf dom}} 

\newcommand{\sign}{\mathop{\bf sign}}

\newcommand{\eg}{{\it e.g.}}
\newcommand{\ie}{{\it i.e.}}

\newcounter{algorithmctr}[section]
\renewcommand{\thealgorithmctr}{\thesection.\arabic{algorithmctr}}
   {\refstepcounter{algorithmctr}
   \begin{list}{}{%
       \setlength{\rightmargin}{0\linewidth}%
       \setlength{\leftmargin}{.05\linewidth}}%
       \rmfamily\small
       \item[]{\setlength{\parskip}{0ex}\hrulefill\par%
        \nopagebreak{\bfseries\textsf{Algorithm \thealgorithmctr~}}}}%
   {{\setlength{\parskip}{-1ex}\nopagebreak\par\hrulefill} 
   \end{list}}

   {\refstepcounter{algorithmctr}
   \begin{list}{}{%
       \setlength{\rightmargin}{0\linewidth}%
       \setlength{\leftmargin}{.05\linewidth}}%
       \rmfamily\small
       \item[]{\setlength{\parskip}{0ex}\hrulefill\par%
        \nopagebreak{\bfseries\textsf{Algorithm}}}}%
   {{\setlength{\parskip}{-1ex}\nopagebreak\par\hrulefill} 
   \end{list}}

\usepackage{circuitikz}

\usepackage{natbib}
\usepackage{amsthm,amssymb}
\newtheorem{theorem}{Theorem}[section]
\newtheorem{corollary}{Corollary}[theorem]
\newtheorem{lemma}[theorem]{Lemma}

\newcommand{\wiremulv}[4]{ 
    \draw[line width=1] (#1 - #3, #2 - #3) to (#1 + #3, #2 + #3);
    \node at (#1+ #3 + 0.2, #2 ) {#4};
}
\newcommand{\circuitbox}[5]{  
\draw 
    (#1 - #3, #2 - #4) [fill=white] rectangle (#1 + #3, #2 + #4);
\node 
    at (#1, #2) {#5};
}

\newcommand{\wiremulseparate}[5]{ 
    \draw[line width=1] (#1 - #3, #2 - #3) to (#1 + #3, #2 + #3);
    \node at (#1, #2 + #4) {#5};
}

\newcommand{\wiremul}[4]{ 
    \wiremulseparate{#1}{#2}{#3}{#3 + 0.15}{#4}
}

\tikzset{
    declare function={
        atan3(\a,\b)=ifthenelse(atan2(0,1)==90, atan2(\a,\b), atan2(\b,\a));
    },
    kinky cross radius/.initial=+.125cm,
    @kinky cross/.initial=+,
    kinky crosses/.is choice,
    kinky crosses/left/.style={@kinky cross=-},
    kinky crosses/right/.style={@kinky cross=+},
    kinky cross/.style args={(#1)--(#2)}{
        to path={
            let \p{@kc@}=($(\tikztotarget)-(\tikztostart)$),
            \n{@kc@}={atan3(\p{@kc@})+180} in
            -- ($(intersection of \tikztostart--{\tikztotarget} and #1--#2)!%
            \pgfkeysvalueof{/tikz/kinky cross radius}!(\tikztostart)$)
            arc [ radius     =\pgfkeysvalueof{/tikz/kinky cross radius},
            start angle=\n{@kc@},
            delta angle=\pgfkeysvalueof{/tikz/@kinky cross}180 ]
            -- (\tikztotarget)
        }
    }
}

\usepackage{subcaption}
\usepackage{arydshln}
\usepackage{mathtools}
\usepackage{blkarray}
\usepackage{float}

\newcommand{\mC}{\mathcal{C}}
\newcommand{\mL}{\mathcal{L}}
\newcommand{\mR}{\mathcal{R}}
\newcommand{\Rm}{r}
\newcommand{\pp}{p}

\newcommand{\rcm}{\hspace{2mm}}
\newcommand{\inner}[2]{ \left\langle #1 ,  #2 \right\rangle }
\newcommand{\norm}[1]{\left\| #1 \right\|} 
\newcommand{\bmat}[1]{\begin{bmatrix} #1 \end{bmatrix}}
\newcommand{\mat}[1]{\begin{matrix} #1 \end{matrix}}

\newcommand{\rB}{\tilde{B}}
\newcommand{\set}[1]{\left\{#1\right\}}
\newcommand{\pr}[1]{\left(#1\right)}
\newcommand{\ve}{\mathbf{e}}
\newcommand{\vn}{\mathbf{n}}
\newcommand{\vr}{\mathbf{r}}
\newcommand{\row}{\text{Row}}

\newcommand{\Bdom}{\mathcal{J}}
\newcommand{\Bran}{\mathcal{K}}

\usepackage{listings}

\definecolor{codegreen}{rgb}{0,0.6,0}
\definecolor{codegray}{rgb}{0.5,0.5,0.5}
\definecolor{codepurple}{rgb}{0.58,0,0.82}
\definecolor{backcolour}{rgb}{0.95,0.95,0.92}

\lstdefinestyle{mystyle}{
    backgroundcolor=\color{backcolour},   
    commentstyle=\color{codegreen},
    keywordstyle=\color{magenta},
    numberstyle=\tiny\color{codegray},
    stringstyle=\color{codepurple},
    basicstyle=\ttfamily\small,
    breakatwhitespace=false,         
    breaklines=true,                 
    captionpos=b,                    
    keepspaces=true,                 
    numbers=left,                    
    numbersep=5pt,                  
    showspaces=false,                
    showstringspaces=false,
    showtabs=false,                  
    tabsize=2
}

\lstset{style=mystyle}
\definecolor{seagreen}{rgb}{0.18, 0.55, 0.34}
\definecolor{mediumviolet-red}{rgb}{0.78, 0.08, 0.52}
\definecolor{khaki}{rgb}{0.94, 0.9, 0.55}

\lstdefinelanguage{mypython}
{
	keywords=[1]{from, import, assert, not, print},
	keywordstyle=[1]{\color{mediumviolet-red}},
	keywords=[2]{surecr, torch, cp, lo, pl},
	keywordstyle=[2]{\color{seagreen}},
	numbers=none,
	upquote=true,
	showstringspaces=false,
	basicstyle=\ttfamily,
	columns=fullflexible,
	keepspaces=true,
	emph={True,False,as,def,return,float,class,match,switch,len},
	emphstyle={\color{seagreen}},
	frame=trBL,
	belowskip=1em,
	aboveskip=1em,
	captionpos=b
}

 \usepackage[bb=boondox]{mathalfa}
\usepackage{xparse}
\DeclareFontFamily{U}{ntxmia}{}
\DeclareFontShape{U}{ntxmia}{m}{it}{<-> ntxmia }{}
\DeclareFontShape{U}{ntxmia}{b}{it}{<-> ntxbmia }{}
\DeclareSymbolFont{lettersA}{U}{ntxmia}{m}{it}
\SetSymbolFont{lettersA}{bold}{U}{ntxmia}{b}{it}
\ExplSyntaxOn
\NewDocumentCommand{\varmathbb}{m}
 {
  \tl_map_inline:nn { #1 }
   {
    \use:c { varbb##1 }
   }
 }
\tl_map_inline:nn { ABCDEFGHIJKLMNOPQRSTUVWXYZ }
 {
  \exp_args:Nc \DeclareMathSymbol{varbb#1}{\mathord}{lettersA}{\int_eval:n { `#1+67 }}
 }
\exp_args:Nc \DeclareMathSymbol{varbbk}{\mathord}{lettersA}{169}
\ExplSyntaxOff
\makeatletter
\DeclareFontFamily{U}{tipa}{}
\DeclareFontShape{U}{tipa}{m}{n}{<->tipa10}{}
\newcommand{\arc@char}{{\usefont{U}{tipa}{m}{n}\symbol{62}}}%

\newcommand{\arc}[1]{\mathpalette\arc@arc{#1}}

\newcommand{\arc@arc}[2]{%
  \sbox0{$\m@th#1#2$}%
  \vbox{
    \hbox{\resizebox{\wd0}{\height}{\arc@char}}
    \nointerlineskip
    \box0
  }%
}
\makeatother


\newcommand{\I}{\varmathbb{I}}

\newcommand{\StaticInterconnect}{
    \begin{circuitikz}[scale=.7, every node/.style={scale=0.8}][american voltages]
        \pgfmathsetmacro{\w}{7}
        \pgfmathsetmacro{\h}{4}
        \pgfmathsetmacro{\boxwidth}{3}
        \pgfmathsetmacro{\midspace}{\w - \boxwidth}
        \pgfmathsetmacro{\wirespace}{\h/6}
        \foreach \i in {1,2,3,4,5} {
            \draw ( \boxwidth/2, \i * \wirespace ) to [short, i=$ $] (\boxwidth/2 + \midspace/3, \i * \wirespace );   
        }        
        \circuitbox{0}{\h/2}{\boxwidth/2}{\h/2}{}
        \pgfmathsetmacro{\netleft}{\boxwidth/10}
        
        \draw
            (\boxwidth/2  , 5 * \wirespace ) 
            to (-\boxwidth/10, 5 * \wirespace ) 
            to (-\boxwidth/10, 3 * \wirespace ) 
            to (\boxwidth/2 , 3 * \wirespace );
        \node at ( -\boxwidth/10 - \netleft - 0.1 , 4 * \wirespace  ) {$N_1$};
        
        \draw
            (\boxwidth/2  , 4 * \wirespace ) 
            to (\netleft, 4 * \wirespace ) 
            to (\netleft, 2 * \wirespace ) 
            to (\boxwidth/2 , 2 * \wirespace );
        \node at (0 - 0.1, 2.25 * \wirespace  ) {$N_2$};
        \draw
            (\boxwidth/2  , \wirespace ) 
            to (\netleft, \wirespace ) ;
        \node at (0 - 0.1, \wirespace  ) {$N_3$};
    
        \node at (0, \h + \h/10 ) {  \textbf{Static interconnect} };
    \end{circuitikz}
}

\newcommand{\DynamicInterconnectNew}{
    \ctikzset{bipoles/length=.8cm}
\begin{circuitikz}[scale=.6, every node/.style={scale=0.7}][american voltages]
    \pgfmathsetmacro{\w}{8}
    \pgfmathsetmacro{\h}{6}
    \pgfmathsetmacro{\boxwidth}{\w/2}
    \pgfmathsetmacro{\midspace}{\w - \boxwidth}
    \pgfmathsetmacro{\wirespace}{\h/6}
    \pgfmathsetmacro{\wirestartwidth}{\boxwidth/5}
    \pgfmathsetmacro{\componentwidth}{3}
    \node at (-0.5 * \componentwidth + 0.3, 8 * \h/7 + 0.5 ) { \large \textbf{Dynamic interconnect} };
    \circuitbox{-0.5 * \componentwidth + 0.3}{4.5 * \h/7}{2.4}{3.5 * \h/7}{ }
    \draw
        ( - \componentwidth, 5 * \wirespace )
        to [short, *-] ( - \componentwidth, 6 * \wirespace )
        to [short, i<=$ $, R, l^=$R_1$] ( - \componentwidth/2, 6 * \wirespace )
        to [short] ( - \componentwidth/2, 6 * \wirespace );        
    \node at (-\componentwidth -0.2, 5 * \wirespace )  {6}; 
    \draw
        ( \wirestartwidth , 6 * \wirespace ) 
        to ( - \componentwidth/2 , 6 * \wirespace ) 
        to [short, *-] ( - \componentwidth/2 , 5 * \wirespace )  
        to [short, i<=$ $, L, l_=$L_1$] ( - \componentwidth, 5 * \wirespace ) 
        to ( - \componentwidth, 4 * \wirespace );
    \node at (- \componentwidth/2, 6 * \wirespace + 0.25 )  {1};
    \draw
        ( \wirestartwidth , 4 * \wirespace )
        to [short, -*] ( - \componentwidth/2, 4 * \wirespace )
        to [short, i<=$ $, L, l_=$L_2$]  (  - \componentwidth, 4 * \wirespace ) ;  
    \node at (-\componentwidth/2, 4 * \wirespace - 0.25 )  {3};
    \draw
        ( \wirestartwidth , 5 * \wirespace )
        to ( 0 , 5 * \wirespace )
        to [short, *-, i=$ $] ( 0 , 4 * \wirespace )
        to ( 0 , 3 * \wirespace );
    \draw
        ( \wirestartwidth , 3 * \wirespace ) 
        to [short, *-*,  i=$ $] ( 0, 3 * \wirespace )
        to [short, C, l=$C_1$, label distance=-1mm, i=$ $] ( -\componentwidth/2 , 3 * \wirespace ) 
        to ( -\componentwidth/2 , 2 * \wirespace );
    \draw
        (0, 2 * \wirespace ) 
        to (\wirestartwidth, 2 * \wirespace ) 
        to (-\componentwidth/2, 2 * \wirespace )
        to [short, *-, i=$ $, C=$C_2$, label distance=-1mm]  (-\componentwidth, 2 * \wirespace )
        to [short, *-, i=$ $] (-\componentwidth, \wirespace/2 ) ;  
    \node at (-\componentwidth/2, 2 * \wirespace - 0.25 )  {5};
    \node at (\wirestartwidth, 3 * \wirespace - 0.25 )  {4};
    \node at (0 - 0.2, 5 * \wirespace  )  {2};
    
    \node at ( 0, 3 * \wirespace-0.25 )  {7}; 
    \node at (-\componentwidth -0.2, 2 * \wirespace ) {8};   
    \draw (-\componentwidth, \wirespace ) node[ground, style={scale=2}] {};  
    
    \foreach \i in {2,3,4,5,6} { 
        \pgfmathtruncatemacro{\compi}{7-\i}
        \draw ( \wirestartwidth, \i * \wirespace ) 
        to (-0.5 * \componentwidth + 0.3 + 2.4, \i * \wirespace)
        to [short, i=$y_\compi$] (\boxwidth/7 + 2.2/5 * \midspace, \i * \wirespace );
    }
\end{circuitikz}
}

\newcommand{\StaticInterconnectF}{
    \begin{circuitikz}[scale=.7, every node/.style={scale=0.8}][american voltages]
        \pgfmathsetmacro{\w}{6}
        \pgfmathsetmacro{\h}{4}
        \pgfmathsetmacro{\boxwidth}{3}
        \pgfmathsetmacro{\midspace}{\w - \boxwidth}
        \pgfmathsetmacro{\wirespace}{\h/6}

        \circuitbox{\w}{\h/2}{\boxwidth/2}{\h/2}{\huge $\partial f$}
        \circuitbox{0}{\h/2}{\boxwidth/2}{\h/2}{}

        \ctikzset{bipoles/length=.8cm}
        \foreach \i in {1,2,3,4,5} { 
            \pgfmathtruncatemacro{\compi}{6-\i}
            \draw ( \boxwidth/2 , \i * \wirespace ) to [short, -*, i=$y_\compi^\star$] ( \boxwidth/2 + \midspace, \i * \wirespace );
        }

        \draw (\w, 0) node[ground, style={scale=2}] {};        
        
        \pgfmathsetmacro{\netleft}{\boxwidth/10}
        \draw
            (\boxwidth/2  , 5 * \wirespace ) 
            to (-\boxwidth/10, 5 * \wirespace ) 
            to (-\boxwidth/10, 3 * \wirespace ) 
            to (\boxwidth/2 , 3 * \wirespace );
        
        \draw
            (\boxwidth/2  , 4 * \wirespace ) 
            to (\netleft, 4 * \wirespace ) 
            to (\netleft, 2 * \wirespace ) 
            to (\boxwidth/2 , 2 * \wirespace );
        \draw
            (\boxwidth/2  , \wirespace ) 
            to (\netleft, \wirespace ) ;

        \foreach \i in {1,2,3,4,5} {
            \node at ( \boxwidth/2 + \midspace + 0.3 , 6 * \wirespace - \i * \wirespace ) {  $x^{\star}_{\i}$ };
        }
    
        \node at (0, \h + \h/10 ) {  \textbf{Static interconnect} };
    \end{circuitikz}
}

\newcommand{\DynamicInterconnectFnew}{
\ctikzset{bipoles/length=.8cm}
\begin{circuitikz}[scale=.6, every node/.style={scale=0.7}][american voltages]
    \pgfmathsetmacro{\w}{8}
    \pgfmathsetmacro{\h}{6}
    \pgfmathsetmacro{\boxwidth}{\w/2}
    \pgfmathsetmacro{\midspace}{\w - \boxwidth}
    \pgfmathsetmacro{\wirespace}{\h/6}
    \pgfmathsetmacro{\wirestartwidth}{\boxwidth/5}
    \pgfmathsetmacro{\componentwidth}{3}
    \node at (-0.5 * \componentwidth + 0.3, 7 * \wirespace + 0.5 ) { \large \textbf{Dynamic interconnect} };
    \circuitbox{-0.5 * \componentwidth + 0.3}{4 * \wirespace}{2.4}{3 * \wirespace}{ }
    \draw
        ( - \componentwidth, 5 * \wirespace )
        to [short, *-] ( - \componentwidth, 6 * \wirespace )
        to [short, R] ( - \componentwidth/2, 6 * \wirespace )
        to [short] ( - \componentwidth/2, 6 * \wirespace );        
    \draw
        ( \wirestartwidth , 6 * \wirespace ) 
        to ( - \componentwidth/2 , 6 * \wirespace ) 
        to [short, *-] ( - \componentwidth/2 , 5 * \wirespace )  
        to [short, L] ( - \componentwidth, 5 * \wirespace ) 
        to ( - \componentwidth, 4 * \wirespace );
    \draw
        ( \wirestartwidth , 4 * \wirespace )
        to [short, -*] ( - \componentwidth/2, 4 * \wirespace )
        to [short, L]  (  - \componentwidth, 4 * \wirespace ) ; 
    \draw
        ( \wirestartwidth , 5 * \wirespace )
        to ( 0 , 5 * \wirespace )
        to [short, *-] ( 0 , 4 * \wirespace )
        to ( 0 , 3 * \wirespace );
    \draw
        ( \wirestartwidth , 3 * \wirespace ) 
        to [short, *-*] ( 0, 3 * \wirespace )
        to [short, C] ( -\componentwidth/2 , 3 * \wirespace ) 
        to ( -\componentwidth/2 , 2 * \wirespace );
    \draw
        (0, 2 * \wirespace ) 
        to (\wirestartwidth, 2 * \wirespace ) 
        to (-\componentwidth/2, 2 * \wirespace )
        to [short, C, *-]  (-\componentwidth, 2 * \wirespace )
        to [short, *-] (-\componentwidth, \wirespace/2 ) ;
    \draw (-\componentwidth, \wirespace ) node[ground, style={scale=2}] {};

    \circuitbox{\w}{4 * \wirespace}{2.3}{3 * \wirespace}{ \huge $\partial f$}

    \foreach \i in {2,3,4,5,6} {
        \pgfmathtruncatemacro{\compi}{7-\i}
        \draw ( \wirestartwidth , \i * \wirespace ) to [short, -*, i=$y_\compi(t)$] ( \w - 2.3, \i * \wirespace );
    }   
    
    \draw (\w, \wirespace ) node[ground, style={scale=2}] {};

    \node at (\boxwidth/2 + \midspace + 0.3, 6 * \wirespace )  {$x_1(t)$};    
    \node at (\boxwidth/2 + \midspace + 0.3, 5 * \wirespace )  {$x_2(t)$};    
    \node at (\boxwidth/2 + \midspace + 0.3, 4 * \wirespace )  {$x_3(t)$};    
    \node at (\boxwidth/2 + \midspace + 0.3, 3 * \wirespace )  {$x_4(t)$}; 
    \node at (\boxwidth/2 + \midspace + 0.3, 2 * \wirespace )  {$x_5(t)$}; 
    
\end{circuitikz}
}

\newcommand{\CircuitPgExtraMain}{
\begin{circuitikz}[scale=.6, every node/.style={scale=0.7}][american voltages]
    \pgfmathsetmacro{\w}{9}
    \pgfmathsetmacro{\h}{3}
    \pgfmathsetmacro{\boxwidth}{2}
    \pgfmathsetmacro{\midspace}{\w - \boxwidth}
    \pgfmathsetmacro{\wcompsize}{\w/3}
    \pgfmathsetmacro{\hcompsize}{3*\h/4}
    \pgfmathsetmacro{\hmargin}{2*\h/5}

    \large

    \foreach \index in {l, j} {
        \IfStrEq{\index}{j}
            { \pgfmathsetmacro{\sign}{-1} }
            { \pgfmathsetmacro{\sign}{1} }
    \pgfmathsetmacro{\heigt}{ (1+\sign) * ( \hmargin + 1/2 * \hcompsize ) }

    \wiremulseparate{-3 * \wcompsize + \boxwidth}{\heigt}{0.2}{0.4}{\large $m$}
    
    \circuitbox{\wcompsize}{\heigt}{\boxwidth/2}{\boxwidth/2}{\Large $\nabla h_\index$}
    \circuitbox{-3 * \wcompsize }{\heigt}{\boxwidth/2}{\boxwidth/2}{\Large $\partial f_\index$}

    \draw ( -2 * \wcompsize, \heigt ) node[yshift=\sign * 10] {$\tilde{x}_\index$}
        to ( -3 * \wcompsize + \boxwidth/2, \heigt );

    \draw ( -2 * \wcompsize, \heigt ) 
        to [short,  R, l_=$R$, *-] ( - \wcompsize, \heigt ) node[yshift=\sign *10] {$e_\index$} 
        to [short,  R, l_=$-R$, *-] ( 0, \heigt ) node[yshift=\sign *10] {$x_\index$} 
        to [short, i=$ $, *-] ( \wcompsize - \boxwidth/2, \heigt );
    }      

    \draw ( 0, \hmargin )
    to [short]( \wcompsize/4, \hmargin )
    to [short,  R, l_=$R_{jl}$] ( \wcompsize/4, \hmargin + \hcompsize)
    to ( 0, \hmargin + \hcompsize);
        
    \draw ( 0, 0 )
    to [short, i=$ $] ( 0, \hmargin )
    to [short] ( -\wcompsize/4, \hmargin )
    to [short,  L, l=$L_{jl}$] ( -\wcompsize/4, \hmargin + \hcompsize)
    to ( 0, \hmargin + \hcompsize)
    to [short, -*] ( 0, 2 * \hmargin + \hcompsize );

\end{circuitikz}
}

\newcommand{\CircuitDADMMMain}{
\begin{circuitikz}[scale=.6, every node/.style={scale=0.7}][american voltages]
        \pgfmathsetmacro{\w}{6}
        \pgfmathsetmacro{\h}{4}
        \pgfmathsetmacro{\boxheight}{2}
        \pgfmathsetmacro{\boxwidth}{2}
        \pgfmathsetmacro{\wirespace}{1}
        \pgfmathsetmacro{\rlspace}{4}

        \foreach \i in {-1,0,1,2} { 
            \draw
            ( 0 + \i * \rlspace,  0) 
            to ( 2 * \wirespace + \i * \rlspace, 0) 
            to [short, R, l=$R$] ( 2 * \wirespace + \i * \rlspace, 3*\h/5) 
            to ( 0 + \i * \rlspace, 3*\h/5);
        };
        \foreach \i in {-1,2} {
            \draw ( 0 + \i * \rlspace ,  3*\h/5) to [short, L, l=$L$] ( 0 + \i * \rlspace,  0); 
        };
        \draw ( 0 ,  3*\h/5) to [short, L, l=$L$ , i<=${i_{\mL}}_{jl}$] ( 0 ,  0); 
        \draw ( 0 + \rlspace ,  3*\h/5) to [short, L, l=$L$, i<=${i_{\mL}}_{lj}$] ( 0 + \rlspace,  0); 
        
        \foreach \i in {-1,0,1,2} { 
            \draw ( \i * \rlspace + \wirespace, 0) to ( \i * \rlspace + \wirespace, -\h/4); 
        };
        \draw (\wirespace + \rlspace, -\h/4) 
        to [short, -*] ( \wirespace + \rlspace/2, -\h/4) node[below, yshift=-2] {  $e_{jl}$ }
        to (\wirespace, -\h/4);
        \draw (-\rlspace + \wirespace, -\h/4) to (-\rlspace - \wirespace, -\h/4);
        \draw (2 *\rlspace + \wirespace, -\h/4) to (2 *\rlspace + 3* \wirespace, -\h/4);
        
        \draw
            ( - \boxheight/2, \h  + \boxheight/2 )
            to [short, -*, i<=$ $] ( - \boxheight/2 , 3*\h/5+ \boxheight/2) ;
        \draw
            [short, *-]  ( \w + \boxheight/2  , 3*\h/5 + \boxheight/2 ) 
            to [short, i=$ $] ( \w + \boxheight/2, \h  + \boxheight/2 );

        \foreach \i in {-1,1} { 
            \draw
            ( \wirespace + \i * \rlspace, 3*\h/5)
            to ( \wirespace + \i * \rlspace, 3*\h/5+ \boxheight/2)
            to ( \wirespace + \i * \rlspace +  \rlspace, 3*\h/5+ \boxheight/2)
            to ( \wirespace + \i * \rlspace +  \rlspace, 3*\h/5);
            \node at ( \wirespace + \i * \rlspace + \rlspace/2 , 3*\h/10 ) {\Large $\cdots$};
        };

        \wiremulv{ -\boxheight/2}{7*\h/8 + \boxheight/2}{0.2}{\large $m$}
        \wiremulv{\w + \boxheight/2}{7*\h/8 + \boxheight/2}{0.2}{\large $m$}
            
        \node at ( -\boxheight/2, 3*\h/5+ \boxheight/2 -0.3 ) {  $x_j$ };
        \node at ( \w + \boxheight/2, 3*\h/5+ \boxheight/2 -0.3 ) {  $x_l$ };

        \circuitbox{- \boxheight/2}{\h + \boxheight}{\boxwidth/2}{\boxheight/2}{\Large $\partial f_j$}
        \circuitbox{\w + \boxheight/2}{\h + \boxheight}{\boxwidth/2}{\boxheight/2}{\Large $\partial f_l$}
        
    \end{circuitikz}
}

\newcommand{\CircuitExampleMain}{
\begin{circuitikz}[scale=.6, every node/.style={scale=0.7}][american voltages]
    \ctikzset{label/align = straight}
    \pgfmathsetmacro{\w}{11}
    \pgfmathsetmacro{\h}{3}
    \pgfmathsetmacro{\boxheight}{2}
    \pgfmathsetmacro{\boxwidth}{2}
    \pgfmathsetmacro{\compwidth}{3}
    \pgfmathsetmacro{\midspace}{\w - \boxwidth}
    \pgfmathsetmacro{\wirespace}{\h/4}
    
    \circuitbox{\w}{0}{\boxwidth/2}{\boxheight/2}{\Large $\partial f$}
    \draw
        ( 0 , 0)
        to [short, C, l=$C_2$, *-*, i<=$ $] ( \compwidth , 0 ) 
        to [short, -*, R, l=$R_2$, i<=$ $] ( 2 * \compwidth, 0 ) 
        to [short, C, l=$C_1$, i=$ $]( 3 * \compwidth, 0 )
        to [short, -*] ( \boxwidth/2 + \midspace , 0 ) node[above, xshift=-6pt] {$x$};
    \draw
        ( \compwidth , 0 )
        to [short, -*, R, l_=$R_1$, i<=$ $] ( 1.5 * \compwidth, -{sqrt(3)} * \compwidth/2 ) 
        to [short, -*, R, l_=$R_3$, i=$ $] ( 2 * \compwidth, 0 );
    \draw
        ( 0 , 0) 
        to ( 0 , -4 * \wirespace ) node[ground] {};
    \draw
        ( 1.5 * \compwidth, -{sqrt(3)} * \compwidth/2 ) to 
        ( 1.5 * \compwidth, -4 * \wirespace ) node[ground] {};
    \wiremulseparate{ \boxwidth/2 + \midspace - \midspace/8}{0}{0.2}{0.35}{$m$}
\end{circuitikz}
}

\newcommand{\CircuitDADMMCMain}{
\begin{center} 
    \begin{circuitikz}[scale=.6, every node/.style={scale=0.7}][american voltages]
        \pgfmathsetmacro{\w}{6}
        \pgfmathsetmacro{\h}{4}
        \pgfmathsetmacro{\boxheight}{2}
        \pgfmathsetmacro{\boxwidth}{2}
        \pgfmathsetmacro{\wirespace}{1}
        \pgfmathsetmacro{\rlspace}{4}


        \foreach \i in {-1} { 
            \draw
            ( \wirespace + \i * \rlspace, 3*\h/5+ \boxheight/2) to 
            ( \wirespace + \i * \rlspace +  \rlspace, 3*\h/5+ \boxheight/2) to 
            ( \wirespace + \i * \rlspace +  \rlspace, 3*\h/5);
            \node at ( \wirespace + \i * \rlspace + \rlspace/2 , 3*\h/10 ) {\Large $\cdots$};
        };

        \foreach \i in {0,1} { 
            \draw ( 0 + \i * \rlspace,  0) 
            to ( 2 * \wirespace + \i * \rlspace, 0) 
            to [short, R, l=$R$] ( 2 * \wirespace + \i * \rlspace, 3*\h/5) 
            to ( 0 + \i * \rlspace, 3*\h/5);
        };
        
        \draw ( 0 ,  3*\h/5) to [short, L, l=$L$ , i<=${i_{\mL}}_{45}$] ( 0 ,  0); 
        \draw ( 0 + \rlspace ,  3*\h/5) to [short, L, l=$L$, i<=${i_{\mL}}_{54}$] ( 0 + \rlspace,  0); 
        
        \foreach \i in {0,1} { 
            \draw ( \i * \rlspace + \wirespace, 0) to ( \i * \rlspace + \wirespace, -\h/4); 
        };
        \draw (\wirespace, -\h/4) to (\wirespace + \rlspace, -\h/4);

        \draw ( \wirespace + \rlspace/2, -\h/4 ) node[above] {  $e_{45}$ } 
            to [short, *-] ( \wirespace + \rlspace/2, -3*\h/8 + 0.1 )
            to [short, C, l=$C$] ( \wirespace + \rlspace/2, -3*\h/4 ) node[ground] {};
        
        \draw
            ( - \boxheight/2, \h  + \boxheight/2 )
            to [short, -*, i<=$ $] ( - \boxheight/2 , 3*\h/5+ \boxheight/2) ;
        \draw
            [short, *-]  ( \wirespace + \rlspace  , 3*\h/5 + \boxheight/2 ) node[left] {$x_5$}
            to [short, i=$ $] ( \wirespace + \rlspace, \h  + \boxheight/2 );

        \draw
            ( \wirespace + \rlspace, 3*\h/5)
            to ( \wirespace +  \rlspace, 3*\h/5+ \boxheight/2);

        \wiremulv{ -\boxheight/2}{7*\h/8 + \boxheight/2}{0.2}{$m$}
        \wiremulv{ \wirespace + \rlspace }{7*\h/8 + \boxheight/2}{0.2}{$m$}
            
        \node at ( -\boxheight/2, 3*\h/5+ \boxheight/2 -0.4 ) {  $x_4$ };

        \circuitbox{- \boxheight/2}{\h + \boxheight}{\boxwidth/2}{\boxheight/2}{\Large $\partial f_4$}
        \circuitbox{ \wirespace + \rlspace }{\h + \boxheight}{\boxwidth/2}{\boxheight/2}{\Large $\partial f_5$}
        
    \end{circuitikz}
    \end{center}
}

\newcommand{\mTerminal}{
\begin{circuitikz}[scale=.7, every node/.style={scale=0.85}][american voltages] 
    \pgfmathsetmacro{\w}{4}
    \pgfmathsetmacro{\h}{2}
    \draw ( 0 , 0 )  to  (\w,0);
    \wiremulseparate{\w/2}{0}{\w/14}{\w/8}{\Large$m$}
\end{circuitikz}
}

\newcommand{\MoerauEnvelope}{
\begin{center}
    \begin{subfigure}{.5\textwidth}
        \centering
        \begin{circuitikz}[scale=.6, every node/.style={scale=0.9}][american voltages]
            \pgfmathsetmacro{\w}{8}
            \pgfmathsetmacro{\h}{2.2}
            \pgfmathsetmacro{\boxwidth}{2}
            \pgfmathsetmacro{\midspace}{\w - \boxwidth}
            \pgfmathsetmacro{\wirespace}{\h/4}
            \circuitbox{\w}{\h/2}{\boxwidth/2}{\boxwidth/2}{\Large $\partial f$}
            \draw ( \boxwidth/2 , 2 * \wirespace )  node[above] {$x$}
                to [short,  R, l^=$R$, *-*, i_>=$ $] ( \boxwidth/2 + \midspace/2, 2 * \wirespace ) node[above] {$\tilde{x}$}
                to [short, i=$ $] ( \boxwidth/2 + \midspace, 2 * \wirespace );
            \wiremulseparate{\boxwidth/2 + 7*\midspace/8}{2 * \wirespace}{0.2}{0.4}{$m$}
        \end{circuitikz}
    \end{subfigure}%
    \begin{subfigure}{.05\textwidth}
        \centering
        \raisebox{0.4cm}{\LARGE $\Leftrightarrow$}
    \end{subfigure}%
    \begin{subfigure}{.5\textwidth}
        \centering
        \begin{circuitikz}[scale=.6, every node/.style={scale=0.9}][american voltages]
            \pgfmathsetmacro{\w}{8}
            \pgfmathsetmacro{\h}{2.2}
            \pgfmathsetmacro{\boxwidth}{2}
            \pgfmathsetmacro{\midspace}{\w - \boxwidth}
            \pgfmathsetmacro{\wirespace}{\h/4}
            \circuitbox{\w}{\h/2}{\boxwidth/2}{\boxwidth/2}{\Large $\nabla^{R} \!f$}
            \draw ( \boxwidth/2 , 2 * \wirespace ) node[above] {$x$} to [short, *-, i=$ $] ( \boxwidth/2 + \midspace, 2 * \wirespace );
            \wiremulseparate{\boxwidth/2 + 7*\midspace/8}{2 * \wirespace}{0.2}{0.4}{$m$}
        \end{circuitikz}
    \end{subfigure}
    \end{center}
}

\newcommand{\MoreauEnvelopeVertical}{
    \begin{center} 
    \begin{circuitikz}[scale=.7, every node/.style={scale=0.85}][american voltages]
        \pgfmathsetmacro{\w}{8}
        \pgfmathsetmacro{\h}{2.2}
        \pgfmathsetmacro{\boxwidth}{2}
        \pgfmathsetmacro{\midspace}{\w - \boxwidth}
        \pgfmathsetmacro{\wirespace}{\h/4}
        \circuitbox{\w}{\h/2}{\boxwidth/2}{\boxwidth/2}{\Large $\partial f$}
        
        \foreach \i in {2} { 
            \draw ( \boxwidth/2 , \i * \wirespace ) 
            to [short,  R, l^=$R$, *-*, i_>=$ $] ( \boxwidth/2 + \midspace/2, \i * \wirespace )
            to [short, i=$ $] ( \boxwidth/2 + \midspace, \i * \wirespace );
        }

        \node at ( \boxwidth/2 , 2 * \wirespace + 0.3  ) {  $x$ };
        \node at ( \boxwidth/2 + \midspace/2, 2 * \wirespace + 0.3  ) {  $\tilde{x}$ };

        \wiremul{\boxwidth/2 + 7*\midspace/8}{2 * \wirespace}{0.2}{$m$}
        
    \end{circuitikz}
    \end{center}

    \begin{center} 
    \begin{circuitikz}[scale=.7, every node/.style={scale=0.85}][american voltages]
        \pgfmathsetmacro{\w}{8}
        \pgfmathsetmacro{\h}{2.2}
        \pgfmathsetmacro{\boxwidth}{2}
        \pgfmathsetmacro{\midspace}{\w - \boxwidth}
        \pgfmathsetmacro{\wirespace}{\h/4}
        
        \circuitbox{\w}{\h/2}{\boxwidth/2}{\boxwidth/2}{\Large $\nabla^{R} f$}
        \foreach \i in {2} { 
            \draw ( \boxwidth/2 , \i * \wirespace ) to [short, *-, i=$ $] ( \boxwidth/2 + \midspace, \i * \wirespace );
        }

        \node at ( \boxwidth/2 , 2 * \wirespace + 0.3  ) {  $x$ };
        \wiremul{\boxwidth/2 + 7*\midspace/8}{2 * \wirespace}{0.2}{$m$}
        
    \end{circuitikz}
    \end{center}
}

\newcommand{\PreMoreauEnvelope}{
    \begin{center} 
    \begin{circuitikz}[scale=.7, every node/.style={scale=0.85}][american voltages]
        \pgfmathsetmacro{\w}{8}
        \pgfmathsetmacro{\h}{2.2}
        \pgfmathsetmacro{\boxwidth}{2}
        \pgfmathsetmacro{\midspace}{\w - \boxwidth}
        \pgfmathsetmacro{\wirespace}{\h/4}
        
        \circuitbox{\w}{\h/2}{\boxwidth/2}{\boxwidth/2}{\Large $\partial \tilde{f}$}
        \foreach \i in {2} { 
            \draw ( \boxwidth/2 , \i * \wirespace ) to [short, *-, i=$ $] ( \boxwidth/2 + \midspace, \i * \wirespace );
        }

        \node at ( \boxwidth/2 , 2 * \wirespace + 0.3  ) {  $x$ };
        \wiremul{\boxwidth/2 + 7*\midspace/8}{2 * \wirespace}{0.2}{$m$}
        
    \end{circuitikz}
    \end{center}

    \begin{center} 
    \begin{circuitikz}[scale=.7, every node/.style={scale=0.85}][american voltages]
        \pgfmathsetmacro{\w}{8}
        \pgfmathsetmacro{\h}{2.2}
        \pgfmathsetmacro{\boxwidth}{2}
        \pgfmathsetmacro{\midspace}{\w - \boxwidth}
        \pgfmathsetmacro{\wirespace}{\h/4}
        \circuitbox{\w}{\h/2}{\boxwidth/2}{\boxwidth/2}{\Large $\nabla f$}
        \foreach \i in {2} { 
            \draw ( \boxwidth/2 , \i * \wirespace ) 
            to [short,  R, l^=$-R$, *-*, i_>=$ $] ( \boxwidth/2 + \midspace/2, \i * \wirespace )
            to [short, i=$ $] ( \boxwidth/2 + \midspace, \i * \wirespace );
        }
        
        \node at ( \boxwidth/2 , 2 * \wirespace + 0.3  ) {  $x$ };
        \node at ( \boxwidth/2 + \midspace/2, 2 * \wirespace + 0.3  ) {  $\tilde{x}$ };

        \wiremul{\boxwidth/2 + 7*\midspace/8}{2 * \wirespace}{0.2}{$m$}
    \end{circuitikz}
    \end{center}
}

\newcommand{\CircuitGD}{
    \begin{center} 
    \begin{circuitikz}[scale=.7, every node/.style={scale=0.85}][american voltages]
        \pgfmathsetmacro{\w}{6}
        \pgfmathsetmacro{\h}{3}
        \pgfmathsetmacro{\boxheight}{2}
        \pgfmathsetmacro{\boxwidth}{2}
        \pgfmathsetmacro{\midspace}{\w - \boxwidth}
        \pgfmathsetmacro{\wirespace}{\h/4}
        
        \circuitbox{\w}{\h/2}{\boxwidth/2}{\boxheight/2}{\Large $\partial f$}
        \draw
            ( -\boxwidth/4 , \wirespace ) node[ground] {}
            to [short, -*] ( -\boxwidth/4 , 2 * \wirespace );
        \foreach \i in {2} { 
            \draw 
            ( \boxwidth/2 + \midspace/3, \i * \wirespace )
            to [short, C, l_=$C$, *-, i=$ $] ( \boxwidth/4 , \i * \wirespace )
            to (-\boxwidth/4, 2 * \wirespace);
            \draw ( \boxwidth/2 + \midspace/3, \i * \wirespace ) to [short, i=$ $] ( \boxwidth/2 + \midspace, \i * \wirespace );
        }

        \node at ( - \boxwidth/4 , 2 * \wirespace + 0.3  ) {  $e$ };
        \node at ( \boxwidth/2 + \midspace/3, 2 * \wirespace + 0.3  ) {  $x$ };
        
        \wiremul{\boxwidth/2 + 7*\midspace/8}{2 * \wirespace}{0.2}{$m$}
    \end{circuitikz}
    \end{center}
}

\newcommand{\CitcuitNesterov}{
    \begin{center} 
    \begin{circuitikz}[scale=.7, every node/.style={scale=0.85}][american voltages]
        \pgfmathsetmacro{\w}{8}
        \pgfmathsetmacro{\h}{6}
        \pgfmathsetmacro{\boxheight}{4}
        \pgfmathsetmacro{\boxwidth}{2}
        \pgfmathsetmacro{\midspace}{\w - \boxwidth}
        \pgfmathsetmacro{\wirespace}{\h/4}
        
        \circuitbox{\w+\boxwidth}{\h/2}{\boxwidth/2}{\boxwidth/2}{\Large $\nabla f$}

        \draw
            ( 0 , 1.5 * \wirespace ) node[ground] {}
            to [short, -*] ( 0 , 2 * \wirespace);

        \pgfmathsetmacro{\endofC}{\boxwidth/2 + \midspace/4}

        \draw ( \endofC, 2 * \wirespace )
        to [short, C, l_=$C$, -, i=$ $] ( \boxwidth/6 , 2 * \wirespace ) 
        to (0, 2 * \wirespace);

        \draw  ( \endofC, 2 * \wirespace ) 
        to [short, *-] ( \endofC, 2 * \wirespace + \wirespace/2 ) 
        to [short, R, l=$R$, i=$ $] ( \boxwidth/2 + 2*\midspace/3, 2 * \wirespace + \wirespace/2 )
        to ( \boxwidth/2 + 2*\midspace/3, 2 * \wirespace );
        
        \draw  ( \endofC, 2 * \wirespace ) 
        to ( \endofC, 2 * \wirespace - \wirespace/2 ) 
        to [short, L, l_=$L$, i=$ $] ( \boxwidth/2 + 2*\midspace/3, 2 * \wirespace - \wirespace/2 )
        to ( \boxwidth/2 + 2*\midspace/3, 2 * \wirespace );
        
        \draw ( \boxwidth/2 + 2*\midspace/3, 2 * \wirespace )
        to [short, R, *-, l=$-R$, i=$ $] ( \w, 2 * \wirespace )
        to [short, -*] ( \w +\boxwidth/2, 2 * \wirespace );
        
        \wiremul{\w +\boxwidth/8}{2 * \wirespace}{0.2}{$m$}

        \node at ( \endofC - 0.4, 2 * \wirespace - 0.25  ) {$e$ };
        \node at ( \boxwidth/2 + 2*\midspace/3 + 0.4 , 2 * \wirespace - 0.25  ) {  $x^{+}$ };
        \node at (  \w +\boxwidth/2 - 0.3 , 2 * \wirespace - 0.25 ) {  $x$ };
        
    \end{circuitikz}
    \end{center}
}

\newcommand{\CircuitPPM}{
    \begin{center} 
    \begin{circuitikz}[scale=.7, every node/.style={scale=0.85}][american voltages]
        \pgfmathsetmacro{\w}{6}
        \pgfmathsetmacro{\h}{3}
        \pgfmathsetmacro{\boxheight}{2}
        \pgfmathsetmacro{\boxwidth}{2}
        \pgfmathsetmacro{\midspace}{\w - \boxwidth}
        \pgfmathsetmacro{\wirespace}{\h/4}
        
        \circuitbox{\w}{\h/2}{\boxwidth/2}{\boxheight/2}{\Large $\partial f$}
        \draw
            ( -\boxwidth , 1 * \wirespace ) node[ground] {}
            to [short, -*] ( -\boxwidth , 2 * \wirespace);
        \draw 
            ( -\boxwidth, 2 * \wirespace)
            to [short, C, l=$C$, -*, i<=$ $] ( \boxwidth/2 , 2 * \wirespace ); 
        \draw ( \boxwidth/2, 2 * \wirespace ) 
        to [short, R, l=$R$, i=$ $] ( \boxwidth/2 + 3*\midspace/4, 2 * \wirespace )
        to [short] ( \boxwidth/2 + \midspace , 2 * \wirespace );

        \wiremul{\boxwidth/2 + \midspace - \midspace/8}{2 * \wirespace}{0.2}{$m$}
    
        \node at ( \boxwidth/2, 2 * \wirespace + 0.3 ) {  $x$ };
    \end{circuitikz}
    \end{center}
}

\newcommand{\CircuitPPMMoreau}{
    \begin{center} 
    \begin{circuitikz}[scale=.7, every node/.style={scale=0.85}][american voltages]
        \pgfmathsetmacro{\w}{6}
        \pgfmathsetmacro{\h}{3}
        \pgfmathsetmacro{\boxheight}{2}
        \pgfmathsetmacro{\boxwidth}{2}
        \pgfmathsetmacro{\midspace}{\w - \boxwidth}
        \pgfmathsetmacro{\wirespace}{\h/4}
        
        \circuitbox{\w}{\h/2}{\boxwidth/2}{\boxheight/2}{\Large $\nabla ^{R}f$}
        \draw
            ( -\boxwidth , 1 * \wirespace ) node[ground] {}
            to [short, -*] ( -\boxwidth , 2 * \wirespace);
        \draw 
            ( \boxwidth/2 , 2 * \wirespace )
            to [short, C, l_=$C$, *-, i=$ $] ( -\boxwidth , 2 * \wirespace ) 
            to ( -\boxwidth, 2 * \wirespace);
        \draw ( \boxwidth/2, 2 * \wirespace ) 
        to [short, i=$ $] ( \boxwidth/2 + 3*\midspace/4, 2 * \wirespace )
        to [short] ( \boxwidth/2 + \midspace , 2 * \wirespace );

        \wiremul{\boxwidth/2 + \midspace - \midspace/4}{2 * \wirespace}{0.2}{$m$}
        \node at ( \boxwidth/2, 2 * \wirespace + 0.3 ) {  $x$ };
    
    \end{circuitikz}
    \end{center}
}

\newcommand{\CircuitProxGrad}{
\begin{circuitikz}[scale=.7, every node/.style={scale=0.85}][american voltages]
    \pgfmathsetmacro{\w}{5}
    \pgfmathsetmacro{\h}{3}
    \pgfmathsetmacro{\boxwidth}{2}
    \draw
        (0,0) node[ground] {}
        (0,0) to [C, l=$C$] (0,\h/4)
        to [short, i<=$ $] (0,\h/2);
    \draw
        (0,\h/2)
        to [R, *-*, l=$-R$] (-\w/2, \h/2)
        to [short, i=$ $] (-\w/2, 9*\h/8);
    \draw
        (0,\h/2)
        to (\w/2, \h/2)
        to [short, i=$ $] (\w/2, 9*\h/8);

    \wiremulv{-\w/2}{\h}{0.2}{$m$}
    \wiremulv{\w/2}{\h}{0.2}{$m$}
    
    \circuitbox{-\w/2}{9*\h/8 + \boxwidth/2}{\boxwidth/2}{\boxwidth/2}{\Large $\nabla f$}
    \circuitbox{\w/2}{9*\h/8 + \boxwidth/2}{\boxwidth/2}{\boxwidth/2}{\Large $\nabla^R g$}
    \node 
        at (-\w/2 , \h/2 - 0.3 ) {$x$};
    \node 
        at (0, \h/2 + 0.3) {$e$};
\end{circuitikz}
}

\newcommand{\CircuitPrimalDecomposition}{
    \begin{center} 
    \begin{circuitikz}[scale=.7, every node/.style={scale=0.85}][american voltages]
        \pgfmathsetmacro{\w}{6}
        \pgfmathsetmacro{\h}{4}
        \pgfmathsetmacro{\boxheight}{2}
        \pgfmathsetmacro{\boxwidth}{2}
        \pgfmathsetmacro{\wirespace}{1}
        \pgfmathsetmacro{\middleheight}{\h*1/2}
        
        \draw 
            ( 0, \h  )
            to [short, -*, i<=$ $] ( 0 , \middleheight) 
            to [short, -*] ( \w , \middleheight) 
            to [short, i=$ $] ( \w, \h  );
        \draw 
            (\w/2,0) node[ground] {}
            (\w/2, \middleheight)
            to [short, *-, C, l=$C$, i=$ $] (\w/2, -0.2);

        \wiremulv{0}{7*\h/8}{0.2}{$m$}
        \wiremulv{\w}{7*\h/8}{0.2}{$m$}
            
        \node at ( 0, \middleheight - 0.3 ) {  $x_1$ };
        \node at ( \w, \middleheight - 0.3 ) {  $x_N$ };
        \node at ( \w/2, \middleheight + 0.3 ) {  $e$ };

        \circuitbox{0}{\h + \boxheight/2}{\boxwidth/2}{\boxheight/2}{\Large $\partial f_1$}
        \node at ( \w/2, \h + \boxheight/2 ){\Large $\cdots$};
        \circuitbox{\w}{\h + \boxheight/2}{\boxwidth/2}{\boxheight/2}{\Large $\partial f_N$}
        
    \end{circuitikz}
    \end{center}
}

\newcommand{\CitcuitDualDecomposition}{
    \begin{center} 
    \begin{circuitikz}[scale=.7, every node/.style={scale=0.85}][american voltages]
        \pgfmathsetmacro{\w}{6}
        \pgfmathsetmacro{\h}{4}
        \pgfmathsetmacro{\boxheight}{2}
        \pgfmathsetmacro{\boxwidth}{2}
        \pgfmathsetmacro{\wirespace}{1}
        \pgfmathsetmacro{\middleheight}{\h/5}
        \pgfmathsetmacro{\boxcenterheight}{\h + \boxheight/2}

        \circuitbox{0}{\boxcenterheight}{\boxwidth/2}{\boxheight/2}{\Large $\partial f_1$}
        \node at ( \w/2, \boxcenterheight ){\Large $\cdots$};
        \circuitbox{\w}{\boxcenterheight}{\boxwidth/2}{\boxheight/2}{\Large $\partial f_N$}
        
        \draw 
            ( 0, \h  )
            to ( 0, 4.5 *\h/5  )
            to [short, *-] ( 0, 3.5 *\h/5  )
            to [short, -, i<=$ $, L, l=$L$] ( 0 , \middleheight) 
            to [short, -*] ( \w/2 , \middleheight) 
            to [short, -] ( \w , \middleheight) ;
        \draw 
            ( \w, \h  )
            to ( \w, 4.5  * \h/5  )
            to [short, *-]  ( \w, 3.5  * \h/5  )
            to [short, i<=$ $, L, l=$L$] ( \w , \middleheight);

        \node at ( -0.4, 4.5 *\h/5 ) {  $x_1$ };
        \node at ( \w - 0.4, 4.5 *\h/5 ) {  $x_N$ };
        \node at ( \w/2, \middleheight - 0.3 ) {  $e$ };

        \wiremulv{0}{3*\h/4}{0.2}{$m$}
        \wiremulv{\w}{3*\h/4}{0.2}{$m$}
        
    \end{circuitikz}
    \end{center}
}

\newcommand{\CircuitProximalDecomposition}{
    \begin{center} 
    \begin{circuitikz}[scale=.7, every node/.style={scale=0.85}][american voltages]
        \pgfmathsetmacro{\w}{6}
        \pgfmathsetmacro{\h}{4}
        \pgfmathsetmacro{\boxheight}{2}
        \pgfmathsetmacro{\boxwidth}{2}
        \pgfmathsetmacro{\wirespace}{1}
        
        \draw 
            ( 0, \h  )
            to [short, -*, i<=$ $] ( 0 , 3*\h/5) 
            to ( -\wirespace ,  3*\h/5) 
            to [short, R, l_=$R$] ( -\wirespace ,  0) 
            to [short, -*] ( \w/2 ,  0) node[below, yshift=-2] {$e$}
            to ( \w +\wirespace, 0) 
            to  [short, R, l_=$R$]  ( \w +\wirespace, 3*\h/5) 
            to [short, -*]  ( \w , 3*\h/5) 
            to [short, i=$ $] ( \w, \h  );
        \draw
            ( -\wirespace ,  3*\h/5)
            to ( \wirespace ,  3*\h/5)
            to [short, L, l_=$L$] ( \wirespace , 0);
        \draw
            ( \w + \wirespace ,  3*\h/5)
            to ( \w - \wirespace ,  3*\h/5)
            to [short, L, l=$L$] ( \w - \wirespace , 0);

        \wiremulv{0}{7*\h/8}{0.2}{$m$}
        \wiremulv{\w}{7*\h/8}{0.2}{$m$}
            
        \node at ( 0, 2*\h/4  ) {  $x_1$ };
        \node at ( \w, 2*\h/4  ) {  $x_N$ };

        \circuitbox{0}{\h + \boxheight/2}{\boxwidth/2}{\boxheight/2}{\Large $\partial f_1$}
        \node at ( \w/2, \h + \boxheight/2 ){\Large $\cdots$};
        \circuitbox{\w}{\h + \boxheight/2}{\boxwidth/2}{\boxheight/2}{\Large $\partial f_N$}
        
    \end{circuitikz}
    \end{center}
}

\newcommand{\CircuitDRS}{
\begin{circuitikz}[scale=.7, every node/.style={scale=0.85}][american voltages]
    \pgfmathsetmacro{\w}{4}
    \pgfmathsetmacro{\h}{4}
    \pgfmathsetmacro{\boxheight}{2}
    \pgfmathsetmacro{\boxwidth}{2}
    \pgfmathsetmacro{\midspace}{\w - \boxwidth}
    \pgfmathsetmacro{\wirespace}{\h/4}

    \draw 
        ( 0, \h/2 )
        to [short, *-*, L, l=$L$, i<=$ $] ( \w , \h/2 ) ;
    \draw 
        ( 0, \h/2 )
        to ( 0, \h/6 ) 
        to [short, R, l=$R$, i<=$ $] ( \w, \h/6)
        to ( \w, \h/2 );
    \foreach \i in {0, 1} { 
        \draw ( \i * \w, \h/2 ) to [short, i=$ $] (\i * \w, \h );
        \wiremulv{ \i * \w }{ 7 * \h/8}{0.2}{$m$}
    }

    \circuitbox{0}{\h+\boxwidth/2}{\boxwidth/2}{\boxheight/2}{\Large $\partial g$}
    \circuitbox{\w}{\h+\boxwidth/2}{\boxwidth/2}{\boxheight/2}{\Large $\partial f$}

    \node at ( 0 - 0.4, \h/2  ) {  $x_1$ };
    \node at ( \w + 0.4, \h/2  ) {  $x_2$ };

\end{circuitikz}
}

\newcommand{\CircuitDYS}{
    \begin{center} 
    \begin{circuitikz}[scale=.7, every node/.style={scale=0.85}][american voltages]
        \pgfmathsetmacro{\w}{10}
        \pgfmathsetmacro{\h}{4}
        \pgfmathsetmacro{\boxheight}{2}
        \pgfmathsetmacro{\boxwidth}{2}
        
        \draw 
            ( 0, \h  )
            to [short, -*, i<=$ $] ( 0 , 2*\h/5) 
            to ( 0 , 0) 
            to [short, R, l_=$R$, i=$ $] ( \w , 0) 
            to [short, -*]  ( \w , 2*\h/5) 
            to [short, i=$ $] ( \w, \h  );
        \draw
            ( 0 , 2*\h/5)
            to [short, L, l=$L$, -*, i<=$ $] ( \w/2 , 2*\h/5) 
            to [short, i=$ $] ( \w/2, \h );
        \draw
            ( \w/2 , 2*\h/5)
            to [short, R, -*, l=$-S$, i=$ $] ( 3*\w/4 , 2*\h/5)
            to [short, R, l=$S$] ( \w , 2*\h/5);

        \wiremulv{0}{7*\h/8}{0.2}{$m$}
        \wiremulv{\w/2}{7*\h/8}{0.2}{$m$}
        \wiremulv{\w}{7*\h/8}{0.2}{$m$}
            
        \node at ( -0.4, 2*\h/5  ) {  $x_3$ };
        \node at ( \w/2 , 2*\h/5 - 0.3  ) {  $x_2$ };
        \node at ( 3*\w/4 , 2*\h/5 - 0.3  ) {  $e$ };
        \node at ( \w + 0.4, 2*\h/5  ) {  $x_1$ };

        \circuitbox{0}{\h + \boxheight/2}{\boxwidth/2}{\boxheight/2}{\Large $\partial g$}
        \circuitbox{\w/2}{\h + \boxheight/2}{\boxwidth/2}{\boxheight/2}{\Large $\nabla h$}
        \circuitbox{\w}{\h + \boxheight/2}{\boxwidth/2}{\boxheight/2}{\Large $\partial f$}
        
    \end{circuitikz}
    \end{center}
}


\newcommand{\CircuitDGD}{
    \begin{center} 
    \begin{circuitikz}[scale=.7, every node/.style={scale=0.85}][american voltages]
        \pgfmathsetmacro{\w}{8}
        \pgfmathsetmacro{\h}{4}
        \pgfmathsetmacro{\boxheight}{2}
        \pgfmathsetmacro{\boxwidth}{2}

        \foreach \index in {l,j} {
            \IfStrEq{\index}{l}
            { \pgfmathsetmacro{\i}{1} }
            { \pgfmathsetmacro{\i}{2} }

            \circuitbox{\i * \w/2 - \w/2}{ \h + \boxheight/2}{\boxwidth/2}{\boxheight/2}{\Large $\nabla f_\index$}
            
            \draw 
            ( \i * \w/2 - \w/2 , \h  ) 
            to [short,  -*, i<=$ $] ( \i * \w/2 - \w/2 , \h/2) node[below, xshift=-6pt] {$x_\index$}
            to [short, C, l=$C$, i=$ $] 
            ( \i * \w/2 - \w/2 , 0 ) node[ground] {};
            
            \wiremulv{\i * \w/2 - \w/2}{ \h - 0.4 }{0.2}{$m$}
        };
        
        \draw
            ( 0 , \h/2)
            to [short, R, l=$R_{jl}$, -*, i<=$ $] ( \w/2 ,\h/2) ;
    \end{circuitikz}
    \end{center}
}

\newcommand{\CircuitDiffusion}{
    \begin{center} 
    \begin{circuitikz}[scale=.7, every node/.style={scale=0.85}][american voltages]
        \pgfmathsetmacro{\w}{8}
        \pgfmathsetmacro{\h}{4}
        \pgfmathsetmacro{\boxheight}{2}
        \pgfmathsetmacro{\boxwidth}{2}

        \foreach \index in {l,j} {
            \IfStrEq{\index}{l}
            { \pgfmathsetmacro{\i}{1} }
            { \pgfmathsetmacro{\i}{2} }

            \circuitbox{\i * \w/2 - \w/2}{ 4*\h/3 + \boxheight/2}{\boxwidth/2}{\boxheight/2}{\Large $\nabla f_\index$}
            
            \draw 
            ( \i * \w/2 - \w/2 , 4*\h/3  ) node[below, xshift=-6pt] {$x_\index$}
            to [short,*-] ( \i * \w/2 - \w/2 , 7*\h/6  )
            to [short, R, l=$-R$, -*, i<=$ $] ( \i * \w/2 - \w/2 , \h/2) node[below, xshift=-6pt] {$e_\index$}
            to [short, C, l=$C$, i=$ $] 
            ( \i * \w/2 - \w/2 , 0 ) node[ground] {};
            
            \wiremulv{\i * \w/2 - \w/2}{ 5*\h/4 - 0.2 }{0.2}{$m$}
        };
        
        \draw
            ( 0 , \h/2)
            to [short, R, l=$R_{jl}$, -*, i<=$ $] ( \w/2 ,\h/2) ;
    \end{circuitikz}
    \end{center}
}

\newcommand{\CircuitDADMM}{
\begin{center} 
    \begin{circuitikz}[scale=.7, every node/.style={scale=0.85}][american voltages]
        \pgfmathsetmacro{\w}{6}
        \pgfmathsetmacro{\h}{4}
        \pgfmathsetmacro{\boxheight}{2}
        \pgfmathsetmacro{\boxwidth}{2}
        \pgfmathsetmacro{\wirespace}{1}
        \pgfmathsetmacro{\rlspace}{4}

        \foreach \i in {-1,0,1,2} { 
            \draw
            ( 0 + \i * \rlspace,  0) 
            to ( 2 * \wirespace + \i * \rlspace, 0) 
            to [short, R, l=$R$] ( 2 * \wirespace + \i * \rlspace, 3*\h/5) 
            to ( 0 + \i * \rlspace, 3*\h/5);
        };
        \foreach \i in {-1,2} {
            \draw ( 0 + \i * \rlspace ,  3*\h/5) to [short, L, l=$L$] ( 0 + \i * \rlspace,  0); 
        };
        \draw ( 0 ,  3*\h/5) to [short, L, l=$L$ , i<=${i_{\mL}}_{jl}$] ( 0 ,  0); 
        \draw ( 0 + \rlspace ,  3*\h/5) to [short, L, l=$L$, i<=${i_{\mL}}_{lj}$] ( 0 + \rlspace,  0); 
        
        \foreach \i in {-1,0,1,2} { 
            \draw ( \i * \rlspace + \wirespace, 0) to ( \i * \rlspace + \wirespace, -\h/4); 
        };
        \draw (\wirespace, -\h/4) to (\wirespace + \rlspace, -\h/4);
        \node at ( \wirespace + \rlspace/2, -\h/4 - 0.4 ) {  $e_{jl}$ };
        \draw ( \wirespace + \rlspace/2, -\h/4 ) to [short, *-] ( \wirespace + \rlspace/2, -\h/4 );
        \draw (-\rlspace + \wirespace, -\h/4) to (-\rlspace - \wirespace, -\h/4);
        \draw (2 *\rlspace + \wirespace, -\h/4) to (2 *\rlspace + 3* \wirespace, -\h/4);
        
        \draw
            ( - \boxheight/2, \h  + \boxheight/2 )
            to [short, -*, i<=$ $] ( - \boxheight/2 , 3*\h/5+ \boxheight/2) ;
        \draw
            [short, *-]  ( \w + \boxheight/2  , 3*\h/5 + \boxheight/2 ) 
            to [short, i=$ $] ( \w + \boxheight/2, \h  + \boxheight/2 );

        \foreach \i in {-1,1} { 
            \draw
            ( \wirespace + \i * \rlspace, 3*\h/5)
            to ( \wirespace + \i * \rlspace, 3*\h/5+ \boxheight/2)
            to ( \wirespace + \i * \rlspace +  \rlspace, 3*\h/5+ \boxheight/2)
            to ( \wirespace + \i * \rlspace +  \rlspace, 3*\h/5);
            \node at ( \wirespace + \i * \rlspace + \rlspace/2 , 3*\h/10 ) {\Large $\cdots$};
        };

        \wiremulv{ -\boxheight/2}{7*\h/8 + \boxheight/2}{0.2}{$m$}
        \wiremulv{\w + \boxheight/2}{7*\h/8 + \boxheight/2}{0.2}{$m$}
            
        \node at ( -\boxheight/2, 3*\h/5+ \boxheight/2 -0.4 ) {  $x_j$ };
        \node at ( \w + \boxheight/2, 3*\h/5+ \boxheight/2 -0.4 ) {  $x_l$ };

        \circuitbox{- \boxheight/2}{\h + \boxheight}{\boxwidth/2}{\boxheight/2}{\Large $\partial f_j$}
        \circuitbox{\w + \boxheight/2}{\h + \boxheight}{\boxwidth/2}{\boxheight/2}{\Large $\partial f_l$}
        
    \end{circuitikz}
    \end{center}
}

\newcommand{\CircuitPgExtra}{
\begin{circuitikz}[scale=.7, every node/.style={scale=0.85}][american voltages]
\pgfmathsetmacro{\w}{9}
    \pgfmathsetmacro{\h}{3}
    \pgfmathsetmacro{\boxwidth}{2}
    \pgfmathsetmacro{\midspace}{\w - \boxwidth}
    \pgfmathsetmacro{\wcompsize}{\w/3}
    \pgfmathsetmacro{\hcompsize}{3*\h/4}
    \pgfmathsetmacro{\hmargin}{2*\h/5}

    \large

    \foreach \index in {l, j} {
        \IfStrEq{\index}{j}
            { \pgfmathsetmacro{\sign}{-1} }
            { \pgfmathsetmacro{\sign}{1} }
    \pgfmathsetmacro{\heigt}{ (1+\sign) * ( \hmargin + 1/2 * \hcompsize ) }

    \wiremulseparate{-3 * \wcompsize + \boxwidth}{\heigt}{0.2}{0.4}{\large $m$}
    
    \circuitbox{\wcompsize}{\heigt}{\boxwidth/2}{\boxwidth/2}{\Large $\nabla h_\index$}
    \circuitbox{-3 * \wcompsize }{\heigt}{\boxwidth/2}{\boxwidth/2}{\Large $\partial f_\index$}

    \draw ( -2 * \wcompsize, \heigt ) node[yshift=\sign * 10] {$\tilde{x}_\index$}
        to ( -3 * \wcompsize + \boxwidth/2, \heigt );

    \draw ( -2 * \wcompsize, \heigt ) 
        to [short,  R, l_=$R$, *-] ( - \wcompsize, \heigt ) node[yshift=\sign *10] {$e_\index$} 
        to [short,  R, l_=$-R$, *-] ( 0, \heigt ) node[yshift=\sign *10] {$x_\index$} 
        to [short, i=$ $, *-] ( \wcompsize - \boxwidth/2, \heigt );
    }      

    \draw ( 0, \hmargin )
    to [short]( \wcompsize/4, \hmargin )
    to [short,  R, l_=$R_{jl}$] ( \wcompsize/4, \hmargin + \hcompsize)
    to ( 0, \hmargin + \hcompsize);
        
    \draw ( 0, 0 )
    to [short, i=$ $] ( 0, \hmargin )
    to [short] ( -\wcompsize/4, \hmargin )
    to [short,  L, l=$L_{jl}$] ( -\wcompsize/4, \hmargin + \hcompsize)
    to ( 0, \hmargin + \hcompsize)
    to [short, -*] ( 0, 2 * \hmargin + \hcompsize );

\end{circuitikz}
}

\newcommand{\CircuitPgExtraSimple}{
    \begin{circuitikz}[scale=.7, every node/.style={scale=0.85}][american voltages]
        \pgfmathsetmacro{\w}{9}
        \pgfmathsetmacro{\h}{3}
        \pgfmathsetmacro{\boxwidth}{2}
        \pgfmathsetmacro{\midspace}{\w - \boxwidth}
        \pgfmathsetmacro{\wcompsize}{\w/3}
        \pgfmathsetmacro{\hcompsize}{3*\h/4}
        \pgfmathsetmacro{\hmargin}{2*\h/5}
        \large

        \foreach \index in {l, j} {
            \IfStrEq{\index}{j}
                { \pgfmathsetmacro{\sign}{-1} }
                { \pgfmathsetmacro{\sign}{1} }
                
        \pgfmathsetmacro{\heigt}{ (1+\sign) * ( \hmargin + 1/2 * \hcompsize ) }
        
        \circuitbox{\wcompsize}{\heigt}{\boxwidth/2}{\boxwidth/2}{\Large $\nabla h_\index$}
        \circuitbox{ - \wcompsize }{\heigt}{\boxwidth/2}{\boxwidth/2}{\Large $\partial f_\index$}

        \draw ( - \wcompsize + \boxwidth/2 , \heigt )
            to ( 0, \heigt ) node[yshift=\sign*10] {$x_\index$}
            to [short, i=$ $, *-] ( \wcompsize - \boxwidth/2, \heigt );

        }
        
        \draw ( 0, 0 )
            to [short, i=$ $] ( 0, \hmargin )
            to [short] ( -\wcompsize/4, \hmargin )
            to [short,  L, l=$L_{jl}$] ( -\wcompsize/4, \hmargin + \hcompsize)
            to ( 0, \hmargin + \hcompsize)
            to [short, -*] ( 0, 2 * \hmargin + \hcompsize );
        \draw ( 0, \hmargin )
            to [short]( \wcompsize/4, \hmargin )
            to [short,  R, l_=$R_{jl}$] ( \wcompsize/4, \hmargin + \hcompsize)
            to ( 0, \hmargin + \hcompsize);

    \end{circuitikz}
    \end{center}
}

\newcommand{\CircuitExample}{
\begin{circuitikz}[scale=.6, every node/.style={scale=0.7}][american voltages]
\ctikzset{label/align = straight}
    \pgfmathsetmacro{\w}{11}
    \pgfmathsetmacro{\h}{3}
    \pgfmathsetmacro{\boxheight}{2}
    \pgfmathsetmacro{\boxwidth}{2}
    \pgfmathsetmacro{\compwidth}{3}
    \pgfmathsetmacro{\midspace}{\w - \boxwidth}
    \pgfmathsetmacro{\wirespace}{\h/4}
    
    \circuitbox{\w}{0}{\boxwidth/2}{\boxheight/2}{\Large $\partial f$}
    \draw
        ( 0 , 0)
        to [short, C, l=$C_2$, *-*, i<=$ $] ( \compwidth , 0 ) node[above] {$e_2$}
        to [short, -*, R, l=$R_2$, i<=$ $] ( 2 * \compwidth, 0 ) node[above] {$e_1$}
        to [short, C, l=$C_1$, i=$ $]( 3 * \compwidth, 0 )
        to [short, -*] ( \boxwidth/2 + \midspace , 0 ) node[above, xshift=-6pt] {$x$};
    \draw
        ( \compwidth , 0 )
        to [short, -*, R, l_=$R_1$, i<=$ $] ( 1.5 * \compwidth, -{sqrt(3)} * \compwidth/2 ) node[below, xshift=7pt] {$e_3$} 
        to [short, -*, R, l_=$R_3$, i=$ $] ( 2 * \compwidth, 0 );
    \draw
        ( 0 , 0) node[above] {$e_4$} 
        to ( 0 , -4 * \wirespace ) node[ground] {};
    \draw
        ( 1.5 * \compwidth, -{sqrt(3)} * \compwidth/2 ) to 
        ( 1.5 * \compwidth, -4 * \wirespace ) node[ground] {};
    \wiremulseparate{ \boxwidth/2 + \midspace - \midspace/8}{0}{0.2}{0.4}{$m$}
\end{circuitikz}
}

\newcommand{\CircuitDADMMC}{
\begin{center} 
    \begin{circuitikz}[scale=.7, every node/.style={scale=0.85}][american voltages]
        \pgfmathsetmacro{\w}{6}
        \pgfmathsetmacro{\h}{4}
        \pgfmathsetmacro{\boxheight}{2}
        \pgfmathsetmacro{\boxwidth}{2}
        \pgfmathsetmacro{\wirespace}{1}
        \pgfmathsetmacro{\rlspace}{4}

        \foreach \i in {-1,0,1} { 
            \draw
            ( 0 + \i * \rlspace,  0) 
            to ( 2 * \wirespace + \i * \rlspace, 0) 
            to [short, R, l=$R$] ( 2 * \wirespace + \i * \rlspace, 3*\h/5) 
            to ( 0 + \i * \rlspace, 3*\h/5);
        };
        \foreach \i in {-1} {
            \draw ( 0 + \i * \rlspace ,  3*\h/5) to [short, L, l=$L$] ( 0 + \i * \rlspace,  0); 
        };
        \draw ( 0 ,  3*\h/5) to [short, L, l=$L$ , i<=${i_{\mL}}_{45}$] ( 0 ,  0); 
        \draw ( 0 + \rlspace ,  3*\h/5) to [short, L, l=$L$, i<=${i_{\mL}}_{54}$] ( 0 + \rlspace,  0); 
        
        \foreach \i in {-1,0,1} { 
            \draw ( \i * \rlspace + \wirespace, 0) to ( \i * \rlspace + \wirespace, -\h/4); 
        };
        \draw (\wirespace, -\h/4) to (\wirespace + \rlspace, -\h/4);
        \draw (-\rlspace + \wirespace, -\h/4) to (-\rlspace - \wirespace, -\h/4);

        \draw ( \wirespace + \rlspace/2, -\h/4 ) node[above] {  $e_{45}$ } 
            to [short, *-] ( \wirespace + \rlspace/2, -3*\h/8 + 0.1 )
            to [short, C, l=$C$] ( \wirespace + \rlspace/2, -3*\h/4 ) node[ground] {};
        
        \draw
            ( - \boxheight/2, \h  + \boxheight/2 )
            to [short, -*, i<=$ $] ( - \boxheight/2 , 3*\h/5+ \boxheight/2) ;
        \draw
            [short, *-]  ( \wirespace + \rlspace  , 3*\h/5 + \boxheight/2 ) node[left] {$x_5$}
            to [short, i=$ $] ( \wirespace + \rlspace, \h  + \boxheight/2 );

        \foreach \i in {-1} { 
            \draw
            ( \wirespace + \i * \rlspace, 3*\h/5)
            to ( \wirespace + \i * \rlspace, 3*\h/5+ \boxheight/2)
            to ( \wirespace + \i * \rlspace +  \rlspace, 3*\h/5+ \boxheight/2)
            to ( \wirespace + \i * \rlspace +  \rlspace, 3*\h/5);
            \node at ( \wirespace + \i * \rlspace + \rlspace/2 , 3*\h/10 ) {\Large $\cdots$};
        };

        \draw
            ( \wirespace + \rlspace, 3*\h/5)
            to ( \wirespace +  \rlspace, 3*\h/5+ \boxheight/2);

        \wiremulv{ -\boxheight/2}{7*\h/8 + \boxheight/2}{0.2}{$m$}
        \wiremulv{ \wirespace + \rlspace }{7*\h/8 + \boxheight/2}{0.2}{$m$}
            
        \node at ( -\boxheight/2, 3*\h/5+ \boxheight/2 -0.4 ) {  $x_4$ };

        \circuitbox{- \boxheight/2}{\h + \boxheight}{\boxwidth/2}{\boxheight/2}{\Large $\partial f_4$}
        \circuitbox{ \wirespace + \rlspace }{\h + \boxheight}{\boxwidth/2}{\boxheight/2}{\Large $\partial f_5$}
        
    \end{circuitikz}
    \end{center}
}

\newcommand{\PgExtraParallelC}{
    \begin{circuitikz}[scale=.7, every node/.style={scale=0.85}][american voltages]
        \pgfmathsetmacro{\w}{9}
        \pgfmathsetmacro{\h}{3}
        \pgfmathsetmacro{\boxwidth}{2}
        \pgfmathsetmacro{\midspace}{\w - \boxwidth}
        \pgfmathsetmacro{\wcompsize}{\w/3}
        \pgfmathsetmacro{\hcompsize}{3*\h/4}
        \pgfmathsetmacro{\hmargin}{2*\h/5}

        \large

        \foreach \index in {l, j} {
            \IfStrEq{\index}{j}
                { \pgfmathsetmacro{\sign}{-1} }
                { \pgfmathsetmacro{\sign}{1} }
        \pgfmathsetmacro{\height}{ (1+\sign) * ( \hmargin + 1/2 * \hcompsize ) }

        \wiremulseparate{-3 * \wcompsize + \boxwidth}{\height}{0.2}{0.4}{\large $m$}
        \circuitbox{\wcompsize}{\height}{\boxwidth/2}{\boxwidth/2}{\Large $\nabla h_\index$}
        \circuitbox{-3 * \wcompsize + \boxwidth/4}{\height}{\boxwidth/2}{\boxwidth/2}{\Large $\partial f_\index$}

        \draw ( -2 * \wcompsize, \height ) node[yshift=\sign * 10] {$\tilde{x}_\index$}
            to ( -3 * \wcompsize + \boxwidth/2 + \boxwidth/4, \height );

        \draw ( -2 * \wcompsize, \height ) 
            to [short,  R, l_=$R$, *-] ( - \wcompsize, \height ) node[yshift=\sign *10] {$e_\index$} 
            to [short,  R, l_=$-R$, *-] ( 0, \height ) node[yshift=\sign *10] {$x_\index$} 
            to [short, i=$ $, *-] ( \wcompsize - \boxwidth/2, \height );
        }      

        \draw ( 0, \hmargin )
        to [short]( 0, \hmargin )
        to [short,  R, l=$R_{jl}$] ( 0, \hmargin + \hcompsize)
        to ( 0, \hmargin + \hcompsize);

        \draw ( 0, \hmargin )
        to [short]( \wcompsize/2, \hmargin )
        to [short,  C, l_=$C_{jl}$] ( \wcompsize/2, \hmargin + \hcompsize)
        to ( 0, \hmargin + \hcompsize);
            
        \draw ( 0, 0 )
        to [short, i=$ $] ( 0, \hmargin )
        to [short] ( -\wcompsize/2, \hmargin )
        to [short,  L, l=$L_{jl}$] ( -\wcompsize/2, \hmargin + \hcompsize)
        to ( 0, \hmargin + \hcompsize)
        to [short, -*] ( 0, 2 * \hmargin + \hcompsize );

    \end{circuitikz}
}


\newcommand{\CircuitPgExtraSlides}{

\begin{circuitikz}[scale=.45, every node/.style={scale=0.6}][american voltages]
    \pgfmathsetmacro{\w}{9}
    \pgfmathsetmacro{\h}{3}
    \pgfmathsetmacro{\boxwidth}{2}
    \pgfmathsetmacro{\midspace}{\w - \boxwidth}
    \pgfmathsetmacro{\wcompsize}{\w/3}
    \pgfmathsetmacro{\hcompsize}{3*\h/4}
    \pgfmathsetmacro{\hmargin}{2*\h/5}

    \large

    \foreach \index in {l, j} {
        
        \IfStrEq{\index}{j}
            { \pgfmathsetmacro{\sign}{-1} }
            { \pgfmathsetmacro{\sign}{1} }
    \pgfmathsetmacro{\heigt}{ (1+\sign) * ( \hmargin + 1/2 * \hcompsize ) }

    \wiremulseparate{-3 * \wcompsize + \boxwidth}{\heigt}{0.2}{0.4}{\large $m$}
    
    \circuitbox{\wcompsize}{\heigt}{\boxwidth/2}{\boxwidth/2}{\Large $\nabla h_\index$}
    \circuitbox{-3 * \wcompsize }{\heigt}{\boxwidth/2}{\boxwidth/2}{\Large $\partial f_\index$}

    \draw ( -2 * \wcompsize, \heigt ) node[yshift=\sign * 10] {$\tilde{x}_\index$}
        to ( -3 * \wcompsize + \boxwidth/2, \heigt );

    \draw ( -2 * \wcompsize, \heigt ) 
        to [short,  R, l_=$R$, *-] ( - \wcompsize, \heigt ) node[yshift=\sign *10] {$e_\index$} 
        to [short,  R, l_=$-R$, *-] ( 0, \heigt ) node[yshift=\sign *10] {$x_\index$} 
        to [short, i=$ $, *-] ( \wcompsize - \boxwidth/2, \heigt );
    }      

    \draw ( 0, \hmargin )
    to [short]( \wcompsize/4, \hmargin )
    to [short,  R, l_=$R_{jl}$] ( \wcompsize/4, \hmargin + \hcompsize)
    to ( 0, \hmargin + \hcompsize);
        
    \draw ( 0, 0 )
    to [short, i=$ $] ( 0, \hmargin )
    to [short] ( -\wcompsize/4, \hmargin )
    to [short,  L, l=$L_{jl}$] ( -\wcompsize/4, \hmargin + \hcompsize)
    to ( 0, \hmargin + \hcompsize)
    to [short, -*] ( 0, 2 * \hmargin + \hcompsize );

\end{circuitikz}
}

\usepackage{circuitikz}
\usepackage{ifthen}
\usepackage{xstring}

\usepackage{cancel}
\usepackage{enumitem}   

\usepackage{natbib}
\usepackage{amsthm,amssymb}

\usepackage{wrapfig}
\usepackage{subcaption}
\usepackage{arydshln}
\usepackage{mathtools}
\usepackage{blkarray}

\title{Optimization Algorithm Design via Electric Circuits}

%

\author{
Stephen P. Boyd\\
Stanford University
\And
Tetiana Parshakova$^*$\\
Stanford University
\And
Ernest K. Ryu\\
UCLA
\And
Jaewook J. Suh\thanks{Lead authors (author list ordered alphabetically)}\\
Rice University\\
}

\begin{document}

\maketitle

\begin{abstract}
We present a novel methodology for convex optimization algorithm design using ideas from electric RLC circuits. Given an optimization problem, the first stage of the methodology is to design an appropriate electric circuit whose continuous-time dynamics converge to the solution of the optimization problem at hand. Then, the second stage is an automated, computer-assisted discretization of the continuous-time dynamics, yielding a provably convergent discrete-time algorithm. Our methodology recovers many classical (distributed) optimization algorithms and enables users to quickly design and explore a wide range of new algorithms with convergence guarantees.
\end{abstract}

\section{Introduction}
In the classical literature of optimization theory, optimization algorithms are designed with the goal of establishing fast worst-case convergence guarantees. However, these methods, designed with the pessimistic framework of worst-case analysis, often exhibit slow practical performance. In the modern machine learning literature, optimizers are designed with the goal of obtaining fast empirical performance on a set of practical problems of interest. However, these methods, designed without consideration of the feasibility of a convergence analysis, tend to be much more difficult to analyze theoretically, and such methods sometimes even fail to converge under nice idealized assumptions such as convexity \cite{KingmaBa2015_adam,Reddi2018}.

In this work, we present a novel methodology for convex optimization algorithm design using ideas from electric RLC  (resistor-inductor-capacitor) circuits and the performance estimation problem \cite{DroriTeboulle2014_performance,TaylorHendrickxGlineur2017_smooth}. (To clarify, our proposal does not involve building a physical circuit.) Specifically, our methodology provides a quick and systematic recipe for designing new, provably convergent optimization algorithms, including distributed optimization algorithms. The ease of the methodology enables users to quickly explore a wide range of algorithms with convergence guarantees.

\paragraph{Optimization problem formulation.}
We consider the standard-form optimization problem
\BEQ\label{e-dist-opt-primal}
\begin{array}{ll}
\mbox{minimize}&
    f(x)\\
    \mbox{subject to}& x\in \mathcal{R}(E^\intercal ),
\end{array}
\EEQ
where $x \in \reals^m$ is the optimization variable, $f\colon \reals^m \to \reals \cup \{ \infty \}$ is closed, convex, 
and proper, and $E\in \reals^{n\times m}$.
Assume we have $n$ nets $N_1,\dots,N_n$ forming a partition of $\{1,\dots,m\}$.
More specifically, we let $E\in \reals^{n\times m}$ be a \emph{selection matrix} defined as
\BEQ\label{e-selection-matrix}
E_{ij}=\left\{
\begin{array}{ll}
+1&\text{if $j\in N_i$ }\\
0&\text{otherwise.}
\end{array}
\right.
\EEQ
Our goal is to find a primal-dual solution satisfying the KKT conditions \cite[Theorem 28.3]{rockafellar1997convex}
\BEQ
\label{pd-sol}
 y\in \partial f(x),\,\quad
x\in \mathcal{R}(E^\intercal),\quad y\in \mathcal{N}(E).
\EEQ
As we show through examples, this standard-form problem \eqref{e-dist-opt-primal} conveniently models many optimization problem setups of practical interest.

\newpage

In the analysis and design of optimization algorithms, a standard approach is to consider a continuous-time model of a given algorithm, corresponding to the limit of small stepsizes \cite{Polyak1964_methods, HelmkeMoore1996_optimization, Alvarez2000_minimizing, WangElia2010_control, SuBoydCandes2014_differential, KricheneBayenBartlett2015_accelerated, KiaCortesMartinez2015_distributed, ShiDuJordanSu2021_understanding}. Our work is based on the key observation that such continuous-time models can be interpreted as RLC circuits connected to the subdifferential operator $\partial f$, which we interpret as a nonlinear resistor. We expand on this observation and propose a general methodology for designing optimization algorithms by designing RLC circuits that relax to the nets defined by $E$.

\paragraph{Example.}
Problem~\eqref{e-dist-opt-primal} represents a general form of distributed optimization,
where the constraints enforce consensus among the primal
variables.
An example is the so-called consensus problem~\cite[\S 7]{boyd2011distributed}
\[
\begin{array}{ll}
\underset{{x_1, \ldots, x_N\in \textbf{R}^{m/N}}}{\mbox{minimize}}&
    f_1(x_1) + \cdots + f_N(x_N)\\
    \mbox{subject to}& x_1 = \cdots = x_N,
\end{array}
\]
where $x = (x_1, \ldots, x_N)$ is the decision variable, 
the objective function $f(x) = f_1(x_1)+\cdots +f_N(x_N)$ is block-separable, and
$E^\intercal = (I, \ldots, I) \in \reals^{m \times m/N}$.
Refer to sections \S\ref{appendix-centralized} and \S\ref{appendix-decentralized},
for an overview of classical splitting methods and decentralized methods
for solving \eqref{e-dist-opt-primal}.

\subsection{Preliminaries and Notation}
We generally follow the standard definitions and notations of convex optimization \cite{boyd2004convex, Nesterov2004_introductory, BauschkeCombettes2017_convex, Nesterov2018_lectures, RyuYin2022_largescale}. 
Consider the extended-valued function $f\colon\reals^n \to \reals \cup \{\infty\}$.
We say $f$ is closed if its epigraph 
is closed set in $\reals^{n+1}$ and proper if its value is finite somewhere. 
We say $f$ is CCP if it is closed, convex, and proper. 
For $R>0$, we say $f$ is $R$-smooth if $f$ is finite and differentiable everywhere and $\| \nabla f(x) - \nabla f(y)  \| \le R \| x - y  \|$ for all $x, y \in \reals^n$.
For $\mu>0$, we say $f$ is $\mu$-strongly convex if $f(x) - (\mu/2) \|x\|^2$ is convex. 
Let $f^*(y) = \sup_{x\in \textbf{R}^n} \{ \langle y , x \rangle - f(x)  \}$ denote the Fenchel conjugate of $f$.
For $R>0$ and a CCP $f$, define the $R$-Moreau envelope of $f$ as 
 ${}^R\!f(x) = \inf_{z\in \textbf{R}^n}\big\{f(z)+\tfrac{1}{2R}\|z-x\|^2\big\}$.
One can show \citep[Proposition 13.24]{BauschkeCombettes2017_convex} that the $R$-Moreau envelope is given by 
${}^R\!f=\big(f^*+\tfrac{R }{2}\|\cdot\|^2\big)^*$.
If $f$ is $1/R$-smooth, we can define \cite[Theorem 18.15]{BauschkeCombettes2017_convex} the $R$-pre-Moreau envelope of $f$ as
\[
\tilde{f}=\big(f^*-\tfrac{R }{2}\|\cdot\|^2\big)^*,
\]
which is defined such that ${}^{R}(\tilde{f})=f$.

Due to the limited space, we defer the review of prior works to \S\ref{s:prior_works} of the appendix.

\subsection{Contributions}
Our work presents two technical novelties, one in continuous time and the other in discrete time. The first is the observation that many standard optimization algorithms can be interpreted as discretizations of electric RLC circuits connected to the subdifferential operator $\partial f$. The second is the use of the performance estimation problem to obtain an automated recipe for discretizing convergent continuous-time dynamics into convergent discrete-time algorithms, and we provide code implementing our automatic discretization methodology.

By combining these two insights, we provide a quick and systematic methodology for designing new, provably convergent optimization algorithms, including distributed optimization algorithms. 
We provide an open-source package that implements automatic discretization
of our circuits:
\begin{quote}
\centering
\url{https://github.com/cvxgrp/optimization_via_circuits}
\end{quote}   

\section{Continuous-time optimization with circuits} \label{sec-continuous-optimization-with-circuit}


\subsection{Interconnects}
We now describe two types of electric circuits that we call \emph{static} and \emph{dynamic interconnects}. Both interconnects have $m$ terminals, and we will later connect them to the $m$ inputs of $\partial f$.

\paragraph{Static interconnect.}
The static interconnect is a set of (ideal) wires connecting $m$ terminals and forming $n$ nets. See Figure~\ref{fig-static-ic} for an example.
Let $x \in\reals^m$ be a vector of terminal potentials and $y\in \reals^m$
be a vector of currents leaving the terminals.
Using matrix $E\in\reals^{n\times m}$ as defined in \eqref{e-selection-matrix}, we can express Kirchhoff's voltage law (KVL) as $x\in \mathcal{R}(E^\intercal )$
and Kirchhoff's current law (KCL) as $y\in \mathcal{N}(E)$. 
\begin{wrapfigure}{r}{0.465\linewidth}
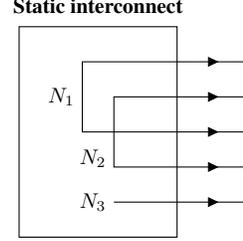
 
 \vspace{-4mm}
\begin{center}
    \StaticInterconnect
\end{center} 
    \caption{Example of a static interconnect, $m=5$, $N_1=\{1,3\}$, $N_2=\{2,4\}$, $N_3=\{5\}$. }
    \label{fig-static-ic} \vspace{-2mm}
\end{wrapfigure} 
In other words, the static interconnect enforces the V-I relationship
\BEQ
(x,y)\in\mathcal{R}(E^\intercal )\times  \mathcal{N}(E).
\label{eq:static-interconnect}
\EEQ

\paragraph{Dynamic interconnect.}
The dynamic interconnect is an RLC circuit with $m$ terminals and $1$ ground node.
We assume all inductances and capacitances have values in $(0,\infty)$ while the resistances have values in $[0,\infty)$. (A $0$-ohm resistor is an ideal wire. We do not permit ideal wire loop.) Each RLC component has two (scalar-valued) terminals: the $+$ and $-$ terminals.

Denote the number of nodes in the RLC circuit by $\tau$. Connect nodes $1, 2, \ldots, m$ to terminals $1, 2, \ldots, m$, and let the last node, node $\tau$, be the ground node. (This implies $\tau \geq m+1$.)
Denote the number of RLC components by $\sigma$. We describe the topology with a reduced node incidence matrix (with the bottom row corresponding to the ground node removed)
$A\in \reals^{(\tau - 1)\times \sigma}$ defined as
\BEAS
A_{ij}&=&\left\{
\begin{array}{ll}
+1&\text{if } \text{node $i$ connects to $+$ terminal of component $j$} \\
-1&\text{if node $i$ connects to $-$ terminal of component $j$}\\
0&\text{otherwise}.
\end{array}
\right.
\EEAS
See Figure~\ref{fig-RLC-example} for an example.

\begin{figure}  [H]
    \centering    
    \begin{subfigure}[c]{0.4\textwidth}
    \centering
    \begin{center} 
        \DynamicInterconnectNew
    \end{center}
    \end{subfigure}
    \hspace{3mm}
    \begin{subfigure}[c]{0.35\textwidth}
        \centering 
        \begin{center}
        \begin{align*}
        &A=\!\!\! 
        {
        \begin{blockarray}{c@{\rcm}c@{\rcm}c@{\rcm}c@{\rcm}c@{\rcm}c@{\rcm}c@{\rcm}c}
            & R_1 & R_2 & R_3 & L_1 & L_2 & C_1 & C_2 \\
            \begin{block}{c@{\hspace{3mm}}[c@{\rcm}c@{\rcm}c@{\rcm}c@{\rcm}c@{\rcm}c@{\rcm}c]}
            1 & +1 & 0  & 0  & -1 & 0  & 0  & 0   \\
            2 & 0  & +1  & 0  & 0  & 0 & 0  & 0   \\
            3 & 0  & 0 & 0  & 0  & -1  & 0  & 0   \\
            4 & 0  & 0  & +1 & 0  & 0  & 0  & 0   \\
            5 & 0  & 0  & 0  & 0  & 0  & -1 & +1  \\
            6 & -1 & 0  & 0  & +1 & +1 & 0  & 0   \\
            7 & 0  & -1 & -1 & 0  & 0  & +1 & 0   \\
            \end{block}
        \end{blockarray}
        }
        \end{align*}
        \end{center}
    \end{subfigure}
    \caption{Example of a dynamic interconnect with $\tau=8$ nodes, $\sigma=7$ RLC components, $m=5$ terminals, and $1$ ground node. Reduced node incidence matrix $A$ is provided. ($R_2$ and $R_3$ are $0$-ohm resistors.)
    This dynamic interconnect is admissible with respect to the static interconnect of Figure~\ref{fig-static-ic}.
    }
    \label{fig-RLC-example}
\end{figure}

The ground node is designated to have $0$ potential, and the \emph{potential} of any node is the potential relative to ground.
The \emph{voltage} across a component is the difference of potentials between the $+$ and $-$ terminals.
The \emph{current} through a component is defined as the current flowing from the $+$ terminal to the $-$ terminal.

Let $x\in \reals^m$ be the potentials at the $m$ terminals, which are connected to nodes $1,\dots,m$, and $y\in \reals^m$ be the currents leaving the terminals.
Denote the node potential vector with the ground node excluded 
(since the potential at ground is $0$)
by 
\[
\begin{bmatrix}
x\\
 e
\end{bmatrix}  \in \reals^{\tau-1}.
\]
So, $e\in \reals^{\tau-1-m}$ denotes the potentials at the non-terminal nodes.
Denote the vector of voltages by $v \in \reals^\sigma$
and the vector of currents by $i \in \reals^{\sigma}$.
Then, the currents and voltages of the dynamic interconnect satisfy the following V-I relations
\begin{gather*}
\text{(i) } Ai = \begin{bmatrix}
-y\\
0\end{bmatrix}\quad \text{(KCL)}\qquad\qquad
\text{(ii) } v = A^\intercal \begin{bmatrix}
x\\
 e
\end{bmatrix} \quad \text{(KVL)}\\
\!\!\!\!
\text{(iii) }  v_{\mathcal{R}} = D_\mathcal{R} i_{\mathcal{R}} \quad\text{(Resistor)}\qquad
\text{(iv) }v_{\mathcal{L}} = D_\mathcal{L} \frac{d}{dt} i_{\mathcal{L}} \quad \text{(Inductor)}\qquad
\text{(v) }i_{\mathcal{C}} = D_\mathcal{C} \frac{d}{dt} v_{\mathcal{C}} \quad \text{(Capacitor)}
\end{gather*}
where $D_\mathcal{R}$, $D_\mathcal{L}$, and $D_\mathcal{C}$ are diagonal matrices respectively with resistances, inductances, and capacitances values in the diagonals.



\paragraph{Admissibility.}
When an RLC circuit reaches equilibrium, voltages across inductors and currents through capacitors are $0$.
We say a dynamic interconnect is \emph{admissible} if it relaxes to the static interconnect at equilibrium. Mathematically, this condition is expressed as
\[
\Big\{(x,y) \,\Big|\, Ai = \begin{bmatrix}
-y\\
0
\end{bmatrix}, v = A^\intercal \begin{bmatrix}
x\\
e
\end{bmatrix}, v_{\mathcal{R}} = D_\mathcal{R} i_{\mathcal{R}} , v_{\mathcal{L}}=0, i_{\mathcal{C}}=0\Big \}
= \mathcal{R}(E^\intercal) \times \mathcal{N}(E).
\]
As an example, the dynamic interconnect of Figure~\ref{fig-RLC-example} is admissible with respect to the static interconnect of Figure~\ref{fig-static-ic}.


\subsection{Composing interconnects with $\partial f$}

We view the subdifferential operator $\partial f$ as an $m$-terminal electric device that is also grounded. Let $x\in \reals^m$ be the potentials at the $m$ terminals (excluding ground) and $y\in \reals^m$ be the currents flowing into the $m$ terminals. The $\partial f$ operator enforces the V-I relation
\[
y \in \partial f(x).
\]

We connect the $m$ terminals of $\partial f$ to the $m$ terminals of the static and dynamic interconnects. Immediately, connecting the static interconnect with $\partial f$ enforces the V-I relations \eqref{eq:static-interconnect} and $y \in \partial f(x)$, which combine to be the optimality condition \eqref{pd-sol}. Therefore, the potentials at the $m$ terminals as a vector in $\reals^m$ is an optimal $x^\star\in \reals^m$ solving \eqref{e-dist-opt-primal}. To clarify, connecting the static interconnect with $\partial f$ leads to a \emph{static} circuit in the sense that the potential $x$ and current $y$ do not depend on time.

\begin{figure} [H]
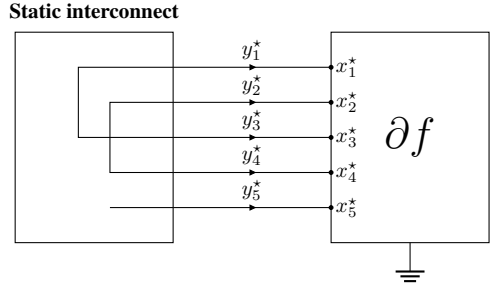

    \begin{center} 
    \StaticInterconnectF
    \end{center} 
    \caption{
   The static interconnect of Figure~\ref{fig-static-ic} connected with $\partial f$.
   The potentials at the $m$ terminals is an optimal $x^\star\in \reals^m$ solving \eqref{e-dist-opt-primal}. 
   }
    \label{fig-static-ic-f}
\end{figure}


Next, we compose (connect) the dynamic interconnect with $\partial f$. Due to capacitors and inductors, this circuit is \emph{dynamic} in the sense that the voltages $v(t)$ and $x(t)$ and currents $i(t)$ and $y(t)$ depend on time, although we often omit explicitly writing the $t$-dependence for notational convenience.
Then, the V-I relations of the dynamic interconnect combined with $y\in \partial f(x)$ leads to the V-I relation
\begin{align}    
\label{e-dyn-ic-partial-f}
\Big\{(v,i) \,\Big|\, &
y \in \partial f(x),\,\,
Ai = \begin{bmatrix}
-y\\
0
\end{bmatrix},\,\, v = A^\intercal \begin{bmatrix}
x\\
 e
\end{bmatrix},\\
&\quad v_{\mathcal{R}} = D_\mathcal{R} i_{\mathcal{R}},\,\,
v_{\mathcal{L}}=D_\mathcal{L} \frac{d}{dt} i_{\mathcal{L}},\,\,
i_{\mathcal{C}}=D_\mathcal{C}\frac{d}{dt} v_{\mathcal{C}},\,\,t\in(0,\infty)\Big\},
\nonumber
\end{align}
where $v(t)=(v_\mathcal{R}(t),v_\mathcal{L}(t),v_\mathcal{C}(t))\in \reals^\sigma$, $i(t)=(i_\mathcal{R}(t),i_\mathcal{L}(t),i_\mathcal{C}(t))\in \reals^\sigma$, 
$e(t)\in \reals^{\tau-m-1}$, $x(t)\in \reals^{m}$, and $y(t)\in \reals^{m}$ for $t\in [0,\infty)$.

\begin{figure}[ht]
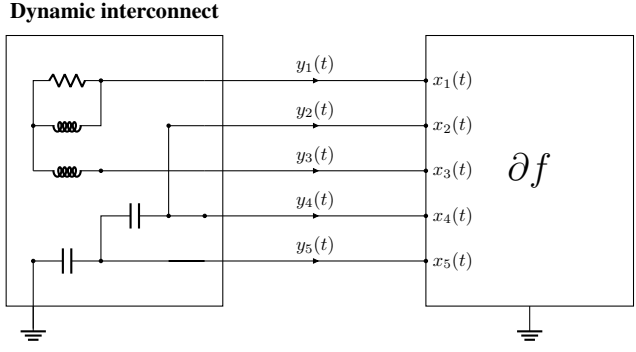

\begin{center}
\DynamicInterconnectFnew
\end{center}
\caption{
 The dynamic interconnect of Figure~\ref{fig-RLC-example} connected with $\partial f$. The potentials at the $m$ terminals satisfy $x(t)\rightarrow x^\star$ for an optimal $x^\star\in \reals^m$ solving \eqref{e-dist-opt-primal} under the conditions of Theorem~\ref{thm:convergence}.}
\label{fig-dynamic-ic-f}
\end{figure} 

Under appropriate conditions, the dynamics \eqref{e-dyn-ic-partial-f} is mathematically well-posed in the sense that there exist unique Lipschitz-continuous curves $v(t)$, $i(t)$, $x(t)$, and $y(t)$ satisfying the V-I relation \eqref{e-dyn-ic-partial-f} as formalized in the following Theorem~\ref{thm-well-posedness}.
The proof, which utilizes the machinery of monotone operator theory \cite{aubin2012differential,  BauschkeCombettes2017_convex, RyuYin2022_largescale}, is provided in \S\ref{s-well-posedness} of the appendix.


\begin{theorem}
\label{thm-well-posedness} 
Assume $f$ is $\mu$-strongly convex and $M$-smooth. 
Suppose $(v^0, i^0, x^0, y^0)$ satisfy
\[
Ai^0 = \begin{bmatrix} -y^0\\ 0 \end{bmatrix}, \quad
            v^0 = A^\intercal \begin{bmatrix} x^0\\  e \end{bmatrix}, \quad
            v_{\mathcal{R}}^0 = D_\mathcal{R} i_{\mathcal{R}}^0, \quad
            y^0 = \nabla f(x^0).
\]  
Then there is a unique Lipschitz continuous curve  
$(v, i, x, y) \colon [0,\infty) \to \reals^\sigma \times \reals^\sigma \times \reals^m \times \reals^m$
satisfying the conditions in \eqref{e-dyn-ic-partial-f} and the initial condition $(v(0), i(0), x(0), y(0)) = (v^0, i^0, x^0, y^0)$.
\end{theorem}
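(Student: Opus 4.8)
The plan is to eliminate the algebraic variables in \eqref{e-dyn-ic-partial-f}, reduce the dynamics to an autonomous ordinary differential equation in the circuit's natural state variables --- the inductor currents $i_\mathcal{L}$ and the capacitor voltages $v_\mathcal{C}$ --- and then invoke the Cauchy--Lipschitz (Picard--Lindel\"of) theorem. Write $p=\begin{bmatrix} x\\ e\end{bmatrix}\in\reals^{\tau-1}$ for the reduced node-potential vector, put $J=\begin{bmatrix} I_m\\ 0\end{bmatrix}$ so that $x=J^\intercal p$, and partition the incidence matrix columnwise as $A=[\,A_\mathcal{R}\ \ A_\mathcal{L}\ \ A_\mathcal{C}\,]$. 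Since $f$ is $M$-smooth, $\partial f$ is single valued and $y=\nabla f(x)$; after contracting the $0$-ohm resistors (legitimate because ideal-wire loops are excluded, so on the reduced circuit $D_\mathcal{R}\succ0$) the resistor law gives $i_\mathcal{R}=D_\mathcal{R}^{-1}A_\mathcal{R}^\intercal p$, and \eqref{e-dyn-ic-partial-f} becomes the algebraic relations
\[
A_\mathcal{R}D_\mathcal{R}^{-1}A_\mathcal{R}^\intercal\,p+A_\mathcal{L}\,i_\mathcal{L}+A_\mathcal{C}\,i_\mathcal{C}+J\,\nabla f(J^\intercal p)=0,\qquad A_\mathcal{C}^\intercal\,p=v_\mathcal{C},
\]
together with the differential relations $D_\mathcal{L}\tfrac{d}{dt}i_\mathcal{L}=A_\mathcal{L}^\intercal p$ and $D_\mathcal{C}\tfrac{d}{dt}v_\mathcal{C}=i_\mathcal{C}$. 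It will then suffice to show: for every value of the state $\xi:=(i_\mathcal{L},v_\mathcal{C})$ the algebraic relations have a \emph{unique} solution $(p,i_\mathcal{C})$, and the solution map $\xi\mapsto\big(p(\xi),i_\mathcal{C}(\xi)\big)$ is globally Lipschitz. Granting this, the differential relations read $\dot\xi=F(\xi)$ with $F$ globally Lipschitz, so Cauchy--Lipschitz produces a unique $C^1$ (hence locally Lipschitz) solution on all of $[0,\infty)$; setting $\xi(0)=(i_\mathcal{L}^0,v_\mathcal{C}^0)$ and using the assumed consistency relations $Ai^0=\begin{bmatrix}-y^0\\0\end{bmatrix}$, $v^0=A^\intercal\begin{bmatrix}x^0\\e^0\end{bmatrix}$, $v_\mathcal{R}^0=D_\mathcal{R}i_\mathcal{R}^0$, $y^0=\nabla f(x^0)$, the same uniqueness statement forces $\big(p(\xi(0)),i_\mathcal{C}(\xi(0))\big)=\big((x^0,e^0),i_\mathcal{C}^0\big)$, and unwinding the substitutions recovers the unique Lipschitz curve $(v,i,x,y)$.

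The core of the argument --- and where monotone-operator theory enters --- will be the unique solvability and Lipschitz dependence of the algebraic system, which I read as a monotone inclusion. The map $(p,i_\mathcal{C})\mapsto\big(A_\mathcal{R}D_\mathcal{R}^{-1}A_\mathcal{R}^\intercal p+J\nabla f(J^\intercal p)+A_\mathcal{C}i_\mathcal{C},\;-A_\mathcal{C}^\intercal p\big)$ has a ``monotone-plus-skew'' structure: the diagonal $p$-block is the sum of the positive semidefinite linear operator $A_\mathcal{R}D_\mathcal{R}^{-1}A_\mathcal{R}^\intercal$ and the monotone, globally $M$-Lipschitz operator $p\mapsto J\nabla f(J^\intercal p)$, while the coupling through $A_\mathcal{C}$ and $-A_\mathcal{C}^\intercal$ is skew-symmetric and hence neutral for monotonicity. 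The $\mu$-strong convexity of $f$ makes $\nabla f$ strongly monotone, so the $p$-block is strongly monotone along the terminal directions $x=J^\intercal p$; combined with the interconnect's topological assumptions this gives strong monotonicity of the $p$-block transverse to the capacitor subspace $\mathcal{R}(A_\mathcal{C})$. A Schur-complement elimination of $i_\mathcal{C}$ then collapses the system to a strongly monotone, globally Lipschitz equation for $p$ on the affine slice $\{p:A_\mathcal{C}^\intercal p=v_\mathcal{C}\}$, so the Browder--Minty theorem (or a damped fixed-point iteration via the Banach fixed-point theorem) gives existence and uniqueness, and the ratio of the strong-monotonicity modulus $\gtrsim\mu$ to the Lipschitz modulus $\lesssim M$ of $\nabla f$ yields the global Lipschitz bound for $\xi\mapsto\big(p(\xi),i_\mathcal{C}(\xi)\big)$, with constant controlled by $\mu$, $M$, $\|A\|$ and the extreme diagonal entries of $D_\mathcal{R},D_\mathcal{L},D_\mathcal{C}$. (Equivalently one may recast the whole evolution as $\dot\xi\in-\mathcal{A}(\xi)$ with $\mathcal{A}$ maximal monotone in the energy inner product $\langle\xi,\xi'\rangle=i_\mathcal{L}^\intercal D_\mathcal{L}i_\mathcal{L}'+v_\mathcal{C}^\intercal D_\mathcal{C}v_\mathcal{C}'$ --- by Tellegen's theorem $F$ is nonexpansive in that norm --- and invoke the Brezis generation theorem; but since $F$ is globally Lipschitz the elementary route above suffices, and it makes global existence on $[0,\infty)$ automatic.)

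I expect the main obstacle to be not the analysis but the \emph{degenerate circuit topologies} hidden in the word ``well-posedness'': the Schur-complement step presumes the differential--algebraic system has index one, which can fail in the presence of $0$-ohm resistors, capacitor-only loops, or inductor-only cutsets (for instance a non-terminal node incident only to inductors), where some nominal state components are actually constrained and some branch currents are not pinned down by KCL alone. I would dispatch these with an up-front graph normalization: contract every $0$-ohm resistor (well defined since ideal-wire loops are forbidden), and then use admissibility together with the standing positivity of all inductances and capacitances either to exclude the remaining offending configurations or to reduce them away --- concretely, choose a spanning tree of the reduced graph containing all capacitors and no inductors (possible exactly when there is no capacitor-only loop and no inductor-only cutset) and re-coordinatize the state by the tree-capacitor voltages and link-inductor currents. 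On the normalized circuit the hypotheses of Theorem~\ref{thm-well-posedness} --- including the fact that the initial data already lie on the constraint manifold --- make the index-one reduction and the monotone-operator argument of the previous paragraph go through, yielding the unique global Lipschitz solution; as a side benefit the same Tellegen computation gives the a priori identity $\tfrac12\tfrac{d}{dt}\big(i_\mathcal{L}^\intercal D_\mathcal{L}i_\mathcal{L}+v_\mathcal{C}^\intercal D_\mathcal{C}v_\mathcal{C}\big)=-\,i_\mathcal{R}^\intercal D_\mathcal{R}i_\mathcal{R}-\langle\nabla f(x),x\rangle$, whose right-hand side is bounded above by a constant depending only on $\mu$ and $\nabla f(0)$, independently ruling out finite-time blow-up.
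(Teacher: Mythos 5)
Your overall strategy---eliminate the algebraic variables, view $(i_{\mathcal{L}},v_{\mathcal{C}})$ as the state, and exploit the monotone-plus-skew structure coming from Tellegen's theorem---is the right starting point and matches the spirit of the paper's proof. However, there is a genuine gap at the step you yourself flag and then dismiss: the claim that for every state $\xi=(i_{\mathcal{L}},v_{\mathcal{C}})$ the algebraic subsystem has a \emph{unique} solution $(p,i_{\mathcal{C}})$ depending Lipschitz-continuously on $\xi$, so that the evolution is an explicit globally Lipschitz ODE $\dot\xi=F(\xi)$ amenable to Picard--Lindel\"of. This is false for circuit topologies that the theorem must cover. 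The paper explicitly allows, e.g., a disconnected $L$-$C$ loop (see \S\ref{s-energy-dissipation}); more generally, with a capacitor-only loop the branch currents $i_{\mathcal{C}}$ are not determined by KCL (a circulating current is free) and the admissible states form a proper affine subspace, and with an internal node incident only to inductors the node potential $e$---hence $v_{\mathcal{L}}$ and therefore $\tfrac{d}{dt}i_{\mathcal{L}}$---is not determined by the algebraic relations at all (it is pinned down only by the requirement that the flow remain on the KCL constraint manifold). In these cases $F$ is set-valued with a restricted domain, Picard--Lindel\"of does not apply, and your two proposed remedies do not close the gap: you cannot ``exclude the offending configurations'' because Theorem~\ref{thm-well-posedness} does not exclude them (it does not even assume admissibility), and the normal-tree re-coordinatization is asserted but not carried out---with the nonlinear element $\nabla f$ in the loop it is exactly the nontrivial part.

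The route you relegate to a parenthetical (``equivalently \dots invoke the Brezis generation theorem'') is in fact the only one that works in general, and it is what the paper does: the reduced dynamics is written as a differential inclusion $\tfrac{d}{dt}\xi\in-\varmathbb{A}(\xi)$ with $\varmathbb{A}$ a possibly set-valued maximal monotone operator whose domain may be a proper affine subspace, and the Aubin--Cellina theorem (Theorem~\ref{thm-AC}) supplies existence, uniqueness, and Lipschitz continuity via the minimal-norm selection, which is precisely the mechanism that selects the correct node potentials and circulating currents in the degenerate cases. Note also that \emph{maximality} of $\varmathbb{A}$ (not mere monotonicity, which is what Tellegen gives you) is the real technical content: the paper's skew-symmetric reformulation of KCL/KVL (Lemma~\ref{expressing KCL KVL with skew-symmetric H} and Corollary~\ref{cor-w-selection}), the partial-inverse analysis of $\nabla F$ (Lemma~\ref{lem-partialF-fulldomain-continuous}), and the sum/composition theorems for maximal monotone operators are all needed to establish it, and none of this is dispensed with by observing that the coupling is skew. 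Finally, even where your explicit-ODE picture does apply, recovering the remaining curves $(v,i,x,y)$ uniquely and Lipschitz-continuously requires using the differential relations $\tfrac{d}{dt}v_{\mathcal{C}}$, $\tfrac{d}{dt}i_{\mathcal{L}}$ in addition to the algebraic ones (this is the role of equation \eqref{eq-iC-vL} in the paper's proof), a step your proposal skips.
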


\paragraph{Equillibrium yields a primal-dual solution.}
With the dynamic interconnect composed with $\partial f$, we generically expect the circuit state $(v(t), i(t), x(t), y(t))$ to converge (relax) to an equilibrium state.
The admissibility condition ensures that at such an equilibrium, $(x,y)$ will be a primal-dual solution. We formally state this fact as Theorem~\ref{thm-equilibrium} of the Appendix.

\subsection{Energy dissipation}

Let $(v^\star,i^\star,x^\star,y^\star)$ be an equilibrium of an admissible dynamic interconnect composed with $\partial f$. 
Since the voltages across resistors and inductors and the currents through capacitors are zero under equilibrium, we have
\[
v^\star = (v_{\mathcal{R}}^\star, v_{\mathcal{L}}^\star, v_{\mathcal{C}}^\star)= (0, 0, v_{\mathcal{C}}^\star), \qquad 
i^\star= (i_{\mathcal{R}}^\star, i_{\mathcal{L}}^\star, i_{\mathcal{C}}^\star)= (0, i_{\mathcal{L}}^\star, 0).
\]
(We formally show this in Theorem~\ref{thm-equilibrium} of the appendix.)
Define the energy of the circuit at time $t$ as
\BEQ\label{e-total-energy}
\mathcal{E}(t)=\frac{1}{2}\|v_{\mathcal{C}}(t)-v_{\mathcal{C}}^\star\|^2_{D_\mathcal{C}} +
\frac{1}{2}\|i_{\mathcal{L}}(t)-i_{\mathcal{L}}^\star\|^2_{D_\mathcal{L}}, 
\EEQ
which is a dissipative (non-increasing) quantity:
\begin{align}
\frac{d}{dt}   \mathcal{E}
&=
\langle v_{\mathcal{C}}-v_{\mathcal{C}}^\star,
i_{\mathcal{C}} -i_{\mathcal{C}}^\star
\rangle 
+
\langle i_{\mathcal{L}}-i_{\mathcal{L}}^\star,
v_{\mathcal{L}}-v^\star_\mathcal{L}
\rangle  \nonumber\\
&=
-
\|i_{\mathcal{R}}\|_{D_\mathcal{R}}^2
-
\langle x - x^\star,
y - y^\star
\rangle 
\le 0. \label{eq-cont-power}
\end{align}
Here, we use $i^\star_{\mathcal{C}}=0$ and $v^\star_\mathcal{L}=0$ and the fact that the power dissipated by the resistors and $\partial f$ must come from the energy stored in the capacitors and inductors. 
This dissipativity property leads to the following continuous-time convergence.
\begin{theorem}
\label{thm:convergence}
    Assume $f\colon \reals^m \to \reals $ is strongly convex and smooth.
    Assume the dynamic interconnect is admissible, and let $(x^\star, y^\star)$ be a primal-dual solution pair.  
    Let $(v(t),i(t),x(t),y(t))$ be a curve satisfying \eqref{e-dyn-ic-partial-f}.
    Then,     
    \[
        \lim_{t\to\infty} (x(t), y(t)) = (x^\star, y^\star).
    \]
\end{theorem}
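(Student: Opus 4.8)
The plan is to use the energy function $\mathcal{E}(t)$ from \eqref{e-total-energy} as a Lyapunov function, exploiting its dissipativity \eqref{eq-cont-power}. First I would note that since $f$ is $\mu$-strongly convex, the monotonicity inequality $\langle x - x^\star, y - y^\star\rangle \geq \mu \|x - x^\star\|^2$ holds along the trajectory (using $y(t) \in \partial f(x(t))$ and $y^\star \in \partial f(x^\star)$). Combined with \eqref{eq-cont-power}, this gives $\frac{d}{dt}\mathcal{E} \leq -\|i_{\mathcal{R}}\|_{D_\mathcal{R}}^2 - \mu\|x - x^\star\|^2 \leq 0$. Hence $\mathcal{E}(t)$ is nonincreasing and bounded below by $0$, so it converges to some limit $\mathcal{E}_\infty \geq 0$, and moreover $\int_0^\infty \big(\|i_{\mathcal{R}}(t)\|_{D_\mathcal{R}}^2 + \mu\|x(t) - x^\star\|^2\big)\,dt < \infty$. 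From the latter integrability, together with the Lipschitz-continuity (hence uniform continuity) of the trajectory guaranteed by Theorem~\ref{thm-well-posedness}, a Barbalat-type argument forces $i_{\mathcal{R}}(t) \to 0$ and $x(t) \to x^\star$ as $t \to \infty$.

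The convergence $x(t) \to x^\star$ is the primal statement; it remains to deduce $y(t) \to y^\star$. Here I would use smoothness: if $f$ is $M$-smooth then $\nabla f$ is continuous (indeed Lipschitz), and since the well-posed trajectory satisfies $y(t) = \nabla f(x(t))$, we get $y(t) = \nabla f(x(t)) \to \nabla f(x^\star) = y^\star$ directly by continuity. (One should check that the primal-dual solution pair is unique in the relevant components: strong convexity makes $x^\star$ unique, and then $y^\star = \nabla f(x^\star)$ is determined; admissibility via Theorem~\ref{thm-equilibrium} identifies $(x^\star,y^\star)$ with the KKT pair of \eqref{e-dist-opt-primal}.) This completes the proof.

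The main obstacle is turning the bound $\frac{d}{dt}\mathcal{E} \leq -\mu\|x-x^\star\|^2$ into actual convergence of $x(t)$, since a priori $\mathcal{E}$ controls $(v_{\mathcal{C}}, i_{\mathcal{L}})$ rather than $x$ itself, and one only gets that $x(t) - x^\star$ is \emph{square-integrable}, not that it decays monotonically. The clean way around this is Barbalat's lemma: $\|x(t) - x^\star\|^2$ is uniformly continuous (because $x(t)$ is Lipschitz by Theorem~\ref{thm-well-posedness} and bounded, the latter following from boundedness of $\mathcal{E}$ plus the algebraic relations tying $x$ to $(v_{\mathcal{C}}, i_{\mathcal{L}})$ through KVL/KCL and the resistor laws), and an integrable, uniformly continuous nonnegative function must vanish at infinity. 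A subtle point worth addressing is establishing that $x(t)$ stays bounded and that the map from the stored-energy variables $(v_{\mathcal{C}}, i_{\mathcal{L}})$ to $(x, y)$ is well-defined and continuous — this uses the nonsingularity of the reduced incidence structure together with the strong monotonicity of $\partial f$, essentially the same solvability fact underlying Theorem~\ref{thm-well-posedness}. Once boundedness and uniform continuity are in hand, Barbalat closes the argument for both $x$ and, via continuity of $\nabla f$, for $y$.
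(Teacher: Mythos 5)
Your proposal is correct and follows essentially the same route as the paper's proof: both use the energy $\mathcal{E}(t)$ as a Lyapunov function, integrate the dissipation inequality to get an integrable nonnegative quantity, invoke Barbalat's lemma via the Lipschitz continuity supplied by Theorem~\ref{thm-well-posedness}, and then convert the vanishing of $\langle x-x^\star, y-y^\star\rangle$ into convergence of $x$ (by strong convexity) and of $y$ (by smoothness/continuity of $\nabla f$). The only cosmetic difference is that the paper applies Barbalat directly to the inner product and then uses strong monotonicity of both $\nabla f$ and $(\nabla f)^{-1}$, whereas you first lower-bound by $\mu\|x-x^\star\|^2$ and recover $y$ afterward; these are equivalent.
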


Theorem~\ref{thm:convergence} largely follows as a corollary of Theorem~\ref{thm-well-posedness}. The formal proof is provided in  \S\ref{s-energy-dissipation} of the appendix.
In \S\ref{sec-auto-discretization}, we present a systematic framework for finding discretized versions of Theorem~\ref{thm:convergence} the corresponding discretized algorithms.




\section{Circuits for classical algorithms} \label{sec-circuits-classical-algorithms}

In this section, we present circuits recovering the classical Nesterov acceleration, decentralized ADMM, and PG-EXTRA.
For additional examples and detailed
derivations, refer to \S\ref{appendix-centralized} 
and \S\ref{appendix-decentralized} of the appendix, where we provide circuits and analyses of classical algorithms 
such as gradient descent~\cite{Cauchy1847_methode}, proximal point method \cite{Rockafellar1976_monotone},
proximal gradient method~\cite{CombettesWajs2005_signal}, 
primal decomposition~\cite{Geoffrion1970_primal, Silverman1972_primal}, dual decomposition~\cite{Everett1963_generalized, Lemarechal2001_lagrangian, Fisher2004_lagrangian, SontagGlobersonJaakkola2011_introduction},
Douglas--Rachford splitting~\cite{PeacemanRachford1955_numerical,DouglasRachford1956_numerical,LionsMercier1979_splitting}, 
Davis--Yin splitting~\cite{DavisYin2017_threeoperator}, 
decentralized gradient descent~\cite{NedicOzdaglar2009_distributed, YuanLingYin2016_convergence}, and diffusion~\cite{CattivelliLopesSayed2007_diffusion,CattivelliSayed2010_diffusion}.

\begin{wrapfigure}{r}{0.305\linewidth}
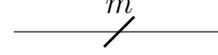
 
\begin{center}
\mTerminal
\end{center} \vspace{-2mm}
   \caption{Multi-wire notation.}
   \label{fig:multiwire}
\vspace{-2mm}
\end{wrapfigure}

\paragraph{Multi-wire notation.}
We start by quickly introducing the multi-wire notation depicted in Figure~\ref{fig:multiwire}. When optimizing $f\colon\reals^m\rightarrow\reals$ and using the $m$-terminal device $\partial f$, we will 
often use dynamic interconnects that have the same RLC circuit across each net, \ie, the dynamic interconnect consists of $m$ identical copies of the same RLC circuit for the $m$ coordinates of $x\in \reals^m$. In this case, we use the diagonal-line notation depicted in Figure~\ref{fig:multiwire}.

\paragraph{Moreau envelope.}
We use the following simple identity throughout this work:
$\partial f$ composed with a resistor is equivalent to $\nabla{}^R\!f(x)$.
\begin{figure}[H]
    \MoerauEnvelope
\end{figure}
\vspace{-0.15in}
\noindent
To clarify, the equivalence means the two circuits impose the same V-I relation on the $m$ pins of $x$. To see this, note $\big[\partial f(\tilde{x})=\tfrac{1}{R}(x-\tilde{x}) \big]\Leftrightarrow \big[\tilde{x} = \prox_{Rf}(x)\big]$ and use the identity for the gradient of the Moreau envelope to conclude
\[
\nabla { }^R\! f(x) = \frac{1}{R}(x - \prox_{Rf}(x)) = \frac{1}{R}(x - \tilde x).
\]
See \S\ref{sec-resistors-Moreau-envelope} of the appendix for further details.

\subsection{Nesterov acceleration} \label{s-nesterov}
Let $f\colon\reals^m\to\reals$ be a $1/R$-smooth convex function. 
Then, the circuit corresponding to the classical Nesterov acceleration is 
given below.
\begin{figure}[H]
    \centering
    \begin{subfigure}{.4\textwidth}
    \begin{circuitikz}[scale=.6, every node/.style={scale=0.7}][american voltages]
        \pgfmathsetmacro{\w}{6}
        \pgfmathsetmacro{\h}{6}
        \pgfmathsetmacro{\boxheight}{4}
        \pgfmathsetmacro{\boxwidth}{2}
        \pgfmathsetmacro{\compwidth}{\w/3}
        \pgfmathsetmacro{\wirespace}{\h/4}
        
        \circuitbox{\w+\boxwidth}{0}{\boxwidth/2}{\boxwidth/2}{\Large $\nabla f$}  
        
        \wiremul{\w+\boxwidth/8}{0}{0.2}{$m$}  
        
        \draw
            ( 0 , -0.5 * \wirespace ) node[ground] {}
            to [short, -*] ( 0 ,0);
        
        \draw ( 0, 0 )
        to [short, C, l=$C$] (\compwidth, 0);    
        
        \draw  ( \compwidth, 0 ) 
        to [short, *-] ( \compwidth, 0 + \wirespace/2 ) 
        to [short, R, l=$R$] ( 2 * \compwidth, 0 + \wirespace/2 )
        to ( 2 * \compwidth, 0 );
        
        \draw  ( \compwidth, 0 ) 
        to ( \compwidth, 0 - \wirespace/2 ) 
        to [short, L, l_=$L$] ( 2 * \compwidth, 0 - \wirespace/2 )
        to ( 2 * \compwidth, 0 );
        
        \draw ( 2 * \compwidth, 0 ) 
        to [short, R, *-, l=$-R$] ( 3 * \compwidth, 0 ) 
        to [short] ( 3 * \compwidth, 0 )
        to [short, -*] ( 3 * \compwidth + \boxwidth/2, 0 ) node[above, xshift=-5] {  $x$ };              
    \end{circuitikz}
    \end{subfigure}
    \begin{subfigure}{.09\textwidth}
        \centering
        \raisebox{0.7cm}{\LARGE $\Leftrightarrow$}
    \end{subfigure}%
    \begin{subfigure}{.5\textwidth}
        \centering
        \begin{circuitikz}[scale=.6, every node/.style={scale=0.7}][american voltages]
        \pgfmathsetmacro{\w}{6}
        \pgfmathsetmacro{\h}{6}
        \pgfmathsetmacro{\boxheight}{4}
        \pgfmathsetmacro{\boxwidth}{2}
        \pgfmathsetmacro{\compwidth}{\w/3}
        \pgfmathsetmacro{\wirespace}{\h/4}
        
        \circuitbox{\w+\compwidth+\boxwidth}{0}{\boxwidth/2}{\boxwidth/2}{\Large $\partial \tilde{f}$}  
        
        \wiremul{\w+\compwidth+\boxwidth/8}{0}{0.2}{$m$}  
        
        \draw
            ( 0 , -0.5 * \wirespace ) node[ground] {}
            to [short, -*] ( 0 ,0);
        
        \draw ( 0, 0 )
        to [short, C, l=$C$] (\compwidth, 0);    
        
        \draw  ( \compwidth, 0 ) 
        to [short, *-] ( \compwidth, 0 + \wirespace/2 ) 
        to [short, R, l=$R$] ( 2 * \compwidth, 0 + \wirespace/2 )
        to ( 2 * \compwidth, 0 );
        
        \draw  ( \compwidth, 0 ) 
        to ( \compwidth, 0 - \wirespace/2 ) 
        to [short, L, l_=$L$] ( 2 * \compwidth, 0 - \wirespace/2 )
        to ( 2 * \compwidth, 0 );
        
        \draw ( 2 * \compwidth, 0 )
        to [short, R, *-, l=$-R$] ( 3 * \compwidth, 0 ) node[above] {  $x$ }
        to [short, R, *-, l=$R$] ( 4 * \compwidth, 0 ) 
        to [short] ( 4 * \compwidth, 0 )
        to [short, -*] ( 4 * \compwidth + \boxwidth/2, 0 );
          
    \end{circuitikz}
    \end{subfigure}
\end{figure}
The use of a \emph{negative} resistor $-R$ may seem unconventional, but the fact that this circuit is stable is easier to see if we consider the equivalent circuit with the pre-Moreau envelope $\tilde f$, \ie, $\tilde f$ is the convex function such that ${}^R\! \tilde f = f$. 
To clarify, negative resistors  
satisfy the same V-I relations of the standard resistors but with a negative slope. 
Negative resistors have also been considered in \cite{VichikBorrelli2014_solving}. 

The V-I relations of this circuit lead to the ODE
\[
    \frac{d^2}{dt^2} x + \frac{R}{L} \frac{d}{dt} x 
    + \left( \frac{1}{C} - \frac{R^2}{L} \right) \frac{d}{dt} \nabla f(x)
    + \frac{R}{LC} \nabla f(x) = 0. 
\]
If we set $R = \sqrt{L/C}$, which can be interpreted as an instance of critical damping \cite{YangArorabravermanZhao2018_physical, ZhangOrvietoDaneshmand2021_rethinking, chen2023underdamped}, $L = \frac{1}{8 \mu \sqrt{\mu}}$, and $C = 2 \sqrt{\mu}$, we recover the Nesterov ODE \cite{WilsonRechtJordan2021_lyapunov}
\[
    \frac{d^2}{dt^2} x + 2\sqrt{\mu} \frac{d}{dt} x  +  \nabla f(x) = 0. 
\]
We also quickly point out that other choices of parameters lead to the high-resolution ODE introduced in \cite{ShiDuSuJordan2019_acceleration}. See \S \ref{s-nesterov-acceleration} of the appendix for further details.



\subsection{Decentralized ADMM}


Let $f_1, \ldots, f_N\colon\reals^{m}\to\reals\cup\{\infty\}$ be CCP functions.
Consider a decentralized optimization setup with graph $G$.
We provide the full description of the decentralized setup and notations in \S \ref{appendix-decentralized} of the appendix.
Define $\Gamma_j$ to be the neighbors of $j$ in graph $G$. 
For simplicity, we only illustrate the circuit related to nodes $j$ and $l$, where $j$ and $l$ are directly connected through an edge in the graph $G$.

The circuit corresponding to decentralized ADMM~\cite{GlowinskiMarroco1975_lapproximation, GabayMercier1976_dual, Gabay1983_chapter, WeiOzdaglar2013_o1k, shi2014linear} is given below.
\vspace{0.1in}
    \begin{center} 
    \CircuitDADMMMain
    \end{center}
    \vspace{-0.2cm}
In the following, the left column presents the dynamics of the continuous-time circuit
and the right column presents the discretization with stepsize $L/R$, recovering the standard decentralized ADMM:
\begin{center}{
\small 
\begin{tabular}{l|l}
\parbox{0.3\linewidth}{%
\begin{align*}
a_j &= \frac{1}{|\Gamma_j|} \sum_{l \in \Gamma_j} (R{i_{\mL}}_{jl} + e_{jl}) \\
x_j &= \prox_{(R/|\Gamma_j|)f_j}\left ( a_j\right ) \\
e_{jl} &= \frac{1}{2}(x_j+x_l) \\ 
\frac{d}{dt} {i_{\mL}}_{jl} &= \frac{1}{L}(e_{jl} - x_j)
\end{align*}}
&
\parbox{0.3\linewidth}{%
\begin{align*}
a_j^{k+1} &= \frac{1}{|\Gamma_j|} \sum_{l \in \Gamma_j} (R{i^k_{\mL}}_{j,l} + e^k_{jl}) \\
x_j^{k+1} &= \prox_{(R/|\Gamma_j|)f_j}\left (a_j^{k+1}\right ) \\
e_{jl}^{k+1} &= \frac{1}{2}(x_j^{k+1}+x_l^{k+1}) \\ 
{i_{\mL}^{k+1}}_{jl} &= {i_{\mL}^k}_{jl} + \frac{1}{R}(e_{jl}^{k+1} - x_j^{k+1})
\end{align*}}
\end{tabular} }
\end{center}
for every node $j=1, \ldots, N$ and every edge $(j,l)$ in graph $G$.

\subsection{PG-EXTRA}
Let $f_1, \ldots, f_N\colon\reals^{m}\to\reals\cup\{\infty\}$ be CCP functions and
$h_1, \ldots, h_N\colon\reals^{m}\to\reals$ be convex $M$-smooth functions.
Consider a decentralized optimization setup with graph $G$.
The circuit corresponding to PG-EXTRA \cite{ShiLingWuYin2015_proximal} is given below.

\begin{figure}[ht]
    \begin{center} 
    \CircuitPgExtraMain
    \end{center}
    \vspace{-0.3cm}
\end{figure}

Define the mixing matrix $W\in \reals^{N\times N}$ with 
\[
W_{jl} = \left\{ \begin{array}{ll} 
    1 - \sum_{l \in \Gamma_j}\frac{R}{R_{jl}} & \text{if }j=l \\
    \frac{R}{R_{jl}} & \text{if } j \neq l, \quad l \in \Gamma_j \\
    0 & \text{otherwise}.
    \end{array}\right. 
\]
In the following, the left column presents the V-I relations for the continuous-time circuit
and the right column presents the discretization with stepsize $\tfrac{1}{2}$, recovering the standard PG-EXTRA:
\vspace{-2mm}
\begin{center}{
\small
\begin{tabular}{l|l}
\parbox{0.3\linewidth}{%
\begin{align*}
x_j &= \prox_{Rf_j}\left (\sum_{l=1}^N W_{jl} x_l - R\nabla h_j(x_j) - w_j\right ) \\
\frac{d}{dt} w_j &= x_j - \sum_{l=1}^N W_{jl}x_l
\end{align*}}
&
\parbox{0.3\linewidth}{%
\begin{align*}
x_j^{k+1} &= \prox_{Rf_j}\left (\sum_{l=1}^N W_{jl} x_l^k - R\nabla h_j(x_j^k) - w_j^k\right ) \\
w_j^{k+1} &= w_j^k + \frac{1}{2}(x_j^k - \sum_{l=1}^N W_{jl}x_l^k)
\end{align*}}
\end{tabular} }
\end{center}
for every node $j=1, \ldots, N$ and every edge $(j,l)$ in graph $G$.


\section{Automatic discretization}\label{sec-auto-discretization}
We discretize the continuous-time dynamics given by the circuit with an admissible dynamic interconnect using a two-stage Runge--Kutta method with parameters $\alpha, \beta$ and stepsize $h>0$.
The explicit form of the discretization is stated in \S\ref{appendix-automatic-discr} of the appendix.
Let $\{(v^k, i^k, x^k, y^k)\}_{k=1}^\infty$ be the iterates generated by the discretized algorithm.
Then the energy stored in the circuit at time $t=kh$ is 
\[
\mathcal{E}_k = \frac{1}{2}\|v_\mathcal{C}^{k}-v^\star_\mathcal{C}\|^2_{D_\mathcal{C}}+
\frac{1}{2}\|i_\mathcal{L}^k-i^\star_\mathcal{L}\|^2_{D_\mathcal{L}}.
\]
To guarantee convergence of the discretized algorithm, we search for discretization parameters 
that ensure the $\mathcal{E}_1,\mathcal{E}_2,\dots$ sequence is dissipative in the following sense.
Specifically, we say the algorithm or the discretization is \emph{sufficiently dissipative} if there is an $\eta>0$ such that
\BEQ\label{e-suff-diss}
\mathcal{E}_{k+1}- \mathcal{E}_k 
 + \eta\langle x^k-x^\star, y^k-y^\star\rangle 
\leq 0,
\EEQ 
holds for all $k=1, 2, \ldots$. 
This requirement is analogous to the ``sufficient decrease'' conditions in optimization~\cite{boyd2004convex, NocedalWright1999_numerical}. 
The following Lemma~\ref{lem:auto-discr}, which proof we provide in \S\ref{appendix-automatic-discr} of the appendix, states that sufficient dissipativity ensures convergence under suitable conditions.
\begin{lemma}    
\label{lem:auto-discr}
Assume $f\colon \reals^m \to \reals \cup \{ \infty \}$ is a strictly convex function 
and the dynamic interconnect is admissible.
If the two-stage Runge--Kutta discretization, as explicitly stated in \S \ref{appendix-automatic-discr} of the appendix, generates a discrete-time sequence
$\{(v^k, i^k, x^k, y^k)\}_{k=1}^\infty$ satisfying the sufficient dissipativity condition \eqref{e-suff-diss}, then $x^k$ converges to a primal solution.
\end{lemma}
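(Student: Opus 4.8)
The plan is to argue convergence in three stages: first extract enough from the sufficient dissipativity condition \eqref{e-suff-diss} to show the energy converges and the ``dissipation terms'' are summable, then use that to show the iterates $(v_\mathcal{C}^k, i_\mathcal{L}^k)$ stay bounded and accumulate at an equilibrium, and finally upgrade accumulation to convergence using strict convexity. Summing \eqref{e-suff-diss} over $k=1,\dots,K$ telescopes to
\[
\mathcal{E}_{K+1} + \eta \sum_{k=1}^{K} \langle x^k - x^\star, y^k - y^\star \rangle \le \mathcal{E}_1.
\]
Since $f$ is convex and $y^k \in \partial f(x^k)$, $y^\star \in \partial f(x^\star)$, monotonicity of $\partial f$ gives $\langle x^k - x^\star, y^k - y^\star\rangle \ge 0$, so every partial sum is bounded by $\mathcal{E}_1$; hence $\mathcal{E}_k$ is bounded, $\{(v_\mathcal{C}^k, i_\mathcal{L}^k)\}$ is bounded (the energy is a weighted norm with positive-definite weights $D_\mathcal{C}, D_\mathcal{L}$), and $\sum_k \langle x^k - x^\star, y^k - y^\star\rangle < \infty$, so in particular $\langle x^k - x^\star, y^k - y^\star\rangle \to 0$. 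Moreover $\mathcal{E}_k$ is non-increasing and bounded below, so $\mathcal{E}_k \to \mathcal{E}_\infty$ for some limit.

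Next I would recover the rest of the circuit state from $(v_\mathcal{C}^k, i_\mathcal{L}^k)$. The two-stage Runge--Kutta update expresses $(v^{k+1}, i^{k+1}, x^{k+1}, y^{k+1})$ as the solution of the algebraic V-I relations of \eqref{e-dyn-ic-partial-f} driven by the stored state, together with the capacitor/inductor integration steps; because the dynamic interconnect is admissible and $f$ is strictly convex (so $\partial f^{-1}$ is single-valued and the resolvent-type map solving the algebraic part is well-defined and continuous), the map $(v_\mathcal{C}^k, i_\mathcal{L}^k) \mapsto (v^k, i^k, x^k, y^k)$ is continuous. Hence boundedness of the stored state implies boundedness of the full state, and along any convergent subsequence $(v_\mathcal{C}^{k_j}, i_\mathcal{L}^{k_j}) \to (\bar v_\mathcal{C}, \bar i_\mathcal{L})$ the full state converges to some $(\bar v, \bar i, \bar x, \bar y)$. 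Passing to the limit in $\langle x^k - x^\star, y^k - y^\star\rangle \to 0$ gives $\langle \bar x - x^\star, \bar y - y^\star\rangle = 0$, and since $f$ is strictly convex this monotone-operator equality forces $\bar x = x^\star$ (strict convexity makes $\partial f$ strictly monotone off the diagonal). So every subsequential limit of $x^k$ equals $x^\star$.

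Finally, to conclude $x^k \to x^\star$ rather than merely having $x^\star$ as the unique cluster point, I would note that boundedness plus uniqueness of the cluster point already gives convergence of $x^k$; the subtlety is only in justifying boundedness of $x^k$ itself, which follows from the continuity of the reconstruction map applied to the bounded stored state. I would also double-check that $\langle x^k - x^\star, y^k-y^\star\rangle \to 0$ combined with strict monotonicity indeed pins down $x^k \to x^\star$ directly — this is the cleanest route and avoids discussing the inductor/capacitor limit states at all, provided one has a quantitative-enough strict convexity (or, failing a modulus, the subsequence argument above). I expect the main obstacle to be the reconstruction step: verifying that admissibility plus strict convexity really does make the Runge--Kutta algebraic solve a well-defined continuous function of the stored state $(v_\mathcal{C}^k, i_\mathcal{L}^k)$, i.e.\ that the relevant reduced system of KVL/KCL/resistor/$\partial f$ equations has a unique solution. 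This is where the structure of $A$, the $0$-ohm-resistor/no-ideal-loop assumptions, and single-valuedness of $\nabla f$ and its inverse all have to be used carefully; everything else is telescoping and a standard cluster-point argument.
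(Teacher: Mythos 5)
Your first half — telescoping \eqref{e-suff-diss}, invoking monotonicity of $\partial f$ to get nonnegativity and hence summability of $\langle x^k-x^\star,y^k-y^\star\rangle$, and concluding $\langle x^k-x^\star,y^k-y^\star\rangle\to 0$ — is exactly the paper's argument. The gap is in the second half: converting $\langle x^k-x^\star,y^k-y^\star\rangle\to 0$ into $x^k\to x^\star$. Your route hinges on a ``reconstruction map'' $(v_\mathcal{C}^k,i_\mathcal{L}^k)\mapsto(v^k,i^k,x^k,y^k)$ being well-defined and continuous, so that boundedness of the stored energy yields boundedness of $x^k$. But the continuity and full-domain properties of that algebraic solve are established in the paper (Lemma~\ref{lem-partialF-fulldomain-continuous} and the proof of Theorem~\ref{thm-well-posedness}) only under $\mu$-strong convexity \emph{and} $M$-smoothness of $f$; here $f$ is merely strictly convex, extended-valued, and possibly nonsmooth, so the map need not be single-valued, everywhere defined, or continuous. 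Your fallback — that strict monotonicity alone pins down $x^k\to x^\star$ from the vanishing inner product — also fails without a modulus or a prior boundedness argument: strict monotonicity gives only $\langle x-x^\star, y-y^\star\rangle>0$ off the diagonal, and the inner product can tend to zero along an unbounded sequence. You flag both issues yourself but do not close them, so as written the proof is incomplete.

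The missing idea, which is how the paper (Lemma~\ref{lem-descent-discretization}) finishes, is a Lagrangian argument that sidesteps the circuit reconstruction entirely. Writing $x^\star=E^\intercal z^\star$ and $L(x,z^\star,y^\star)=f(x)-\langle y^\star, x-E^\intercal z^\star\rangle$, one has $y^k-y^\star\in\partial_x L(x^k,z^\star,y^\star)$, and since $0\in\partial_x L(x^\star,z^\star,y^\star)$ the subgradient inequality gives
\[
\langle x^k-x^\star,\,y^k-y^\star\rangle \;\ge\; L(x^k,z^\star,y^\star)-f(x^\star)\;\ge\;0 .
\]
Hence $L(x^k,z^\star,y^\star)\to f(x^\star)=\min_x L(x,z^\star,y^\star)$. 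Because $L(\cdot,z^\star,y^\star)$ is strictly convex with the \emph{unique} minimizer $x^\star$, its sublevel sets are bounded, so the minimizing sequence $x^k$ is bounded and every cluster point is the unique minimizer; therefore $x^k\to x^\star$. This uses only strict convexity of $f$ and the KKT structure $x^\star\in\mathcal{R}(E^\intercal)$, $y^\star\in\mathcal{N}(E)$, and it is the step you need in place of the reconstruction argument.
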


We find such a discretization with the following automated methodology.
Given a discretization characterized by $(\alpha,\beta,h)$, the dissipativity condition \eqref{e-suff-diss} for a given $\eta>0$ is implied if the optimal value of the following optimization problem is non-positive:
\BEQ\label{eq-worst-case-descent}
\begin{array}{ll}
\mbox{maximize} &
    \mathcal{E}_{2} - \mathcal{E}_1  +  \eta
\langle x^1-x^\star, y^1-y^\star\rangle\\
    \mbox{subject to}
& \mathcal{E}_s = \frac{1}{2}\|v_\mathcal{C}^{s}-v^\star_\mathcal{C}\|^2_{D_\mathcal{C}} +
\frac{1}{2}\|i_\mathcal{L}^s-i^\star_\mathcal{L}\|^2_{D_\mathcal{L}}, \quad s \in \{1, 2\} \\
& (v^1, i^1, x^1, y^1) \text{ is feasible initial point} \\
& (v^2, i^2, x^2, y^2) \text{ is generated by discrete optimization method from initial point} \\
& f \in \mathcal{F},
\end{array}
\EEQ
where $f, v^1, i^1, x^1, y^1, v^\star, i^\star, x^\star, y^\star$
are the decision variables
and $\mathcal{F}$ is a family of functions (\eg, $L$-smooth convex) that the algorithm is to be applied to.
Here, we are using the fact that \eqref{e-suff-diss} is homogeneous with respect to $k$ (\ie, \eqref{e-suff-diss} essentially has no $k$-dependence), and therefore it is sufficient to verify the condition for $k=1$ but for all feasible initial points $(v^1, i^1, x^1, y^1)$.
It turns out that \eqref{eq-worst-case-descent} can be solved exactly as a semidefinite program (SDP) for many commonly considered function classes $\mathcal{F}$.
This technique was initially proposed as the  performance estimation
problem (PEP) \cite{DroriTeboulle2014_performance, 
TaylorHendrickxGlineur2017_smooth}, a computer-aided methodology for
constructing convergence proofs of first-order optimization methods.
See, \eg, PEPit~\cite{pepit2022} package that implements PEP in Python.

Further, \eqref{eq-worst-case-descent} can be posed as a nonconvex quadratically constrained quadratic problem (QCQP) with only a few tens of
variables and such problems can be solved exactly with spatial branch-and-bound algorithms \cite{Gurobi,locatelli2013global,horst2013global,liberti2008introduction,DasGuptaVanParysRyu2023_branchandbound}.

In conclusion, we can solve a non-convex QCQP to find a provably convergent discretization of the continuous-time circuit with an admissible dynamic interconnect.
We use the Ipopt~\cite{wachter2006implementation,andersson2019casadi} solver. 
Further details are provided in \S\ref{appendix-automatic-discr} of the appendix.

\paragraph{Example.}
Consider the following example circuit for 
the minimization of a convex function $f$. Let $R_1=R_2=R_3=1$, and $C_1=C_2=10$.
    \begin{center} 
    \CircuitExampleMain
    \end{center}
With our automatic discretization methodology, we find the sufficiently dissipative parameters
\[
 \eta=6.66, \qquad h=6.66, \quad \alpha=0, \quad \beta=1.
\]
The resulting provably convergent algorithm is 
\BEAS
x^k &=& \prox_{(1/2) f}(z^k ),\quad  y^k=2(z^k-x^k)\\
w^{k+1} &=& w^k - 0.33(y^k + 3w^k) \\
z^{k+1} &=& z^k - 0.16(5 y^k + 3w^k).
\EEAS
is provably convergent%
\footnote{Our pipeline has a final verification stage that numerically checks whether point returned by the Ipopt solver is indeed feasible for the small QCQP. Strictly speaking, our theoretical convergence guarantee relies on the correctness of this numerical verification of feasibility.}
under the condition that $f$ is strictly convex, see \S\ref{appendix-ciropt} for details.



\begin{wrapfigure}{r}{0.41\linewidth} \vspace{-8mm}
\vspace{-0.2in}
\begin{center}
        \includegraphics[width=0.4\textwidth]{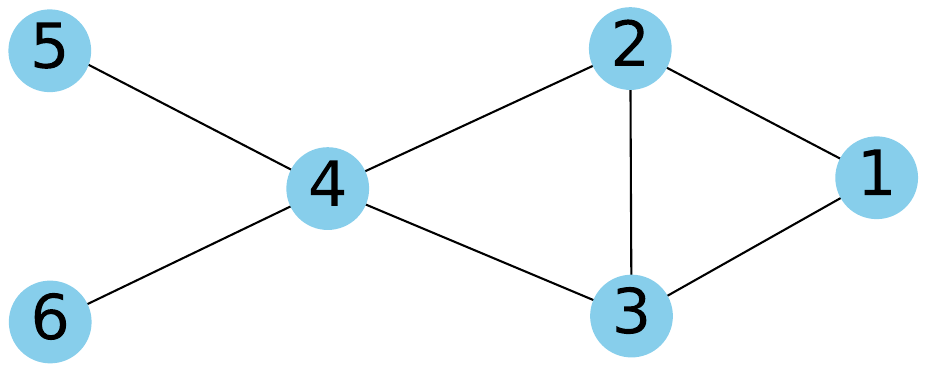}
\end{center} \vspace{-2mm}
    \caption{Underlying graph $G$.}
    \label{fig-graph} 
\end{wrapfigure}

\section{Experiments} \label{sec-experiment}
In this section, we use our methodology to obtain a new algorithm and experiment with it on a specific problem instance.
Consider a decentralized optimization problem with a communication graph $G$ with $N=6$ nodes and $7$ edges, as shown in Figure~\ref{fig-graph}.
Specifically, we consider the optimization problem
\[
\begin{array}{ll}
 \underset{x \in \textbf{R}^{100}}{\mbox{minimize}}&
    \sum_{i \in \{4, 5\}} \left (\| x - b_i\|_2 + \|x - b_i\|_2^2\right) + \sum_{i \notin \{4, 5\}} \| x - b_i\|_2,
\end{array}
\]
where each agent $i \in \set{ 1, \dots, 6 }$ holds the vector $b_i \in \reals^{100}$.
To leverage the strong convexity of $f_4$ and $f_5$, we propose a modification to the DADMM circuit described in \S\ref{s-dadmm}. 
Given that a circuit with a capacitor and inductor corresponds to a momentum method (see \S\ref{s-nesterov}), 
and momentum is known to accelerate convergence for strongly convex functions~\cite{Polyak1964_methods}, we add a capacitor to $e_{45}$ to DADMM circuit as shown in the left column of Figure~\ref{fig:dadmm_c_n6}.
We then discretize the circuit and refer the the resulting algorithm DADMM+C.
We apply DADMM+C to the decentralized optimization problem and observe a speedup as shown in the right columns of Figure~\ref{fig:dadmm_c_n6}. 
The relative error for DADMM+C decreases to $10^{-10}$ in 
$66$ iterations, for DADMM in $87$ iterations
and for P-EXTRA in $294$ iterations.
For further details, see \S\ref{sec-dadmm-c} of the appendix.


\begin{figure} [H]
    \centering
    \!\!\!\!\!\!\!\!\!\!\!\!\!\!\!\!
    \begin{subfigure}[c]{0.51\textwidth}
        \centering
\CircuitDADMMCMain
    \end{subfigure}
    \hfill
    \!\!\!\!\!\!\!\!\!\!\!\!\!\!\!\!\!\!
    \begin{subfigure}[c]{0.55\textwidth}
        \centering
        \vspace{-0.2in}
        \includegraphics[width=1\textwidth]{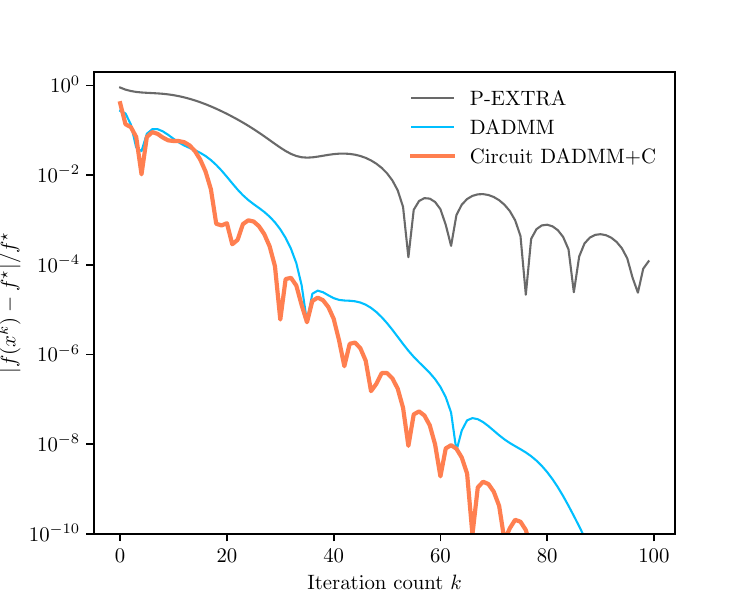}
    \end{subfigure}
    \caption{(Left) Circuit of DADMM+C. Compared to the DADMM circuit of \S\ref{s-dadmm}, the DADMM+C circuit has an additional capacitor.
    (Right) Relative error $\left|f(x^k) - f^\star\right|/{f^\star}$ vs.\ iteration count.
    }
    \label{fig:dadmm_c_n6}
\end{figure}

Further, we define a general version of the DADMM+C method for any connected graph and establish a general convergence proof in Lemma~\ref{lem:dadmm-c-convergence} of in \S\ref{ss:dadmm-c-convergence} of the appendix. This convergence analysis demonstrates how to use our methodology to discover a new family of methods with a classical convergence proof. Finally, we provide another set of similar experiments in \S\ref{s:pg-extra-c} of the appendix.

\section{Conclusion}
In this work, we present a novel approach to optimization algorithm design using ideas from electric RLC circuits. The continuous-time RLC circuit models combined with the automatic discretization method provide a foundation for designing algorithms that inherently possess convergence guarantees. Further, we provide code implementing the automatic discretization. Our framework opens the door to future research by applying this methodology to a broader range of optimization problems and extending the problem to other setups, such as the stochastic optimization setup.


\section*{Acknowledgments and Disclosure of Funding}

This work was supported by the Samsung Science and Technology Foundation (Project Number SSTF-BA2101-02), the National Research Foundation of Korea (NRF) grant funded by the Korean government (No.RS-2024-00421203, RS-2024-00406127),
and the Oliger Memorial Fellowship. We thank Hangjun Cho for the helpful discussions on the continuous-time analysis. We also thank anonymous reviewers for the highly constructive feedback.


\bibliography{references}
\bibliographystyle{abbrv}

\newpage



\appendix
\section{Prior works}
\label{s:prior_works}

\paragraph{Distributed optimization as RLC circuits.} 
This work started as a lecture for the Stanford University EE 364b class given in 2010 \cite{Boyd2010_distributed}.
The lecture proposed the idea of relating distributed optimization algorithms to the dynamics of RLC circuits.
Different from the prior studies \cite{Dennis1959_mathematical, Wilson1986_quadratic, ChuaLin1984_nonlinear, VichikBorrelli2014_solving, SawantNguyenLiuPoonDhople2024_hybridcomputing},
that consider solving specific optimization problems through implementing physical circuits, our focus is on using insights from circuit theory to design new algorithms, without any consideration of implementing physical circuits.
The follow-up works \cite{YuAcikmese2020_rc, YuAcikmese2020_rlc, AgarwalFisckoKarPileggiSinopoli2023_equivalent, AgarwalPileggi2023_equivalent},
have built upon this setup \cite{Boyd2010_distributed}.

\paragraph{Optimization algorithms from continuous-time dynamics.}
Relating continuous-time dynamics described by ordinary differential equation (ODE) with optimization algorithm is a technique with a long history \cite{Bruck1975_asymptotic, HelmkeMoore1996_optimization, Alvarez2000_minimizing, SchroppSinger2000_dynamical, Fiori2005_quasigeodesic}.  
The continuous-time dynamics related to Polyak's heavy ball method \cite{Polyak1964_methods} were studied by \cite{AttouchAlvarez2000_heavy, AttouchGoudouRedont2000_heavy, AlvarezAttouchBolteRedont2002_secondorder, AttouchCzarnecki2002_asymptotic}. 
The ODE model for Nesterov acceleration \cite{Nesterov1983_method} was introduced by \cite{SuBoydCandes2014_differential, SuBoydCandes2016_differential},
analyses for generalized cases were followed by \cite{ApidopoulosAujolDossal2018_differential, AttouchChbaniPeypouquetRedont2018_fast, AttouchChbaniRiahi2019_rate}, and the ODE model for Nesterov acceleration for strongly convex function (NAG-SC) was introduced in \cite{WilsonRechtJordan2021_lyapunov}. 
Together with \cite{KricheneBayenBartlett2015_accelerated}, the studies by  \cite{SuBoydCandes2014_differential, SuBoydCandes2016_differential} initiated continuous-time analyses of accelerated first-order methods and inspired much follow-up works such as \cite{WibisonoWilsonJordan2016_variational, AttouchPeypouquet2019_convergence, MuehlebachJordan2021_optimization, DiakonikolasJordan2021_generalized, EvenBerthierBachFlammarionHendrikxGaillardMassoulieTaylor2021_continuized, WilsonRechtJordan2021_lyapunov, AttouchLaszlo2020_newtonlike, BotHulett2022_seconda, SuhRohRyu2022_continuoustime, KimYang2023_unifying, SuhParkRyu2023_continuoustime, BotCsetnekNguyen2023_fast}. 
As a further refined continuous-time model preserving more information from the discretization, the high-resolution ODE for NAG-SC was introduced in  \cite{ShiDuJordanSu2021_understanding}, and was further developed by \cite{Lu2022_os}. 

In addition to accelerated methods, various topics and methods in optimization have been studied in a continuous-time framework. 
Continuous-time dynamics related to splitting methods were studied by
\cite{AttouchPeypouquetRedont2014_dynamical, AbbasAttouch2015_dynamical, BotCsetnek2018_convergence, CsetnekMalitskyTam2019_shadow, FrancaRobinsonVidal2021_gradient, Hassan-MoghaddamJovanovic2021_proximal}. 
\cite{FrancaRobinsonVidal2018_admm} studied continuous-time dynamics of ADMM \cite{GlowinskiMarroco1975_lapproximation, GabayMercier1976_dual, Gabay1983_chapter, EcksteinBertsekas1992_douglas, BoydParikhChuPeleatoEckstein2011_distributed}, and provided an accelerated ADMM by discretizing the ODE model combined with \cite{SuBoydCandes2014_differential}. The analyses were furthermore generalized to differential inclusions by \cite{YuanZhouLiSun2019_differential, FrancaRobinsonVidal2023_nonsmooth}. 
There are numerous works of continuous-time analyses for distributed optimization,  \cite{WangElia2010_control, LuTang2012_zerogradientsum, GharesifardCortes2014_distributed, KiaCortesMartinez2015_distributed, LinRenFarrell2017_distributed} to name a few, and we refer the readers to the survey paper \cite{YangYiWuYuanWuMengHongWangLinJohansson2019_survey} for a comprehensive overview.

\paragraph{Computer-assisted analysis of optimization algorithms.}
There has been lines of work automating the analysis of optimization methods using semidefinite programs (SDP). 
One line of work is performance estimation problems (PEP) introduced by \cite{DroriTeboulle2014_performance}, which provides a systematic way to obtain worst-case performance guarantees of a given fixed-step first-order method. 
The range and technique of utilizing PEP have been further developed by \cite{TaylorHendrickxGlineur2017_smooth, TaylorBach2019_stochastic, RyuTaylorBergelingGiselsson2020_operator, MoucerTaylorBach2023_systematic, KimYang2023_convergencea}, and many efficient algorithms with tight analyses utilizing PEP are discovered \cite{KimFessler2016_optimized, Lieder2021_convergence, Kim2021_accelerated, KimFessler2021_optimizing, YoonRyu2021_accelerated, LeeKim2021_fast, ParkRyu2022_exact, GorbunovLoizouGidel2022_extragradient, taylor2023optimal, JangGuptaRyu2023_computerassisted, BarreTaylorBach2023_principled, YoonKimSuhRyu2024_optimal}.

Another line of work is an approach adapting integral quadratic constraints (IQC) \cite{MegretskiRantzer1997_system}. 
IQCs are a powerful analysis method in control theory for analyzing interconnected dynamical systems with nonlinear feedback. This approach was first adapted for analyzing first-order optimization algorithms by \cite{LessardRechtPackard2016_analysis} and followed by \cite{FazlyabRibeiroMorariPreciado2018_analysis}. 
Analyses based on IQC have lead to tight bounds for well-known algorithms \cite{NishiharaLessardRechtPackardJordan2015_general, HuLessard2017_dissipativity}. IQC has also been utilized to develop new fast algorithms with tight convergence rates \cite{VanScoyFreemanLynch2018_fastest, CyrusHuVanScoyLessard2018_robust, SundararajanVanScoyLessard2020_analysis, SimonMichalowskyEbenbauer2021_robust}.

Recently, an extension of PEP to leveraging quadratic constrained quadratic programs (QCQP) was introduced by \cite{DasGuptaVanParysRyu2023_branchandbound}. 
Treating the step-sizes as optimization variables, this work furthermore provides systematic computer-assisted methodology to optimize the step-sizes. 
Our work adapts this approach to finding appropriate discretizations. To the best of our knowledge, our proposal is the first instance of using computer-assisted methodologies to find discretizations of continuous-time dynamics.

\paragraph{Physics-bases approaches to designing optimization algorithms.} 
Optimization methods obtained by discretizing conformal Hamiltonian dynamics \cite{McLachlanPerlmutter2001_conformal} were considered by \cite{MaddisonPaulinTehODonoghueDoucet2018_hamiltonian}. 
Studying structure-preserving discretizations for conformal (dissipative) Hamiltonian systems, \cite{FrancaSulamRobinsonVidal2020_conformal, FrancaSulamRobinsonVidal2020_conformala} analyzed symplectic structure of Nesterov and heavy ball, 
and introduced Relativistic Gradient Descent (RGD) by adopting ideas from special relativity. 
Based on relativistic Born-Infeld (BI) dynamics,   
\cite{DeLucaSilverstein2022_borninfeld} considered a class of frictionless, energy-conserving system and introduced Bouncing BI (BBI) algorithm as a discretization. 

Our work is based on nonlinear resistive electric circuits, the study of which dates back to \cite{Duffin1946_nonlinear}. 
The stationary condition for nonlinear networks were considered by \cite{Millar1951_cxvi}, generalizing theorems of Maxwell \cite{Maxwell1873_treatise} for linear networks. 
The study of nonlinear resistive networks influenced the refinement of the concept of maximal monotonicity~\cite{Minty1960_monotone, Minty1961_maximal, Minty1961_solving}, which is now a fundamental concept in convex optimization. 
Well-posedness of the solutions for nonlinear networks was studied by \cite{DesoerKatzenelson1965_nonlinear, DesoerWu1974_nonlinear}, but only for one-descent nonlinear resistors. 
Recently, the study of nonlinear electrical circuits was revisited by \cite{ChaffeySepulchre2021_monotone, ChaffeyPadoan2022_circuit, ChaffeyBanertGiselssonPates2023_circuit, ChaffeySepulchre2024_monotone} using contemporary methods of convex optimization. However, their main focus was on circuits, not on designing new optimization algorithms. 
To the best of our knowledge, our work is the first to introduce a generalized framework for designing optimization methods based on electric circuits.

\paragraph{Discretization.} 
Continuous-time analyses of optimization algorithms must eventually contend with the issue of discretizing the dynamics into a discrete-time algorithm.
Discretization of differential equations is a subject of numerical analysis, and it has a long history, even dating back to Euler \cite{Euler1755_institutiones}. 
Standard discretization schemes such as Euler, Runge--Kutta \cite{Runge1895_ueber, Kutta1901_beitrag} and symplectic integrators \cite{Vogelaere1956_methods, Ruth1983_canonical, Feng1984_difference}, have a rich body of research analyzing their convergence \cite{HairerLubichGerhard2006_geometric, Iserles2009_first} for example. However, these theories in numerical analysis primarily focus on the convergence of the discretized sequence to the trajectory of the solution flow in differential equations throughout a finite time-interval, which differs from the focus of optimization.
Therefore, directly applying standard discretization schemes from numerical analysis does not ensure convergence to the optimality criteria of interest in optimization, such as function value or optimal point convergence. 

In optimization, the study of discretization can broadly be divided into two categories. 
One involves applying standard discretization schemes or their variants, and the other provides special rules tailored to the specific dynamics of interest. 
As previously discussed, the former cases can only guarantee the convergence involving certain errors \cite{BetancourtJordanWilson2018_symplectic, FrancaJordanVidal2021_dissipative}, or introduce specific and limited cases they can cover \cite{ScieurRouletBachdAspremont2017_integration, ZhangMokhtariSraJadbabaie2018_direct, ShiDuSuJordan2019_acceleration, MuehlebachJordan2019_dynamical, ZhangSraJadbabaie2019_acceleration, SuhRohRyu2022_continuoustime, UshiyamaSatoMatsuo2023_unified}. 
The latter type of works do provide discretization rules with analytic proofs for certain families of ODEs \cite{AlvarezAttouch2001_inertiala, SuBoydCandes2016_differential, WibisonoWilsonJordan2016_variational, AttouchChbaniRiahi2019_fast, WilsonMackeyWibisono2019_accelerating, AdlyAttouch2020_finite, AttouchChbaniFadiliRiahi2020_firstorder, DiakonikolasJordan2021_generalized, BotCsetnekNguyen2023_fast}, but cannot be applied to general cases.  
Of course, both approaches have brought significant advances in obtaining new methods from continuous-time dynamics, however, it is still true that previous approaches cannot immediately applied the new ODEs that emerge from our framework. 
To the best of our knowledge, our work is the first to propose to automate the process of finding a discretized method from ODE using computer-assisted tools.
\section{Proof of Theorem~\ref{thm-well-posedness}} \label{s-well-posedness}

To prove Theorem~\ref{thm-well-posedness}, it is sufficient to consider the cases without $0$-ohm resistors and furthermore all resistor, inductance, capacitance values are $1$. 
We first state the theorem for such cases, which implies Theorem~\ref{thm-well-posedness}. 
\begin{theorem} \label{thm-well-posedness-for-reduced} 
Let $f \colon \reals^m \to \reals^m$ be a $\mu$-strongly convex and $M$-smooth function and 
$B \colon \reals^\Bdom \to \reals^\Bran$ be a matrix. 
Suppose $(v^0, i^0, x^0, y^0)$ satisfy
\begin{align} \label{eq-reduced-initial}
    \bmat{ i^0 \\ y^0 } \in \mathcal{N}(B),  \quad
    \bmat{ v^0 \\ x^0 } \in \mathcal{R}(B^{\intercal}), \quad
    v_{\mathcal{R}}^0 = i_{\mathcal{R}}^0, \quad
    y^0 = \nabla f(x^0).
\end{align}
Then there is a uniquely determined Lipschitz continuous curve  
$(v, i, x, y) \colon [0,\infty) \to \reals^{2\Bran}$
satisfies 
\begin{align} \label{eq-reduced-dynamics}
    \bmat{ i \\ y } \in \mathcal{N}(B),  \quad
    \bmat{v \\ x } \in \mathcal{R}(B^{\intercal}), \quad 
    y = \nabla f(x), \quad
    v_{\mathcal{R}} = i_{\mathcal{R}}, \quad
    v_{\mathcal{L}} = \frac{d}{dt} i_{\mathcal{L}}, \quad
    i_{\mathcal{C}} = \frac{d}{dt} v_{\mathcal{C}},
\end{align}
for all $t\in(0,\infty)$ and the initial condition $(v(0), i(0), x(0), y(0)) = (v^0, i^0, x^0, y^0)$.
\end{theorem}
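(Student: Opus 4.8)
The plan is to reduce the mixed algebraic–differential system \eqref{eq-reduced-dynamics} to an ordinary differential equation $\dot z = G(z)$ for the genuine state variables $z = (i_{\mathcal{L}}, v_{\mathcal{C}})$, constrained to a fixed linear subspace, with $G$ globally Lipschitz, and then to invoke the Cauchy--Lipschitz (Picard--Lindel\"of) theorem. Because $f$ is assumed smooth, $\nabla f$ is single-valued and Lipschitz, so the heavier Br\'ezis theory for $\dot z + A(z) \ni 0$ with set-valued maximal monotone $A$ is not strictly needed here, though it would give the same conclusion and is the natural route for extensions to nonsmooth $f$.

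\textbf{Algebraic subsystem.} First I would freeze a candidate state $z = (i_{\mathcal{L}}, v_{\mathcal{C}})$ and study the static relations $\bmat{i \\ y} \in \mathcal{N}(B)$, $\bmat{v \\ x} \in \mathcal{R}(B^{\intercal})$, $v_{\mathcal{R}} = i_{\mathcal{R}}$, $y = \nabla f(x)$ as a system for the remaining unknowns $(i_{\mathcal{R}}, i_{\mathcal{C}}, v_{\mathcal{R}}, v_{\mathcal{L}}, x, y)$. I would recast it as a monotone inclusion: eliminating $y$ and $v_{\mathcal{R}}$, writing $\bmat{v \\ x} = B^{\intercal}\lambda$ to enforce KVL and letting the multiplier $\lambda$ enforce KCL, the system takes the form $0 \in (\mathcal{N} + \mathcal{S})(u) - c(z)$, where $u$ stacks $x$, $i_{\mathcal{R}}$ and $\lambda$, the operator $\mathcal{N}$ is block-diagonal and maximal monotone (the $\mu$-strongly monotone, $M$-Lipschitz map $\nabla f$ on the $x$-block, the identity on the resistor block, zero on the multiplier block), $\mathcal{S}$ is a skew-symmetric (hence monotone) linear operator carrying the KVL/KCL coupling $\bmat{0 & B_1^{\intercal} \\ -B_1 & 0}$, and $c(z)$ is affine in $z$. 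The claim is that this inclusion has a unique solution, so that $v_{\mathcal{L}}(z)$, $i_{\mathcal{C}}(z)$, $x(z)$, $i_{\mathcal{R}}(z)$ are well-defined functions of $z$, and that $z \mapsto (v_{\mathcal{L}}(z), i_{\mathcal{C}}(z)) =: G(z)$ is globally Lipschitz. Uniqueness follows from the Tellegen-type identity underlying \eqref{eq-cont-power}: two completions for the same $z$ satisfy $\|i_{\mathcal{R},1} - i_{\mathcal{R},2}\|^2 + \langle x_1 - x_2,\, \nabla f(x_1) - \nabla f(x_2)\rangle = 0$, forcing $x_1 = x_2$ (by $\mu$-strong convexity) and $i_{\mathcal{R},1} = i_{\mathcal{R},2}$, after which the no-ideal-wire-loop structure of the reduced interconnect pins down $v_{\mathcal{L}}$ and $i_{\mathcal{C}}$ (and the internal node potentials) as well; the same identity supplies the coercivity estimate giving existence (via a surjectivity criterion for maximal monotone operators, Minty--Browder) and the global Lipschitz bound on $G$.

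\textbf{Reduction to an ODE and conclusion.} Let $\mathcal{A}$ be the linear subspace of states $z = (i_{\mathcal{L}}, v_{\mathcal{C}})$ admitting such a completion. Since $\bmat{v \\ x} \in \mathcal{R}(B^{\intercal})$ and $\bmat{i \\ y} \in \mathcal{N}(B)$ are linear constraints, $G(z) = (v_{\mathcal{L}}, i_{\mathcal{C}})$ satisfies the same homogeneous constraints, so $G$ maps $\mathcal{A}$ into $\mathcal{A}$; thus $\dot z = G(z)$ is an autonomous ODE on $\mathcal{A}$ with globally Lipschitz right-hand side. The hypotheses \eqref{eq-reduced-initial} say exactly that $(v^0, i^0, x^0, y^0)$ is an admissible completion of $z^0 := (i_{\mathcal{L}}^0, v_{\mathcal{C}}^0)$, so $z^0 \in \mathcal{A}$; by Cauchy--Lipschitz there is a unique solution $z \colon [0,\infty) \to \mathcal{A}$ with $z(0) = z^0$ (global, since the Lipschitz bound excludes finite-time blow-up). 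Applying the Lipschitz completion map to $z(t)$ produces a Lipschitz curve $(v,i,x,y)(t)$ satisfying all relations in \eqref{eq-reduced-dynamics} with the prescribed initial data; conversely, any solution of \eqref{eq-reduced-dynamics} restricts to a solution of $\dot z = G(z)$ with initial value $z^0$, hence, by uniqueness of both the ODE solution and the completion, coincides with the one constructed. This establishes existence and uniqueness.

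\textbf{Main obstacle.} The crux is the algebraic step: showing the nonlinear resistive subcircuit, driven by the inductor currents and capacitor voltages as sources, has a unique operating point depending Lipschitz-continuously on those sources. Uniqueness and the contraction estimate for the part of the state ``seen'' by $\nabla f$ and the resistors drop out cleanly from the dissipation identity \eqref{eq-cont-power}, but recovering the inductor voltages $v_{\mathcal{L}}$ and capacitor currents $i_{\mathcal{C}}$ requires a nondegeneracy argument that genuinely uses the structure of the reduced interconnect --- the absence of ideal-wire loops and the positivity of all $L, C$ values --- and this is where the bulk of the technical work, and any case analysis, will go.
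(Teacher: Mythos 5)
Your overall architecture — take $z=(v_{\mathcal{C}},i_{\mathcal{L}})$ as the state, treat the remaining variables as an algebraic completion governed by a skew-plus-monotone structure, and use the Tellegen/dissipation identity to control the completion — is the same skeleton as the paper's proof. But there is a genuine gap at exactly the point you flag as the crux, and your proposed fix does not close it. The static relations in \eqref{eq-reduced-dynamics} (KCL, KVL, $v_{\mathcal{R}}=i_{\mathcal{R}}$, $y=\nabla f(x)$) do \emph{not} determine $v_{\mathcal{L}}$ and $i_{\mathcal{C}}$ uniquely from $z$ in general. The dissipation identity does force uniqueness of $x$, $y$, $i_{\mathcal{R}}$, $v_{\mathcal{R}}$, but after that the compatible $(v_{\mathcal{L}}, i_{\mathcal{C}})$ form an affine subspace of positive dimension whenever the interconnect contains, e.g., two inductors in series (only $v_{L_1}+v_{L_2}$ is determined; the internal node potential is free), two capacitors in parallel (only $i_{C_1}+i_{C_2}$ is determined), or a disconnected $LC$ loop — all of which the theorem's hypotheses permit (the "no ideal-wire loop" assumption only excludes loops of $0$-ohm resistors, and positivity of $L,C$ is irrelevant here since the reduced theorem already has unit values). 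So the map $G$ you want to feed to Picard--Lindel\"of is not well defined as a single-valued selection of the static relations, and Minty--Browder surjectivity of $\mathcal{N}+\mathcal{S}$ will not produce a unique $(v_{\mathcal{L}},i_{\mathcal{C}})$ either, because $\mathcal{N}$ is zero on the multiplier block and the coercivity estimate only sees the resistive port variables.

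The missing ingredient is an index-reduction step: one must differentiate the algebraic constraints along the trajectory. The paper does this via the identity \eqref{eq-iC-vL}, $w_{\pp_*}=-(H_{\pp_*}^{\pp_*})^{\intercal}u_{\pp_*}$, which is obtained by combining the skew-symmetric KCL/KVL representation with the dynamic relation $\tfrac{d}{dt}w_{\pp}=u_{\pp_*}$; this supplies precisely the extra linear equations (e.g.\ $v_{L_1}=v_{L_2}$ for series inductors) that pin down $(v_{\mathcal{L}},i_{\mathcal{C}})$ and yield the single-valued Lipschitz completion map $\mathcal{G}$. Moreover, the paper avoids needing a single-valued right-hand side for the \emph{existence} argument altogether: it defines the set-valued operator $\varmathbb{A}$ in \eqref{eq-def-varmathbbA}, proves it is maximal monotone (via the skew-symmetric hybrid representation of KCL/KVL and partial inverses of $\nabla F$), and invokes Theorem~\ref{thm-AC}, for which uniqueness of the trajectory follows from monotonicity rather than from Lipschitzness of a selection; the completion map $\mathcal{G}$ is then used only a posteriori to recover and uniquely determine the full curve. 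Your plan is repairable — replace "the static relations pin down $v_{\mathcal{L}}, i_{\mathcal{C}}$" with the differentiated-constraint argument, or switch to the differential-inclusion formulation — but as written the construction of a globally Lipschitz single-valued $G$ fails, and with it both the existence and the uniqueness halves of your argument.
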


\begin{lemma} \label{lem-enough-to-show-reduced}
    Theorem~\ref{thm-well-posedness-for-reduced} implies Theorem~\ref{thm-well-posedness}. 
\end{lemma}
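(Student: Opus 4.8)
The plan is to bring the general interconnect of Theorem~\ref{thm-well-posedness} into the normalized form of Theorem~\ref{thm-well-posedness-for-reduced} by a composition of three reversible reductions: (i) contract the $0$-ohm resistors, (ii) apply a diagonal rescaling of voltages and currents that normalizes every resistance, inductance, and capacitance to $1$, and (iii) repackage Kirchhoff's laws together with the coupling $y$ into the single-matrix range/nullspace form appearing in \eqref{eq-reduced-dynamics}. Since $f$ itself is never touched, its $\mu$-strong convexity and $M$-smoothness are inherited verbatim; so once I exhibit a bijection between the trajectories of the two dynamical systems and between their admissible initial data, I can invoke Theorem~\ref{thm-well-posedness-for-reduced} and pull the unique Lipschitz curve back through the reductions to obtain Theorem~\ref{thm-well-posedness}.

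\emph{Step 1 (eliminate $0$-ohm resistors).} A $0$-ohm resistor is an ideal wire, and since the interconnect has no ideal-wire loop the subgraph formed by these wires is a forest. Contracting all of its edges partitions the nodes into ``super-nodes'', one of which contains the ground node, and leaves a circuit whose resistances all lie in $(0,\infty)$ and whose inductances and capacitances are unchanged in $(0,\infty)$. On the contracted interconnect KCL reads $A'i' = -Qy$ and KVL reads $v' = (A')^{\intercal}p$ with $x = Q^{\intercal}p$, where $p$ is the super-node potential vector and $Q$ is the $0/1$ matrix recording which super-node each of the $m$ terminals attaches to (in the generic case $Q=\bmat{I_m \\ 0}$). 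This correspondence is a bijection on trajectories and on initial data: the forward map discards the currents on the contracted edges and merges node potentials, and the inverse map reconstructs those currents uniquely from KCL, which is possible precisely because each super-node's shorting subgraph is a tree; both maps are linear, so regularity is preserved.

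\emph{Steps 2--3 (normalize values, then absorb $y$ and $e$).} Set $S=\mathrm{diag}\!\big(D_{\mathcal{R}}^{1/2},\,D_{\mathcal{L}}^{1/2},\,D_{\mathcal{C}}^{-1/2}\big)$, which is invertible after Step 1, and let $\tilde{v}=S^{-1}v$, $\tilde{i}=Si$. A one-line verification turns the resistor, inductor, and capacitor laws into $\tilde{v}_{\mathcal{R}}=\tilde{i}_{\mathcal{R}}$, $\tilde{v}_{\mathcal{L}}=\frac{d}{dt}\tilde{i}_{\mathcal{L}}$, $\tilde{i}_{\mathcal{C}}=\frac{d}{dt}\tilde{v}_{\mathcal{C}}$, while KCL and KVL retain their form with $A'$ replaced by $\tilde{A}:=A'S^{-1}$; the same rescaling carries $v_{\mathcal{R}}^0=D_{\mathcal{R}}i_{\mathcal{R}}^0$ to $\tilde{v}_{\mathcal{R}}^0=\tilde{i}_{\mathcal{R}}^0$. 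Now put $B:=\big[\,\tilde{A}\;\;Q\,\big]$. Then $\bmat{\tilde{i} \\ y}\in\mathcal{N}(B)$ is exactly $\tilde{A}\tilde{i}=-Qy$ (KCL), and $\bmat{\tilde{v} \\ x}\in\mathcal{R}(B^{\intercal})$ is exactly the existence of $p$ with $\tilde{v}=\tilde{A}^{\intercal}p$ and $x=Q^{\intercal}p$ (KVL); note $p$, equivalently the auxiliary potential $e$, need not be unique, and neither theorem asserts it is. Hence \eqref{e-dyn-ic-partial-f} with $y=\nabla f(x)$ becomes precisely \eqref{eq-reduced-dynamics}, and the admissible data of Theorem~\ref{thm-well-posedness} becomes precisely \eqref{eq-reduced-initial}. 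Applying Theorem~\ref{thm-well-posedness-for-reduced} yields a unique Lipschitz curve $(\tilde{v},\tilde{i},x,y)$ with the transformed initial condition, and inverting the linear maps of Steps 1--2 produces the unique Lipschitz curve $(v,i,x,y)$ solving \eqref{e-dyn-ic-partial-f} for the original interconnect with the original data.

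\emph{Main obstacle.} Steps 2--3 are pure linear algebra and the transfer of existence and uniqueness through the reductions is routine, so the one part that requires genuine care is Step 1: one must check that contracting the $0$-ohm resistors is \emph{faithful}, i.e.\ that trajectories and initial data of the contracted and original interconnects are in bijection with regularity preserved. The crux there is exactly the no-ideal-wire-loop hypothesis, which forces the shorting subgraph to be a forest and thereby makes the currents on the contracted edges uniquely recoverable from KCL; without it the lifting back from the reduced circuit would fail to be well-defined.
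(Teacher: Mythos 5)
Your proposal is correct and follows essentially the same route as the paper's proof: first eliminate the $0$-ohm resistors (the paper does this via explicit row/column operations on the augmented incidence matrix rather than your graph-contraction language, but it is the same reduction, and your observation that the no-ideal-wire-loop hypothesis makes the shorted edges a forest, so the eliminated currents are uniquely recoverable, is exactly the point the paper relies on), then apply the diagonal square-root rescaling $\mathrm{diag}(D_{\mathcal{R}}^{\pm 1/2}, D_{\mathcal{L}}^{\pm 1/2}, D_{\mathcal{C}}^{\mp 1/2})$ to normalize all component values to $1$, and finally invoke Theorem~\ref{thm-well-posedness-for-reduced} and pull the unique Lipschitz curve back through the invertible linear maps. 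No gaps.
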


\begin{proof}

\emph{(i) KCL, KVL and V-I relations for equivalent dynamics without $0$-ohm resistors.} \\
We first consider the equivalent dynamic interconnect without $0$-ohm resistors. 
As $0$-ohm resistors are ideal wires, from basic circuit theory we know the nodes connected by $0$-ohm resistors can be considered as a single node. 
We find the expression for KCL, KVL and V-I relations for the equivalent dynamic interconnect composed with $\partial f$. 
The equivalent expression for KCL and KVL can be considered as consequence of Tellegen's theorem in \citep[\S10.2.3]{DesoerKuh1969_basic}, however, we write the detail here to make it self-contained. 

Observe, KCL and KVL can be equivalently written as
\begin{align*}
    \bmat{ A \,\,\Big|\,\, \mat{ I_m  \\ 0 } } \bmat{ i \\ y } = 0, 
    \qquad \bmat{ v \\ x } 
    = \bmat{ A \,\,\Big|\,\, \mat{ I_m  \\ 0 } }^\intercal \bmat{ x \\ e }.
\end{align*}
We furthermore restrict the values to satisfy Ohm's law for $0$-ohm resistors, \ie, the potential values of two nodes connected to a $0$-ohm resistor is identical. 

Let's first focus on KCL, the left equation. 
Suppose node $j$ and $l$ are connected with $0$-ohm resistor named as $\mR_{jl}$. 
Suppose the $k$'th column of $A$ corresponds to $\mR_{jl}$. 
Eliminating $\mR_{jl}$ corresponds to eliminating the $k$'th column of $A$ and eliminating $i_{\mR_{jl}}$ from $i$. 
However, if we just directly eliminate them, as $i_{\mR_{jl}}$ may not be zero, the equation will no longer be satisfied. 
We need to keep the information that currents flowing into node $j$ (except for $-i_{\mR_{jl}}$) flows to node $l$. 
As we do not permit ideal wire loop, without loss of generality we may assume node $j$ is not the ground node.

To preserve the information, when node $l$ is not the ground node, we add the $j$'th row of {\footnotesize $\bmat{ A \,\,\Big|\,\, \mat{ I_m  \\ 0 } }$} to the $l$'th row. 
Then $k$'th component of the $l$'th row becomes $0$, thus the equation corresponding to the $l$'th row will still be satisfied after eliminating the $k$'th column and $\mR_{jl}$. 
When node $l$ is the ground node, skip the row addition. 
Now eliminate the $j$'th row. 
Note that column is eliminated only from $A$. 

We now move on to KVL.         
Eliminating a column of $A$ and a component in $i$ corresponds to eliminating a row of $A^{\intercal}$ and a component in $v$. 
This conserves the validity of the equation. 
Next, the row operation for {\footnotesize $\bmat{ A \,\,\Big|\,\, \mat{ I_m  \\ 0 } }$} corresponds to column operation for {\footnotesize $\bmat{ A \,\,\Big|\,\, \mat{ I_m  \\ 0 } }^{\intercal} = \bmat{ A^\intercal \\  \overline{ \mat{ I_m  \,\, 0 } } }$}. 
Recall we've restricted the potential values of the nodes connected with $0$-ohm resistor to be same, values in {\footnotesize$\bmat{ x \\ e }$} corresponding to column $j$ and $l$ coincide. 
Thus when node $l$ is not the ground node, adding $j$'th column to the $l$'th column and eliminating $j$'th component in $\bmat{ x \\ e }$, will not change the values on the left hand side. 
When node $l$ is the ground node, the same argument holds by skipping the column addition. 

Repeat this process until there is no $0$-ohm resistors. 
Name the reduced matrix as $\tilde{B}$ and reduced current as $\tilde{i}$. 
Then KCL reduces to $\tilde{B} \bmat{ \tilde{i} \\ y } = 0$ and KVL reduces to $\bmat{ v \\ x }  = \tilde{B}^\intercal \bmat{ \tilde{x} \\ \tilde{e} }$, or equivalently $\bmat{ \tilde{v} \\ x } \in \mathcal{R}(\tilde{B}^{\intercal})$. 

Now name the reduced diagonal matrices $\tilde{D}_\mR$, $\tilde{D}_\mL$ and $\tilde{D}_\mC$ as the reduced matrices that without the entries corresponding to eliminated components. 
Note $\tilde{D}_\mR$ has no zero diagonal entries. 
Then KCL, KVL and V-I relations for the equivalent dynamic interconnect composed with $\partial f$ become as follows
\begin{align} \label{eq-no-0-ohm-dynamics}
    &\bmat{ \tilde{i} \\ y } \in \mathcal{N}(\tilde{B}),  \quad
    \bmat{ \tilde{v} \\ x } \in \mathcal{R}(\tilde{B}^{\intercal}), \quad
    y = \nabla f(x), \quad
    \tilde{v}_{\mathcal{R}} = \tilde{D}_\mathcal{R} \tilde{i}_{\mathcal{R}}, \quad
    \tilde{v}_{\mathcal{L}} = \tilde{D}_\mathcal{L} \frac{d}{dt} \tilde{i}_{\mathcal{L}}, \quad
    \tilde{i}_{\mathcal{C}} = \tilde{D}_\mathcal{C} \frac{d}{dt} \tilde{v}_{\mathcal{C}}.
\end{align}
As an equivalent dynamics, it is enough to prove the curve that satisfies \eqref{eq-no-0-ohm-dynamics}
and the initial condition $(\tilde{v}(0), \tilde{i}(0), x(0), y(0)) = (\tilde{v}^0, \tilde{i}^0, x^0, y^0)$ with condition  
\begin{align} \label{eq-0-ohm-initial}
    \bmat{ \tilde{i}^0 \\ y^0 } \in \mathcal{N}(\tilde{B}),  \quad
    \bmat{ \tilde{v}^0 \\ x^0 } \in \mathcal{R}(\tilde{B}^{\intercal}), \quad
    \tilde{v}_{\mathcal{R}}^0 = \tilde{D}_\mathcal{R} \tilde{i}_{\mathcal{R}}^0, \quad
    y^0 = \nabla f(x^0)
\end{align}  
is unique and Lispchitz continuous.

\emph{(ii) Sufficient to consider only the cases with $\tilde{D}_\mR$, $\tilde{D}_\mL$ and $\tilde{D}_\mC$ are identity matrices.} \\
For a dynamic interconnect composed with $\partial f$, consider the equivalent dynamics without $0$-ohm resistors. 
Let $\tilde{B}$ be the matrix in \eqref{eq-no-0-ohm-dynamics} for the dynamics, and let $\mathcal{K}$ be the number of columns of $\tilde{B}$. 
Suppose $(\tilde{v}^0, \tilde{i}^0, x^0, y^0)$ satisfy \eqref{eq-0-ohm-initial}. 
Define the diagonal matrix
\[
    P = \diag\Big(
    \sqrt{ \tilde{D}_\mathcal{R}^{-1} },
    \sqrt{ \tilde{D}_\mathcal{L}^{-1} },
    \sqrt{ \tilde{D}_\mathcal{C}}, 
    I_m
    \Big),
\]
and define $B=\tilde{B} P$. 
Define ${i}^0$ and ${v}^0$ to satisfy $\bmat{ {i}^0 \\ y^0 } = P^{-1} \bmat{ \tilde{i}^0 \\ y^0 }$ and $\bmat{ {v}^0 \\ x^0 } = P \bmat{ \tilde{v}^0 \\ x^0 }$. 
Then $({v}^0, {i}^0, x^0, y^0)$ satisfies \eqref{eq-reduced-initial} since 
\BEAS
    \tilde{B} \bmat{ \tilde{i}^0 \\ y^0 } = 0 &\iff&
    B \bmat{ i^0 \\ y^0 } = (\tilde{B} P) \pr{ P^{-1} \bmat{ \tilde{i}^0 \\ y^0 } } = 0,  \\
    \qquad
     \exists z^0, \,\, \bmat{ \tilde{v}^0 \\ x^0 } = \tilde{B}^{\intercal} z^0 &\iff& 
    \exists z^0, \,\, \bmat{ v^0 \\ x^0 } = P \bmat{ \tilde{v}^0 \\ x^0 } = P\tilde{B}^{\intercal} z = Bz^0,
\EEAS
and
\[
    \tilde{v}_{\mathcal{R}}^0 = \tilde{D}_\mathcal{R} \tilde{i}_{\mathcal{R}}^0
    \iff \sqrt{ \tilde{D}_\mathcal{R} } \tilde{v}_{\mathcal{R}}^0 = \sqrt{ \tilde{D}_\mathcal{R}^{-1} } \tilde{i}_{\mathcal{R}}^0
    \iff v_{\mathcal{R}}^0 = i_{\mathcal{R}}^0.
\]
Then by Theorem~\ref{thm-well-posedness-for-reduced}, there is a Lipschitz continuous curve  
$(v, i, x, y) \colon [0,\infty) \to \reals^{2\Bran}$ that satisfies \eqref{eq-reduced-dynamics} for all $t\in(0,\infty)$ and the initial condition $(v(0), i(0), x(0), y(0)) = (v^0, i^0, x^0, y^0)$. 
Define $\tilde{i}$ and $\tilde{v}$ to satisfy $\bmat{ \tilde{i} \\ y } = P \bmat{ i \\ x }$ and $\bmat{ \tilde{v} \\ y } = P^{-1} \bmat{ v \\ y }$. 
Then $\tilde{i}$ and $\tilde{v}$ are Lipschitz continuous as well, as they are composition of linear operators and Lipschitz continuous curves. 
Furthermore, we can check \eqref{eq-no-0-ohm-dynamics} and the initial condition $(\tilde{v}(0), \tilde{i}(0), x(0), y(0)) = (\tilde{v}^0, \tilde{i}^0, x^0, y^0)$ is satisfied, with the similar argument above. 

Reversing the arguments, the uniqueness can be obtained since $P$ is invertible and thus $(v, i) \mapsto (\tilde{v},\tilde{i})$ is bijective. This concludes the proof. 
    
\end{proof}

By Lemma~\ref{lem-enough-to-show-reduced}, our goal has reduced to Theorem~\ref{thm-well-posedness-for-reduced}. 
We will establish the well-posedness for $v_\mC$ and $i_{\mL}$ first, then extended them to whole curve. 
The well-posedness of $v_\mC$, $i_{\mL}$ can be obtained by reducing the dynamics to a differential inclusion with a maximal monotone operator. 
We first restate the theorem in \cite{{aubin2012differential}} and its immediate implication as a remark, which we use in the proof. 
\begin{theorem}{\citep[Thm 3.2.1]{aubin2012differential}}
\label{thm-AC} \label{known-thm-for-well-posedness}
Let $\varmathbb{M}\colon\reals^n\rightrightarrows\reals^n$ be a maximal monotone operator, consider the differential inclusion
\begin{equation} \label{eq-dif-inclu-thm}
    \dot{X}(t)\in -\varmathbb{M}(X(t)),    
\end{equation}
with initial condition $X(0)=X_0 \in \dom \varmathbb{M}$. 
Then there is a unique solution $X\colon [0,\infty)\rightarrow\reals^n$ that is absolutely continuous and satisfies \eqref{eq-dif-inclu-thm} for almost all $t$. 
Moreover, 
if we denote $ \mathcal{T} = \{ t \in [0,\infty) \mid X \text{ is differentiable at } t \} $, then 
followings are true.
\begin{itemize}
    \item [(i)] Let $X(\cdot), Y(\cdot)$ are the solutions issued from $X_0, Y_0 \in \dom \varmathbb{M}$ respectively. 
    Then $\norm{X(t) - Y(t)} \le \norm{ X_0 - Y_0 }$ for all $t\ge0$. 
    \item [(ii)] For all $t\ge0$,
        $\dot{X}_{+}(t) := \lim_{h\to0+}\frac{X(t+h)-X(t)}{h}$ is well-defined and continuous from the right. 
        Note, $\dot{X}(t)=\dot{X}_{+}(t)$ for all $t \in \mathcal{T}$.
    \item [(iii)] $t \mapsto \norm{ \dot{X}_+(t) }$ is nonincreasing. 
    \item [(iv)] $\dot{X}_{+}(t) = -m(\varmathbb{M}(X(t)))$ holds for all $t\ge0$.
    Here $m(K)$ is the element of $K \subset \reals^n$ with minimal norm, that is, $m(K) = \Pi_{K}(0) = \underset{k \in K}{\argmin} \norm{ k }$. 
    Therefore $\dot{X}(t) = -m(\varmathbb{M}(X(t))) $ holds for all $t\in\mathcal{T}$, and so \eqref{eq-dif-inclu-thm} is satisfied almost everywhere.
\end{itemize}

\textbf{Remark.} 
From $(iii)$ we have $\norm{ \dot{X}_{+}(t) } \le \norm{ \dot{X}_{+}(0) } = \norm{ m(\varmathbb{M}(X_0)) }$ for all $t\ge0$, 
thus for $t_1, t_2 \ge 0 $ we have
\begin{align*}
    \norm{ X(t_1) - X(t_2) } 
    = \norm{ \int_{t_2}^{t_1} \dot{X}_{+}(s) ds } 
    &\le \int_{t_2}^{t_1} \norm{ \dot{X}_{+}(s) } ds  \\
    &\le \int_{t_2}^{t_1} \norm{ m(\varmathbb{M}(X_0)) } ds = | t_1 - t_2 | \norm{ m(\varmathbb{M}(X_0)) }.
\end{align*}
Therefore the theorem implies that $X$ is Lipschitz-continuous, in particular with parameter $\norm{ m(\varmathbb{M}(X_0)) }$. 

\end{theorem}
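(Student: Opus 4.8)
The plan is to establish existence, uniqueness, and the regularity statements (i)--(iv) through the classical Yosida regularization scheme for maximal monotone operators. For $\lambda>0$ I would introduce the resolvent $J_\lambda=(I+\lambda\varmathbb{M})^{-1}$ and the Yosida approximation $\varmathbb{M}_\lambda=\lambda^{-1}(I-J_\lambda)$. Maximal monotonicity of $\varmathbb{M}$ guarantees that $J_\lambda$ is single-valued, nonexpansive, and defined on all of $\reals^n$, so $\varmathbb{M}_\lambda$ is single-valued, monotone, and $\lambda^{-1}$-Lipschitz. The regularized Cauchy problem $\dot{X}_\lambda=-\varmathbb{M}_\lambda(X_\lambda)$ with $X_\lambda(0)=X_0$ therefore admits a unique global solution by the Cauchy--Lipschitz theorem, reducing the set-valued problem to a family of genuine ODEs. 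I would then derive $\lambda$-uniform a priori bounds: the basic inequality $\|\varmathbb{M}_\lambda(x)\|\le\|m(\varmathbb{M}(x))\|$ for $x\in\dom\varmathbb{M}$ gives $\|\varmathbb{M}_\lambda(X_0)\|\le C:=\|m(\varmathbb{M}(X_0))\|$, and differentiating while using monotonicity of $\varmathbb{M}_\lambda$ shows that $t\mapsto\|\dot{X}_\lambda(t)\|=\|\varmathbb{M}_\lambda(X_\lambda(t))\|$ is nonincreasing, hence bounded by $C$ for all $t$. Thus every $X_\lambda$ is $C$-Lipschitz uniformly in $\lambda$.

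The core of the argument is a Cauchy estimate in the regularization parameter. Writing $X_\lambda=J_\lambda X_\lambda+\lambda\varmathbb{M}_\lambda(X_\lambda)$ and applying monotonicity of $\varmathbb{M}$ to the pairs $(J_\lambda X_\lambda,\varmathbb{M}_\lambda X_\lambda)$ and $(J_\mu X_\mu,\varmathbb{M}_\mu X_\mu)$, I would obtain
\[
\frac{1}{2}\frac{d}{dt}\|X_\lambda-X_\mu\|^2
= -\langle\varmathbb{M}_\lambda X_\lambda-\varmathbb{M}_\mu X_\mu,\,X_\lambda-X_\mu\rangle
\le (\lambda+\mu)\,C^2,
\]
so $\|X_\lambda(t)-X_\mu(t)\|^2\le 2(\lambda+\mu)C^2 t$. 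Hence $\{X_\lambda\}$ is Cauchy in $C([0,T];\reals^n)$ for each $T$ and converges uniformly on compacts to a $C$-Lipschitz (thus absolutely continuous) limit $X$ with $X(0)=X_0$. Since $\|X_\lambda-J_\lambda X_\lambda\|=\lambda\|\varmathbb{M}_\lambda X_\lambda\|\le\lambda C\to0$, the resolvent points converge to $X$ as well. Extracting a weak-$*$ limit $\xi$ of the bounded velocities $\varmathbb{M}_\lambda(X_\lambda)$ and invoking demiclosedness of the maximal monotone graph yields $(X(t),\xi(t))\in\mathrm{graph}\,\varmathbb{M}$, i.e. $\dot{X}\in-\varmathbb{M}(X)$ almost everywhere. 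Uniqueness and the contraction estimate (i) then follow at once: for two solutions $\tfrac{1}{2}\tfrac{d}{dt}\|X-Y\|^2=\langle\dot{X}-\dot{Y},X-Y\rangle\le0$ by monotonicity, so $\|X(t)-Y(t)\|$ is nonincreasing and vanishes when $X_0=Y_0$.

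For the refined statements I would exploit autonomy of the inclusion: $X(\cdot+h)$ is again a solution, so by (i) the map $t\mapsto\|X(t+h)-X(t)\|$ is nonincreasing; dividing by $h$ and letting $h\to0+$ produces the right derivative $\dot{X}_+(t)$, its right-continuity, and the monotonicity of $t\mapsto\|\dot{X}_+(t)\|$, giving (ii) and (iii). For (iv), the selection $\xi(t)=-\dot{X}_+(t)$ lies in $\varmathbb{M}(X(t))$, and the uniform bound $\|\varmathbb{M}_\lambda\|\le\|m(\varmathbb{M})\|$ passed to the limit forces it to be the minimal-norm element $m(\varmathbb{M}(X(t)))$. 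I expect the main obstacle to be precisely the limit passage of the second paragraph: the velocities converge only weakly, not strongly, so recovering the inclusion requires carefully combining strong convergence of the states, weak convergence of the velocities, and closedness of the maximal monotone graph -- and sharpening this to the minimal-norm (lazy) selection in (iv) is exactly where full maximality, rather than mere monotonicity, is indispensable.
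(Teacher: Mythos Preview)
The paper does not prove this theorem at all: it is stated with the citation \cite[Thm~3.2.1]{aubin2012differential} and used as a black box in the proof of Theorem~\ref{thm-well-posedness}. So there is no ``paper's own proof'' to compare against.

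That said, your Yosida-regularization scheme is exactly the classical route by which this result is established in the cited reference (and in Br\'ezis, Barbu, etc.), and your outline is correct in its essentials. The only place I would tighten is the argument for (iv): the bound $\|\varmathbb{M}_\lambda(X_\lambda(t))\|\le C=\|m(\varmathbb{M}(X_0))\|$ that survives the weak limit gives $\|\dot{X}_+(t)\|\le\|m(\varmathbb{M}(X_0))\|$, not $\|\dot{X}_+(t)\|\le\|m(\varmathbb{M}(X(t)))\|$, so it does not immediately force the minimal-norm selection at every $t$. The standard fix is local: for fixed $t_0$ one restarts the approximation from $X(t_0)$, uses $\|\varmathbb{M}_\lambda(X(t_0))\|\le\|m(\varmathbb{M}(X(t_0)))\|$, and combines weak lower semicontinuity of the norm with the fact that $-\dot{X}_+(t_0)\in\varmathbb{M}(X(t_0))$ (a closed convex set with a unique minimal-norm element) to conclude equality. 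With that adjustment the sketch is sound.
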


Thus our first goal is to prove the condition \eqref{eq-reduced-dynamics} can be equivalently written as
\[
    \frac{d}{dt} \bmat{ v_{\mC} \\ i_{\mL} } \in -\varmathbb{A} \bmat{ v_{\mC} \\ i_{\mL} }
\]
for some maximal monotone operator $\varmathbb{A}\colon \reals^{|\mC| + |\mL|} \rightrightarrows \reals^{|\mC| + |\mL|}$. 
We first establish an efficient reformulation of KCL and KVL. 
\begin{lemma} \label{expressing KCL KVL with skew-symmetric H}
    There is a skew-symmetric matrix $\hat{H}\colon\reals^{\sigma + m} \to \reals^{\sigma + m}$ and 
    a corresponding diagonal matrix $J\colon\reals^{\sigma + m} \to \reals^{\sigma + m}$ with entries $0$ of $1$ that satisfies 
    \[
        \bmat{ i \\ y } \in \mathcal{N}(B),  \,\,  
        \bmat{ v \\ x } \in \mathcal{R}(B^{\intercal})
        \quad \Longleftrightarrow \quad 
         \hat{u} = \hat{H} \hat{w}, 
    \]
    where $\hat{u}$ and $\hat{w}$ are defined as
    \[
        \hat{w} = \bmat{ J & I_{\sigma + m} - J } \bmat{ v \\ x \\ i \\ y }, \quad
        \hat{u} = \bmat{ I_{\sigma + m} - J & J } \bmat{ v \\ x \\ i \\ y }.
    \]
    Moreover, let $Q\colon\reals^{\sigma + m} \to \reals^{\sigma + m}$ be a permutation matrix, define $w = Q\hat{w}$, $u=Q\hat{u}$. 
    Then there is a skew-symmetric matrix $H$ that satisfies
    \[
        \bmat{ i \\ y } \in \mathcal{N}(B),  \,\,  
        \bmat{ v \\ x } \in \mathcal{R}(B^{\intercal})
        \quad \Longleftrightarrow \quad 
        u = H w.
    \]
\end{lemma}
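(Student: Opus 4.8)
The plan is to exhibit the skew-symmetric matrix $\hat H$ explicitly by manipulating the linear subspace description of KCL and KVL. The starting observation is that KCL asserts $\bmat{i\\ y}\in\mathcal N(B)$ and KVL asserts $\bmat{v\\ x}\in\mathcal R(B^\intercal)$, so together they say that the pair $\big(\bmat{v\\ x},\bmat{i\\ y}\big)$ lies in the graph of the linear relation $\mathcal R(B^\intercal)\times\mathcal N(B)$. This is a \emph{Lagrangian} (maximal isotropic) subspace of $\reals^{\sigma+m}\times\reals^{\sigma+m}$ with respect to the standard symplectic form, because $\mathcal N(B)$ is exactly the orthogonal complement of $\mathcal R(B^\intercal)$. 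The content of the lemma is the standard fact that any Lagrangian subspace is the graph of a skew-symmetric (partial) relation once one picks the right ``coordinates'' — i.e., for each of the $\sigma+m$ coordinate slots one decides whether the ``input'' is the voltage-type variable or the current-type variable, this choice being recorded by the diagonal $0/1$ matrix $J$, and then the complementary variables are a skew-symmetric function of the chosen ones.

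Concretely, I would proceed as follows. First, stack $\bmat{v\\ x}=B^\intercal z$ and $\bmat{i\\ y}=n$ with $Bn=0$; the combined constraint on $\zeta:=\bmat{v\\ x\\ i\\ y}\in\reals^{2(\sigma+m)}$ is $\zeta\in\mathcal R(B^\intercal)\times\mathcal N(B)$. Let $\Omega=\bmat{0 & I\\ -I & 0}$ be the symplectic form on $\reals^{2(\sigma+m)}$ grouped as (voltage-block, current-block); then one checks $\mathcal R(B^\intercal)\times\mathcal N(B)$ is Lagrangian: its dimension is $\mathrm{rank}(B^\intercal)+\dim\mathcal N(B)=\sigma+m$, and $\langle \bmat{B^\intercal z_1\\ n_1},\Omega\bmat{B^\intercal z_2\\ n_2}\rangle = \langle B^\intercal z_1,n_2\rangle - \langle n_1, B^\intercal z_2\rangle = z_1^\intercal(Bn_2) - (Bn_1)^\intercal z_2 = 0$. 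Next, invoke the linear-algebra fact that a Lagrangian subspace $V\subset\reals^{2N}$ admits a splitting $\{1,\dots,N\}=I\sqcup I^c$ (encoded by $J=\mathrm{diag}(\mathbf 1_I)$) such that $V$ is the graph of a skew-symmetric map from the ``$I$-voltage/$I^c$-current'' coordinates to the complementary ones: with $\hat w=\bmat{J & I-J}\zeta$ selecting those coordinates and $\hat u=\bmat{I-J & J}\zeta$ the rest, there is skew-symmetric $\hat H$ with $\hat u=\hat H\hat w$. The proof of this fact is by choosing $I$ so that the $N\times N$ submatrix of a basis matrix of $V$ picking rows $(\text{$I$ from top block},\ \text{$I^c$ from bottom block})$ is invertible (possible by a rank/exchange argument since the symmetric-kernel structure forces some such selection to be full rank), then showing the resulting graph matrix $\hat H$ is skew-symmetric using $V$'s isotropy $\hat w_1^\intercal\hat u_2 = \hat w_2^\intercal\hat u_1$, i.e. $\hat w_1^\intercal\hat H\hat w_2 = \hat w_2^\intercal\hat H\hat w_1$ for all $\hat w_1,\hat w_2$, which is precisely $\hat H^\intercal=-\hat H$. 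Finally, for the ``moreover'' part, conjugating by a permutation $Q$ — setting $w=Q\hat w$, $u=Q\hat u$ — turns $\hat u=\hat H\hat w$ into $u=(Q\hat H Q^\intercal)w$, and $H:=Q\hat H Q^\intercal$ is again skew-symmetric since congruence by an orthogonal matrix preserves skew-symmetry; the permutation is there just to reorder the coordinates into the grouping $(v,x,i,y)$ that is convenient downstream.

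The main obstacle — and the only place requiring real care rather than bookkeeping — is establishing the existence of a valid index split $I$, i.e. that \emph{some} choice of ``which variable is the free one'' in each slot makes the relation a genuine (everywhere-defined) graph with a well-defined $\hat H$. Abstractly this is the statement that a Lagrangian subspace is ``transverse'' to at least one coordinate Lagrangian of the form $\prod_k(\text{voltage}_k\text{-axis or current}_k\text{-axis})$; this can fail for a \emph{fixed} split but always holds for \emph{some} split, and the clean way to see it is via the Lagrangian Grassmannian / the fact that the Arnold–Maslov charts of this type cover it, or more concretely by a greedy column-selection on the $(\sigma+m)\times 2(\sigma+m)$ basis matrix $\bmat{(B^\intercal)^\intercal\ \text{rows}\\ \text{basis of }\mathcal N(B)}$ combined with the isotropy constraint which rules out selecting both the voltage and current coordinate of the same index. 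I would state this as a short self-contained linear-algebra lemma, prove it by the exchange argument, and then the rest of Lemma~\ref{expressing KCL KVL with skew-symmetric H} is immediate. The explicit $\hat H$ in terms of blocks of $B$ (it will be built from $(B_I)^{-1}B_{I^c}$-type expressions) can be written down but I would not grind through it, since only skew-symmetry and the graph property are needed later.
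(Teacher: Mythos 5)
Your proposal is correct and takes essentially the same route as the paper: the existence of the $0/1$ diagonal matrix $J$ is established there by exactly the greedy/exchange selection you describe (at each index, the orthogonality of $\mathcal{N}(B)$ and $\mathcal{R}(B^\intercal)$ forces at least one of the voltage or current coordinates to remain linearly independent of what has been selected so far), skew-symmetry of $\hat{H}$ is likewise read off from the isotropy identity $\langle (v,x),(i,y)\rangle = 0$, and the permutation is handled by the conjugation $H = Q\hat{H}Q^\intercal$. Your symplectic/Lagrangian-subspace framing is only a repackaging of that same elementary argument.
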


\paragraph{Remark.} 
The diagonal matrix $J$ determines whether to select voltage or current for each component, to construct $\hat{w}$. 
To clarify, $w, u \in \reals^{\sigma + m}$ are the vectors that $\{w_l, u_l \}$ becomes a current and voltage pair of a component for $l=1,2,\dots,\sigma+m$. 
Such partitions of current, voltages values $w, u$ and skew-symmetric matrix $H$ were also considered in \cite{Hughes2017_passivity} with different notation. 
However, we introduce our method of constructing them here, as we will consider $H$ with a special property in Corollary~\ref{cor-w-selection} that plays a key role in the proof.

\begin{proof}
    Define $N$ and $\rB$  be matrices consisted with basis of $\mathcal{N}(B)$ and $\row(B)$ respectively. 
Then KCL and KVL can be shortly rewritten as
\[
    \bmat{ N & 0 \\ 0 & \rB } \bmat{v\\x\\i\\y} = \bmat{ 0 \\ 0 }.
\]
We now show there is a diagonal matrix $J \colon \reals^{\sigma + m} \to \reals^{\sigma + m}$ with entries $0$ or $1$, 
that makes the below square matrix invertible 
\[
    G = \bmat{ N & 0 \\ 0 & \rB \\ J & I_{\sigma + m} - J } \in \reals^{2(\sigma+m) \times 2(\sigma+m)}.
\]
Name $N_0 = \bmat{ N \\ 0 }$ and $\rB_0 = \bmat{ 0 \\ \rB }$. 
We will attach the standard basis vectors or $0$ below, and increase the index with attached number of rows. 
We proceed induction on the index, until the index becomes $\sigma + m$. 

Suppose, for $0\le k\le \sigma + m -1$, $N_k$ and $\rB_k$ satisfy the form
\begin{equation} \label{form of N_K, B_k}
    N_k = \bmat{ N \\ 0 \\ j_1 \ve_1 \\ \vdots \\ j_k \ve_k }, \quad 
    \rB_k = \bmat{ 0 \\ \rB \\ (1-j_1) \ve_1 \\ \vdots \\ (1-j_k) \ve_k },
\end{equation}
where $j_l \in \set{0,1}$ and $\ve_l \in \reals^{\sigma + m}$ is a standard basis (row) vector for $1\le l \le k$. 
We claim either $\ve_{k+1} \notin \row(N_k)$ or $\ve_{k+1} \notin \row(\rB_k)$ is true.

Proof by contradiction. Suppose not. That is, suppose $\ve_{k+1} \in \row(N_k)$ and $\ve_{k+1} \in \row(\rB_k)$. 
Then there are $\vn = (n_1, \dots, n_{\sigma+m}) \in \row(N)$, $\vr = (r_1, \dots, r_{\sigma+m}) \in \row(\rB)$ and coefficients $a_l$, $b_l$ such that 
\[
    \ve_{k+1} = \vn + \sum_{l=1}^{k} a_l j_l \ve_l = \vr + \sum_{l=1}^{k} b_l (1-j_l) \ve_l.
\]
Taking inner product with $\ve_{p}$, $1\le p \le \sigma + m$, 
we have
\[
    n_p = \begin{cases}
       -a_p & \text{ if } 1\le p \le k, \,\, j_p = 1 \\ 
       0 & \text{ if } 1\le p \le k, \,\, j_p = 0 \\ 
       1 & \text{ if } p = k+1 \\
       0 & \text{ if } k+1 < p \le \sigma + m ,
    \end{cases}
    \qquad
    r_p = \begin{cases}
       0 & \text{ if } 1\le p \le k, \,\, j_p = 1 \\ 
       -b_p & \text{ if } 1\le p \le k, \,\, j_p = 0 \\ 
       1 & \text{ if } p = k+1 \\
       0 & \text{ if } k+1 < p \le \sigma + m .
    \end{cases}
\]
Therefore
\[
    \inner{\vn}{\vr} 
    = \sum_{p=1}^{\sigma + m} n_p r_p
    = n_{k+1} r_{k+1}
    = 1.
\]
By the way, since $\vn \in \row(N) = N(B)$, $\vr \in \row(\rB) = R(N^\intercal)$ we have $\vn \perp \vr$ and so $\inner{\vn}{\vr}=0$. 
A contradiction, we conclude either $\ve_{k+1} \notin \row(N_k)$ or $\ve_{k+1} \notin \row(\rB_k)$ is true.

From the proved claim, we can extend $N_0$, $\rB_0$ to $N_{\sigma+m}$, $\rB_{\sigma+m}$ with keeping the form of \eqref{form of N_K, B_k} by repeating the process below. 
Recall, the desired form of the matrix was 
\[
    G = \bmat{ N & 0 \\ 0 & \rB \\ J & I_{\sigma + m} - J } 
\]
with diagonal matrix $J \in \reals^{(\sigma + m) \times (\sigma + m)}$ with entries $0$ or $1$. 
By the construction, we see matrix $\bmat{ N_{\sigma+m} & \rB_{\sigma+m} }$ satisfies the desired form. 
Moreover, we know the nonzero rows of $N_{\sigma+m}$ and $\rB_{\sigma+m}$ are linearly independent respectively, by their construction. 
By the form of $G$ we see if $l$-th row of $N_{\sigma+m}$ is nonzero then $l$-th row of $\rB_{\sigma+m}$ is zero and vice-versa, we conclude the rows of $G$ are linearly independent. 
Therefore, $G$ is invertible. 

Observe
\begin{equation} \label{w express all components with linear combination}
    G \bmat{ v \\ x \\ i \\ y } 
    = \bmat{ N & 0 \\ 0 & \rB \\ J & I_{\sigma + m} - J } 
    \bmat{ v \\ x \\ i \\ y } 
    = \bmat{ 0  \\ \hat{w} } 
    \quad \Longrightarrow \quad
    \bmat{ v \\ x \\ i \\ y }
    =  G^{-1} \bmat{ 0  \\ \hat{w} }.
\end{equation}
We know above equation holds for arbitrarily chosen $(v,x)$, $(i,y)$ that satisfies KVL and KCL respectively. 
Observe $\dom(G) = R(B^\intercal) \times N(B)$ and from dimension theorem  we know
\[
    \dim(R(B^\intercal) \times N(B))
    = \dim(R(B^\intercal)) + \dim(N(B))
    = \sigma + m.
\]
As $G$ is invertible, we have $\dim(R(G)) = \dim(\dom(G)) = \sigma + m$. 
Therefore the values of the components of $\hat{w}$ can be arbitrary values in $\reals$. 

Rearranging the rows of $G^{-1}$, from \eqref{w express all components with linear combination} we obtain $\tilde{H} \in \reals^{2(\sigma+m) \times 2(\sigma+m)}$ that satisfies
\[
    \bmat{ \hat{w} \\ \hat{u} }
    = \tilde{H} \bmat{ 0 \\ \hat{w} }
    = \bmat{ \tilde{H}_0^{w} & \tilde{H}_{w}^{w} \\ \tilde{H}_0^{u} & \tilde{H}_{w}^{u}  } \bmat{ 0 \\ \hat{w} },
\]
where the block matrices are in $\reals^{(\sigma + m)\times(\sigma + m)}$. 
Now, naming $\hat{H} = \tilde{H}_{w}^{u}$ we get
\[
    \hat{u} = \hat{H} \hat{w}.
\]
Now to show $H$ is skew-symmetric, recall from $(v,x) \in R(B^\intercal)$ and $(i,y)\in N(B)$ we have $\inner{(v,x)}{(i,y)} =0$. 
Thus for all $\hat{w} \in \reals^{\sigma + m}$, we have
\begin{align*}
    \inner{\hat{w}}{\hat{H}\hat{w}} = \inner{\hat{w}}{\hat{u}} = \inner{(v,x)}{(i,y)} =0. 
\end{align*}
Therefore $\hat{H}$ is skew-symmetric.  

Finally, let $Q\colon\reals^{\sigma + m} \to \reals^{\sigma + m}$ be a permutation matrix. 
Define $H = Q\hat{H}Q^{\intercal}$. 
Since 
\begin{align*}
    H^\intercal = Q\hat{H}^{\intercal}Q^{\intercal} = Q(-\hat{H})Q^{\intercal} = -H, 
\end{align*}
$H$ is skew-symmetric. 
And from $Q^\intercal Q = I_{\sigma+m}$ we have
\[
    \hat{u} = \hat{H} \hat{w} 
    \quad \iff \quad 
    u = Q\hat{u} = Q \hat{H} \hat{w} = Q \hat{H} Q^{\intercal} Q \hat{w} = H w,
\]
we conclude the proof. 

\end{proof}

\begin{corollary} \label{cor-w-selection}
Recall $\hat{w}$ is composed with voltage or current values of each component. 
Integrate the values of resistors and denote as $r$, integrate $v_{\mC}, i_{\mL}$ as $\pp$ and integrate $i_{\mC}, v_{\mL}$ as $\pp_*$. 
Then we may rearrange the elements of $\hat{w}$ with certain permutation matrix $Q$, that $w=Q\hat{w}$ can be decomposed as following order
\[
    w = \bmat{  w_{\pp} \\ w_{\pp_*} \\ w_{\Rm} }, 
    \qquad \text{ where } \quad 
    w_\pp = \bmat{ w_{v_\mC} \\ w_{i_\mL} },  \quad
    w_{\pp^*} = \bmat{ w_{i_\mC} \\ w_{v_\mL} },  \quad
    w_{\Rm} = \bmat{ w_{v_{\Rm}} \\ w_{i_{\Rm}} } = \bmat{ w_{v_\mR} \\ w_{x} \\ w_{i_\mR} \\ w_{y} }. 
\]
Consider rewriting $u=Hw$ in the decomposed way as
\begin{equation} \label{eq-decomposed-H}
    \bmat{ u_{\pp_*} \\ u_{\pp} \\ u_{\Rm} }
    = \bmat{ H_{\pp}^{\pp_*} & H_{\pp_*}^{\pp_*} & H_{\Rm}^{\pp_*} 
        \\ H_{\pp}^{\pp} & H_{\pp_*}^{\pp} & H_{\Rm}^{\pp} 
        \\ H_{\pp}^{\Rm} & H_{\pp_*}^{\Rm} & H_{\Rm}^{\Rm} } 
    \bmat{ w_{\pp} \\ w_{\pp_*} \\ w_{\Rm} }.
\end{equation}
Then there is a diagonal matrix $J$ satisfies the properties considered in Lemma~\ref{expressing KCL KVL with skew-symmetric H}, that corresponding $H$ satisfies
\[
    H_{\pp_*}^{\Rm}=0, \quad
    H_{\pp_*}^{\pp}=0, \quad 
    H_{\Rm}^{\pp_*}=0.
\]
\end{corollary}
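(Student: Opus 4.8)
The plan is to revisit the greedy construction of the hybrid partition in the proof of Lemma~\ref{expressing KCL KVL with skew-symmetric H} and to exercise the freedom it leaves open --- namely the order in which the $\sigma+m$ component slots are processed, and which assignment $j_l\in\{0,1\}$ to make at slot $l$ when $\ve_l$ lies in neither $\row(N_k)$ nor $\row(\rB_k)$. I would process the slots in the order ``capacitors, then inductors, then all resistor and $\partial f$ slots,'' using the preference rule: at a capacitor slot take $j_l=1$ (put $v_\mC$ into $w$) whenever admissible and $j_l=0$ only when forced, and dually at an inductor slot take $j_l=0$ (put $i_\mL$ into $w$) whenever admissible and $j_l=1$ only when forced. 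Collect the capacitor slots assigned to voltage together with the inductor slots assigned to current into the block $\pp$, the remaining forced capacitor and inductor slots into $\pp_*$, and the resistor and $\partial f$ slots into $\Rm$. Letting $Q$ be the permutation that relists $w$ in the order $(w_\pp,w_{\pp_*},w_\Rm)$ then produces exactly the block form of \eqref{eq-decomposed-H}; skew-symmetry of the resulting $H$ is inherited from Lemma~\ref{expressing KCL KVL with skew-symmetric H} because $Q$ is orthogonal. This is the partition under which the subsequent reduction of \eqref{eq-reduced-dynamics} to a differential inclusion in $(v_\mC,i_\mL)$ will go through.

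The reason for processing capacitors first and resistor/$\partial f$ slots last is that every $\pp_*$-slot is, by construction, forced: when a capacitor slot $l$ is forced to $j_l=0$ we have $\ve_l\in\row(N_k)$ at that step, and at that step $\row(N_k)$ is spanned by $\mathcal{N}(B)$ together with the indicators of capacitor slots already placed in the tree, so the dependency forcing slot $l$ lives entirely among capacitor slots (a capacitor-only loop) and is in particular independent of every resistor and $\partial f$ slot. Dually, a forced inductor slot carries a KVL dependency living entirely among inductor slots. Feeding this back into the explicit matrix $G=\bmat{N&0\\0&\rB\\ J&I-J}$, the rows and columns of $G$ indexed by $\pp_*$ decouple from those indexed by $\Rm$, and when $H$ is read off from $G^{-1}$ the block mapping $w_{\pp_*}$ into $u_\Rm$ and the block mapping $w_\Rm$ into $u_{\pp_*}$ must vanish, giving $H_{\pp_*}^\Rm=0$ and $H_\Rm^{\pp_*}=0$; an analogous decoupling of $\pp_*$ from $\pp$ along the forced rows yields $H_{\pp_*}^\pp=0$. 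In the generic case there are no capacitor-only loops and no inductor-only cutsets, $\pp_*$ is empty, and the three blocks are vacuously zero; the content of the corollary is to cover the degenerate case, where the same bookkeeping applies.

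The main obstacle is this last translation --- turning the combinatorial statement ``the forced capacitor and inductor slots close up among themselves'' into the precise vanishing of those blocks of $H$, read off from $G^{-1}$. I would carry it out by maintaining an explicit invariant on the partially built $G$ throughout the greedy loop (after the capacitor phase, the row span of the tree-selectors added so far together with the rows of $N$ already contains $\ve_l$ for every forced capacitor slot, and its interaction with the resistor/$\partial f$ coordinates is controlled) and then inverting $G$ block by block, checking that the Schur complements that appear are supported only where claimed. A secondary point is that the prescribed order never breaks the dichotomy ``$\ve_{k+1}\notin\row(N_k)$ or $\ve_{k+1}\notin\row(\rB_k)$'' of Lemma~\ref{expressing KCL KVL with skew-symmetric H}; this is automatic, since that dichotomy was proved for an arbitrary processing order, and the preference rule only ever selects an assignment already known to be admissible.
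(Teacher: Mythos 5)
Your construction of $J$ coincides with the paper's: greedily prefer $v_{\mC}$ and $i_{\mL}$ so that the $\pp_*$ block consists only of forced slots, with the maximality property that every forced capacitor indicator lies in $\textbf{span}\pr{\set{\ve_{\mC_{l'}} \mid j_{\mC_{l'}}=1}\cup\row(N)}$ (and dually for inductors); your remark that the preference rule never violates the dichotomy of Lemma~\ref{expressing KCL KVL with skew-symmetric H} is also correct. The gap is precisely in the step you flag as the main obstacle. The claim that ``the rows and columns of $G$ indexed by $\pp_*$ decouple from those indexed by $\Rm$'' is not true as stated: the first two block rows of $G$ are full bases of $\mathcal{N}(B)$ and $\row(B)$, and their rows generically have nonzero entries on every component slot (within the voltage half, respectively the current half), so there is no literal block decoupling of $G$, of $G^{-1}$, or of any Schur complement of it for the invariant-plus-block-inversion plan to latch onto. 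The combinatorial fact that ``the dependency forcing slot $l$ lives entirely among capacitor slots'' concerns one particular linear combination witnessing $\ve_l\in\row(N_k)$; by itself it does not constrain the rows of $G^{-1}$ that produce $u_{\Rm}$.

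What closes the argument in the paper is an orthogonality computation carried out row by row on $H$ itself rather than on $G$. First, $H_\alpha^\beta=0$ whenever $\alpha$ is a current block and $\beta$ a voltage block (or vice versa), since each row of $u=Hw$ is a pure KCL or pure KVL relation; this reduces the claim to $H_{i_\mC}^{i_{\Rm}}=0$ and $H_{v_\mL}^{v_{\Rm}}=0$ (and the analogous blocks of $H_{\pp_*}^{\pp}$). Then, the $k$th row of $u_{i_{\Rm}} = \bmat{ H_{i_\mL}^{i_{\Rm}} & H_{i_\mC}^{i_{\Rm}} & H_{i_{\Rm}}^{i_{\Rm}} } ( w_{i_{\mL}}, w_{i_{\mC}}, w_{i_{\Rm}})$ corresponds to some $\vr\in\row(\tilde{B})$ whose coordinates at the forced capacitor slots are exactly the entries $b_l$ of $\bmat{H_{i_\mC}^{i_{\Rm}}}_k$ and whose coordinates at the selected capacitor slots vanish. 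By the maximality property, $\sum_l b_l\ve_{\mC_l}$ lies in $\textbf{span}\pr{\set{\ve_{\mC_{l'}} \mid j_{\mC_{l'}}=1}\cup\row(N)}$, and since $\vr\perp\row(N)$ and $\vr\perp\ve_{\mC_{l'}}$ for every selected $l'$, one gets $0=\langle \vr, \sum_l b_l\ve_{\mC_l}\rangle=\sum_l b_l^2$, so the block vanishes; $H_{\Rm}^{\pp_*}=0$ then follows from skew-symmetry. Without some version of this $\row(N)\perp\row(\tilde{B})$ argument your proposal establishes the right choice of $J$ but not the vanishing of the blocks, which is the actual content of the corollary.
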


\begin{proof} [Proof]
        Name the indices as $\mC_l, \mL_k \in \set{1, \dots, \sigma + m}$ for $l\in\set{1,\dots, |\mC|}$, $k\in\set{1,\dots, |\mL|}$ that satisfy
    \[
        \ve_{\mC_l} \bmat{v\\x} = v_{\mC_l}, \qquad \ve_{\mL_k}\bmat{i\\y} = i_{\mL_k}.
    \]
    First we put $\ve_{\mC_l}$'s and $\ve_{\mL_k}$'s in $J$ as many as possible. 
    That is, determine the values of $j_{\mC_l}$'s and $j_{\mL_k}$'s to satisfy
    \begin{itemize}
        \item $\set{\ve_{\mC_l} \mid j_{\mC_l}=1}$ is linearly independent to $\row(N)$.
        \item $\set{\ve_{\mC_l} \mid j_{\mC_l}=1} \cup \set{ \ve_{\mC_{l'}}}$ is linearly dependent to $\row(N)$ for any $l'\in\set{  j_{\mC_l} \ne 1}$.
        \item $\set{\ve_{\mL_s} \mid j_{\mL_k}=0}$ is linearly independent to $\row(\tilde{B})$.
        \item $\set{\ve_{\mL_s} \mid j_{\mL_k}=0} \cup \set{ \ve_{\mL_{s'}}}$ is linearly dependent to $\row(\tilde{B})$ for any $s'\in\set{ j_{\mL_s} \ne 0 }$.
    \end{itemize}
    Next, fill the remaining $j$'s as we've done in Lemma~\ref{expressing KCL KVL with skew-symmetric H}. 
    
    Since the proof can be applied using the same argument to other cases, we will focus on a specific case. 
    Focusing on the last row of \eqref{eq-decomposed-H}, we can furthermore decompose and write as following
    \[
         u_{\Rm}
        =  \bmat{ H_{\pp}^{\Rm} & H_{\pp_*}^{\Rm} & H_{\Rm}^{\Rm} } 
            \bmat{ w_{\pp} \\ w_{\pp_*} \\ w_{\Rm} }
        \,\, \iff \,\,
        \bmat{ u_{i_\Rm} \\ u_{v_\Rm} }
        =  \bmat{ 0 & H_{i_\mL}^{i_{\Rm}} & H_{i_\mC}^{i_{\Rm}} & 0 &  0 & H_{i_{\Rm}}^{i_{\Rm}} \\ 
        H_{v_\mC}^{v_{\Rm}} & 0 & 0 &  H_{v_\mL}^{v_{\Rm}} & H_{v_{\Rm}}^{v_{\Rm}} & 0  }
    \bmat{ w_{v_{\mC}} \\ w_{i_{\mL}} \\ w_{i_{\mC}}  \\ w_{v_{\mL}} \\ w_{v_{\Rm}} \\ w_{i_{\Rm}} }.
    \]
    Note, since above equations origin from KCL and KVL (which are linear equations only with current values or voltage values), $H_\alpha^\beta=0$ if $\alpha$ is current and $\beta$ is voltage, and vice-versa. Refer \citep[Theorem 6.3]{SeshuReed1961_linear}.

    Observe $H_{\pp_*}^{\Rm} = \bmat{ H_{i_\mC}^{i_{\Rm}} & 0 \\ 0 &  H_{v_\mL}^{v_{\Rm}} }$, here we show $H_{i_\mC}^{i_{\Rm}}=0$. 
    Focusing on arbitrary $k$'th row of $H_{i_\mC}^{i_{\Rm}}$, from above equality we get
    \[
        u_{i_{\Rm_k}}
        = \bmat{ H_{i_\mL}^{i_{\Rm}} & H_{i_\mC}^{i_{\Rm}} & H_{i_{\Rm}}^{i_{\Rm}} }_k \bmat{ w_{i_{\mL}} \\ w_{i_{\mC}} \\  w_{i_{\Rm}} }
        \quad \iff \quad 
        0 = \bmat{ H_{i_\mL}^{i_{\Rm}} & H_{i_\mC}^{i_{\Rm}} & H_{i_{\Rm}}^{i_{\Rm}} }_k \bmat{ w_{i_{\mL}} \\ w_{i_{\mC}} \\  w_{i_{\Rm}} } - u_{i_{\Rm_k}},
    \]
    where the subscript $k$ means the $k$'th row of the block matrix. 
    As this is a linear equation of current values, it origins from KCL, thus there is a vector $\vr \in \row(\tilde{B})$ corresponding to this equation, \ie 
    \[
        \vr \bmat{ i \\ y } = \bmat{ H_{i_\mL}^{i_{\Rm}} & H_{i_\mC}^{i_{\Rm}} &  H_{i_{\Rm}}^{i_{\Rm}} }_k 
        \bmat{ w_{i_{\mL}} \\ w_{i_{\mC}} \\ w_{i_{\Rm}} } - u_{i_{\Rm_k}}.
    \]
    On the other hand, as $w_{i_{\mL}}, w_{i_{\mC}}, w_{i_{\Rm}}$ are consisted with the components of $i,y$ that corresponds to $j_l=0$, there are coefficient vectors $a\in \reals^{|\mL|}$, $b\in\reals^{|\mC|}$, $c\in\reals^{|\mR|+m}$ that satisfies
    \[
        \bmat{ H_{i_\mL}^{i_{\Rm}} & H_{i_\mC}^{i_{\Rm}} &  H_{i_{\Rm}}^{i_{\Rm}} }_k 
        \bmat{ w_{i_{\mL}} \\ w_{i_{\mC}} \\ w_{i_{\Rm}} }  - u_{i_{\Rm_k}}
        = \pr{ \sum_{s \in \{ j_{\mL_s}=0 \} } \!\!\!\! a_s \ve_{\mL_s} + \sum_{l \in \{j_{\mC_l}=0\} } \!\!\!\! b_l \ve_{\mC_l} + \sum_{q \in \{j_{\Rm_q}=0\} } \!\!\!\! c_q \ve_{\Rm_q} - \ve_{\Rm_k} } \bmat{ i \\ y }.
    \]
    Note $b_l$'s correspond to components of $\bmat{ H_{i_\mC}^{i_{\Rm}}}_{k}$. 
    Organizing, we have
    \[
        \vr = \sum_{s \in \{ j_{\mL_s}=0 \} } \!\!\!\! a_s \ve_{\mL_s} 
            + \sum_{l \in \{j_{\mC_l}=0\} } \!\!\!\! b_l \ve_{\mC_l} 
            + \sum_{q \in \{j_{\Rm_q}=0\} } \!\!\!\! c_q \ve_{\Rm_q} - \ve_{\Rm_k}.
    \]
    Observe that from right hand side, we can see $\vr$ is orthogonal to $\set{\ve_{\mC_{l'}} \mid j_{\mC_{l'}}=1}$.

    By the way, as $\set{\ve_{\mC_{l'}} \mid j_{\mC_{l'}}=1} \cup \set{ \ve_{\mC_{l}}}$ is linearly dependent to $\row(N)$ for all $l \in \{j_{\mC_l}=0\}$, we see
    \[
        \sum_{l \in \{j_{\mC_l}=0\} } \!\!\!\! b_l \ve_{\mC_l} \in \textbf{span} \pr{ \set{\ve_{\mC_{l'}} \mid j_{\mC_{l'}}=1} \cup \row(N)},
    \]
    so there is some coefficient vector $d \in \reals^{|\mC|}$ and $\vn \in \row(N)$ that satisfies
    \[
        \sum_{l \in \{j_{\mC_l}=0\} } \!\!\!\! b_l \ve_{\mC_l} 
        = \sum_{l' \in \{ j_{\mC_{l'}}=1 \} } \!\!\!\! d_{l'} \ve_{\mC_{l'}} + \vn.
    \]
    However, as $\vr \in \row(\tilde{B})$ and $\row(\tilde{B}) \perp \row(N)$, we have $\inner{\vr}{\vn}=0$. 
    Moreover, as $\vr$ is orthogonal to $\set{\ve_{\mC_{l'}} \mid j_{\mC_{l'}}=1}$, we conclude
    \[
        0 = \inner{\vr}{\sum_{l' \in \{ j_{\mC_{l'}}=1 \}} \!\!\!\! d_{l'} \ve_{\mC_{l'}} + \vn}
        = \inner{\vr}{\sum_{l \in \{j_{\mC_l}=0\}} \!\!\!\! b_l \ve_{\mC_l}}
        = \norm{ \sum_{l \in \{j_{\mC_l}=0\}} \!\!\!\! b_l \ve_{\mC_l} }^2
        = \sum_{l \in \{j_{\mC_l}=0\}} \!\!\!\! b_l^2 .
    \]
    Therefore, as $b_l$'s corresponds to components of $\bmat{ H_{i_\mC}^{i_{\Rm}}}_{k}$, we conclude $\bmat{ H_{i_\mC}^{i_{\Rm}}}_{k} = 0$.
    As $k$ was arbitrary, we get $H_{i_\mC}^{i_{\Rm}} = 0$. 
    Similarly we can show $H_{v_\mL}^{v_{\Rm}}=0$, and thus $H_{\pp_*}^{\Rm}=0$.     
    Repeating the same argument, we can show $H_{\pp_*}^{\pp}=0$. 
    Finally, as $H$ is skew-symmetric, we have $H_{\Rm}^{\pp_*} = -(H_{\pp_*}^{\Rm})^{\intercal} = 0$.
\end{proof}

We now move on to V-I relations of resistors. 
To express V-I relations in terms of $w$ and $u$, we adopt partial inverse. 
\paragraph{Definition.} \citep[Definition 20.42]{BauschkeCombettes2017_convex} 
Let $\varmathbb{M} \colon \reals^d \rightrightarrows \reals^d$ be a set-valued operator and let $K$ be a
closed linear subspace of $\reals^d$. 
Denote $\Pi_K \colon \reals^d \to \reals^d$ the projection onto $K$ as
\begin{align*}
    \Pi_{K}(z) = \underset{k \in K}{\argmin} \norm{ z - k }.
\end{align*}
The \textit{partial inverse} of $\varmathbb{M}$ with respect to $K$ is the
operator $\varmathbb{M}_K \colon \reals^d \rightrightarrows \reals^d$ defined by 
\begin{align*}
    \text{gra} \, \varmathbb{M}_K   = \set{ (\Pi_{K} \text{x} + \Pi_{K^{\perp}} \text{y}, \Pi_{K} \text{y} + \Pi_{K^{\perp}} \text{x}) \mid  (\text{x},\text{y}) \in \text{gra} \, \varmathbb{M} }, 
\end{align*}
\ie,
\begin{align*}
    u \in \varmathbb{M}_{K} (w)
    \iff \exists \text{x}, \text{y} \text{ such that } \text{y} \in \varmathbb{M} \text{x} \text{ and } 
    (w,u) = (\Pi_{K} \text{x} + \Pi_{K^{\perp}} \text{y}, \Pi_{K} \text{y} + \Pi_{K^{\perp}} \text{x}).
\end{align*}

We then prove important properties of the function related to V-I relations for resistors. 

\begin{lemma} \label{lem-partialF-fulldomain-continuous}
    Suppose $f$ is $\mu$-strongly convex and $M$-smooth function. 
    Let $Q_{\Rm}, H_{\Rm}^{\Rm}, J_{\Rm} \colon \reals^{|\mR|+m} \to \reals^{|\mR|+m}$ be a permutation matrix, 
    a skew-symmetric matrix, 
    a diagonal matrix with entries $1$ or $0$ respectively
    and let $K = \range(J_{\Rm})$. 
    Define $F\colon\reals^{|\mR|+m}\to\reals$ as
    \[
        F(v_{\mR}, x) = \frac{1}{2} \norm{ v_{\mR} }^2 + f(x).
    \]
    Then the following holds.
    \begin{itemize}
        \item [(i)] $\dom ( Q_\Rm (\nabla F)_K Q_\Rm^{\intercal} - H_{r}^{r} )^{-1} = \reals^{|\mR|+m}$.
        \item [(ii)] $( Q_\Rm (\nabla F)_K Q_\Rm^{\intercal} - H_{r}^{r} )^{-1}$ is Lipschitz continuous monotone operator. 
    \end{itemize}
\end{lemma}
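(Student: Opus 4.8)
The plan is to reduce both (i) and (ii) to one structural statement: the operator $\varmathbb{T} := Q_\Rm (\nabla F)_K Q_\Rm^{\intercal} - H_\Rm^\Rm$ is single-valued, maximal monotone, and $c$-strongly monotone, where $c = \mu_0/(1+L_0^2)>0$ with $\mu_0 = \min\{1,\mu\}$ and $L_0 = \max\{1,M\}$. Granting this, (i) follows because a maximal monotone, $c$-strongly monotone operator is surjective (by Minty's theorem, e.g.\ applied to $\varmathbb{T} = (\varmathbb{T}-cI) + cI$, noting $c<1$), so $\varmathbb{T}^{-1}$ is defined on all of $\reals^{|\mR|+m}$; and (ii) follows because $c\,\norm{z_1-z_2}^2 \le \inner{\varmathbb{T}z_1-\varmathbb{T}z_2}{z_1-z_2} \le \norm{\varmathbb{T}z_1-\varmathbb{T}z_2}\,\norm{z_1-z_2}$ shows $\varmathbb{T}^{-1}$ is single-valued and $(1/c)$-Lipschitz, and the inverse of a monotone operator is monotone.

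First I would note that $F(v_\mR,x) = \tfrac12\norm{v_\mR}^2 + f(x)$ is $\mu_0$-strongly convex and $L_0$-smooth, because $\tfrac12\norm{\cdot}^2$ is $1$-strongly convex and $1$-smooth and $f$ is $\mu$-strongly convex and $M$-smooth; hence $\nabla F$ is a single-valued, full-domain, maximal monotone operator that is $\mu_0$-strongly monotone and $L_0$-Lipschitz.

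The core computation is the strong-monotonicity estimate for the partial inverse $(\nabla F)_K$. Writing $P = \Pi_K$ and $P^\perp = \Pi_{K^\perp}$, every graph point of $(\nabla F)_K$ has the form $(w,u) = (P\xi + P^\perp\nabla F(\xi),\, P\nabla F(\xi) + P^\perp\xi)$. For two such points with parameters $\xi_1,\xi_2$, put $p = \xi_1-\xi_2$ and $q = \nabla F(\xi_1)-\nabla F(\xi_2)$, so $w_1-w_2 = Pp+P^\perp q$ and $u_1-u_2 = Pq+P^\perp p$. Expanding $\inner{u_1-u_2}{w_1-w_2}$, the two $K \perp K^\perp$ cross terms vanish and, using that $P$ and $P^\perp$ are self-adjoint idempotents, the expression collapses exactly to $\inner{u_1-u_2}{w_1-w_2} = \inner{q}{p} = \inner{\nabla F(\xi_1)-\nabla F(\xi_2)}{\xi_1-\xi_2} \ge \mu_0\norm{p}^2$. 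Meanwhile $\norm{w_1-w_2}^2 = \norm{Pp}^2 + \norm{P^\perp q}^2 \le \norm{p}^2 + \norm{q}^2 \le (1+L_0^2)\norm{p}^2$, giving $\inner{u_1-u_2}{w_1-w_2} \ge \tfrac{\mu_0}{1+L_0^2}\norm{w_1-w_2}^2$. The same identity $\inner{u_1-u_2}{w_1-w_2} = \inner{q}{p}$ yields single-valuedness of $(\nabla F)_K$ at one stroke: if $w_1 = w_2$ then $\inner{q}{p}=0$, so $p=0$, so $u_1=u_2$. Maximal monotonicity of $(\nabla F)_K$ is the standard fact that the partial inverse of a maximal monotone operator is maximal monotone \citep[Proposition~20.44]{BauschkeCombettes2017_convex}.

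Finally I would assemble $\varmathbb{T}$. Conjugation by the orthogonal permutation matrix $Q_\Rm$ preserves single-valuedness, maximal monotonicity, and the strong-monotonicity modulus, so $\mathcal{M} := Q_\Rm(\nabla F)_K Q_\Rm^{\intercal}$ is single-valued, maximal monotone, and $\tfrac{\mu_0}{1+L_0^2}$-strongly monotone. Since $H_\Rm^\Rm$ is skew-symmetric, $-H_\Rm^\Rm$ is linear with $\inner{-H_\Rm^\Rm z}{z}=0$ for all $z$, hence a full-domain, Lipschitz, maximal monotone operator; therefore $\varmathbb{T} = \mathcal{M} - H_\Rm^\Rm$ is maximal monotone as a sum of a maximal monotone operator and a full-domain maximal monotone operator \citep[Corollary~25.5]{BauschkeCombettes2017_convex}, is single-valued, and retains the modulus $c = \tfrac{\mu_0}{1+L_0^2}$ because the skew part contributes nothing to the associated quadratic form, which is exactly the structural statement needed. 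I expect the main obstacle to be the strong-monotonicity estimate for $(\nabla F)_K$: bounding $\inner{u_1-u_2}{w_1-w_2}$ below directly in terms of $\norm{w_1-w_2}$ runs into an indefinite cross term, and the right move is to first recognize that this quadratic form equals $\inner{\nabla F(\xi_1)-\nabla F(\xi_2)}{\xi_1-\xi_2}$ and only then bound $\norm{w_1-w_2}$ through $\norm{p}$ and $\norm{q}$; the remaining ingredients (partial inverse preserves maximal monotonicity, orthogonal conjugation is harmless, maximal plus strongly monotone implies bijective with Lipschitz inverse) are standard.
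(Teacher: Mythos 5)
Your proposal is correct and follows essentially the same route as the paper: both reduce the lemma to showing that $Q_\Rm(\nabla F)_K Q_\Rm^{\intercal}-H_\Rm^{\Rm}$ is maximal monotone and strongly monotone, using the partial-inverse inner-product identity and maximality from \citep[Proposition 20.44]{BauschkeCombettes2017_convex}, the vanishing quadratic form of the skew-symmetric part, and surjectivity plus Lipschitz invertibility of strongly monotone maximal monotone operators. The only differences are cosmetic: you obtain the modulus $\mu_0/(1+L_0^2)$ by bounding $\|w_1-w_2\|$ via the Lipschitz constant of $\nabla F$, where the paper instead combines strong monotonicity with cocoercivity and the norm-preservation identity of the partial inverse, and you conclude the Lipschitz bound on the inverse by Cauchy--Schwarz rather than via a resolvent of a shifted monotone operator.
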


\begin{proof}
    Take $(w_r^l, u_r^l) \in \reals^{|\mR|+m}$ for $l \in \set{1,2}$, such that $u_r^l \in (Q_\Rm (\nabla F)_K Q_\Rm^{\intercal}) w_r^l$. 
    As $Q_\Rm$ is permutation matrix, we know $(Q_\Rm)^{-1} = (Q_\Rm)^{\intercal}$, 
    and thus  $Q_\Rm^{\intercal} u_r^l \in (\nabla F)_K (Q_\Rm^{\intercal} w_r^l)$. 
    Then there are $\bmat{ v_{\mathcal{R}}^l \\ x^l }, \bmat{ i_{\mathcal{R}}^l \\ y^l } \in \reals^{|\mR|+m}$ such that 
    \begin{equation} \label{eq-V-I-relation-partial-inverse}
        \bmat{ i_{\mathcal{R}}^l \\ y^l }
        = \nabla F \bmat{ v_{\mathcal{R}}^l \\ x^l },
        \quad 
        \pr{ Q_{\Rm}^{\intercal} w_{\Rm}^l , Q_{\Rm}^{\intercal} u_{\Rm}^l }
        = \pr{ \Pi_{K} \bmat{ v_{\mathcal{R}}^l \\ x^l  } + \Pi_{K^{\perp}} \bmat{ i_{\mathcal{R}}^l \\ y^l  } ,
            \Pi_{K} \bmat{ i_{\mathcal{R}}^l \\ y^l  } + \Pi_{K^{\perp}} \bmat{ v_{\mathcal{R}}^l \\ x^l  } }.
    \end{equation}
    By \citep[Proposition 20.44, (iii)]{BauschkeCombettes2017_convex}, we have 
    \[
        \inner{ Q_{\Rm}^{\intercal}(w_r^1 - w_r^2) }{ Q_{\Rm}^{\intercal}(u_r^1 - u_r^2) }  
        = \inner{ \bmat{ v_{\mathcal{R}}^1 \\ x^1 } - \bmat{ v_{\mathcal{R}}^2 \\ x^2 } }
            { \bmat{ i_{\mathcal{R}}^1 \\ y^1 } - \bmat{ i_{\mathcal{R}}^2 \\ y^2 } }  .
    \]
    Moreover, we can check
    \begin{align*}
        &\norm{ \bmat{ v_{\mathcal{R}}^1 \\ x^1 } - \bmat{ v_{\mathcal{R}}^2 \\ x^2 } }^2 
        + \norm{ \bmat{ i_{\mathcal{R}}^1 \\ y^1 } - \bmat{ i_{\mathcal{R}}^2 \\ y^2 }  }^2 \\
        &= \norm{ \Pi_{K} \pr{ \bmat{ v_{\mathcal{R}}^1 \\ x^1 } - \bmat{ v_{\mathcal{R}}^2 \\ x^2 }  } }^2
         + \norm{ \Pi_{K^{\perp}}\pr{ \bmat{ v_{\mathcal{R}}^1 \\ x^1 } - \bmat{ v_{\mathcal{R}}^2 \\ x^2 }  } }^2   \\&\quad
         + \norm{ \Pi_{K} \pr{ \bmat{ i_{\mathcal{R}}^1 \\ y^1 } - \bmat{ i_{\mathcal{R}}^2 \\ y^2 } } }^2 
         + \norm{ \Pi_{K^{\perp}}\pr{ \bmat{ i_{\mathcal{R}}^1 \\ y^1 } - \bmat{ i_{\mathcal{R}}^2 \\ y^2 } } }^2  
         = \norm{ w_r^1 - w_r^2 }^2 + \norm{ u_r^1 - u_r^2 }^2.
    \end{align*}
    
    Define $\mu_{\min} = \set{ \mu , 1 }$ and $M_{\min} = \set{M, 1}$. 
    Then we can check $\nabla F$ is $\mu_{\text{min}}$-strongly convex and $M_{\text{min}}$-smooth, we see
\BEAS
    \inner{ (i_{\mathcal{R}}^1, y^1) - (i_{\mathcal{R}}^2, y^2) }{ (v_{\mathcal{R}}^1, x^1) - (v_{\mathcal{R}}^2, x^2) }  
    &\ge&  \mu_{\text{min}} \norm{ (v_{\mathcal{R}}^1, x^1) - (v_{\mathcal{R}}^2, x^2) }^2, \\
    \inner{ (i_{\mathcal{R}}^1, y^1) - (i_{\mathcal{R}}^2, y^2) }{ (v_{\mathcal{R}}^1, x^1) - (v_{\mathcal{R}}^2, x^2) } 
    &\ge&  M_{\text{min}} \norm{ (i_{\mathcal{R}}^1, y^1) - (i_{\mathcal{R}}^2, y^2) }^2.
\EEAS
Thus
\BEAS
    \inner{ w_r^1 - w_r^2 }{ u_r^1 - u_r^2 }   
    &=& \inner{ Q_{\Rm}^{\intercal}(w_r^1 - w_r^2) }{ Q_{\Rm}^{\intercal}(u_r^1 - u_r^2) }  
    = \inner{ \bmat{ v_{\mathcal{R}}^1 \\ x^1 } - \bmat{ v_{\mathcal{R}}^2 \\ x^2 } }
            { \bmat{ i_{\mathcal{R}}^1 \\ y^1 } - \bmat{ i_{\mathcal{R}}^2 \\ y^2 } } \\
    &\ge& \frac{\mu_{\text{min}}  + M_{\text{min}}}{2} \pr{ \norm{ \bmat{ v_{\mathcal{R}}^1 \\ x^1 } - \bmat{ v_{\mathcal{R}}^2 \\ x^2 } }^2 + \norm{ \bmat{ i_{\mathcal{R}}^1 \\ y^1 } - \bmat{ i_{\mathcal{R}}^2 \\ y^2 }  }^2   } \\
    &=& \frac{\mu_{\text{min}}  + M_{\text{min}}}{2} \pr{ \norm{ w_r^1 - w_r^2 }^2 + \norm{ u_r^1 - u_r^2 }^2 } \\
    &\ge&   \frac{\mu_{\text{min}}  + M_{\text{min}}}{2} \norm{ w_r^1 - w_r^2 }^2, 
\EEAS
we see $Q_\Rm (\nabla F)_K Q_\Rm^{\intercal}$ is $\frac{\mu_{\text{min}}  + M_{\text{min}}}{2}$-strongly monotone. 
Note we can check $(\nabla F)_K$ is also strongly monotone, by considering the special case $Q_\Rm = I_{|\Rm|+m}$. 
Lastly, since $H_{r}^{r}$ is skew-symmetric, we know $\inner{H_{r}^{r} z}{z}=0$ for all $z\in\reals^{|\mR|+m}$. 
Therefore for arbitrary $(\tilde{w}_r^l, \tilde{u}_r^l) \in \reals^{|\mR|+m}$ with $l \in \set{1,2}$ such that $\tilde{u}_r^l \in \pr{  Q_\Rm (\nabla F)_K Q_\Rm^{\intercal} - H_{\Rm}^{\Rm} } \tilde{w}_r^l$, 
since ${u}_r^l = \tilde{u}_r^l + H_{\Rm}^{\Rm} \tilde{w}_r^l \in (Q_\Rm (\nabla F)_K Q_\Rm^{\intercal}) \tilde{w}_r^l$, 
\BEAS
    \inner{ \tilde{w}_r^1 - \tilde{w}_r^2 }{ \tilde{u}_r^1 - \tilde{u}_r^2 }
    &=& \inner{ \tilde{w}_r^1 - \tilde{w}_r^2 }{ {u}_r^1 - {u}_r^2 - H_{\Rm}^{\Rm} (\tilde{w}_r^1 - \tilde{w}_r^2) } \\
    &=& \inner{ \tilde{w}_r^1 - \tilde{w}_r^2 }{ {u}_r^1 - {u}_r^2 } \\
    &\ge&   \frac{\mu_{\text{min}}  + M_{\text{min}}}{2} \norm{ w_r^1 - w_r^2 }^2 
\EEAS
thus $(Q_\Rm (\nabla F)_K Q_\Rm^{\intercal} - H_{\Rm}^{\Rm})$ is also $\frac{\mu_{\text{min}}  + M_{\text{min}}}{2}$-strongly monotone.

Now since $\nabla F$ is maximal monotone, $(\nabla F)_K$ is maximal monotone by \citep[Proposition 20.44, (v)]{BauschkeCombettes2017_convex}. 
Since $(\nabla F)_K$ is strongly monotone, 
we have $\dom (\nabla F)_K = \reals^{|\mR|+m}$ by \citep[Proposition 22.11]{BauschkeCombettes2017_convex}, 
and thus $Q_\Rm (\nabla F)_K Q_\Rm^{\intercal}$ is maximal monotone by \citep[Theorem 12]{RyuYin2022_largescale}. 
Moreover, since both $Q_\Rm (\nabla F)_K Q_\Rm^{\intercal}$ and $-H_\Rm^{\Rm}$ have full domain, 
$(Q_\Rm (\nabla F)_K Q_\Rm^{\intercal} - H_\Rm^{\Rm})$ is maximal monotone by \citep[Theorem 10]{RyuYin2022_largescale}.

Organizing, $(Q_\Rm (\nabla F)_K Q_\Rm^{\intercal} - H_\Rm^{\Rm})$ is maximal monotone and strongly monotone. 
Therefore by \citep[Proposition 22.11]{BauschkeCombettes2017_convex}, 
we conclude (ii). 
Finally, observe
\BEAS
    (Q_\Rm (\nabla F)_K Q_\Rm^{\intercal} - H_\Rm^{\Rm})^{-1}
    &=& \Bigg( \underbrace{ Q_\Rm (\nabla F)_K Q_\Rm^{\intercal} - H_\Rm^{\Rm} - \frac{\mu_{\text{min}}  + M_{\text{min}}}{2} I_{|\mR|+m} }_{=:\varmathbb{M}} + \frac{\mu_{\text{min}} + M_{\text{min}}}{2} I_{|\mR|+m} \Bigg)^{-1} \\
    &=& \varmathbb{J}_{ \frac{2}{\mu_{\text{min}} + M_{\text{min}}} \varmathbb{M} }  
        \circ \frac{2}{\mu_{\text{min}} + M_{\text{min}}} I_{|\mR| + m}.
\EEAS
Since $\varmathbb{M}$ is monotone, $\frac{2}{\mu_{\text{min}} + M_{\text{min}}} \varmathbb{M}$ is also monotone, 
by \citep[Corollary 23.9]{BauschkeCombettes2017_convex} we know $\varmathbb{J}_{ \frac{2}{\mu_{\text{min}} + M_{\text{min}}} \varmathbb{M} }$ is $1$-Lipschitz continuous. 
Therefore $(Q_\Rm (\nabla F)_K Q_\Rm^{\intercal} - H_\Rm^{\Rm})^{-1}$ is $\frac{2}{\mu_{\text{min}} + M_{\text{min}}}$-Lipschitz continuous. Finally it is monotone as it is an inverse of a monotone operator, we conclude (ii).
\end{proof}

Finally, we are ready to prove Theorem~\ref{thm-well-posedness}. 
\begin{proof}[Proof of Theorem~\ref{thm-well-posedness}]
By Lemma~\ref{lem-enough-to-show-reduced} it is suffices to show Theorem~\ref{thm-well-posedness-for-reduced}. 

\emph{(i) Well-posedness and Lipschitz continuity of  $v_\mC, i_\mL$. Existence of the whole curve $(v,x,i,y)$.} \\      
    Define an operator $\varmathbb{A} \colon \reals^{|{\mL}|+|{\mC}|} \rightrightarrows \reals^{|{\mL}|+|{\mC}|}$ as
    \begin{align} \label{eq-def-varmathbbA}
        \varmathbb{A} = \Bigg\{ 
            \pr{ \bmat{ {v}_{\mathcal{C}} \\ {i}_{\mathcal{L}} }, \,
                  -\bmat{ {i}_{\mathcal{C}} \\ {v}_{\mathcal{L}} } } \,\Big|&\,\, \exists \,
            {v} =  ( {v}_{\mathcal{R}}, {v}_{\mathcal{L}}, {v}_{\mathcal{C}} ), \,\, 
            {i} =  ( {i}_{\mathcal{R}}, {i}_{\mathcal{L}}, {i}_{\mathcal{C}} ), \,\, (x,y)  \\& \quad  \nonumber
           \text{ such that } \bmat{ i \\ y } \in \mathcal{N}(B),  \,\,
            \bmat{v \\ x } \in \mathcal{R}(B^{\intercal}), \,\,
            y = \nabla f(x), \,\,
            v_{\mathcal{R}} = i_{\mathcal{R}} \Bigg\}.
    \end{align}
    We prove $\varmathbb{A}$ is maximal monotone by providing an explicit expression of $\varmathbb{A}$ and apply Theorem~\ref{known-thm-for-well-posedness}. 

    From Corollary~\ref{cor-w-selection}, we know there is a diagonal matrix $J\colon\reals^{\sigma + m} \to \reals^{\sigma + m}$, a permutation matrix $Q\colon\reals^{\sigma + m} \to \reals^{\sigma + m}$ and 
    a corresponding skew-symmetric matrix $H\colon\reals^{\sigma + m} \to \reals^{\sigma + m}$ that satisfies
    \begin{align} \label{eq-reduced-H}
        \bmat{ i \\ y } \in \mathcal{N}(B),  \,\,
        \bmat{v \\ x } \in \mathcal{R}(B^{\intercal}) 
        \iff
        \bmat{ u_{\pp_*} \\ u_{\pp} \\ u_{\Rm}  }
        = \bmat{ H_{\pp}^{\pp_*} & H_{\pp_*}^{\pp_*} & H_{\Rm}^{\pp_*} 
            \\ H_{\pp}^{\pp} & 0 & 0
            \\ H_{\pp}^{\Rm} & 0 & H_{\Rm}^{\Rm} } 
        \bmat{ w_{\pp} \\ w_{\pp_*} \\ w_{\Rm} }
    \end{align}
    where $w$ and $u$ are defined as in Lemma~\ref{expressing KCL KVL with skew-symmetric H}. 

    Define $F\colon \reals^{|\mR|+m} \to \reals$ as $F({v}_{\mathcal{R}}, x) = \frac{1}{2} \norm{ {v}_{\mathcal{R}} }^2 + f(x)$. 
    Then it is straight forward that
    \[
        y = \nabla f(x), \,\,  v_{\mathcal{R}} = i_{\mathcal{R}}
        \iff \bmat{ i_{\mathcal{R}} \\ y } = \nabla F \bmat{ v_{\mathcal{R}} \\ x }.
    \]
    By the construction of $w$, $u$, there is a diagonal matrix $J_{\Rm} \colon \reals^{|\mR|+m} \to \reals^{|\mR|+m}$ with entries $1$ or $0$ and a permutation matrix $Q_\Rm \colon \reals^{|\mR|+m} \to \reals^{|\mR|+m}$ that satisfies
    \[
        \bmat{ Q_\Rm^{-1} w_\Rm \\ Q_\Rm^{-1} u_\Rm }
        = \bmat{ Q_\Rm^{\intercal} w_\Rm \\ Q_\Rm^{\intercal} u_\Rm }
        = \bmat{ J_\Rm & I_{|\mR| + m} \\  I_{|\mR| + m} & J_\Rm } 
            \bmat{ v_{\mathcal{R}} \\ x \\ i_{\mathcal{R}} \\ y }.
    \]
    Define $K_{\Rm}=\range(J_{\Rm})$. Note, we can check $\Pi_{K_{\Rm}}(z) = J_{\Rm}z$ and $\Pi_{K^{\perp}_{\Rm}}(z) = (I_{|\mR| + m} - J_{\Rm})z$ for all $z\in\reals^{|\mR|+m}$. 
    Recalling \eqref{eq-V-I-relation-partial-inverse} in Lemma~\ref{lem-partialF-fulldomain-continuous} we see
    \[
        \bmat{ i_{\mathcal{R}} \\ y } = \nabla F \bmat{ v_{\mathcal{R}} \\ x }
        \iff 
        u_{r} = ( Q_\Rm (\nabla F)_{K_\Rm} Q_\Rm^{\intercal} ) w_{r} .
    \]
    From 
    \[
        (Q_\Rm (\nabla F)_{K_\Rm} Q_\Rm^{\intercal})  w_{\Rm}
        = u_{\Rm}
        = H_{\pp}^{\Rm} w_{\pp} + H_{\Rm}^{\Rm} w_{\Rm},
    \]
    we get expression for $w_\Rm$ in terms of $w_\pp$
    \begin{equation} \label{eq-expressing-resistors-with-CL}
        w_{\Rm}
        = \pr{ Q_\Rm (\nabla F)_{K_\Rm} Q_\Rm^{\intercal}  - H_{\Rm}^{\Rm} }^{-1}  H_{\pp}^{\Rm} w_{\pp}.
    \end{equation}
    Now focusing on the expression for $\bmat{ u_{\pp_*} \\ u_{\pp} }$, 
    since $H_{\pp}^{\Rm} = -(H_{\Rm}^{\pp_*})^{\intercal}$ as $H$ is skew-symmetric, 
    eliminating $w_\Rm$ by applying \eqref{eq-expressing-resistors-with-CL} we see 
    \BEA \label{eq-u_p-u_p*-by-w_p-w_p*}
        \bmat{ u_{\pp_*} \\ u_{\pp} }
        &=& \bmat{ H_{\pp}^{\pp_*} & H_{\pp_*}^{\pp_*} & H_{\Rm}^{\pp_*} 
            \\ H_{\pp}^{\pp} & 0 & 0 } 
        \bmat{ w_{\pp} \\ w_{\pp_*} \\ w_{\Rm} } \\
        &=&  - \underbrace{\left( - \bmat{ H_{\pp}^{\pp_*} & H_{\pp_*}^{\pp_*}  \\ H_{\pp}^{\pp} & 0  }
             + \bmat{  H_{\pp}^{\Rm} & 0 }^{\intercal} \pr{ Q_\Rm (\nabla F)_{K_\Rm} Q_\Rm^{\intercal}  - H_{r}^{r} }^{-1} \bmat{  H_{\pp}^{\Rm} & 0 } \right) }_{\varmathbb{B}}  \bmat{ w_{\pp} \\ w_{\pp_*} }. \nonumber
    \EEA
    Since $H$ is skew-symmetric, its principal minors {\small $-\bmat{ H_{\pp}^{\pp_*} & H_{\pp_*}^{\pp_*}  \\ H_{\pp}^{\pp} & 0  }$} and $-H_{r}^{r}$ are skew-symmetric and so maximal monotone. 
    Furthermore, by Lemma~\ref{lem-partialF-fulldomain-continuous} we have $(  Q_\Rm (\nabla F)_{K_\Rm} Q_\Rm^{\intercal} - H_{r}^{r} )^{-1}$ is maximal monotone and $\dom ( (\nabla F)^{1,-1} - H_{r}^{r} )^{-1} = \reals^{|\mR|+m}$. 
    Invoking \citep[Theorem~11, 12]{RyuYin2022_largescale}, we conclude $\varmathbb{B}$ is maximal monotone.
    
    Organizing, we see
    \begin{align*}
        \bmat{ {i}_{\mathcal{C}} \\ {v}_{\mathcal{L}} } \in -\varmathbb{A} \bmat{ {v}_{\mathcal{C}} \\ {i}_{\mathcal{L}} }   
        \iff&\bmat{ i \\ y } \in \mathcal{N}(B),  \,\,
            \bmat{v \\ x } \in \mathcal{R}(B^{\intercal}), \,\,
            y = \nabla f(x), \,\,
            v_{\mathcal{R}} = i_{\mathcal{R}} \\
        \iff&
        \bmat{ u_{\pp_*} \\ u_{\pp} } = -\varmathbb{B} \bmat{ w_{\pp} \\ w_{\pp_*} },
        \text{ where }
        \bmat{ w \\ u  } =  \bmat{ Q & 0 \\ 0 & Q } \bmat{ J & I_{\sigma + m} - J  \\ I_{\sigma + m} - J  & J } \bmat{ v \\ x \\ i \\ y }.
    \end{align*}
    Therefore there is a diagonal matrix $J_{\pp, \pp^*} \colon \reals^{|\mL| + |\mC|} \to \reals^{|\mL| + |\mC|}$ with entries $1$ or $0$ 
    and a permutation matrix $Q_{\pp, \pp^*} \colon \reals^{|\mL| + |\mC|} \to \reals^{|\mL| + |\mC|}$ that satisfies
    \begin{align*}
        \bmat{ {i}_{\mathcal{C}} \\ {v}_{\mathcal{L}} } \in -\varmathbb{A} \bmat{ {v}_{\mathcal{C}} \\ {i}_{\mathcal{L}} }    
        &\iff \bmat{ u_{\pp_*} \\ u_{\pp} } = -\varmathbb{B} \bmat{ w_{\pp} \\ w_{\pp_*} }, 
    \end{align*}
    where
    \[
        \bmat{ Q_{\pp, \pp^*}^{\intercal} & 0 \\ 0 & Q_{\pp, \pp^*}^{\intercal} } 
        \bmat{ w_{\pp} \\ w_{\pp_*} \\ u_{\pp_*} \\ u_{\pp} } 
        =  \bmat{ J_{\pp, \pp^*} & I_{|\mL| + |\mC|} - J_{\pp, \pp^*}  \\ I_{|\mL| + |\mC|} - J_{\pp, \pp^*}  & J_{\pp, \pp^*} } \bmat{ v_{\mC} \\ i_{\mL} \\ i_{\mC} \\ v_{\mL} } .
    \]
    Therefore,
    for $K_{\pp, \pp^*} = \range(J_{\pp, \pp^*})$ we have
    \begin{align*}
        \varmathbb{A}_{K_{\pp,\pp^*}} = Q_{\pp, \pp^*}^{\intercal} \varmathbb{B} \, Q_{\pp, \pp^*}.
    \end{align*}
    Since $\varmathbb{B}$ is a maximal monotone operator with $\dom\varmathbb{B} = \reals^{|\mL| + |\mC|}$, 
    we have $Q_{\pp, \pp^*}^{\intercal} \varmathbb{B} \, Q_{\pp, \pp^*}$ is maximal monotone. 
    Finally from \citep[Proposition 20.44, (v)]{BauschkeCombettes2017_convex}, we conclude $\varmathbb{A}$ is maximal monotone. 

    By applying Theorem~\ref{thm-AC} and its remark, we know there is a unique Lipschitz continuous curve $({v}_{\mathcal{C}}, {i}_{\mathcal{L}}) \colon [0,\infty) \to \reals^{|{\mL}|} \times \reals^{|{\mC}|}$ that satisfies 
    \[
        \frac{d}{dt} \bmat{ v_{\mC} (t) \\ i_{\mL} (t) }
        = - m\varmathbb{A} \bmat{ v_{\mC} (t) \\ i_{\mL} (t) }
        \in - \varmathbb{A} \bmat{ v_{\mC} (t) \\ i_{\mL} (t) }
    \]
    for almost all $t\in[0,\infty)$, where $m \varmathbb{A}$ is the minimum-norm selection of $\varmathbb{A}$. 
    Let $\bmat{ i_{\mathcal{C}} (t) \\ v_{\mathcal{L}} (t) } =- m\varmathbb{A} \bmat{ {v}_{\mathcal{C}} (t) \\ {i}_{\mathcal{L}} (t) }$. 
    Moreover, the definition of $\varmathbb{A}$ implies the existence of accompanying curves $v_{\mathcal{R}}$, $i_{\mathcal{R}}$, $x$ and $y$ that satisfy KCL, KVL and V-I relations. 
    This concludes the existence of the curve. 

\emph{(ii) The whole flow $(v,x,i,y)$ is well-posed and Lipschitz continuous.} \\
    Finally, we show other curves besides $(v_{\mathcal{L}}, i_{\mathcal{C}})$ are defined uniquely and Lipschitz continuous. 
To do so, we prove  there is a Lipschitz continuous function $\mathcal{G}\colon \reals^{|\mC|} \times \reals^{|\mL|} \to \reals^\sigma \times \reals^\sigma \times \reals^m \times \reals^m$ that satisfies
\[
    \mathcal{G}(v_{\mathcal{L}}, i_{\mathcal{C}}) = (v, i, x, y).
\]
We prove the claim by finding the explicit expression of the component functions of $G$. 
We first show one key equation
\begin{equation} \label{eq-iC-vL}
    w_{\pp_*}
    = -(H_{\pp_*}^{\pp_*})^{\intercal} u_{\pp_*}.
\end{equation}
From \eqref{eq-reduced-H} we have $H_{\pp_*}^{\pp_*} w_{\pp_*} = u_{\pp_*}$. 
And as $H$ is skew-symmetric, we have $H_{\pp}^{\pp} = - (H_{\pp_*}^{\pp_*})^{\intercal}$. 
The core information we additionally use here, is the V-I relations $\frac{d}{dt} w_{\pp} = u_{\pp_*}$. 
As differentiation is a linear operation, we get \eqref{eq-iC-vL} by following
\[
    -\pr{ H_{\pp_*}^{\pp_*} }^{\intercal} u_{\pp_*}
        = H_{\pp}^{\pp} u_{\pp_*}
        = H_{\pp}^{\pp} \pr{ \frac{d}{dt} w_{\pp} }
        = \frac{d}{dt} \pr{ H_{\pp}^{\pp} w_{\pp} }
        = \frac{d}{dt} u_{\pp}
        = w_{\pp_*}.
\]
Recall, from \eqref{eq-u_p-u_p*-by-w_p-w_p*} we have
\[
    u_{\pp_*}
    = \pr{ H_{\pp}^{\pp_*}+  H_{\Rm}^{\pp_*} \pr{ Q_\Rm (\nabla F)_{K_\Rm} Q_\Rm^{\intercal} - H_{\Rm}^{\Rm} }^{-1}  H_{\pp}^{\Rm} }  w_{\pp} 
        + H_{\pp_*}^{\pp_*} w_{\pp_*}.
\]
Moving the last term of right hand side to left hand side, 
multiplying both sides by $-(H_{\pp_*}^{\pp_*})^{\intercal} $ and using \eqref{eq-iC-vL}, we 
have
\[
    \pr{ \I + (H_{\pp_*}^{\pp_*})^{\intercal} H_{\pp_*}^{\pp_*}  } w_{\pp_*}
    = -(H_{\pp_*}^{\pp_*})^{\intercal} \pr{ H_{\pp}^{\pp_*}+  H_{\Rm}^{\pp_*} \pr{ Q_\Rm (\nabla F)_{K_\Rm} Q_\Rm^{\intercal} - H_{\Rm}^{\Rm} }^{-1}  H_{\pp}^{\Rm} } w_{\pp}.
\]
Since $\I + (H_{\pp_*}^{\pp_*})^{\intercal} H_{\pp_*}^{\pp_*} \succ 0$ its inverse exists, we conclude 
\[
    w_{\pp_*}
    = 
    \underbrace{ -\pr{ \I + (H_{\pp_*}^{\pp_*})^{\intercal} H_{\pp_*}^{\pp_*} }^{-1}
    (H_{\pp_*}^{\pp_*})^{\intercal} \pr{ H_{\pp}^{\pp_*}+  H_{\Rm}^{\pp_*} \pr{ Q_\Rm (\nabla F)_{K_\Rm} Q_\Rm^{\intercal} - H_{\Rm}^{\Rm} }^{-1}  H_{\pp}^{\Rm} } }_{=:\varmathbb{C}} w_{\pp}.
\]
Organizing, we have
\[
    \bmat{ w_{\pp} \\ w_{\pp_*} \\ w_{\Rm} }
    = 
    \pr{ \bmat{ \I \\ 0 \\ 0  } + \bmat{ 0 \\ \varmathbb{C} \\ 0 }
        + \bmat{  0 \\ 0 \\  \pr{ Q_\Rm (\nabla F)_{K_\Rm} Q_\Rm^{\intercal} - H_{\Rm}^{\Rm} }^{-1}  H_{\pp}^{\Rm}   }    } 
    w_{\pp}.
\]
From Lemma~\ref{lem-partialF-fulldomain-continuous} we know $\pr{ Q_\Rm (\nabla F)_{K_\Rm} Q_\Rm^{\intercal} - H_{\Rm}^{\Rm} }^{-1}$ is Lipschitz continuous, and clearly linear operators are Lipschitz continuous, so ${\varmathbb{C}}$ is Lipschitz continuous as it is composition and sum of Lipschitz continuous functions. 
Therefore $w_{\pp} \mapsto w$ is Lipschitz continuous. 
Finally since $u=Hw$ and $H$ is indeed Lipschitz continuous as a linear operator, mapping $w_{\pp}\mapsto u$ is also Lipschitz continuous. 
As
$(w,u)$ is rearrangement of $({v},{i},x,y)$ and 
$w_{v_{\mC}}, w_{i_{\mL}}$ are component functions of ${v}_\mC$ and ${i}_\mL$, we get the desired result. 

For $(v,i,x,y)$ that satisfies \eqref{eq-reduced-dynamics} with proper initial value, 
we know $(v_\mC, i_{\mL})$ is uniquely defined by previous observation and $(v,i,x,y) = \mathcal{G}(v_\mC, i_{\mL})$ should hold, we conclude $(v,i,x,y)$ is uniquely determined since $\mathcal{G}$ is single valued. 
Furthermore, as $v_\mC(t)$ and $i_{\mL}(t)$ are Lipschitz continuous with respect to $t$, 
we have $(v(t),i(t),x(t),y(t)) = \mathcal{G}(v_\mC(t), i_{\mL}(t))$ is also Lipschitz continuous as it is composition of Lipschitz continuous functions. 
This concludes the proof.

\end{proof}

\clearpage
\section{Equilibrium condition}
Define the set of voltages and currents in the equilibrium of the interconnect 
\[
D_{x, y}=\left\{(v,i) \mid Ai = \begin{bmatrix}
-y\\
0
\end{bmatrix}, v = A^\intercal \begin{bmatrix}
x\\
e
\end{bmatrix}, v_{\mathcal{R}} = D_\mathcal{R} i_{\mathcal{R}}, 
v_{\mathcal{L}}=0, i_{\mathcal{C}}=0 \right \}.
\]

\begin{lemma}\label{lem-ad-admiss-property}
Assume the dynamic interconnect is admissible.
Then for all $(v, i) \in D_{x, y}$
we have
\[
    v_{\mathcal{R}}=i_{\mathcal{R}}=0.
\]
\end{lemma}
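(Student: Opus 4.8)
The core of the argument is a power-balance (Tellegen) identity combined with the orthogonality $\mathcal{R}(E^\intercal)\perp\mathcal{N}(E)$. Fix any $(x,y)$ and any $(v,i)\in D_{x,y}$, and let $e$ be the accompanying non-terminal potentials. Using KVL $v=A^\intercal\begin{bmatrix}x\\ e\end{bmatrix}$, then transferring the transpose across the inner product, then KCL $Ai=\begin{bmatrix}-y\\ 0\end{bmatrix}$, I would first establish
\[
\langle v,i\rangle=\Big\langle A^\intercal\begin{bmatrix}x\\ e\end{bmatrix},\,i\Big\rangle=\Big\langle \begin{bmatrix}x\\ e\end{bmatrix},\,Ai\Big\rangle=\Big\langle \begin{bmatrix}x\\ e\end{bmatrix},\,\begin{bmatrix}-y\\ 0\end{bmatrix}\Big\rangle=-\langle x,y\rangle .
\]
Then, splitting $v=(v_{\mathcal{R}},v_{\mathcal{L}},v_{\mathcal{C}})$ and $i=(i_{\mathcal{R}},i_{\mathcal{L}},i_{\mathcal{C}})$ and using the two equilibrium conditions $v_{\mathcal{L}}=0$ and $i_{\mathcal{C}}=0$ to kill the inductor and capacitor terms, followed by the resistor V-I law $v_{\mathcal{R}}=D_{\mathcal{R}}i_{\mathcal{R}}$, I get
\[
\langle v,i\rangle=\langle v_{\mathcal{R}},i_{\mathcal{R}}\rangle+\langle v_{\mathcal{L}},i_{\mathcal{L}}\rangle+\langle v_{\mathcal{C}},i_{\mathcal{C}}\rangle=\langle v_{\mathcal{R}},i_{\mathcal{R}}\rangle=\langle D_{\mathcal{R}}i_{\mathcal{R}},i_{\mathcal{R}}\rangle=\|i_{\mathcal{R}}\|_{D_{\mathcal{R}}}^{2}\ge 0 .
\]

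Next I would use admissibility to pin down the sign. Since $(v,i)\in D_{x,y}$, the pair $(x,y)$ lies in the left-hand set of the admissibility identity, hence $(x,y)\in\mathcal{R}(E^\intercal)\times\mathcal{N}(E)$; because the row space and the null space of $E$ are orthogonal complements, $\langle x,y\rangle=0$. Combining this with the two displays gives $\|i_{\mathcal{R}}\|_{D_{\mathcal{R}}}^{2}=-\langle x,y\rangle=0$. As $D_{\mathcal{R}}$ is diagonal with nonnegative entries, every summand $(D_{\mathcal{R}})_{jj}(i_{\mathcal{R}})_{j}^{2}$ vanishes, so $v_{\mathcal{R}}=D_{\mathcal{R}}i_{\mathcal{R}}=0$ and $(i_{\mathcal{R}})_{j}=0$ for every resistor of strictly positive resistance.

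The only remaining point, which I expect to be the main obstacle, is the current through the $0$-ohm resistors (ideal wires), about which the power identity is silent because they dissipate no power. Here the standing no-ideal-wire-loop hypothesis is essential, and the cleanest route is to reduce to the case $D_{\mathcal{R}}\succ 0$ exactly as in the node-identification argument used for Theorem~\ref{thm-well-posedness} (Lemma~\ref{lem-enough-to-show-reduced}): contract all $0$-ohm resistors to obtain an equivalent admissible interconnect with positive definite resistance matrix, apply the previous paragraph there, and transport the conclusion back. The delicate bookkeeping is verifying that this contraction carries along the equilibrium constraints $v_{\mathcal{L}}=0$, $i_{\mathcal{C}}=0$ and the admissibility hypothesis unchanged, and that $v_{\mathcal{R}}=0$ on the contracted edges is automatic since they are ideal wires. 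The power-balance core above is otherwise routine.
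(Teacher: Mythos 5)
Your power-balance core is exactly the paper's proof: the identity $\langle v,i\rangle=-\langle x,y\rangle$ via KVL/KCL, cancellation of the inductor and capacitor terms using $v_{\mathcal{L}}=0$ and $i_{\mathcal{C}}=0$, the resistor law to get $\|i_{\mathcal{R}}\|_{D_{\mathcal{R}}}^2$, and admissibility plus $\mathcal{R}(E^\intercal)\perp\mathcal{N}(E)$ to force this to vanish. Up to that point you and the paper coincide line for line.

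Where you diverge is the $0$-ohm issue, and you are right that there is something to worry about: the paper's proof simply stops at $\|i_{\mathcal{R}}\|_{D_{\mathcal{R}}}^2=0$, which pins down $i_{\mathcal{R}_j}=0$ only on resistors with $(D_{\mathcal{R}})_{jj}>0$ (it does give $v_{\mathcal{R}}=0$ everywhere, since $v_{\mathcal{R}_j}=(D_{\mathcal{R}})_{jj}i_{\mathcal{R}_j}$ vanishes either way). However, your proposed fix by contracting the ideal wires cannot close this gap: after contraction those resistor branches no longer exist, so nothing is "transported back" about their currents. Indeed the full conclusion $i_{\mathcal{R}}=0$ is false in general. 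In the circuit of Figure~\ref{fig-RLC-example} (admissible for Figure~\ref{fig-static-ic}), take $y=(0,1,0,-1,0)\in\mathcal{N}(E)$ and any $x\in\mathcal{R}(E^\intercal)$; KCL at nodes $2$, $4$, $7$ together with $i_{C_1}=0$ forces only $i_{R_2}+i_{R_3}=0$, so $i_{R_2}=-y_2=-1\neq 0$ while all constraints defining $D_{x,y}$ are met (the $0$-ohm law $v_{R_2}=v_{R_3}=0$ just identifies the potentials of nodes $2$, $4$, $7$, consistent with $x_2=x_4$). So the honest statement is: $v_{\mathcal{R}}=0$ and $\|i_{\mathcal{R}}\|_{D_{\mathcal{R}}}=0$ always, and $i_{\mathcal{R}}=0$ under the additional hypothesis $D_{\mathcal{R}}\succ 0$. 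This weaker conclusion is all that Theorem~\ref{thm-equilibrium} and the dissipation computation \eqref{e-sec-convergence-power} actually consume, so nothing downstream breaks; but you should state the caveat rather than attempt the contraction argument, which does not deliver the claim.
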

\begin{proof}
First, note that for all $(v, i)$ in the dynamic interconnect
\BEQ\label{e-dyn-ic-vi}
\langle v, i \rangle = \langle e, Ai \rangle 
= \left \langle \begin{bmatrix}
x\\
 e
\end{bmatrix}, \begin{bmatrix}
-y\\
0
\end{bmatrix} \right \rangle
= -\langle x, y \rangle.
\EEQ
Now suppose $(v, i) \in D_{x, y}$, then we have
\[
\langle v, i \rangle = \langle v_{\mathcal{R}}, i_{\mathcal{R}} \rangle =  \|i_{\mathcal{R}}\|_{D_\mathcal{R}}^2 = -\langle x, y \rangle.
\]
From the admissibility assumption we have $x \in \mathcal{R}(E^\intercal)$
and $y \in \mathcal{N}(E)$.
Thus 
\[
\|i_{\mathcal{R}}\|_{D_\mathcal{R}}^2 = -\langle x, y \rangle = 0,
\]
which concludes the proof.
\end{proof}

\begin{theorem}
\label{thm-equilibrium}
Assume the dynamic interconnect is admissible.
If $(x^\star,y^\star)$ is a primal-dual solution pair (with zero duality gap) for the optimization problem, then
there exist $v_{\mathcal{C}} \in \reals^{|C|}$ and $i_{\mathcal{L}} \in \reals^{|L|}$ such that
\[
\left ((0, 0, v_{\mathcal{C}}), (0, i_{\mathcal{L}}, 0)\right) \in D_{x^\star, y^\star}.
\]
Conversely, if $y \in \partial f (x)$ and
\[
\left((v_{\mathcal{R}}, v_{\mathcal{L}}, v_{\mathcal{C}}), (i_{\mathcal{R}}, i_{\mathcal{L}}, i_{\mathcal{C}}) \right) \in D_{x, y},
\]
then
$v_{\mathcal{R}}=i_{\mathcal{R}}=0$ and 
$(x, y)$ is a primal-dual solution pair (with zero-duality) for the optimization problem.
\end{theorem}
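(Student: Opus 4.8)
The plan is to obtain both implications directly from the admissibility identity, which is set up precisely so that $D_{x,y}\neq\emptyset$ if and only if $(x,y)\in\mathcal{R}(E^\intercal)\times\mathcal{N}(E)$, together with Lemma~\ref{lem-ad-admiss-property}. I will use throughout that a pair $(x,y)$ is a primal-dual solution with zero duality gap for \eqref{e-dist-opt-primal} exactly when it satisfies the KKT system \eqref{pd-sol}, which is \cite[Theorem 28.3]{rockafellar1997convex}.

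For the forward implication, I would start from a primal-dual solution $(x^\star,y^\star)$, rewrite it via \eqref{pd-sol} as the membership $(x^\star,y^\star)\in\mathcal{R}(E^\intercal)\times\mathcal{N}(E)$, and then use the admissibility identity to conclude that $D_{x^\star,y^\star}$ is nonempty. Choosing any $(v,i)\in D_{x^\star,y^\star}$, Lemma~\ref{lem-ad-admiss-property} (applicable since the interconnect is admissible) forces $v_{\mathcal{R}}=i_{\mathcal{R}}=0$, while $v_{\mathcal{L}}=0$ and $i_{\mathcal{C}}=0$ hold by the definition of $D_{x^\star,y^\star}$. Reading off the resistor/inductor/capacitor blocks, $v=(0,0,v_{\mathcal{C}})$ and $i=(0,i_{\mathcal{L}},0)$ for some $v_{\mathcal{C}}\in\reals^{|C|}$, $i_{\mathcal{L}}\in\reals^{|L|}$, which is the asserted element of $D_{x^\star,y^\star}$.

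For the converse, I would take $(v,i)\in D_{x,y}$ with $y\in\partial f(x)$ and apply Lemma~\ref{lem-ad-admiss-property} again to get $v_{\mathcal{R}}=i_{\mathcal{R}}=0$. Since $D_{x,y}\neq\emptyset$, the admissibility identity places $(x,y)$ in $\mathcal{R}(E^\intercal)\times\mathcal{N}(E)$, i.e.\ $x\in\mathcal{R}(E^\intercal)$ and $y\in\mathcal{N}(E)$; together with the hypothesis $y\in\partial f(x)$ this is exactly the KKT system \eqref{pd-sol}, so $(x,y)$ is a primal-dual solution with zero duality gap by \cite[Theorem 28.3]{rockafellar1997convex}.

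Once Lemma~\ref{lem-ad-admiss-property} and the admissibility identity are available, the proof is essentially bookkeeping and I do not expect a serious obstacle. The two places that deserve a line of care are the equivalence between ``primal-dual solution with zero duality gap'' and the KKT conditions \eqref{pd-sol} for the subspace-constrained problem \eqref{e-dist-opt-primal} (handled by the cited Rockafellar theorem, and implicitly assuming whatever constraint qualification makes strong duality hold), and the trivial but necessary observation that $v_{\mathcal{R}}=v_{\mathcal{L}}=0$ and $i_{\mathcal{R}}=i_{\mathcal{C}}=0$ force the stated block decompositions of $v$ and $i$.
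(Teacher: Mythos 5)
Your proposal is correct and follows essentially the same route as the paper's proof: rewrite admissibility as the identity $\{(x,y)\mid D_{x,y}\neq\emptyset\}=\mathcal{R}(E^\intercal)\times\mathcal{N}(E)$, use the KKT characterization \eqref{pd-sol} to pass between primal-dual solutions and that product set, and invoke Lemma~\ref{lem-ad-admiss-property} to kill the resistor components in both directions. No gaps.
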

\begin{proof}
First, observe the admissibility assumption can be rewritten as 
\[
\left\{(x,y) \mid \exists (v,i)  
\text{ such that } (v,i) \in D_{x,y} \right \}
= \mathcal{R}(E^\intercal) \times \mathcal{N}(E).
\]
Now suppose $(x^\star, y^\star) \in X^\star\times Y^\star$. 
Then by Karush-Kuhn-Tucker (KKT) optimality conditions, 
we have $(x^\star, y^\star) \in \mathcal{R}(E^\intercal) \times \mathcal{N}(E)$. 
Thus there exists $(v^\star, i^\star)$ such that
\[
(v^\star, i^\star) = \left((v_{\mathcal{R}}^\star, 0, v_{\mathcal{C}}^\star), (i_{\mathcal{R}}^\star, i_{\mathcal{L}}^\star, 0) \right) \in D_{x^\star, y^\star}.
\]
Furthermore from Lemma~\ref{lem-ad-admiss-property} we have $v_{\mathcal{R}}^\star = i_{\mathcal{R}}^\star = 0$. 
Therefore $v_{\mathcal{C}}^\star$, $i_{\mathcal{L}}^\star$ are the vectors that satisfiy the desired statement. 

Conversely, suppose 
\[
(v, i) = \left((v_{\mathcal{R}}, v_{\mathcal{L}}, v_{\mathcal{C}}), (i_{\mathcal{R}}, i_{\mathcal{L}}, i_{\mathcal{C}}) \right) \in D_{x, y}.
\]
From Lemma~\ref{lem-ad-admiss-property}, we have $v_{\mathcal{R}} = i_{\mathcal{R}} = 0$. 
Moreover, since there exists $(v,i)$ such that $(v,i) \in D_{x, y}$, 
by admissibility assumption we have $(x,y) \in \mathcal{R}(E^\intercal) \times \mathcal{N}(E)$. 
Finally, given the assumption that $y \in \partial f (x)$, 
by using KKT optimality conditions, 
we conclude $(x,y) \in X^\star\times Y^\star$. 

\end{proof}

\clearpage
\section{Energy dissipation analysis}\label{s-energy-dissipation}

In this section, we provide the proof of Theorem~\ref{thm:convergence}. 
The energy function \eqref{e-total-energy} used in the proof is related to the Lyapunov function considered in \cite{Willems1971_generation}. 
However, the dissipativity theory presented in \cite{Willems1971_generation, Willems1972_dissipative} does not directly apply to our setup. 
In our setup, we allow cases where $v_C, i_L$ oscillate, for example, a circuit with a
disconnected $L-C$ loop.
The proof is obtained by combining Barbalat's lemma \citep[Lemma 8.2]{khalil2002nonlinear} with Theorem~\ref{thm-well-posedness}.

\begin{proof} [Proof of Theorem~\ref{thm:convergence}]
    Let $(x^\star, y^\star)$ be a primal-dual solution pair.  Then by Theorem~\ref{thm-equilibrium}, there is $(v^\star, i^\star)\in D_{x^\star, y^\star}$ that satisfies
\BEQ\label{e-pd-pair-def}
(v^\star, i^\star) = ((0, 0, v_{\mathcal{C}}^\star), (0, i_{\mathcal{L}}^\star, 0)).
\EEQ
In particular, $i_{\mathcal{C}}^\star=0$ and $v_{\mathcal{L}}^\star=0$.
Define the total energy at time $t$ as
\[
\mathcal{E}(t)=\frac{1}{2}\|v_\mathcal{C}-v^\star_\mathcal{C}\|^2_{D_\mathcal{C}}+
\frac{1}{2}\|i_{\mathcal{L}}-i^\star_{\mathcal{L}}\|^2_{D_\mathcal{L}}.
\]
Then the power at time $t$ is given by
\BEA\label{e-sec-convergence-power}
\frac{d}{dt}   \mathcal{E}(t)
&=&
\langle v_{\mathcal{C}}-v_{\mathcal{C}}^\star,
D_{\mathcal{C}}\dot v_{\mathcal{C}}
\rangle 
+
\langle i_{\mathcal{L}}-i_{\mathcal{L}}^\star,
D_{\mathcal{L}}\dot i_{\mathcal{L}}
\rangle \nonumber\\
&=&
\langle v_{\mathcal{C}}-v_{\mathcal{C}}^\star,
i_{\mathcal{C}} -i_{\mathcal{C}}^\star
\rangle 
+
\langle i_{\mathcal{L}}-i_{\mathcal{L}}^\star,
v_{\mathcal{L}}-v^\star_\mathcal{L}
\rangle \nonumber\\
&=&
-
\langle v_{\mathcal{R}}-
\cancelto{0}{v_{\mathcal{R}}^\star},
i_{\mathcal{R}}-
\cancelto{0}{i_{\mathcal{R}}^\star}
\rangle
-
\langle x - x^\star,
y - y^\star
\rangle\\
&=&
-
\|i_{\mathcal{R}}\|_{D_\mathcal{R}}^2
-
\underbrace{ \langle x - x^\star,
y - y^\star}
_{ \ge 0}
\rangle \nonumber\\
&\le 0. \nonumber
\EEA
where we used \eqref{e-dyn-ic-vi} and
the monotonicity of $\partial f$. 
Therefore $\mathcal{E}(\infty) = \lim_{t\to\infty} \mathcal{E}(t)$ exists. 
Now, integrating from $0$ to $\infty$ we have
\begin{align*}
    0 
    \le \int_0^{\infty}  \langle x(t) - x^\star, y(t) - y^\star  \rangle dt
    \le \mathcal{E}(0) - \mathcal{E}(\infty) < \infty.
\end{align*}
From Theorem~\ref{thm-well-posedness} we know the integrand is Lipschitz continuous, by Barbalat's lemma 
we get
\begin{align*}
    \lim_{t\to\infty} \langle x(t) - x^\star, y(t) - y^\star  \rangle = 0.
\end{align*}
Since $f$ is $\mu$-strongly convex and $M$-smooth, $\nabla f$ and $(\nabla f)^{-1}$ are strictly monotone, 
we conclude
\begin{align*}
    \lim_{t\to\infty} x(t) = x^\star, \quad 
    \lim_{t\to\infty} y(t) = y^\star,
\end{align*}
which is our desired result.

\end{proof}


\section{Centralized classical algorithms}\label{appendix-centralized}

\subsection{Resistors and Moreau envelope}\label{sec-resistors-Moreau-envelope}
For $R>0$, define the Moreau envelope of $f\colon\reals^m\to\reals$ of parameter $R$ as
\[
^R f(x) = \inf_{z \in \reals^m} \left ( f(z) + \frac{1}{2R} \|z - x\|^2_2\right ).
\]
Then $^Rf$ is $1/R$-smooth with gradient given by
\BEQ\label{e-moreau-nabla-f}
\nabla ^R f(x) = \frac{1}{R}(x - \prox_{Rf}(x)). 
\EEQ

In this section we show that composing linear resistors with $\partial f$
is equivalent to taking a Moreau envelope of $f$. See two circuits below.

\begin{figure}[ht]
    \MoreauEnvelopeVertical
    \vspace{-0.5cm}
\end{figure}

By KCL and Ohm's law for the first circuit, we have
\[
\frac{1}{R}(x - \tilde x) = i \in \partial f(\tilde x),
\]
which is equivalent to $\tilde x = \prox_{Rf}(x)$.
Using identity for the gradient of the Moreau envelope, we get
\[
\nabla ^R f(x) = \frac{1}{R}(x - \prox_{Rf}(x)) = \frac{1}{R}(x - \tilde x) = i.
\]
Therefore, the V-I relation on $m$ pins of $x$ in both circuits is identical.

As a consequence, consider $f$ to be $1/R$-smooth. Let $\tilde f$ be pre-Moreau envelope of $f$, \ie, $^R \tilde f = f$. 
Note that $\tilde f$ is a convex function.
Then from the series connection of the resistors ($-R$ in series with $R$ is the same as $0$-ohm resistor), we get the equivalence of the two circuits below.
\begin{figure}[ht]
    \PreMoreauEnvelope
    \vspace{-0.5cm}
\end{figure}

Note that for this circuit $x = \tilde x - R \nabla f(\tilde x)$. 

\subsection{Gradient flow} \label{sec-grad-flow}
Let $f\colon\reals^m\to\reals$ be a convex function. 
Consider the circuit below.
\begin{figure}[H]
    \CircuitGD
    \vspace{-0.5cm}
\end{figure}
 
Let $x$ be the potentials at $m$ pins of $\partial f$, and $y$ be the current entering those pins. 
Applying KCL and the V-I relations of the capacitor we get
\[
   D_\mC \frac{d}{dt} v_{\mC} = i_{\mC} = -y \in -\partial f(x).
\]
Since $e$ is connected to ground, we have $v_{\mC} = x - e = 0$. 
The resulting differential inclusion is
\[
    \frac{d}{dt} x \in - D_\mC^{-1} \partial f(x).
\]

For the automatic discretization of the continuous-time dynamics
of this circuit, we define the energy at time $k$ as
\[
\mathcal{E}_k = \frac{1}{2}\|x^k - x^\star\|^2_{D_\mC}.
\]

\subsection{Nesterov acceleration}  \label{s-nesterov-acceleration}

Let $f\colon\reals^m\to\reals$ be a $1/R$-smooth convex function. 
Consider the circuit below.
\begin{figure}[H]
    \CitcuitNesterov
    \vspace{-0.5cm}
\end{figure}

Observe, by Ohm's law and $y = \nabla f(x)$ we have
\[
    x^+ = x - R y = x - R \nabla f(x).
\]

From KCL, KVL, and V-I relations we have
\BEAS
    \frac{d}{dt} i_{\mL} &=& D_{\mL}^{-1} (v_{\mC}-x^{+})  \\
    \frac{d}{dt} v_{\mC} &=& - D_{\mC}^{-1} \nabla f(x). 
\EEAS
Applying KCL and Ohm's law at $x$, it follows
\[
    \nabla f(x) 
    = i_{\mL} + \frac{1}{R} (v_{\mC} - x^{+}),
\]
which implies that
\[
x = v_{\mC} + R i_{\mL}.
\]
Differentiating the above equality twice and plugging in the V-I relations, we get
\[
    \frac{d^2}{dt^2} x
    = -R (D_{\mL} D_{\mC})^{-1} \nabla f(x) - R D_{\mL}^{-1} \frac{d}{dt} x - ( D_{\mC}^{-1} - R^2 D_{\mL}^{-1} ) \frac{d}{dt} \nabla f(x).
\]
Reorganizing, we conclude
\BEQ\label{e-nesterov-accel-ode}
    \frac{d^2}{dt^2} x + R D_{\mL}^{-1} \frac{d}{dt} x 
    + ( D_{\mC}^{-1} - R^2 D_{\mL}^{-1} ) \frac{d}{dt} \nabla f(x)
    + R (D_{\mL} D_{\mC})^{-1} \nabla f(x) = 0. 
\EEQ

Under the proper selection of parameters for $\mu$-strongly convex and $L$-smooth function $f$, \eqref{e-nesterov-accel-ode} corresponds to the high-resolution ODE for NAG-SC introduced in \cite{ShiDuSuJordan2019_acceleration}
\[
    \frac{d^2}{dt^2} {x} + 2\sqrt{\mu} \frac{d}{dt}{x} + \sqrt{s} \frac{d}{dt} \nabla f(x) + (1 + \sqrt{\mu s} ) \nabla f(x) = 0. 
\]
As an immediate consequence, if we set $R = \frac{1}{4 \mu }$, $L_i = \frac{1}{8 \mu \sqrt{\mu}}$, $C_i = 2 \sqrt{\mu}$, we recover the low-resolution ODE of NAG-SC
\[
    \frac{d^2}{dt^2} x + 2\sqrt{\mu} \frac{d}{dt} x 
    +  \nabla f(x) = 0. 
\]

For the automatic discretization of the continuous-time dynamics
of this circuit, we define the energy at time $k$ as
\[
\mathcal{E}_k = \frac{1}{2}\|v_{\mC}^k - x^\star\|^2_{D_\mC} 
+ \frac{1}{2}\|i_{\mL}^k - y^\star\|^2_{D_\mL}.
\]

\subsection{Proximal point method}

Consider the circuit below.
\begin{figure}[H]
    \CircuitPPM
    \vspace{-0.5cm}
\end{figure}

\vspace{0.1cm}
Then from the discussion in \S\ref{sec-resistors-Moreau-envelope}, the above circuit is equivalent to the circuit below.

\begin{figure}[H]
    \CircuitPPMMoreau
    \vspace{-0.5cm}
\end{figure}
According to \S\ref{sec-grad-flow}, the ODE for the above circuit is
\BEQ\label{e-prox-point-method-ode}
    \frac{d}{dt} x = - D_\mC^{-1} \nabla^{R} f(x).
\EEQ
Since from \eqref{e-moreau-nabla-f} we have
\[
\prox_{Rf}(x) = x - R\nabla ^R f(x),
\]
 this circuit gives a continuous model for the proximal point method. 
 
Applying Euler discretization to \eqref{e-prox-point-method-ode} 
with a stepsize of $C_i R$ for each $i$th coordinate,
we recover proximal point method
\[
    x^{k+1} = x^{k} - R \nabla^{R} f(x^k) =  
    x^{k} - ( x^k - \prox_{R f} (x^k) ) = \prox_{R f} (x^k).
\]

For the automatic discretization of the continuous-time dynamics
of this circuit, we define the energy at time $k$ as
\[
\mathcal{E}_k = \frac{1}{2}\|x^k - x^\star\|^2_{D_\mC}.
\]

\subsection{Proximal gradient method}\label{sec-prox-grad}

Let $f\colon\reals^{m}\to\reals$ be $1/R$-smooth convex function,
and $g\colon\reals^{m}\to\reals$ be convex function.
Consider the circuit below.
\begin{center}
    \CircuitProxGrad
\end{center}
Observe, by the Ohm's law $e = x - R \nabla f(x)$.
Using KCL at $x$ and KVL at $e$, we get
\BEAS
    i_{\mC} &=& -\nabla f(x) - \nabla^R g (e) \\
    v_{\mC} &=& x - R \nabla f(x).
\EEAS
Applying V-I relation for the capacitor and eliminating $e$ gives 
\[
     \frac{d}{dt} x - R \frac{d}{dt} \nabla f(x) 
    =  \frac{d}{dt} v_{\mC}
    = -\frac{1}{C} \pr{ \nabla f(x) + \nabla^R g (x - R \nabla f(x)) }.
\]
Organizing
\[
     \frac{d}{dt} x
    = - \frac{1}{C} \pr{ \nabla^{R} g (I - R \nabla f) + \nabla f } (x) +  R \frac{d}{dt} \nabla f(x) .
\]
We can show that $R \norm{ \frac{d}{dt} \nabla f(x) }<M$ for some $M>0$, thus
\begin{equation} \label{e-FBS-ODE-circuit}
     \frac{d}{dt} x
    = - \frac{1}{CR} \pr{ \pr{ R \nabla^{R} g (I - R \nabla f) + R \nabla f } (x) +  O\pr{MCR} }.
\end{equation}
Applying Euler discretization with stepsize $CR$ we have
\[
    \frac{x^{k+1} - x^{k}}{CR}
    = - \frac{1}{CR} \pr{ \pr{ R \nabla^{R} g (I - R \nabla f) + R \nabla f } (x^{k}) +  O\pr{ MCR} }.
\]
Multiplying $CR$ on both sides and reorganizing gives
\BEAS
    x^{k+1}
    &=& x^{k} - \pr{ R \nabla^{R} g (I - R \nabla f) + R \nabla f } (x^{k}) + O(MCR) \\
    &=& \pr{ \prox_{Rg} - I } (I - R \nabla f) (x^k) + \pr{ I - R \nabla f } (x^k) + O(MCR)  \\
    &=& \prox_{Rg} (I - R \nabla f) (x^k) + O(MCR) .
\EEAS
If we set $C \ll R$, we recover 
the proximal gradient method. 

For the automatic discretization of the continuous-time dynamics
of this circuit, we define the energy at time $k$ as
\[
\mathcal{E}_k = \frac{C}{2}\|e^k - e^\star\|^2_2,
\]
where $e^\star=x^\star - R \nabla f(x^\star)$.

\subsection{Primal decomposition}

Let $f_1, \ldots, f_N\colon\reals^{m}\to\reals$ be convex functions. 
Consider the circuit below. 
\begin{figure}[H]
    \CircuitPrimalDecomposition
    \vspace{-0.5cm}
\end{figure}

Let $x_1, \dots, x_N \in \reals^{m}$ be vectors of potentials at pins of $\partial f_1, \ldots, \partial f_N$ respectively.
From KVL, we have
\[
    e = x_1 = \cdots = x_N = v_{\mC}.
\]
Using KCL at $e$ and the V-I relation of nonlinear resistors we 
get $\sum_{j=1}^{N} y_j + i_{\mC} = 0$,
where  $y_j \in \partial f_j(x_j)$. 
Using the V-I relation for capacitor we have 
\[
    \frac{d}{dt} e = -\frac{1}{C} \sum_{j=1}^{N} y_j.
\]
Discretizing above V-I relations, 
we recover primal decomposition
\BEAS
    y_j^{k} &\in& \partial f_j(x_j^k) \\ 
    e^{k+1} &=& e^k - \frac{h}{C} \sum_{j=1}^{N} y_j^{k}.
\EEAS

For the automatic discretization of the continuous-time dynamics
of this circuit, we define the energy at time $k$ as
\[
\mathcal{E}_k = \frac{C}{2}\|e^k - x^\star\|^2_2.
\]

\subsection{Dual decomposition}

Let $f_1, \ldots, f_N\colon\reals^{m}\to\reals$ be convex functions. 
Consider the circuit below. 
\begin{figure}[H]
    \CitcuitDualDecomposition
    \vspace{-0.5cm}
\end{figure}

Using KCL at $x_j$ and V-I relation for nonlinear resistors we get
\[
    x_j \in \partial f_j^*(y_j) = \partial f_j^*(i_{\mL_j}).
\]
Using KCL at $e$ yields $\sum_{j=1}^{N} y_j = 0$.
Using KVL and V-I relation for inductors we get 
\[
    e - x_j = v_{\mL_j} = L \frac{d}{dt} i_{\mL_j}.
\]
Summing over $j=1, \ldots, N$ gives
\[
    N e - \sum_{j=1}^{N} x_j
    = L\sum_{j=1}^{N} \frac{d}{dt} i_{\mL_j}
    = L\frac{d}{dt} \sum_{j=1}^{N} y_j = 0,
\]
leading to $e = (1/N) \sum_{j=1}^{N} x_j$.  
Discretizing above V-I relations, 
we recover dual decomposition
\BEAS
    x_j^{k} &\in& \partial f_j^* (i_{\mL_j}^k) \\
    e^k &=& \frac{1}{N} \sum_{j=1}^{N} x_j^k \\
    i_{\mL_j}^{k+1}  &=& i_{\mL_j}^k - \frac{h}{L} (e^k - x_j^k).
\EEAS

For the automatic discretization of the continuous-time dynamics
of this circuit, we define the energy at time $k$ as
\[
\mathcal{E}_k = \sum_{j=1}^N \frac{L}{2}\|i_{\mL_j}^k - y_j^\star\|^2_2.
\]

\subsection{Proximal decomposition}\label{sec-prox-decomposition}
Let $f_1, \ldots, f_N\colon\reals^{m}\to\reals$ be convex functions. 
Consider the circuit below. 
\begin{figure}[H]
    \CircuitProximalDecomposition
    \vspace{-0.5cm}
\end{figure}

Let $x_1, \dots, x_N \in \reals^{m}$ be vectors of potentials at pins
of $\partial f_1, \ldots, \partial f_N$ respectively. Define $e \in \reals^{m}$ to be a vector of potentials on the bottom of the circuit.
Observe, by Ohm's law and KCL we have
\[
    y_j = i_{\mL_j} + \frac{1}{R}(e - x_j) \in \partial f_j(x_j).
\]
It implies that $x_i = \prox_{Rf_i}(e + Ri_{\mL_i})$.
The V-I relations for inductors are given by
\[
    \frac{d}{dt} i_{\mL} = v_{\mL}/L = (E^\intercal e-x)/L,
\]
where $E^\intercal=(I, \ldots, I) \in \reals^{ Nm \times m}$.

Further, note that by KCL $Ey = \sum_{j=1}^N y_j = 0$,
therefore $E\frac{d}{dt} y = \frac{d}{dt} Ey =0$. 
Using the above V-I relations for $g = N e - E x$ we get the following ODE
\BEQ\label{e-admm-kcl}
\dot g = \frac{d}{dt} RE(y - i_{\mL}) = -RE \frac{d}{dt}i_{\mL}
= -\frac{R}{L}E(E^\intercal e-x) = -\frac{R}{L} g.
\EEQ
We initialize circuit with $E i_{\mL}(0)=0$ and $EE^\intercal=NI$ gives
\[
0=Ey(0) = E(i_{\mL}(0) + (E^\intercal e(0) - x(0))/R) = -\frac{1}{R} g(0).
\]
Thus the solution to an ODE \eqref{e-admm-kcl} is $g=0$ and
we conclude $e = \frac{1}{N}Ex$.

The V-I relations for the circuit are
\BEAS\label{e-admm-vi-continuous}
x_j &=& \prox_{Rf_j}(e + Ri_{\mL_j}), \quad j=1, \ldots, N  \nonumber \\
e &=& \frac{1}{N} Ex \\ 
\frac{d}{dt} i_{\mL} &=& (E^\intercal e-x)/L. \nonumber
\EEAS
Discretizing above V-I relations we recover proximal decomposition
\BEAS
x_j^{k+1} &=& \prox_{Rf_j}(e^k + Ri_{\mL_j}^k), \quad j=1, \ldots, N  \nonumber \\
e^{k+1} &=& \frac{1}{N} Ex^k \\
i_{\mL}^{k+1} &=& i_{\mL}^k + \frac{h}{L}(E^\intercal e^{k+1}-x^{k+1}). \nonumber
\EEAS

For the automatic discretization of the continuous-time dynamics
of this circuit, we define the energy at time $k$ as
\[
\mathcal{E}_k = \sum_{j=1}^N \frac{L}{2}\|i_{\mL_j}^k - y_j^\star\|^2_2
+ \gamma \|e^k - x^\star\|_2^2,
\]
where $\gamma$ is a parameter that is being optimized, see \S\ref{appendix-automatic-discr}.

\subsection{Douglas–Rachford splitting}
Let $f, g\colon\reals^{m}\to\reals$ be convex functions.
Consider the circuit below.
\begin{figure}[H]
    \begin{center} 
        \CircuitDRS
    \end{center}
    \vspace{-0.5cm}
\end{figure}
Using KCL at $x_1$ and Ohm's law we get
\[
    \frac{1}{R} ( x_2 - x_1 ) + i_{\mathcal{L}} \in \partial g (x_1),
\]
which implies $x_1 = \prox_{R g} (x_2 + R i_{\mathcal{L}})$.
Similarly, using KCL at $x_2$ and Ohm's law we get
\[
    \frac{1}{R} ( x_1 - x_2 ) - i_{\mathcal{L}} \in \partial f (x_2),
\]
which implies $x_2 = \prox_{R f} (x_1 - R i_{\mathcal{L}})$.
From KVL and V-I relation for inductors we have
\[
    \frac{d}{dt} i_{\mathcal{L}} = \frac{1}{L} ( x_2 - x_1 ).
\]

Discretizing above V-I relations with $R=L=1$ and stepsize $h=1$, 
we recover Douglas–Rachford splitting
\BEAS
    x_1^{k+1} &=& \prox_{R g} (x_2^k + R i_{\mL}^k) \\
    x_2^{k+1} &=& \prox_{R f} (x_1^{k+1} - R i_{\mL}^k)  \\
    i_{\mL}^{k+1} &=& i_{\mL}^k + \frac{h}{L} (x_2^{k+1} - x_1^{k+1}).
\EEAS

For the automatic discretization of the continuous-time dynamics
of this circuit, we define the energy at time $k$ as
\[
\mathcal{E}_k =  \frac{L}{2}\|i_{\mL}^k - y_1^\star\|^2_2
+ \gamma \|x_2^k - x^\star\|_2^2,
\]
where $\gamma$ is a parameter that is being optimized, see \S\ref{appendix-automatic-discr},
and $y_1^\star \in \partial g(x^\star)$.

\subsection{Davis-Yin splitting}
Let $f, g, h\colon\reals^{m}\to\reals$ be  convex functions,
with $h$ also being $1/S$-smooth.
Consider the circuit below.
\begin{figure}[H]
    \CircuitDYS
    \vspace{-0.5cm}
\end{figure}

Using KCL at $e$ and Ohm's law we have
\[
    x_1 = x_2 - S i_{-S} + S i_{S} = x_2.
\]
Applying KCL at $x_2$, we get
\BEQ\label{dys:KCL_on_2}
    i_{\mL} = \frac{e - x_2}{-S} - \nabla h(x_2).
\EEQ
Using KCL at $x_3$ and \eqref{dys:KCL_on_2} 
it follows
\[
     \frac{x_1-x_3}{R} + i_{\mathcal{L}} = \frac{x_1-x_3}{R} + \frac{x_2 - e}{S} - \nabla h(x_2)\in \partial g(x_3).
\]
Using KCL at $x_1$, we get
\[
 \frac{x_3-x_1}{R} + \frac{e-x_1}{S} \in \partial f(x_1).
\]
Organizing, and applying V-I relation for inductor
\BEAS
    x_3 &=& \prox_{Rg} \pr{ \pr{ 1 + \frac{R}{S} } x_1 - \frac{R}{S} e - R \nabla h(x_1)  } \\
    x_1 &=& \prox_{Rf} \pr{ x_3 + \frac{R}{S} ( e- x_1 ) } \\
    i_{\mL} &=& \frac{e - x_1}{-S} - \nabla h(x_1) \\
    \frac{d}{dt} i_{\mathcal{L}} &=& \frac{1}{L} ( x_1 -x_3 ).
\EEAS
Now we eliminate the term $i_{\mathcal{L}}$. 
Differentiating \eqref{dys:KCL_on_2}, applying $L\frac{d}{dt} i_{\mathcal{L}} = x_1 -x_3$ we get
\[
    \frac{1}{L} ( x_1 -x_3 )
    = \frac{d}{dt} i_{\mathcal{L}} = \frac{d}{dt}\frac{x_1 - e}{S} - \frac{d}{dt} \nabla h(x_1).
\]
In other words, 
\[
    \frac{d}{dt} e = \frac{S}{L} ( x_3 - x_1 ) + \frac{d}{dt} x_1 - S \frac{d}{dt} \nabla h(x_1). 
\]
Using ``alternating update'' and Euler discretization of $e$ and $x_1$, we have
\BEAS
    x_3^{k+1} &=& \prox_{Rg} \pr{ \pr{ 1 + \frac{R}{S} } x_1^{k} - \frac{R}{S} e^{k} - R \nabla h(x_1^{k})  } \\
    x_1^{k+1} &=& \prox_{Rf} \pr{ x_3^{k+1} + \frac{R}{S} ( e^{k}- x_1^{k} ) } \\
    e^{k+1} &=& e^{k} +  \frac{Sh}{L} ( x_3^{k+1} - x_1^{k+1} ) + x_1^{k+1}-x_1^{k} - S h \frac{d}{dt} \nabla h(x_1^{k}).
\EEAS
Set $R=S=h=\alpha$ and $L=\alpha^2$, then the above can be rewritten as
\BEAS
    x_3^{k+1} &=& \prox_{\alpha g} \pr{ 2x_1^{k} - e^{k} - \alpha  \nabla h(x_1^{k})  } \\
    x_1^{k+1} &=& \prox_{\alpha f} \pr{ e^{k} + x_3^{k+1} - x_1^{k} } \\
    e^{k+1} &=& e^{k} +  x_3^{k+1} - x_1^{k} - \alpha^2 \frac{d}{dt} \nabla h(x_1^{k}).
\EEAS
When $\norm{ \frac{d}{dt} \nabla h(x^{k}) }$ is bounded, then $\alpha^2 \frac{d}{dt} \nabla h(x^{k}) = O(\alpha^2)$. For
small $\alpha$ we may ignore this term and recover DYS. 

For the automatic discretization of the continuous-time dynamics
of this circuit, we define the energy at time $k$ as
\[
\mathcal{E}_k =  \frac{L}{2}\|i_{\mL}^k - y_1^\star\|^2_2
+ \gamma \|e_1^k - e^\star\|_2^2,
\]
where $\gamma$ is a parameter that is being optimized, see \S\ref{appendix-automatic-discr},
and $e^\star=x^\star - R(y_1^\star + y_3^\star)$,
$y_1^\star \in \partial f(x^\star)$, $y_3^\star \in \partial g(x^\star)$.

\clearpage
\section{Decentralized classical algorithms}\label{appendix-decentralized}

In a decentralized optimization setup, we are given a graph $G=(V, A)$ 
which defines the communication pattern between agents.
This means that each agent is constrained to communicate only to its
direct neighbors for the edges of the graph.

We define $\Gamma_j$ as the neighbors of $j$ in graph $G$.
For simplicity, in each example we 
only illustrate the circuit between components indexed by $j$ and $l$,
where $j$ and $l$
are connected through an edge in the graph $G$.

\subsection{Decentralized gradient descent}
Let $f_1, \ldots, f_N\colon\reals^{m}\to\reals$ be differentiable convex functions. 
Decentralized gradient descent (DGD) is derived as a gradient descent of the appropriate
penalty formulation of a decentralized problem.
Similarly, to construct a DGD circuit we apply the gradient flow circuit of \S\ref{sec-grad-flow}
to appropriate nonlinear resistors and arrive at the following circuit.
\begin{figure}[H]
    \CircuitDGD
    \vspace{-0.5cm}
\end{figure}
The right side of the circuit contains the graph with resistors $R_{jl}$ connecting vectors of potentials
$x_j \in \reals^{m}$ and $x_l\in \reals^{m}$ for every neighbors $j$ and $l$ in the given graph $G$.

Using the KCL at $x_j$ we get
\[
0 = \nabla f_j(x_j) + \sum_{l \in \Gamma_j} \frac{x_j - x_l}{R_{jl}} + i_{\mC_j}.
\]
Applying the V-I relation for the capacitors we get
\[
\frac{d}{dt} x_j = \frac{d}{dt} v_{\mC_j} = - \frac{1}{C} \left(\nabla f_j(x_j) + \sum_{l \in \Gamma_j} (x_j - x_l)/R_{jl}\right).
\]
Euler discretization recovers the DGD
\[
x_j^{k+1}
= \left(1 - \sum_{l \in \Gamma_j}\frac{h}{CR_{jl}}\right) x_j^k
+ \sum_{l \in \Gamma_j} \frac{h}{CR_{jl}} x_l^k - \frac{h}{C} \nabla f_j(x_j^k),
\]
with gradient stepsize $h/C$ and the mixing matrix
\[
W_{jl} = \left\{ \begin{array}{ll} 
    1 - \sum_{l \in \Gamma_j}\frac{h}{CR_{jl}} & \text{if }j=l \\
    \frac{h}{CR_{jl}} & \text{if } j \neq l, \quad l \in \Gamma_j \\
    0 & \text{otherwise}.
    \end{array}\right. 
\]

For the automatic discretization of the continuous-time dynamics
of this circuit, we define the energy at time $k$ as
\[
\mathcal{E}_k = \sum_{j=1}^N \frac{C}{2}\|x_j^k - x_j^\star\|^2_2.
\]

\subsection{Diffusion}
Let $f_1, \ldots, f_N\colon\reals^{m}\to\reals$ be $1/R$-smooth convex functions.
Decentralized gradient descent is derived as a forward-backward splitting fixed
point iteration of the appropriate
penalty formulation of a decentralized problem.
Similarly, to construct a diffusion circuit we apply the proximal
gradient circuit of \S\ref{sec-prox-grad}
to appropriate nonlinear resistors and arrive at the following circuit.
\begin{figure}[H]
    \CircuitDiffusion
    \vspace{-0.5cm}
\end{figure}
The right side of the circuit is the graph with linear resistors $R_{jl}$ connecting vectors of potentials
$e_j \in \reals^{m}$ and $e_l\in \reals^{m}$ for every neighbors $j$ and $l$ in the given graph $G$.

By Ohm's law we have $v_{\mC_j} = e_j = x_j - R \nabla f_j(x_j)$.
Using the KCL at $e_j$ we get
\[
\nabla f_j(x_j) + \sum_{l \in \Gamma_j} \frac{e_j - e_l}{R_{jl}} + i_{\mC_j}=0 .
\]
Applying the V-I relation for the capacitors we get
\BEAS
\frac{d}{dt} e_j &=& \frac{d}{dt}x_j - R \frac{d}{dt} \nabla f_j(x_j) \\
&=& - \frac{1}{C} \left(\nabla f_j(x_j) + \sum_{l \in \Gamma_j} \frac{(x_j - R \nabla f_j(x_j)) - (x_l - R \nabla f_l(x_l))}{R_{jl}}\right).
\EEAS
We can show that $R \norm{ \frac{d}{dt} \nabla f_j(x_j) }<M$ for some $M>0$, thus
\[
\frac{d}{dt}x_j = - \frac{1}{C} \left(\nabla f_j(x_j) + \sum_{l \in \Gamma_j} \frac{(x_j - R \nabla f_j(x_j)) - (x_l - R \nabla f_l(x_l))}{R_{jl}}+O(MC)\right).
\]
Applying Euler discretization with stepsize $CR$ gives
\BEAS
x_j^{k+1} 
&= & \left(1 - \sum_{l \in \Gamma_j}\frac{R}{R_{jl}}\right) (x_j^k -  R \nabla f_j(x_j^k)) \\
&& + \sum_{l \in \Gamma_j} \frac{R}{R_{jl}} (x_l^k - R \nabla f_l(x_l^k)) + O(MCR),
\EEAS
with gradient stepsize $R$ and the mixing matrix
\[
W_{jl} = \left\{ \begin{array}{ll} 
    1 - \sum_{l \in \Gamma_j}\frac{R}{R_{jl}} & \text{if }j=l \\
    \frac{R}{R_{jl}} & \text{if } j \neq l, \quad l \in \Gamma_j \\
    0 & \text{otherwise}.
    \end{array}\right. 
\]
If we set $C \ll R$, we recover the diffusion method.

For the automatic discretization of the continuous-time dynamics
of this circuit, we define the energy at time $k$ as
\[
\mathcal{E}_k = \sum_{j=1}^N \frac{C}{2}\|e_j^k - e_j^\star\|^2_2,
\]
where $e_j^\star = x_j^\star - R \nabla f_j(x_j^\star)$ for all $j=1, \ldots, N$.

\subsection{Decentralized ADMM} \label{s-dadmm}
Let $f_1, \ldots, f_N\colon\reals^{m}\to\reals$ be convex functions.
Then the decentralized ADMM circuit is given below. 

\begin{figure}[H]
    \CircuitDADMM
    \vspace{-0.5cm}
\end{figure} 
Note that this circuit is similar to the one in proximal decomposition in \S\ref{sec-prox-decomposition}
with the difference that instead of a single net $e$ we now have a net $e_{jl}$ for each
edge $(j,l)$ in graph $G$.
Denote currents on inductors to be ${i_{\mL}}_{jl} \in \reals^{m}$ and ${i_{\mL}}_{lj} \in \reals^{m}$.

Using KCL at $x_j$ we get
\BEQ\label{e-dec-admm-kcl}
\sum_{l \in \Gamma_j} \left ( {i_{\mL}}_{jl} + \frac{e_{jl} - x_j}{R}\right ) \in \partial f_j(x_j).
\EEQ
We initialize the circuit such that ${i_{\mL}}_{jl}(0) +{i_{\mL}}_{lj}(0)=0$
for each edge $(j,l)$ in graph $G$.
Now consider KCL at $e_{jl}$
\BEQ\label{e-dec-admm-kcl2}
{i_{\mL}}_{jl} + {i_{\mL}}_{lj} =  - \frac{(e_{jl} - x_j)}{R} - \frac{(e_{jl} - x_l)}{R}. 
\EEQ
Using V-I relation for inductor we also have
\[
\frac{d}{dt}{i_{\mL}}_{jl} + \frac{d}{dt}{i_{\mL}}_{lj} =
\frac{1}{L}(2e_{jl} - x_j - x_l).
\]
Combining the two equalities above we get an ODE
\[
\frac{d}{dt}\left ({i_{\mL}}_{jl} + {i_{\mL}}_{lj} \right) = -\frac{R}{L}
\left ({i_{\mL}}_{jl} + {i_{\mL}}_{lj} \right).
\]
Using initial conditions the solution of an ODE is ${i_{\mL}}_{jl} + {i_{\mL}}_{lj}=0$.
From \eqref{e-dec-admm-kcl2} we conclude that $e_{jl}=\frac{1}{2}(x_j+x_l)$.

Using \eqref{e-dec-admm-kcl}, we get the V-I relations for the circuit
\BEAS\label{e-dec-admm-vi-continuous}
x_j &=& \prox_{(R/|\Gamma_j|)f_j}\left (\frac{1}{|\Gamma_j|} \sum_{l \in \Gamma_j} (R{i_{\mL}}_{jl} + e_{jl})\right ) \\
e_{jl} &=& \frac{1}{2}(x_j+x_l) \\ 
\frac{d}{dt} {i_{\mL}}_{jl} &=& \frac{1}{L}(e_{jl} - x_j),
\EEAS
for every $j=1, \ldots, N$ and every edge $(j,l)$ in graph $G$.
Discretizing the V-I relations with stepsize $L/R$, 
we recover decentralized ADMM,
\BEAS 
x_j^{k+1} &=& \prox_{(R/|\Gamma_j|)f_j}\left ( \frac{1}{|\Gamma_j|} \sum_{l \in \Gamma_j} (R{i^k_{\mL}}_{jl} + e^k_{jl})\right ) \\
e_{jl}^{k+1} &=& \frac{1}{2}(x_j^{k+1}+x_l^{k+1}) \\ 
{i_{\mL}}_{jl}^{k+1} &=& {i_{\mL}}_{jl}^k + \frac{1}{R}(e_{jl}^{k+1} - x_j^{k+1}).
\EEAS 

For the automatic discretization of the continuous-time dynamics
of this circuit, we define the energy at time $k$ as
\[
\mathcal{E}_k = \sum_{\text{edge}~\{j,l\}} 
\left( \frac{L}{2}\|{i^k_{\mL}}_{jl} -  {i^\star_{\mL}}_{jl}\|^2_2
+ \frac{L}{2}\|{i^k_{\mL}}_{lj} - {i^\star_{\mL}}_{lj}\|^2_2
+ \gamma \|e_{jl}^k - x^\star\|_2^2 \right),
\]
where $\gamma$ is a parameter that is being optimized, see \S\ref{appendix-automatic-discr},
and ${i^\star_{\mL}}_{jl}$ is the current through inductor at equilibrium.

\subsection{PG-EXTRA} \label{s-PG-EXTRA}
Let $f_1, \ldots, f_N\colon\reals^{m}\to\reals$ be convex functions, and
$h_1, \ldots, h_N\colon\reals^{m}\to\reals$ be convex $M$-smooth functions.
Then the PG-EXTRA circuit is given below. 
\begin{figure}[ht]
    \begin{center} 
    \CircuitPgExtra
    \end{center}
    \vspace{-0.5cm}
\end{figure}
Denote current on inductor going from $x_j$ to $x_l$ to be ${i_{\mL}}_{jl} \in \reals^{m}$.

Recall \S\ref{sec-resistors-Moreau-envelope} and apply Ohm's law to get 
\[
\frac{e_j - \tilde x_j}{R} = \frac{e_j - \prox_{Rf_j}(e_j)}{R} = \nabla^R f_j(e_j) = \frac{x_j - e_j}{-R}.
\]
This yields $x_j = e_j - R\nabla^R f_j(e_j) = \prox_{Rf_j}(e_j)$.
Using KCL at $x_j$ we get
\BEQ\label{e-pgextra-kcl}
\frac{e_j - x_j}{-R} = \nabla h_j(x_j) + 
\sum_{l \in \Gamma_j} \left ( {i_{\mL}}_{jl} + \frac{x_j - x_l}{R_{jl}}\right ).
\EEQ

Define the mixing matrix
\[
W_{jl} = \left\{ \begin{array}{ll} 
    1 - \sum_{l \in \Gamma_j}\frac{R}{R_{jl}} & \text{if }j=l \\
    \frac{R}{R_{jl}} & \text{if } j \neq l, \quad l \in \Gamma_j \\
    0 & \text{otherwise}.
    \end{array}\right. 
\]
Rearranging the terms in~\eqref{e-pgextra-kcl} we get
\BEAS 
e_j &=& x_j - R\nabla h_j(x_j) - \sum_{l \in \Gamma_j} \left ( R{i_{\mL}}_{jl} + \frac{x_j - x_l}{R_{jl}/R}\right ) \\
&=& x_j - \sum_{l=1}^N W_{jl}(x_j - x_l) - R\nabla h_j(x_j) - \sum_{l \in \Gamma_j} R{i_{\mL}}_{jl} \\
&=& \sum_{l=1}^N W_{jl} x_l - R\nabla h_j(x_j) - \sum_{l \in \Gamma_j} R{i_{\mL}}_{jl}.
\EEAS

Using the V-I relation for inductor we also have
\[
\frac{d}{dt}{i_{\mL}}_{jl} =
\frac{1}{L_{jl}}(x_j - x_l).
\]
Set $L_{jl}=R_{jl}$ for every edge $(j,l)$ in graph $G$.
Define $w_j = \sum_{l \in \Gamma_j} R{i_{\mL}}_{jl}$, then
\BEAS
\frac{d}{dt} w_j 
&=& \sum_{l \in \Gamma_j} \frac{R}{L_{jl}}(x_j - x_l) \\
&=& x_j - \sum_{l=1}^N W_{jl}x_l.
\EEAS

Combining the above, we get the V-I relations for the circuit
\BEA\label{e-pgextra-vi-continuous}
x_j &=& \prox_{Rf_j}\left (\sum_{l=1}^N W_{jl} x_l - R\nabla h_j(x_j) - w_j\right ) \\
\frac{d}{dt} w_j &=& x_j - \sum_{l=1}^N W_{jl}x_l,\nonumber
\EEA
for every $j=1, \ldots, N$ and every edge $(j,l)$ in graph $G$.
Discretizing the above V-I relations with stepsize $1/2$, 
and following the decentralized notation of \cite[\S11.3]{RyuYin2022_largescale},
we recover PG-EXTRA,
\BEA\label{e-pgextra-vi-discretized}
x^{k+1} &=& \prox_{Rf}\left (W x^k - R\nabla h(x^k) - w^k\right ) \\
w^{k+1} &=& w^k + \frac{1}{2}(I - W)x^k.\nonumber
\EEA

We can simplify the circuit by eliminating potentials $e_j$ as shown below.
\begin{figure}[ht]
    \begin{center} 
    \CircuitPgExtraSimple
    \vspace{-0.5cm}
\end{figure}

For the automatic discretization of the continuous-time dynamics
of this circuit, we define the energy at time $k$ as
\[
\mathcal{E}_k = \sum_{\text{edge}~\{j,l\}} 
\frac{L_{jl}}{2}\|{i^{k+1}_{\mL_{jl}}} - {i^\star_{\mL}}_{jl}\|^2_2
+ \sum_{j=1}^N \gamma \|x_j^k - x^\star\|_2^2 ,
\]
where $\gamma$ is a parameter that is being optimized, see \S\ref{appendix-automatic-discr},
and ${i^\star_{\mL}}_{jl}$ is the current through inductor
at equilibrium.

\section{Automatic discretization}\label{appendix-automatic-discr}

In this paper, we discretize admissible dynamic interconnects corresponding
to the following decentralizes setup with graph consensus
\BEQ\label{e-dist-opt-consensus}
\begin{array}{ll}
\underset{{x_1, \ldots, x_N\in \reals^{m/N}}}{\mbox{minimize}}&
    f_1(x_1) + \cdots + f_N(x_N)\\
    \mbox{subject to}& x_j = x_l,  \quad j =1, \ldots, N,
    \quad l \in \Gamma_j,
\end{array}
\EEQ
where $\Gamma_j$ contains the neighbors of agent $j$ in the communication
graph, see \S\ref{appendix-decentralized}.
We assume that the communication graph is connected, 
ensuring that all agents can communicate with each 
other~\cite[\S 11.2]{RyuYin2022_largescale}.
The static interconnect for this problem corresponds to the consensus problem
\[
\begin{array}{ll}
\underset{{x_1, \ldots, x_N\in \reals^{m/N}}}{\mbox{minimize}}&
    f_1(x_1) + \cdots + f_N(x_N)\\
    \mbox{subject to}& x_1 = \cdots = x_N,
\end{array}
\]
which is a special case of \eqref{e-dist-opt-primal} where $E^\intercal=(I, \ldots, I)\in \reals^{m \times m/N}$.
Therefore, we have
$n=m/N$ nets each of size $N$ with $x_j \in \reals^n$ for all $j=1, \ldots, N$.
This setup generalizes the setup
of the classical methods discussed in \S\ref{appendix-centralized} and \S\ref{appendix-decentralized}.

For automatic discretization, we focus on dynamic interconnects that have the same RLC circuit across each net, \ie, the dynamic interconnects represented with the multi-wire notation.


\paragraph{Runge--Kutta method.}
The capacitor and inductor ODEs are of the form
\[
\frac{d}{dt}x(t) = F(x(t)).
\]
We discretize ODEs using the two-stage Runge--Kutta method,
with coefficients $\alpha$, $\beta$, 
and stepsize $h$:
\BEAS
x^{k+1/2} &=& x^k + \alpha h F(x^k) \\
x^{k+1} &=& x^k + \beta h F(x^k) + (1-\beta) h F(x^{k+1/2}).
\EEAS 

We clarify that simpler one-stage discretization schemes can also be used. We chose two-stage Runge--Kutta to demonstrate that multi-stage discretization schemes are compatible with our automatic discretization methodology.

\paragraph{Energy descent.}
Let a discrete-time optimization algorithm generate a sequence $\{(v^k, i^k, x^k, y^k)\}_{k=1}^\infty$ 
with $v^k, i^k \in \reals^\sigma$ (voltages across and currents through the branches of interconnect) 
and $x^k, y^k \in \reals^m$ (potentials at terminals and currents leaving terminals).
Let the subscripts $\mathcal{R}$, $\mathcal{L}$, and $\mathcal{C}$ denote the components 
related to resistors, inductors, and capacitors, respectively.
Then the energy stored in the circuit is given by
\BEAS
\mathcal{E}_k &=& \frac{1}{2}\|v_\mathcal{C}^{k}-v^\star_\mathcal{C}\|^2_{D_\mathcal{C}}+
\frac{1}{2}\|i_\mathcal{L}^k-i^\star_\mathcal{L}\|^2_{D_\mathcal{L}}.
\EEAS

\begin{lemma}\label{lem-descent-discretization}
Assume $f\colon \reals^m \to \reals \cup \{ \infty \}$ is a strictly convex function 
and the dynamic interconnect is admissible.
Let a discrete-time optimization algorithm generate a sequence
$\{(v^k, i^k, x^k, y^k)\}_{k=1}^\infty$.
If there exists 
$\eta > 0$ such that for all $k=1, 2, \ldots$ the energy descent 
\BEQ\label{e-energy-descent-cond}
D_k =    
 \left (  \mathcal{E}_{k+1} + \eta \langle x^k-x^\star, y^k-y^\star\rangle \right) - \mathcal{E}_k \leq 0
\EEQ 
holds, then $x^k$ converges to a primal solution.
\end{lemma}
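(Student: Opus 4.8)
The plan is to mimic the continuous-time argument of Theorem~\ref{thm:convergence} in the discrete setting, using the sufficient dissipativity condition \eqref{e-energy-descent-cond} in place of the power inequality \eqref{eq-cont-power}, and replacing Barbalat's lemma by a summability argument. First I would fix a primal-dual solution pair $(x^\star,y^\star)$ — which exists by Theorem~\ref{thm-equilibrium} once we know the interconnect is admissible and the problem has an optimal solution — and choose the accompanying equilibrium voltages/currents $(v^\star,i^\star)=((0,0,v_\mathcal{C}^\star),(0,i_\mathcal{L}^\star,0))\in D_{x^\star,y^\star}$. The energies $\mathcal{E}_k\geq 0$ are then well-defined and $D_k\leq 0$ for all $k$ means
\[
\mathcal{E}_{k+1}-\mathcal{E}_k \leq -\eta\langle x^k-x^\star, y^k-y^\star\rangle \leq 0,
\]
where the last inequality uses $y^k\in\partial f(x^k)$, $y^\star\in\partial f(x^\star)$ and monotonicity of $\partial f$. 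Hence $\{\mathcal{E}_k\}$ is nonincreasing and bounded below by $0$, so it converges; in particular the sequence is bounded, which gives boundedness of $v_\mathcal{C}^k$ and $i_\mathcal{L}^k$, and telescoping gives
\[
\sum_{k=1}^\infty \eta\langle x^k-x^\star, y^k-y^\star\rangle \leq \mathcal{E}_1-\lim_{k\to\infty}\mathcal{E}_k < \infty.
\]
Therefore $\langle x^k-x^\star, y^k-y^\star\rangle\to 0$.

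Next I would convert this inner-product convergence into $x^k\to x^\star$. Since $f$ is strictly convex, $\partial f$ is strictly monotone, so $\langle x^k-x^\star, y^k-y^\star\rangle=0$ forces $x^k=x^\star$; more precisely, if $x^k$ does not converge to $x^\star$ there is a subsequence with $\|x^{k_j}-x^\star\|\geq\varepsilon$, and I would argue along this subsequence that $\langle x^{k_j}-x^\star, y^{k_j}-y^\star\rangle$ is bounded away from $0$, contradicting the limit. To make that last step rigorous I would need compactness: either invoke an a priori boundedness of $\{x^k\}$ (obtainable from boundedness of $v_\mathcal{C}^k,i_\mathcal{L}^k$ together with the Lipschitz reconstruction map $\mathcal{G}$ of the well-posedness analysis, now in its discrete form, relating the full state to $(v_\mathcal{C},i_\mathcal{L})$), pass to a convergent subsequence $x^{k_j}\to \bar x$ with $\bar x\neq x^\star$, extract a further subsequence so that $y^{k_j}\to\bar y$ with $\bar y\in\partial f(\bar x)$ by closedness of the graph of $\partial f$, and then $\langle \bar x-x^\star,\bar y-y^\star\rangle>0$ by strict monotonicity — a contradiction. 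Once $x^k\to x^\star$ we are done, since $x^\star$ is a primal solution by Theorem~\ref{thm-equilibrium}.

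The main obstacle I anticipate is the compactness/boundedness of $\{x^k\}$ and $\{y^k\}$: the energy only controls $v_\mathcal{C}^k$ and $i_\mathcal{L}^k$ directly, and translating this into control of $x^k$, $y^k$ (and of the discrete trajectory as a whole) requires a discrete analogue of the reconstruction argument in the proof of Theorem~\ref{thm-well-posedness}, i.e.\ that $(x^k,y^k)$ are a Lipschitz function of $(v_\mathcal{C}^k,i_\mathcal{L}^k)$ via KCL, KVL and the resistor/$\partial f$ relations. A cleaner route that sidesteps part of this is to note that the consensus-form problem here has $x^\star$ unique (by strict convexity of $f$) and to work with the already-bounded variables: since $\mathcal{E}_k$ converges, $v_\mathcal{C}^{k+1}-v_\mathcal{C}^k\to 0$ and $i_\mathcal{L}^{k+1}-i_\mathcal{L}^k\to 0$, and combining this with the Runge--Kutta update and the admissibility identity one can show every limit point of $(v_\mathcal{C}^k,i_\mathcal{L}^k)$ is an equilibrium, hence corresponds to $(x^\star,y^\star)$; uniqueness of $x^\star$ then upgrades subsequential convergence to full convergence. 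I would present the argument this way, flagging the discrete reconstruction lemma as the technical heart and deferring its routine verification.
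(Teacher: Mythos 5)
Your first half is exactly the paper's argument: telescope \eqref{e-energy-descent-cond}, use monotonicity of $\partial f$ to see each term $\langle x^k-x^\star,y^k-y^\star\rangle$ is nonnegative, bound the partial sums by $\mathcal{E}_1$, and conclude by summability that $\langle x^k-x^\star,y^k-y^\star\rangle\to 0$. Up to that point there is nothing to add.

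Where you diverge is the final step, and there your route has a real gap that you flag but underestimate. You propose to pass from $\langle x^k-x^\star,y^k-y^\star\rangle\to 0$ to $x^k\to x^\star$ via strict monotonicity plus a compactness/subsequence argument, which requires a priori boundedness of $\{x^k\}$ and $\{y^k\}$. You suggest obtaining this from a ``discrete reconstruction lemma'' analogous to the map $\mathcal{G}$ in the proof of Theorem~\ref{thm-well-posedness}; but that construction relies on $f$ being strongly convex and smooth (it inverts $Q_\Rm(\nabla F)_{K}Q_\Rm^\intercal-H_\Rm^\Rm$, whose full domain and Lipschitz inverse come from strong monotonicity), whereas the present lemma only assumes $f$ strictly convex and possibly extended-valued. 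So the deferred step is not routine and, as stated, does not go through. The paper closes the gap differently and more cheaply: fix $z^\star$ with $x^\star=E^\intercal z^\star$ and consider the Lagrangian $L(x,z^\star,y^\star)=f(x)-\langle y^\star, x-E^\intercal z^\star\rangle$, which is a strictly convex CCP function with $0\in\partial_x L(x^\star,z^\star,y^\star)$, hence with unique minimizer $x^\star$ and minimum value $f(x^\star)$. The subgradient inequality gives
\[
\langle x^k-x^\star,\,y^k-y^\star\rangle \;\geq\; L(x^k,z^\star,y^\star)-f(x^\star)\;\geq\;0,
\]
so the established limit forces $L(x^k,z^\star,y^\star)\to f(x^\star)$; since a CCP function whose minimizer set is a single point has all sublevel sets bounded, this already yields boundedness of $\{x^k\}$ and then $x^k\to x^\star$, with no need for boundedness of $\{y^k\}$, closedness of the graph of $\partial f$, or any reconstruction of the full circuit state. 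I would replace your final step with this function-value argument.
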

\begin{proof}
Suppose there exists $\eta>0$ for which~\eqref{e-energy-descent-cond} holds.
Then we have
\BEAS 
 0 &\leq & \mathcal{E}_{K+1} \\
 &\leq& \mathcal{E}_K - \eta \langle x^K-x^\star, y^K-y^\star\rangle  \\
&\leq & \mathcal{E}_0 - \sum_{k=0}^K \eta \langle x^k-x^\star, y^k-y^\star\rangle.
\EEAS 
By monotonicity of subdifferential operator $\partial f$, we have
\[
\langle x^k-x^\star, y^k-y^\star\rangle \geq 0, \quad y^k \in \partial f(x^k).
\]
Thus rearranging the terms we get
\[
0 \leq \sum_{k=0}^K\eta\langle x^k-x^\star, y^k-y^\star\rangle
\leq \mathcal{E}_0.
\]
Sending $K$ to infinity, by the summability argument it follows that
\BEQ\label{e-descent-convergence}
\langle x^k-x^\star, y^k-y^\star\rangle \to 0.
\EEQ

Define the Lagrangian function 
\[
L(x, z, y) = f(x) - y^T(x - E^\intercal z).
\]
Since $x^\star \in \mathcal{R}(E^\intercal)$, there exists
some $z^\star$ such that $x^\star = E^\intercal z^\star$.
Then for fixed $z^\star$ and $y^\star$, function $L(x, z^\star, y^\star)$ is strictly convex.
Its subgradient is given by $(y - y^\star) \in  \partial_x L(x, z^\star, y^\star)$ for
$y \in \partial f(x)$, therefore, $0 \in  L(x^\star, z^\star, y^\star)$. 
Together with strict convexity this implies that $L(x, z^\star, y^\star)$ 
achieves a unique global minimum at $x^\star$ with $L(x^\star, z^\star, y^\star) = f(x^\star)$.
By the subgradient inequality, we have
\[
\langle x^k-x^\star, y^k-y^\star\rangle \geq L(x^k, z^\star, y^\star) - f(x^\star) \geq 0.
\]
Then the condition~\eqref{e-descent-convergence} implies $L(x^k, z^\star, y^\star) \to f(x^\star)$.
Therefore, $x^k \to x^\star$ which concludes the proof.
\end{proof}

By the descent lemma~\ref{lem-descent-discretization}, the discretization is dissipative 
if there exist value $\eta > 0$ such that 
\[
D_k =    
 \left (  \mathcal{E}_{k+1} + \eta \langle x^k-x^\star, y^k-y^\star\rangle \right) - \mathcal{E}_k \leq 0
\]
for all $k=1,2, \ldots$
Since the descent $D_k$ is defined using a one-step transition,
without loss of generality, it suffices to consider $k=1$.

\paragraph{Solver dissipative term.}
To provide more flexibility with the Ipopt~\cite{wachter2006implementation,andersson2019casadi} solver, we also incorporate dissipation from the linear resistors as in the continuous-time energy dissipation \eqref{eq-cont-power}, \ie, we try to establish
\BEQ\label{e-solver-dissipative-term}
D_k =    
 \left (  \mathcal{E}_{k+1}  +
\eta \langle x^k-x^\star, y^k-y^\star\rangle 
+ \rho R\|i_\mathcal{R}^k\|_2^2 \right)
- \mathcal{E}_k
\EEQ
with $\eta>0$ and $\rho\ge 0$.
Also see \S\ref{s-energy-dissipation}.
If there exist values $\eta > 0$
and $\rho\geq 0$ such that $D_k \leq 0$ holds, then the discretization is sufficiently dissipative and Lemma~\ref{lem-descent-discretization} applies.

\subsection{Dissipative discretization}\label{sec-app-dissipative-discretization}
In this section, we fix $\alpha$, $\beta$, $h$, $\eta$, and $\rho$ and describe a
convex optimization 
problem that checks whether the discretization is dissipative.
We focus on problem \eqref{e-dist-opt-consensus}.

\paragraph{Worst-case optimization problem.}
To verify if the dissipativity condition $D_k \leq 0$~\eqref{e-solver-dissipative-term} 
is satisfied for a given discretization, we can alternatively solve a worst-case problem.
Specifically, this entails determining if the optimal value of the following optimization problem is non-positive:
\BEQ\label{e-app-worst-case-descent}
\begin{array}{ll}
\mbox{maximize} &
    \mathcal{E}_2 - \mathcal{E}_1 
    + \eta \langle x^1-x^\star, y^1-y^\star\rangle + \rho R\|i_\mathcal{R}^1\|_2^2\\
    \mbox{subject to}
& \mathcal{E}_s = \frac{1}{2}\|v_\mathcal{C}^{s}-v^\star_\mathcal{C}\|^2_{D_\mathcal{C}} +
\frac{1}{2}\|i_\mathcal{L}^s-i^\star_\mathcal{L}\|^2_{D_\mathcal{L}}, \quad s \in \{1, 2\} \\
& (v^1, i^1, x^1, y^1) ~\text{is feasible initial point} \\
& (v^2, i^2, x^2, y^2) ~\text{is generated by discrete optimization method from initial point} \\
& f \in \mathcal{F},
\end{array}
\EEQ
where $f, v^k, i^k, x^k, y^k, v^\star, i^\star, x^\star, y^\star$
are the decision variables
and $\mathcal{F}$ is a family of functions 
(\eg, \ $L$-smooth convex) that the algorithm is to be applied to.

\paragraph{Reformulated worst-case optimization problem.}
Recall that we assume that the RLC circuit across each net is the same. 
Thus we can define an operator $\operatorname{mat}(z)$ that reshapes 
vector $z\in\reals^{\sigma}$ 
into a matrix of size $n \times \sigma/n$, 
where each row contains information (voltage or
current) of the electric components that belong to the same net.
Define index sets
\BEAS
I_K &=& \{ 1, 1.5, 2, \star\}, \\
I_N &=& \{ 1, \ldots, N\}, \\
I_K \times I_N &=& \{ (k, l) \mid  l \in I_N, k \in I_K \},
\EEAS
and matrices
\BEAS
H &=& \left [\operatorname{mat}(v^1) \quad  \operatorname{mat}(i^1) \quad \left [y^k_l \right]_{(k,l) \in I_K \times I_N}\right ] 
\in \reals^{n \times (2\sigma /n + |I_K| N)},\\
G &=& H^TH \in \symm_+^{2\sigma/n + |I_K| N}, \\
F &=& \left [f^k_l \right]_{(k,l) \in I_K \times I_N} \in \reals^{|I_K| N},
\EEAS
where $y^k_l \in \partial f_l(x^k)$ and $f_l^k = f_l(x^k)$ for all $l \in I_N$.
Note that we have
\[
|F| = |I_K| N, \qquad |G| = (2\sigma/n + |I_K| N)^2,
\]
and for $|I_K|=4$ this simplifies to
\[
|F| = 4 N, \qquad |G| = (2\sigma/n + 4 N)^2.
\]
Recall the circuit ODEs are discretized with the two-stage Runge--Kutta method,
leading to the variables
$\operatorname{mat}(v^k)$, $\operatorname{mat}(i^k)$, $x^k$, $y^k$ that are linear combinations of 
columns in $H$. The coefficients of these linear combinations are polynomials in $\alpha$, $\beta$, and $h$.
In other words, there exist matrices $\mathbf{v}^k, \mathbf{i}^k, \mathbf{x}^k, \mathbf{y}_l^k$ such that
\[
\operatorname{mat}(v^k) = H \mathbf{v}^k, \quad
\operatorname{mat}(i^k) = H \mathbf{i}^k, \quad
x^k = H \mathbf{x}^k, \quad
y^k_l = H \mathbf{y}^k_l
\]
for all $k \in I_K$, $l \in I_N$. Similarly, we can find $ \mathbf{f}^k_l$ such that $f^k_l = F \mathbf{f}^k_l$.

For fixed parameters $\alpha$, $\beta$, $h$, $\eta$, and $\rho$, the problem~\eqref{e-app-worst-case-descent} can be reformulated
as 
\[
\begin{array}{ll}
\underset{f_1, \ldots, f_N, H}{\mbox{maximize}} &
    \mathcal{E}_2 - \mathcal{E}_1  +
\eta \langle x^1-x^\star, y^1-y^\star\rangle +  \rho R\|i_\mathcal{R}^1\|^2_2\\
    \mbox{subject to}
& \mathcal{E}_s = \frac{1}{2}\|v_\mathcal{C}^{s}-v^\star_\mathcal{C}\|^2_{D_\mathcal{C}} +
\frac{1}{2}\|i_\mathcal{L}^s-i^\star_\mathcal{L}\|^2_{D_\mathcal{L}}, \quad s \in \{1, 2\} \\
& \operatorname{mat}(v^k) = H \mathbf{v}^k, \quad k \in I_K \\
& \operatorname{mat}(i^k) = H \mathbf{i}^k, \quad k \in I_K \\
& x^k = H \mathbf{x}^k, \quad k \in I_K \\
& y^k_l = H \mathbf{y}^k_l, \quad k \in I_K, \quad l \in I_N \\
& f_l \in \mathcal{F}_{\mu_l, M_l}(\reals^n), \quad l \in I_N .
\end{array}
\]

By the interpolation lemma (\cite{taylor2023optimal}, Theorem 2), $f_l \in \mathcal{F}_{\mu_l, M_l}(\reals^n)$ if and only if
\BEAS
0 &\geq& f_l^j - f_l^i  + \langle g_l^j, x^i - x^j\rangle 
+ \frac{1}{2M_l} \|g_l^i - g_l^j\|^2_2 \\
&& + 
\frac{\mu_l}{2 (1 - \mu_l / M_l)} \| x^i - x^j - 1/{M_l} (g_l^i - g_l^j)\|_2^2
, \quad i, j \in I_K.
\EEAS
Therefore, we can replace infinite dimensional decision variable $f_l \in \mathcal{F}_{\mu_l, M_l}(\reals^n)$ with $|I_K|(|I_K|-1)$ inequalities.

\paragraph{Grammian formulation.}
Now using Grammian formulation, 
the problem of finding the worst-case energy difference over a given 
family of functions reduces to solving an SDP, similar to~\cite{TaylorHendrickxGlineur2017_smooth}.
This SDP can be presented compactly as
\BEQ\label{e-sdp-primal-compact}
\begin{array}{ll}
\underset{G, F}{\mbox{maximize}}&
     [F^T \; \operatorname{vec}(G)^T]D p  \\
    \mbox{subject to}
& [F^T \; \operatorname{vec}(G)^T]S_{lij} p \leq 0, \quad l=1, \ldots, N, \; i, j \in I_K \\
& G \succeq 0,
\end{array}
\EEQ
where $p$ is a vector with dummy variables that encode the monomials of $\alpha, \beta, h, \eta, \rho$, and
$D\in \reals^{(|F|+|G|)\times |p|}$ and $S \in  \reals^{(|I_K|N)\times(|F|+|G|)\times |p|}$ 
are some matrices with constant coefficients.

\paragraph{Dualization.}
Define variables for the energy descent as
\[
V_D = [F^T \; \operatorname{vec}(G)^T]D p,
\]
and for interpolating inequality indexed by $lij$ as
\[
(V_S)_{lij} = [F^T \; \operatorname{vec}(G)^T]S_{lij} p.
\]
Let $Z$ and $\lambda_{lij}$ for all $i, j \in I_K$, $l=1, \ldots, N$ be the dual variables for problem~\eqref{e-sdp-primal-compact}. 
Vertically stack $\lambda_{lij}$ and $(V_S)_{lij}$ to form vectors $\lambda$ and $V_S$ respectively.
The Lagrangian that generates primal problem \eqref{e-sdp-primal-compact} is
\[
L(G, f, Z, \lambda) = V_D - \lambda^T V_S + \Tr(GZ),
\]
and the dual problem is given by
\BEQ\label{e-sdp-dual-compact}
\begin{array}{ll}
\underset{Z, \lambda}{\mbox{minimize}}&
     0 \\
    \mbox{subject to}
& D_F p - \lambda^T S_F p = 0 \\
& D_G p - \lambda^T S_G p + Z = 0 \\
& Z \succeq 0 \\
& \lambda \geq 0.
\end{array}
\EEQ

\paragraph{Algebraic proof.}
Using a weak duality we have $p^\star \leq d^\star$. 
Let $Z^\star$ and $\lambda^\star$ be optimal dual variables with $d^\star = 0$, 
then for all $G \in \symm_+^{(|\mathcal{C}| +|\mathcal{L}|)/n + |I_K| N}$ 
and $F \in \reals^{|I_K| N}$ it follows that
\BEAS
L(G, F, Z^\star, \lambda^\star) &=& 
F^T \underbrace{\left ( D_F p - (\lambda^\star)^T S_F p  \right )}_{=0} + \Tr \left( G
\underbrace{\left (D_G p - (\lambda^\star)^T S_G p + Z^\star\right)}_{=0}\right)\\
&=&0.
\EEAS
Therefore, having $Z^\star$ and $\lambda^\star$ gives us an algebraic proof for the worst-case one step energy difference 
\[
V_D = \sum_{l,i,j} \underbrace{\lambda^\star_{lij}}_{\geq 0} \underbrace{(V_S)_{lij}}_{\leq 0} - \underbrace{\Tr(GZ^\star)}_{\geq 0}  \leq 0,
\]
where $G\succeq 0$ because $G$ is a Gram matrix and $(V_S)_{lij} \leq 0$ for all $f_l \in \mathcal{F}_{\mu_l, L_l}$.

\subsection{Optimizing over discretizations}\label{sec-app-opt-discretization}
In this section we also optimize over the parameters 
$\alpha$, $\beta$, $h$, $\eta$, and $\rho$.

\subsubsection{QCQP formulation}
We can formulate the dual problem~\eqref{e-sdp-dual-compact} as QCQP 
following \cite{DasGuptaVanParysRyu2023_branchandbound},
\[
\begin{array}{ll}
\underset{p, \lambda, P}{\mbox{minimize}}&
     0 \\
    \mbox{subject to}
& D_F p - \lambda^T S_F p = 0 \\
& D_G p - \lambda^T S_G p + PP^T = 0 \\
& p^T Q_e p + a_e^T p = 0, \quad e = 1, \ldots, |p|\\
& P \textrm{ is lower triangular} \\
& \diag(P) \geq 0 \\
& \lambda \geq 0,
\end{array}
\]
where relations for dummy variables are specified using quadratic or bilinear constraints 
$p^T Q_e p + a_e^T p = 0$.

\subsubsection{Lifted nonconvex SDP}
Alternatively, we can formulate the dual problem~\eqref{e-sdp-dual-compact} as lifted
nonconvex semidefinite problem with respect to a variable
$w = \left ( p, \lambda \right ) \in \reals^{|p|+|\lambda|} $.
Specifically, we have
\BEQ\label{e-lifted-noncvx-sdp}
\begin{array}{ll}
\underset{w, W, Z}{\mbox{minimize}}&
     0 \\
    \mbox{subject to}
& \overline D_F(i,:) w - 
\Tr \left (\overline S_F(:,i,:) W \right )  = 0, \quad i = 1, \ldots, |F| \\
& \overline D_G(i,:) w - 
\Tr \left (\overline S_G(:,i,:) W \right ) + Z(i,i)  = 0, \quad i = 1, \ldots, |G| \\
& \Tr \left ( \overline Q_e W \right ) + \overline a_e^T w = 0 \\
& W = ww^T\\
& Z \succeq 0\\
& \lambda \geq 0,
\end{array}
\EEQ
where
$\overline S_F = \left [ \begin{array}{cc} 0 & \frac{1}{2} S_F^T \\ 
\frac{1}{2} S_F & 0 \end{array}  \right ]$, $\overline D_F = \left [ \begin{array}{cc} D_F & 0 \end{array}  \right ]$,
$\overline S_G = \left [ \begin{array}{cc} 0 & \frac{1}{2} S_G^T \\ 
\frac{1}{2} S_G & 0 \end{array}  \right ]$,
$\overline D_G = \left [ \begin{array}{cc} D_G & 0 \end{array}  \right ]$,
$\overline Q_e = \left [ \begin{array}{cc} Q_e &0  \\ 
0 & 0 \end{array}  \right ]$ and $\overline a_e =\left ( a_e, 0 \right )$.
In the above the transpose for the third order tensors $S_F$ and $S_G$ is obtained 
by transposing the first and third dimensions.
Note that with the exception of the rank-1 constraint $W = ww^T$, the constraints 
define convex sets.

\subsubsection{SDP relaxation}
To find globally optimal solutions to the nonconvex optimization problem,
methods like spacial branch-and-bound require good initial bounds on the variables.
Following \cite{DasGuptaVanParysRyu2023_branchandbound},
an SDP relaxation of~\eqref{e-lifted-noncvx-sdp} is given by
\BEQ\label{e-lifted-noncvx-sdp-relaxed}
\begin{array}{ll}
\underset{w, W, Z}{\mbox{minimize}}&
     0 \\
    \mbox{subject to}
& \overline D_F(i,:) w - 
\Tr \left (\overline S_F(:,i,:) W \right )  = 0, \quad i = 1, \ldots, |F| \\
& \overline D_G(i,:) w - 
\Tr \left (\overline S_G(:,i,:) W \right ) + Z(i,i)  = 0, \quad i = 1, \ldots, |G| \\
& \Tr \left ( \overline Q_e W \right ) + \overline a_e^T w = 0 \\
& \left [ \begin{array}{cc} W & w \\ 
w^T & 1 \end{array}  \right ] \succeq 0\\
& Z \succeq 0\\
& \lambda \geq 0.
\end{array}
\EEQ
Problem~\eqref{e-lifted-noncvx-sdp-relaxed} is now a convex optimization problem,
since the rank-1 constraint $W=ww^T$ has been relaxed to $W \succeq ww^T$.
This constraint in turn can be represented equivalently using the Schur complement.

\section{Package \texttt{ciropt}}\label{appendix-ciropt}

In this section, we present a simple problem instance to demonstrate 
the step-by-step process of obtaining a discretized algorithm with our methodology.

\paragraph{Optimization problem.}
Consider a problem
\[
\begin{array}{ll}
\mbox{minimize}& f(x),
\end{array}
\]
where $f$ is a convex function.

\paragraph{Determine the static interconnect.}
Static interconnect is determined from the optimality conditions.
The optimality condition for this problem is to find an $x$ such that
$0 \in \partial f(x)$. The corresponding static interconnect for this condition provided below.
\begin{center} 
\begin{circuitikz}[scale=.6, every node/.style={scale=0.7}][american voltages]
\ctikzset{label/align = straight}
    \pgfmathsetmacro{\w}{4}
    \pgfmathsetmacro{\h}{3}
    \pgfmathsetmacro{\boxheight}{2}
    \pgfmathsetmacro{\boxwidth}{2}
    \pgfmathsetmacro{\compwidth}{3}
    \pgfmathsetmacro{\midspace}{\w - \boxwidth}
    \pgfmathsetmacro{\wirespace}{\h/4}
    
    \circuitbox{\w}{0}{\boxwidth/2}{\boxheight/2}{\Large $\partial f$}
    \draw
        ( 0 , 0) to [short, -*] ( \boxwidth/2 + \midspace , 0 ) node[above, xshift=-6pt] {$x$};

    \wiremul{ \boxwidth * 2/3}{0}{0.2}{$m$}
\end{circuitikz}
\end{center}

\paragraph{Admissible dynamic interconnect.}
An admissible dynamic interconnect with RLC components relaxes to the static interconnect in equilibrium. The following provides an example of such a dynamic interconnect.
\begin{center} 
\CircuitExample
\end{center}

The V-I relations for the circuit (left column) and convergent discretized method
found by our method (right) are displayed below.
\begin{center}{
\begin{tabular}{l|l}
\parbox{0.3\linewidth}{%
\begin{align*}
x &= \prox_{(R/2) f}\left (z\right )\\
y &=\frac{2}{R}(z-x)\\
\frac{d}{dt} e_2 &= - \frac{1}{2CR}(R y + 3e_2) \\
\frac{d}{dt} z &= - \frac{1}{4CR}(5R y + 3e_2)
\end{align*}}
&
\parbox{0.3\linewidth}{%
\begin{align*}
x^k &= \prox_{(R/2) f}\left (z^k\right )\\
y^k &=\frac{2}{R}(z^k-x^k)\\
e_2^{k+1} &= e_2^k - \frac{h}{2CR}(R y^k + 3e_2^k) \\
z^{k+1} &= z^k - \frac{h}{4CR}(5R y^k + 3e_2^k).
\end{align*}}
\end{tabular} }
\end{center}

\paragraph{Automatic discretization.}
Now we find a discretization parameters for this dynamic interconnect that guarantee algorithm convergence
using \texttt{ciropt} package.

\paragraph{Step 1.} Define a problem.
\begin{lstlisting}[language=mypython]
import ciropt as co
problem = co.CircuitOpt()
\end{lstlisting}

\paragraph{Step 2.} 
Define function class, in this example $f$ is convex and nondifferentiable, \ie,
$\mu=0$ and $M=\infty$.
\begin{lstlisting}[language=mypython]
f = co.def_function(problem, mu=0, M=np.inf)
\end{lstlisting}

\paragraph{Step 3.} Define the optimal points.
\begin{lstlisting}[language=mypython]
x_star, y_star, f_star = f.stationary_point(return_gradient_and_function_value=True)
\end{lstlisting}

\paragraph{Step 4.} Define values for the RLC components and
discretization parameters, here for simplicity 
we take $\alpha=0$ and $\beta=1$.
\begin{lstlisting}[language=mypython]
R, C = 1, 10
h, eta = problem.h, problem.eta
\end{lstlisting}

\paragraph{Step 5.} Define the one step transition in the discretized V-I relations.
\begin{lstlisting}[language=mypython]
z_1 = problem.set_initial_point()
e2_1 = problem.set_initial_point()
x_1 = co.proximal_step(z_1, f, R/2)[0]
y_1 = (2 / R) * (z_1 - x_1)
e1_1 = (e2_1 - R * y_1) / 2
v_C1_1 = e2_1 / 2 - z_1
v_C2_1 = e2_1

e2_2 = e2_1  -  h / (2 * R * C) * (R * y_1 + 3 * e2_1)  
z_2 = z_1  -  h / (4 * R * C) * (5 * R * y_1 + 3 * e2_1)
x_2 = co.proximal_step(z_2, f, R/2)[0]
y_2 = (2 / R) * (z_2 - x_2)
v_C1_2 = e2_2 / 2 - z_2
v_C2_2 = e2_2 
\end{lstlisting}

\paragraph{Step 6.} Define the dissipative term
\[
\mathcal{E}_2- \mathcal{E}_1 
 +  \eta\langle x^1-x^\star, y^1-y^\star\rangle.
\]
Solve the final problem.

\begin{lstlisting}[language=mypython]
E_1 = (C/2) * (v_C1_1 + x_star)**2 + (C/2) * (v_C2_1)**2
E_2 = (C/2) * (v_C1_2 + x_star)**2 + (C/2) * (v_C2_2)**2
Delta_1 = eta * (x_1 - x_star) * (y_1 - y_star) 

problem.set_performance_metric(E_2 - (E_1 - Delta_1))
params = problem.solve()[:1]
\end{lstlisting} 

This gives the disretization parameters
\[
b=6.66, \qquad h=6.66. 
\]
The resulting provably convergent algorithm is 
\BEAS
x^k &=& \prox_{(1/2) f}(z^k )\\
y^k &=& 2(z^k-x^k)\\
w^{k+1} &=& w^k - 0.33(y^k + 3w^k) \\
z^{k+1} &=& z^k - 0.16(5 y^k + 3w^k).
\EEAS

\paragraph{New algorithm.} 
Solve your problem using new algorithm. 
Consider Huber penalty function $\phi:\reals \to \reals$
\[
\phi(x) = 
\begin{cases}
x^2 & |x| \leq 1 \\
2x-1 & |x| > 1.
\end{cases} 
\]
We consider the primal problem
\[
\begin{array}{ll}
\text{minimize} & f(x) = \sum_i \phi(x_i - c_i) \\
\text{subject to} & Ax = b,
\end{array}
\]
where $A\in \reals^{m \times n}$, $b\in \reals^m$ and $c\in \reals^n$, and solve the dual problem
\[
\begin{array}{ll}
\text{maximize} & g(y) = -f^*(-A^\intercal  y ) - b^\intercal y.
\end{array}
\]

We apply our algorithm to solve the dual problem.
Note that the proximal operator $\mathbf{prox}_{\alpha g}(\tilde y)$ is equivalent to
\[
x = \argmin_x \left( f(x) + (\alpha/2)\|Ax-b\|_2^2 + \tilde y^\intercal (Ax-b) \right),
\quad y = \tilde y + \alpha(Ax-b).
\]

Since $f$ is CCP and $2$-smooth (as a Huber loss), $f^*$ is $1/2$-strongly convex. 
We take $m=30$, $n=100$ and sample entries of $A$, 
$c$ and $b$ from i.i.d. Gaussian distribution.
Finally we rescale the entries of $A$ by $\lambda_{\min}(AA^\intercal )$ to have $g$ that is $1/2$-strongly convex.
The following Figure~\ref{fig-app-huber} presents the results of the algorithm applied to a random problem instance. 

\begin{figure}[H]
    \centering
    \includegraphics[width=0.7\textwidth]{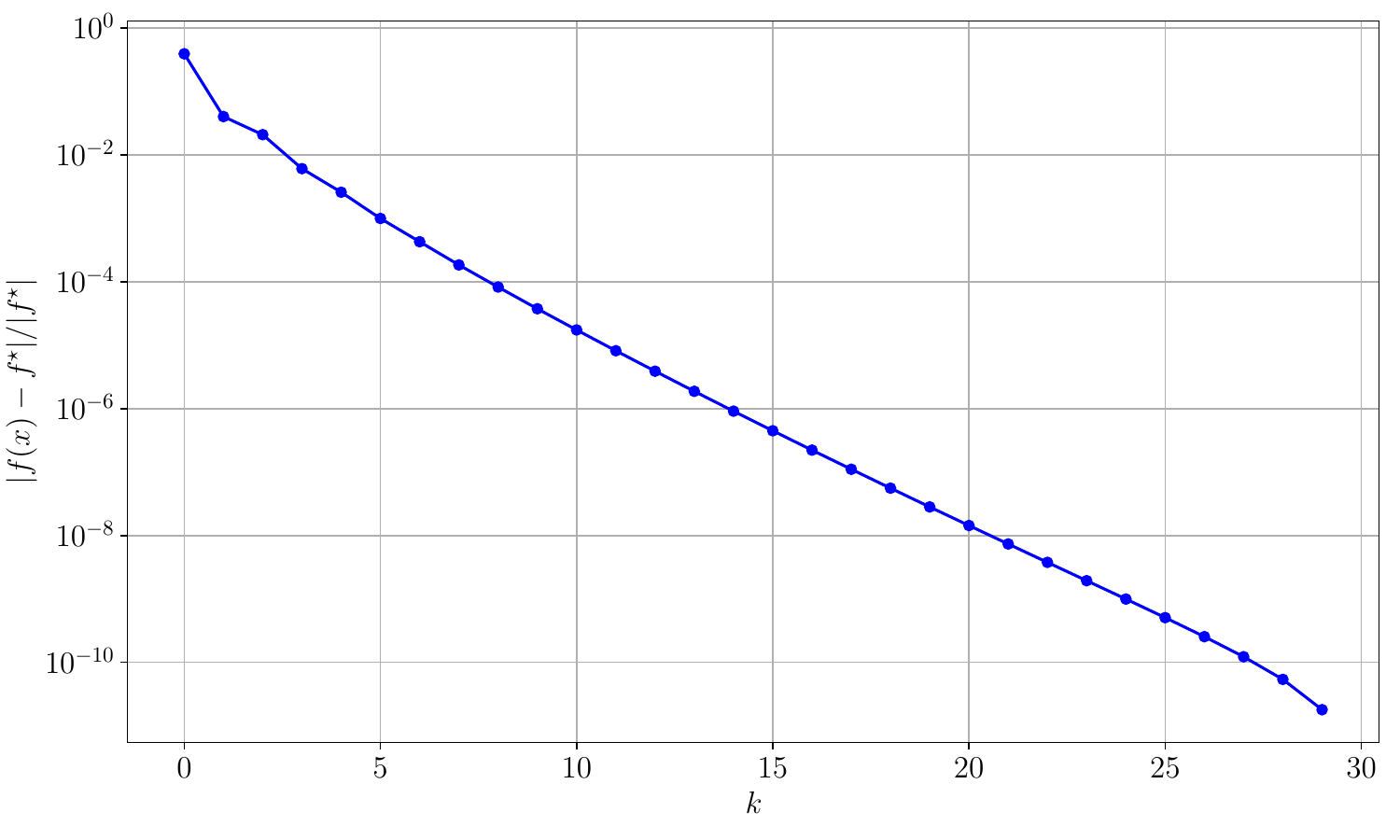}
    \caption{Relative error across iterations when applying the new algorithm.}
    \label{fig-app-huber}
\end{figure}

\section{Numerical experiments}\label{app-numerical}

\subsection{Decentralized ADMM+C}\label{sec-dadmm-c}

Consider a decentralized optimization problem
\BEQ 
\begin{array}{ll} \nonumber
 \underset{x \in \reals^m}{\mbox{minimize}}&
    \sum_{i=1}^{N} f_i(x),
\end{array}
\EEQ
where $f_1, \dots, f_N$ are CCP. Suppose furthermore we know some of the functions are strongly convex, that is, suppose there is a subset $S \subset \set{ 1, 2, \dots, N }$ such that $f_j$ are strongly convex for $j \in S$. 
We wish to find an efficient algorithm that fully exploits the additional information for $f_j$'s. 

To solve the problem in a decentralized manner, define a primal variable $x_i \in \reals^m$ for each agent function $f_i$.
To leverage the strong convexity of $f_j$ for each $j \in S$, 
we could consider implementing a specialized update rule for $x_j$ that is more effective for strongly convex 
functions.

We consider a modification of the DADMM circuit in \S\ref{s-dadmm}. 
Recall from \S\ref{s-nesterov}, a circuit with a capacitor and inductor corresponds to a method with momentum. 
It is known~\cite{Polyak1964_methods} that momentum accelerates the convergence of methods for strongly convex functions. 
Therefore, we propose to attach capacitors to the circuit in \S\ref{s-dadmm}, on the nets that are directly related to $x_j$'s in $j\in S$. 
We anticipate that the method derived by discretization of a new circuit (using our automatic discretization methodology) 
will outperform the DADMM.

Consider a modified decentralized geometric median problem from~\cite{ShiLingWuYin2015_proximal}. 
Suppose each agent $i \in \set{ 1, \dots, N}$ holds vector $b_i \in \reals^m$, 
and consider the minimization problem
\BEQ\label{e-dadmmc-ex-problem}
\begin{array}{ll}
 \underset{x \in \reals^m}{\mbox{minimize}}&
    \sum_{i \in S} \left (\| x - b_i\|_2 + \|x - b_i\|_2^2\right) + \sum_{i \notin S} \| x - b_i\|_2.
\end{array}
\EEQ
The minimization subproblem has an explicit solution, \ie, 
\begin{align*}
    \prox_{\rho f_i}(z)
    &= b_i - \frac{b_i-\tilde{z}}{ \|b_i-\tilde{z}\|_2} (\|b_i-\tilde{z}\|_2 - \tilde \rho )_+,
\end{align*}
where
\begin{align*}
    \tilde{z} 
    &= \begin{cases}
        z & i \notin S \\
        \frac{1}{1 + 2 \rho } (z + 2 \rho b_i) & i \in S ,
    \end{cases}
    \qquad\qquad
    \tilde{\rho} 
    = \begin{cases}
        \rho & i \notin S \\
        \frac{\rho}{1 + 2 \rho} & i \in S .
    \end{cases}
\end{align*}
We set $m = 100$, $N=6$, $S=\set{4,5}$, and sample vectors $b_i \in \reals^{100}$ from the 
uniform distribution over $[-100,100]^{100}$. 
We use graph $G$ provided in Figure~\ref{fig:dadmm_c_n6}.
We initialize iterates to $x^0_i = b_i$ for all $i$.

We use a modified DADMM circuit~\S\ref{s-dadmm} for the graph in Figure~\ref{fig:dadmm_c_n6}. 
This modified version includes an extra capacitor connected at $e_{45}$,
to which we refer as DADMM+C.
\CircuitDADMMC
Note when $N=1$, the DADMM+C circuit corresponds to the 
Nesterov acceleration circuit~\S\ref{s-nesterov}. 

Using KCL at $e_{45}$, we have
\[
    {i_{\mL}}_{45} + {i_{\mL}}_{54} =  - \frac{(e_{45} - x_4)}{R} - \frac{(e_{45} - x_5)}{R} - C \frac{d}{dt} e_{45}. 
\]
Thus for $\{j,l\}\neq \{4,5\}$  the update rule of $e_{jl}$ 
is given by \eqref{e-dec-admm-kcl2},
while for $e_{45}$ we get
\[
    \frac{d}{dt} e_{45}
        = - \frac{1}{C} \pr{ {i_{\mL}}_{45} + {i_{\mL}}_{54} + \frac{1}{R} \pr{ 2 e_{45} - x_4 - x_5 } }.
\]
Other V-I relations remain unchanged as in~\S\ref{s-dadmm}.
The resulting algorithm becomes
\BEAS 
x_j^{k+1} &=& \prox_{(R/|\Gamma_j|)f_j}\left ( \frac{1}{|\Gamma_j|} \sum_{l \in \Gamma_j} (R{i^k_{\mL}}_{jl} + e^k_{jl})\right ) \\
e_{jl}^{k+1} &=& \begin{cases}
    e_{45}^{k} - \frac{h}{CR} \pr{ R({i_{\mL}^k}_{45} + {i_{\mL}^k}_{54}) + 2 e_{45}^{k} - x_4^{k+1} - x_5^{k+1} } 
    \quad & \{j, l\} = \{4,5\}  \\
    \frac{1}{2}(x_j^{k+1}+x_l^{k+1}) & \text{otherwise}
\end{cases} \\ 
{i_{\mL}}_{jl}^{k+1} &=& {i_{\mL}}_{jl}^k + \frac{h}{L}(e_{jl}^{k+1} - x_j^{k+1}).
\EEAS 

We consider the circuit with $R=0.8$, $L=2$ and $C=15$.
To discretize the circuit, we take advantage of the fact that 
the strong convexity of $f_i$ is $2$ for 
$i \in S$~\eqref{e-dadmmc-ex-problem}.
Specifically, we apply our automatic discretization methodology 
to convex functions, setting 
$\mu=0$ for $f_i$ with $i\notin S$ and $\mu=2$ for $f_i$ with $i \in S$,
and using smoothness $M=100$.
The sufficiently dissipative parameters we find are
\[
\eta=3.70, \quad h = 3.52, \quad \rho=0, \quad \alpha=0, \quad \beta=1,
\quad \gamma=4.48.
\] 
We compare DADMM+C with DADMM and P-EXTRA. 
Based on grid search, 
we set $R=0.6$ for DADMM in \S \ref{s-dadmm}, 
and $R=1$ and $h_1=\cdots=h_N=0$ for PG-EXTRA in \S \ref{s-PG-EXTRA} to get P-EXTRA. 
Note that the parameters of the proximal operators for DADMM are scaled by ${1}/{|\Gamma_j|}$, 
in contrast to P-EXTRA, where $|\Gamma_j|$ is generally not equal to $1$. 
We use Metropolis mixing matrix for P-EXTRA,
\[
    W_{ij} = \begin{cases}
        \frac{1}{\max\set{ |\Gamma_i|, |\Gamma_j| } + 1} & \text{ if $i \in \Gamma_j$}  \\ 
        1 - \sum_{j\in\Gamma_j} W_{ij} & \text{ if } i=j \\
        0 & \text{otherwise}.
    \end{cases}
\] 
The numerical results are illustrated in Figure~\ref{fig:dadmm_c_n6_appndx}. 
The relative error for DADMM+C decreases to $10^{-10}$ in 
$66$ iterations, for DADMM in $87$ iterations
and for P-EXTRA in $294$ iterations.
\begin{figure} [H]
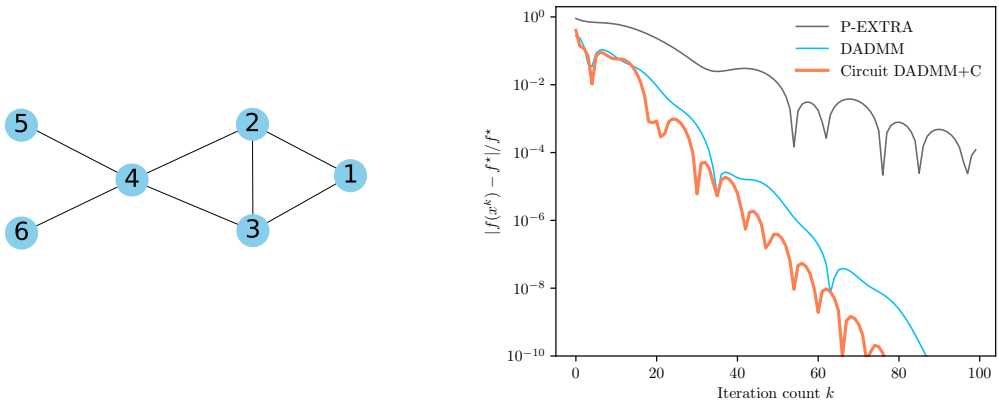

    \centering
    \begin{subfigure}[c]{0.35\textwidth}
        \centering
        \includegraphics[width=\textwidth]{figures_main/graph_n6.pdf}
    \end{subfigure}
    \hfill
    \begin{subfigure}[c]{0.54\textwidth}
        \centering
        \includegraphics[width=\textwidth]{figures_main/freldif_n6_circuit_dadmm_c_theta1.pdf}
    \end{subfigure}
    \caption{(Left) Underlying graph $G$. (Right) Relative error $\left|f(x^k) - f^\star\right|/{f^\star}$ vs. $k$.
    }
    \label{fig:dadmm_c_n6_appndx}
\end{figure}

\subsubsection{Convergence proof of decentralized ADMM+C}
\label{ss:dadmm-c-convergence}

We first review the meaning of the numerical values in the previous section. 
Suppose $(x^\star, y^\star)$ be a primal-dual solution pair.  Then by Theorem~\ref{thm-equilibrium}, there is $(v^\star, i^\star)\in D_{x^\star, y^\star}$ that satisfies $(v^\star, i^\star) = ((0, 0, v_{\mathcal{C}}^\star), (0, i_{\mathcal{L}}^\star, 0))$. 
The numerical values imply, for the energy function
\[
\mathcal{E}_k = \sum_{ ( j,l) \in A } \norm{ i_{\mL_{jl}}^k - i_{\mL_{jl}}^\star }^2
    + \sum_{ j<l, \{ j,l \} \subset S } \frac{15}{2} \norm{ e_{jl}^{k} - x_j^\star }^2 
    + 3.52 \sum_{ j<l, \{ j,l \} \not\subset S } \norm{ e_{jl}^k - x_j^\star }^2,
\]
following inequality is true up to certain numerical precision
\[
    \left (  \mathcal{E}_{k+1} +
        3.7\langle x^{k+1}-x^\star, y^{k+1}-y^\star\rangle \right) - \mathcal{E}_k \leq 0.
\] 
This inequality guarantees the convergence of the method we've used in the experiment. 
Inspired form the numerical results, we could obtain an analytic proof for generalized cases as well.  
To clarify, $A$ is the set of edges introduced in \S\ref{appendix-decentralized}, and each edge is counted twice in the sum $\sum_{ ( j,l) \in A }$. 
\begin{lemma}
\label{lem:dadmm-c-convergence}
Let $f_j\colon \reals^m \to \reals \cup \{ \infty \}$ are CCP functions for $j \in \{1,\dots, N\}$ and $S \subset \set{ 1, 2, \dots, N }$. 
Consider the generalized DADMM+C
\BEAS 
x_j^{k+1} &=& \prox_{(R/|\Gamma_j|)f_j}\left ( \frac{1}{|\Gamma_j|} \sum_{l \in \Gamma_j} (R{i^k_{\mL}}_{jl} + e^k_{jl})\right ) \\
e_{jl}^{k+1} &=& \begin{cases}
    e_{jl}^{k} - \frac{h}{CR} \pr{ R({i_{\mL}^k}_{jl} + {i_{\mL}^k}_{lj}) + 2 e_{jl}^{k} - x_j^{k+1} - x_l^{k+1} } 
    \quad & \{j, l\} \subset S  \\
    \frac{1}{2}(x_j^{k+1}+x_l^{k+1}) & \text{otherwise}
\end{cases}  \\ 
{i_{\mL}}_{jl}^{k+1} &=& {i_{\mL}}_{jl}^k + \frac{h}{L}(e_{jl}^{k+1} - x_j^{k+1}),
\EEAS 
with initilzation $i_{\mL_{jl}}^0 = i_{\mL_{lj}}^0$ for all edge $(j,l)$ in G.
Let $(x^\star, y^\star)$ be a primal-dual solution pair and $(v^\star, i^\star)\in D_{x^\star, y^\star}$. 
Define the energy function as
\[
    \mathcal{E}_k 
    = \sum_{ ( j,l) \in A } \frac{L}{2}  \norm{ i_{\mL_{jl}}^k - i_{\mL_{jl}}^\star }^2
    + \sum_{ j<l, \{ j,l \} \subset S } \frac{C}{2} \norm{ e_{jl}^{k} - x_j^\star }^2 
    + \sum_{ j<l, \{ j,l \} \not\subset S } \frac{h}{2R} \norm{ e_{jl}^k - x_j^\star }^2 .
\]    
Then for all $R, L, C, h, \tau > 0$ that satisfy
\[
    \max\set{ 1, \frac{2h}{CR} }
    \le \tau^2    
    \le 2 - \frac{hR}{L},
\] 
following inequality is true
\[   
 \left (  \mathcal{E}_{k+1} 
            +  \frac{h}{2R} \pr{ 2 - \frac{hR}{L} - \tau^2 } \sum_{ ( j,l) \in A } \left\|   e_{jl}^{k+1} -  x_j^{k+1}   \right\|^2 
            + h \langle x^{k+1}-x^\star, y^{k+1}-y^\star\rangle \right) - \mathcal{E}_k \leq 0.
\]
\end{lemma}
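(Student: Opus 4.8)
The plan is to prove the claimed inequality by a direct discrete energy (Lyapunov) computation that mirrors the continuous-time dissipation identity \eqref{eq-cont-power}. First I would fix a primal-dual solution pair $(x^\star,y^\star)$ and, invoking Theorem~\ref{thm-equilibrium}, the accompanying equilibrium $(v^\star,i^\star)\in D_{x^\star,y^\star}$; on the consensus problem this gives $x_j^\star=x^\star$ for all $j$, $e_{jl}^\star=x^\star$ for every edge, $v_{\mathcal L}^\star=0$ (so $e_{jl}^\star-x_j^\star=0$), $i_{\mathcal C}^\star=0$, and the discrete Kirchhoff relation $y_j^\star=\sum_{l\in\Gamma_j} i_{\mathcal L,jl}^\star$. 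Next, from the optimality condition of the proximal step I would record that $y_j^{k+1}:=\tfrac{|\Gamma_j|}{R}\big(a_j^k-x_j^{k+1}\big)=\sum_{l\in\Gamma_j}\big(i_{\mathcal L,jl}^k+\tfrac1R(e_{jl}^k-x_j^{k+1})\big)\in\partial f_j(x_j^{k+1})$, which is the discrete KCL at node $j$; summing $\langle x_j^{k+1}-x^\star,\,y_j^{k+1}-y_j^\star\rangle$ over $j$ reproduces exactly $\langle x^{k+1}-x^\star,\,y^{k+1}-y^\star\rangle$, each summand being nonnegative by monotonicity of $\partial f_j$. I would also use the initialization to propagate the relevant invariant on the inductor currents, which on the non-$S$ edges (where $e_{jl}^{k+1}=\tfrac12(x_j^{k+1}+x_l^{k+1})$) keeps $2e_{jl}^{k+1}-x_j^{k+1}-x_l^{k+1}=0$, consistent with the $i_{\mathcal L}^{k+1}$ update and with the equilibrium relation $i_{\mathcal L,jl}^\star+i_{\mathcal L,lj}^\star=0$.

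The core of the argument is to expand $\mathcal E_{k+1}-\mathcal E_k$ term by term. For each directed edge the inductor contribution expands, using $i_{\mathcal L,jl}^{k+1}-i_{\mathcal L,jl}^k=\tfrac hL(e_{jl}^{k+1}-x_j^{k+1})$, as $h\langle i_{\mathcal L,jl}^k-i_{\mathcal L,jl}^\star,\,e_{jl}^{k+1}-x_j^{k+1}\rangle+\tfrac{h^2}{2L}\|e_{jl}^{k+1}-x_j^{k+1}\|^2$; the $S$-edge capacitor terms expand via the $e_{jl}$ update as $C\langle e_{jl}^k-x^\star,\,\Delta e_{jl}\rangle+\tfrac C2\|\Delta e_{jl}\|^2$ with $\Delta e_{jl}=-\tfrac{h}{CR}\big(R(i_{\mathcal L,jl}^k+i_{\mathcal L,lj}^k)+2e_{jl}^k-x_j^{k+1}-x_l^{k+1}\big)$; and the non-$S$ weighted terms $\tfrac{h}{2R}\|e_{jl}^{k+1}-x^\star\|^2$ are handled using $e_{jl}^{k+1}=\tfrac12(x_j^{k+1}+x_l^{k+1})$. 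I would then collect all the linear-in-increment terms, substitute the discrete KCL to convert the $\sum_{(j,l)}\langle i_{\mathcal L,jl}^k-i_{\mathcal L,jl}^\star,\,\cdot\rangle$ bookkeeping into $-h\langle x^{k+1}-x^\star,\,y^{k+1}-y^\star\rangle$ plus a negative resistor-type term proportional to $-\tfrac hR\sum_{(j,l)}\|e_{jl}^{k+1}-x_j^{k+1}\|^2$ plus cross terms that mix $e_{jl}^k$ with $e_{jl}^{k+1}$ (the price of the implicit/explicit index mismatch in the update).

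The main obstacle is exactly this mismatch together with the leftover increment-squared terms: I would control them with Young's inequality using the free parameter $\tau$, splitting the negative $-\tfrac hR\|e_{jl}^{k+1}-x_j^{k+1}\|^2$ term so that one part ($\tfrac{h^2}{2L}$) cancels the inductor increment square, one part ($\tfrac{h\tau^2}{2R}$) absorbs the $e^k$-versus-$e^{k+1}$ cross terms and the capacitor increment squares, and the remainder $\tfrac{h}{2R}\big(2-\tfrac{hR}{L}-\tau^2\big)\|e_{jl}^{k+1}-x_j^{k+1}\|^2$ is precisely the nonnegative quantity retained on the left-hand side of the claimed inequality. The lower bound $\tau^2\ge 1$ is what makes the non-$S$-edge bookkeeping close, $\tau^2\ge \tfrac{2h}{CR}$ is what lets the capacitor increment squares be absorbed, and $\tau^2\le 2-\tfrac{hR}{L}$ keeps the retained coefficient nonnegative; the monotonicity inequalities $\langle x_j^{k+1}-x^\star,\,y_j^{k+1}-y_j^\star\rangle\ge 0$ supply the remaining sign. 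I expect each individual expansion to be routine, so the real work is finding the grouping of terms that makes all three $\tau$-constraints appear simultaneously and verifying that no stray term survives; once the grouping is fixed, the inequality follows by assembling the pieces.
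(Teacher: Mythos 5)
Your proposal follows essentially the same route as the paper's proof: the same decomposition of $\mathcal{E}_{k+1}-\mathcal{E}_k$ into inductor, $S$-edge capacitor, and non-$S$-edge pieces, the same identification of the discrete KCL quantity $y_{jl}^{k+1}=i_{\mL_{jl}}^k+\tfrac1R(e_{jl}^k-x_j^{k+1})$ from the proximal step, the same use of the equilibrium antisymmetry $i_{\mL_{jl}}^\star=-i_{\mL_{lj}}^\star$, and the same completing-the-square/Young's inequality with free parameter $\tau$ to absorb the $e^k$-versus-$e^{k+1}$ cross terms, with the three constraints on $\tau^2$ arising exactly where you say they do. The plan is correct and matches the paper's argument.
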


\begin{proof}
    For notation simplicity, define
    \[
        y_{jl}^{k+1} = i_{\mL_{jl}}^{k} + \frac{1}{R} \pr{  e_{jl}^{k} - x_j^{k+1} }.
    \]
    Note, from the first line of the algorithm we have
    $ x_j^{k+1} + \frac{R}{|\Gamma_j|} y_j^{k+1}
    = \frac{1}{|\Gamma_j|} \sum_{l\in\Gamma_j} \pr{ R i_{\mL_{jl}}^k + e_{jl}^k }$,
and therefore
\[
    y_j^{k+1} 
    = \sum_{l\in\Gamma_j} \pr{ i_{\mL_{jl}}^k + \frac{1}{R} \pr{ e_{jl}^k - x_j^{k+1} } }
    = \sum_{l\in\Gamma_j} y_{jl}^{k+1}.
\]

\emph{(i) Difference of $\sum_{ ( j,l) \in A } \frac{L}{2}  \norm{ i_{\mL_{jl}}^k - i_{\mL{jl}}^\star }^2$.} 

Name
\[
    \Delta_L = \sum_{ ( j,l) \in A } \frac{L}{2}  \norm{ i_{\mL_{jl}}^{k+1} - i_{\mL_{jl}}^\star }^2
    - \sum_{ ( j,l) \in A } \frac{L}{2}  \norm{ i_{\mL_{jl}}^{k} - i_{\mL_{jl}}^\star }^2.
\]
Observe
\BEAS
    \frac{\Delta_L}{h}
    &=& \frac{1}{h} \pr{ \sum_{ ( j,l) \in A } \frac{L}{2}  \norm{ {i_{\mL}^k}_{jl} - i_{\mL_{jl}}^\star + \frac{h}{L}(e_{jl}^{k+1} - x_j^{k+1})  }^2
    - \sum_{ ( j,l) \in A } \frac{L}{2}  \norm{ i_{\mL_{jl}}^{k} - i_{\mL_{jl}}^\star }^2 } \\
    &=& \sum_{ ( j,l) \in A } \inner{ e_{jl}^{k+1} - x_j^{k+1} }{ i_{\mL_{jl}}^{k} - i_{\mL_{jl}}^\star } 
    + \sum_{ ( j,l) \in A } \frac{h}{2L} \norm{ e_{jl}^{k+1} - x_j^{k+1} }^2 \\
    &=& \sum_{ ( j,l) \in A } \inner{ e_{jl}^{k+1} - x_j^{k+1} }{ y_{jl}^{k+1} - \frac{1}{R} \pr{  e_{jl}^{k} - x_j^{k+1} } - i_{\mL_{jl}}^\star } 
    + \sum_{ ( j,l) \in A } \frac{h}{2L} \norm{ e_{jl}^{k+1} - x_j^{k+1} }^2 \\
    &=& - \frac{1}{R} \sum_{ ( j,l) \in A }  \inner{ e_{jl}^{k+1} - x_j^{k+1} }{ e_{jl}^{k} - x_j^{k+1} }
     +  \sum_{ ( j,l) \in A } \inner{ e_{jl}^{k+1} - x_j^{k+1} }{ y_{jl}^{k+1} - i_{\mL_{jl}}^\star } \\
    && + \sum_{ ( j,l) \in A } \frac{h}{2L} \norm{ e_{jl}^{k+1} - x_j^{k+1} }^2 \\
    &=& -\pr{ \frac{1}{R} - \frac{h}{2L} } \sum_{ ( j,l) \in A } \norm{ e_{jl}^{k+1} - x_j^{k+1} }^2 
       - \frac{1}{R} \sum_{ ( j,l) \in A } \inner{ e_{jl}^{k+1} - x_j^{k+1} }{ e_{jl}^{k} - e_{jl}^{k+1} } \\ 
    &&
       - \sum_{ ( j,l) \in A } \inner{ x_j^{k+1} - x_j^\star }{ y_{jl}^{k+1} - i_{\mL_{jl}}^\star }
       + \sum_{ ( j,l) \in A } \inner{ e_{jl}^{k+1} - x_j^\star }{ y_{jl}^{k+1} - i_{\mL_{jl}}^\star }.
\EEAS
On the other hand, from Theorem~\ref{thm-equilibrium} we know $i_{\mR}^\star=0$, by KCL at $x_j$ we have $y_j^\star = \sum_{l \in \Gamma_j} i_{\mL_{jl}}^\star$. Therefore
\[
    \sum_{ ( j,l) \in A } \inner{ x_j^{k+1} - x_j^\star }{ y_{jl}^{k+1} - i_{\mL_{jl}}^\star }
    = \sum_{ j=1 }^{N} \sum_{l \in \Gamma_j} \inner{ x_j^{k+1} - x_j^\star }{ y_{jl}^{k+1} - i_{\mL_{jl}}^\star }
    = \sum_{ j=1 }^{N} \inner{ x_j^{k+1} - x_j^\star }{ y_{j}^{k+1} - y_{j}^\star }.
\]
Moreover, $i_{\mL_{jl}}^\star = -i_{\mL_{lj}}^\star$, $e_{jl}^{k+1} =  e_{lj}^{k+1}$ holds by their definition, 
and $x_j^\star = x_l^\star$ as $x^\star$ is the solution. Therefore we see
\[
    \sum_{ ( j,l) \in A } \inner{ e_{jl}^{k+1} - x_j^\star }{ i_{\mL_{jl}}^\star }
    = \sum_{ j<l, (j,l) \in A } \inner{ e_{jl}^{k+1} - x_j^\star }{ i_{\mL_{jl}}^\star + i_{\mL_{lj}}^\star }
    = 0.
\]
Lastly, following equality is true for $\tau \in (0,\infty)$
\BEAS
    \inner{ e_{jl}^{k+1} - x_j^{k+1} }{ e_{jl}^{k} - e_{jl}^{k+1} } 
    &=& \frac{1}{2} \norm{ \tau(e_{jl}^{k+1} - x_j^{k+1}) + \frac{1}{\tau}(e_{jl}^{k} - e_{jl}^{k+1}) }^2 \\&&
        -  \frac{\tau^2}{2} \norm{ e_{jl}^{k+1} - x_j^{k+1} }^2 - \frac{1}{2\tau^2}  \norm{ e_{jl}^{k} - e_{jl}^{k+1} }^2.
\EEAS
Finally, applying above observations we have
\BEAS
    \frac{\Delta_L}{h}
    &=& -\pr{ \frac{1}{R} - \frac{h}{2L} -  \frac{\tau^2}{2R} } \sum_{ ( j,l) \in A } \norm{ e_{jl}^{k+1} - x_j^{k+1} }^2 \\&&
        + \frac{1}{2R\tau^2} \sum_{ ( j,l) \in A }  \norm{ e_{jl}^{k} - e_{jl}^{k+1} }^2    
       - \frac{1}{2R} \sum_{ ( j,l) \in A } \norm{ \tau(e_{jl}^{k+1} - x_j^{k+1}) + \frac{1}{\tau}(e_{jl}^{k} - e_{jl}^{k+1}) }^2 \\ &&
      - \sum_{ j=1 }^{N} \inner{ x_j^{k+1} - x_j^\star }{ y_{j}^{k+1} - y_{j}^\star }
       + \sum_{ ( j,l) \in A } \inner{ e_{jl}^{k+1} - x_j^\star }{ y_{jl}^{k+1}  }.
\EEAS

\emph{(ii) Difference of $\sum_{ j<l, \{ j,l \} \subset S } \frac{C}{2} \norm{ e_{jl}^{k} - x_j^\star }^2$.}

Name
\[
    \Delta_C = \sum_{ j<l, \{ j,l \} \subset S } \frac{C}{2} \norm{ e_{jl}^{k+1} - x_j^\star }^2 
        - \sum_{ j<l, \{ j,l \} \subset S } \frac{C}{2} \norm{ e_{jl}^{k} - x_j^\star }^2  .
\]
Plugging the definition of the method, we have
\BEAS
    \frac{\Delta_C}{h}
    &=& \frac{1}{h} \pr{ \frac{1}{2} \sum_{ \{ j,l \} \subset S } \frac{C}{2} \norm{ e_{jl}^{k+1} - x_j^\star }^2 
    - \frac{1}{2} \sum_{ \{ j,l \} \subset S } \frac{C}{2} \norm{ e_{jl}^{k+1} - x_j^\star -  \pr{ e_{jl}^{k+1} - e_{jl}^{k} }  }^2 } \\
    &=& \frac{C}{2h} \sum_{ \{ j,l \} \subset S } \inner{ e_{jl}^{k+1} - e_{jl}^{k} }{ e_{jl}^{k+1} - x_j^\star }  
         - \frac{C}{4h}  \sum_{ \{ j,l \} \subset S }  \norm{ e_{jl}^{k+1} - e_{jl}^{k} }^2   \\
    &=& - \frac{1}{2} \sum_{ \{ j,l \} \subset S } \inner{ y_{jl}^{k+1} + y_{lj}^{k+1} }{ e_{jl}^{k+1} - x_j^\star }  
         - \frac{C}{4h}  \sum_{ \{ j,l \} \subset S }  \norm{ e_{jl}^{k+1} - e_{jl}^{k} }^2   \\
    &=& - \sum_{ \{ j,l \} \subset S } \inner{ y_{jl}^{k+1} }{ e_{jl}^{k+1} - x_j^\star } 
        - \frac{C}{4h}  \sum_{ \{ j,l \} \subset S }  \norm{ e_{jl}^{k+1} - e_{jl}^{k} }^2.
\EEAS

\emph{(iii) Difference of $\sum_{ j<l, \{ j,l \} \not\subset S } \frac{h}{2R}  \norm{ e_{jl}^k - x_j^\star }^2$.}

Name
\[
    \Delta_\gamma = 
        \sum_{ j<l, \{ j,l \} \not\subset S } \frac{h}{2R}  \norm{ e_{jl}^{k+1} - x_j^\star }^2
        - \sum_{ j<l, \{ j,l \} \not\subset S } \frac{h}{2R}  \norm{ e_{jl}^k - x_j^\star }^2 .
\]
For $\{j,l\} \not\subset S$, from the initialization $i_{\mL_{jl}}^{0} = -i_{\mL_{lj}}^{0}$ and from $e_{jl}^{k+1} = \frac{1}{2} \pr{ x_j^{k+1} + x_l^{k+1} }$, inductively we can check
\[
    i_{\mL_{jl}}^{k+1} 
    = i_{\mL_{jl}}^{k} + \frac{h}{L} \pr{ e_{jl}^{k+1} - x_j^{k+1} }
    = -i_{\mL_{lj}}^{k} - \frac{h}{L} \pr{ e_{jl}^{k+1} - x_l^{k+1} }
    = -i_{\mL_{lj}}^{k+1} .
\]
Therefore $i_{\mL_{jl}}^{k} = -i_{\mL_{lj}}^{k}$ for all $k$. 
And from the definition of $y_{lj}^{k+1}$, we have
\BEAS
    y_{lj}^{k+1}
    = -i_{\mL_{jl}}^{k} - \frac{1}{R} \pr{  e_{jl}^{k} -  x_j^{k+1} } + \frac{2}{R} \pr{ e_{jl}^{k} - e_{jl}^{k+1} } 
    = -y_{jl}^{k+1} + \frac{2}{R} \pr{ e_{jl}^{k} - e_{jl}^{k+1} }.
\EEAS
And thus
\[
    e_{jl}^{k+1} - e_{jl}^{k}
    =   - \frac{R}{2} \pr{ y_{lj}^{k+1} +  y_{lj}^{k+1} } .
\]
Now proceeding the similar calculation and argument for $\Delta_C$, we have
\BEAS
    \frac{\Delta_\gamma}{h}
    &=& - \sum_{ \{ j,l \} \not\subset S } \inner{  y_{jl}^{k+1} }{ e_{jl}^{k+1} - x_j^\star }  
         - \sum_{ j<l, \{ j,l \} \not\subset S } \frac{1}{2R} \norm{ e_{jl}^{k+1} - e_{jl}^{k} }^2 .
\EEAS

Finally, summing the calculations in (i), (ii), (iii), we have
\begin{align*}
    &\frac{1}{h} \pr{ \mathcal{E}_{k+1} - \mathcal{E}_k } 
        + \sum_{ j=1 }^{N} \inner{ x_j^{k+1} - x_j^\star }{ y_{j}^{k+1} - y_{j}^\star } 
        + \pr{ \frac{1}{R} - \frac{h}{2L} -  \frac{\tau^2}{2R} } \sum_{ ( j,l) \in A } \norm{ e_{jl}^{k+1} - x_j^{k+1} }^2 \\
    &= \frac{1}{h} \pr{ \Delta_L + \Delta_C + \Delta_\gamma} 
        + \sum_{ j=1 }^{N} \inner{ x_j^{k+1} - x_j^\star }{ y_{j}^{k+1} - y_{j}^\star } 
        + \frac{1}{2R} \pr{ 2 - \frac{hR}{L} - \tau^2 } \sum_{ ( j,l) \in A } \norm{ e_{jl}^{k+1} - x_j^{k+1} }^2 \\
    &=  - \frac{1}{R} \sum_{ ( j,l) \in A } \norm{ \tau(e_{jl}^{k+1} - x_j^{k+1}) + \frac{1}{\tau}(e_{jl}^{k} - e_{jl}^{k+1}) }^2 \\&\quad
       - \frac{1}{2R} \pr{ \frac{CR}{2h} - \frac{1}{\tau^2} }  \sum_{ \{ j,l \} \subset S }  \norm{ e_{jl}^{k+1} - e_{jl}^{k} }^2
       - \frac{1}{2R} \pr{ 1 - \frac{1}{\tau^2} } \sum_{ \{ j,l \} \not\subset S }  \norm{ e_{jl}^{k+1} - e_{jl}^{k} }^2.
\end{align*}
Therefore, for all $R, L, C, h, \tau > 0$ that satisfy 
\[
    2 - \frac{hR}{L} - \tau^2 \ge 0, \quad
    \frac{CR}{2h} - \frac{1}{\tau^2} \ge 0, \quad
    1 - \frac{1}{\tau^2} \ge 0
\]
or equivalently,
\[
    \max\set{ 1, \frac{2h}{CR} }
    \le \tau^2    
    \le 2 - \frac{hR}{L},
\]
we conclude the desired inequality. 

\end{proof}

\subsection{PG-EXTRA + Parallel C}
\label{s:pg-extra-c}

In this section, we introduce an additional pipeline of designing new optimization algorithm via circuit. 
The previous automatized discretization pipeline has the advantage of guaranteeing convergence; however, it may provide a conservative step size since it considers all worst-case scenarios. 
As a result, it may eliminate the possibility of finding efficient step size that works for certain optimization problem in practice. 

Our circuit-based approach has the advantage of designing a variant of the prior method quickly, that is likely to converge and possibly works better based on physical intuition. 
Furthermore, the variant method provides greater freedom in selecting parameters to tune. 
We provide an example of new optimization method obtained with exploiting these advantages, that outperforms PG-EXTRA for the problem considered in the paper introduced PG-EXTRA \cite{ShiLingWuYin2015_proximal}.

We use a modified PG-EXTRA circuit~\S\ref{s-PG-EXTRA}, 
that includes extra capacitors connected parallel to inductors. 
\begin{center} 
\PgExtraParallelC
\end{center} 
Recalling \eqref{e-total-energy}, we know the energy for this circuit is defined as below
\[
    \mathcal{E}(t) 
    = \sum_{j<l, (j,l) \in A} \frac{C_{jl}}{2} \norm{ v_{\mC_{jl}}(t) - v_{\mC_{jl}}^\star }^2 
    + \sum_{j<l, (j,l) \in A} \frac{L_{jl}}{2} \norm{ i_{\mL_{jl}}(t) - i_{\mL_{jl}}^\star }^2.
\]
Note, compared to the energy of PG-EXTRA, we have additional energy terms for capacitors. 
Observe
\[
    v_{\mC_{jl}}(t) - v_{\mC_{jl}}^\star
    = x_j(t) - x_l(t) - ( x_j^\star - x_l^\star )
    = x_j(t) - x_l(t). 
\]
Previously, the energy dissipation by the resistors only reduced the values of $\| i_{\mL_{jl}}(t) - i_{\mL_{jl}}^\star \|^2$, but now it also reduces the values of $\|x_j(t) - x_l(t)\|^2$ for $(j,l) \in A$. 
Intuitively, we may hope that this dissipation accelerates the convergence $\lim_{t\to\infty} ( x_j(t) - x_l(t) ) = 0$, thus eventually speed up the convergence to the optimal. 
This observation provides informal motivation for the method. 

Following the arguments of \S\ref{s-PG-EXTRA}, 
and additionally defining $u_j = \sum_{l \in \Gamma_j} R i_{\mC_{jl}}$ and setting $C_{jl} = \frac{C}{R_{jl}}$ for $(j,l) \in A$ with some constant $C>0$, we derive the following method  
\BEA \label{e:pg-extra-par-c} \nonumber
    x^{k+1} &=& \prox_{R f} \pr{ Wx^k - R \nabla h(x^k) - w^k - u^k } \\ 
    w^{k+1} &=& w^{k} + s (I-W) x^k \\
    u^{k+1} &=& \frac{C}{s} (I-W) (x^{k+1} - x^k). \nonumber
\EEA
We now consider the decentralized quadratic programming from \cite{ShiLingWuYin2015_proximal}. 
Suppose each agent $j\in\{1,\dots, N\}$ holds a symmetric positive semidefinite matrix $Q_j\in\reals^{m\times m}$, vectors $a_j, p_j \in \reals^m$ and scalars $b_j \in \reals$. Consider the minimization problem 
\[
\begin{array}{ll}
 \underset{x \in \reals^m}{\mbox{minimize}}&
    \frac{1}{N} \sum_{j =1}^{N} \left( x^\intercal Q_j x + p_j^\intercal x  \right), \\
    {\mbox{subject to}}& a_j^\intercal x \le b_j, \quad j=1, \dots, N.
\end{array}
\]
Set $f_j (x) = \delta_{\{ z \mid a_j^\intercal z \le b_j\}} (x)$ and $h_j(x) = x^\intercal Q_j x + p_j^\intercal x $, where 
\[
    \delta_{\{ z \mid a_j^\intercal z \le b_j\}}(x) = \begin{cases}
        0 & \text{if } a_j^\intercal x \le b_j \\
        \infty & \text{otherwise } 
    \end{cases}
\]
is the indicator function. 
Then the given optimization problem 
recasts to
\[
\begin{array}{ll}
 \underset{x \in \reals^m}{\mbox{minimize}}&
    \frac{1}{N} \sum_{j =1}^{N} \left( f_j(x) + h_j(x)  \right).
\end{array}
\]
Since the minimization subproblem has an explicit solution
\[
    \prox_{R f_j} (z) = \begin{cases}
        z & \text{if } a_j^\intercal z \le b_j \\
        z + \frac{ b_j-a_j^\intercal z  }{\| a_j \|_2^2} a_j & \text{otherwise},
    \end{cases}
\]
this problem can be solved by PG-EXTRA. 

We follow the same setting of \cite{ShiLingWuYin2015_proximal}. 
We set $m=50$. 
Each $Q_j$ is generated by taking the product of $\tilde{Q}_j$ and its transpose, where $ \tilde{Q}_j \in \reals^{m \times m}$ is a matrix with elements that follow an i.i.d. Gaussian distribution. 
Each $p_j$ is generated to follow an i.i.d. Gaussian distribution. 
Vectors $a_j$ and $b_j$ are also randomly generated, however, we conducted the experiment for the case that the solution of the constrained problem differs from that of the unconstrained problem.
We use Metropolis mixing matrix as in \S\ref{sec-dadmm-c}. 

The numerical results are illustrated in Figure~\ref{fig:pg_extra_par_c_n20_appndx}. 
We compare PG-EXTRA and the variant method \eqref{e:pg-extra-par-c} obtained from the modified circuit with additional parallel capacitors.  
We use $R=0.05$, $R=0.07$ for PG-EXTRA and $R=0.07$, $C=0.3$ and $s=0.8$ for \eqref{e:pg-extra-par-c}. 
The parameters for PG-EXTRA were obtained through a grid search. 
The parameters for \eqref{e:pg-extra-par-c} are hand-optimized starting from $C=0$ and $s=0.5$, the parameter selection that makes \eqref{e:pg-extra-par-c} to coincide with PG-EXTRA when $u^0=0$. 
The relative error for \eqref{e:pg-extra-par-c} decreases to $10^{-8}$ in 
$147$ iterations, while for PG-EXTRA with $R=0.05$ in $214$ iterations.
\begin{figure} [H]
    \centering \vspace{-4mm}
    \begin{subfigure}[c]{0.44\textwidth}
        \centering
        \includegraphics[width=\textwidth]{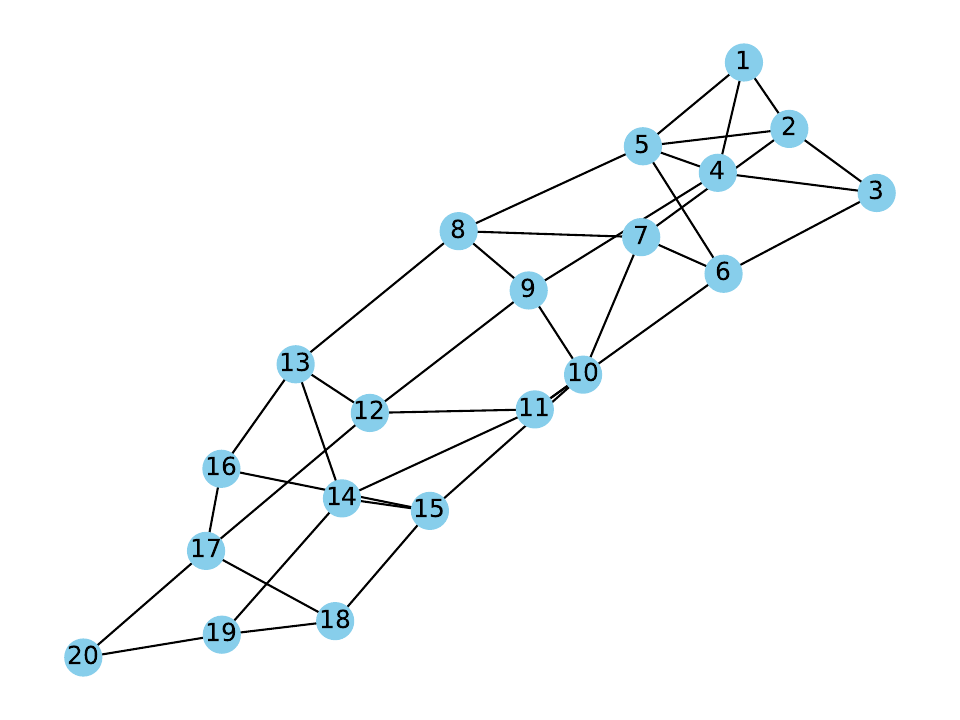}
    \end{subfigure}
    \hfill
    \begin{subfigure}[c]{0.55\textwidth}
        \centering
        \includegraphics[width=\textwidth]{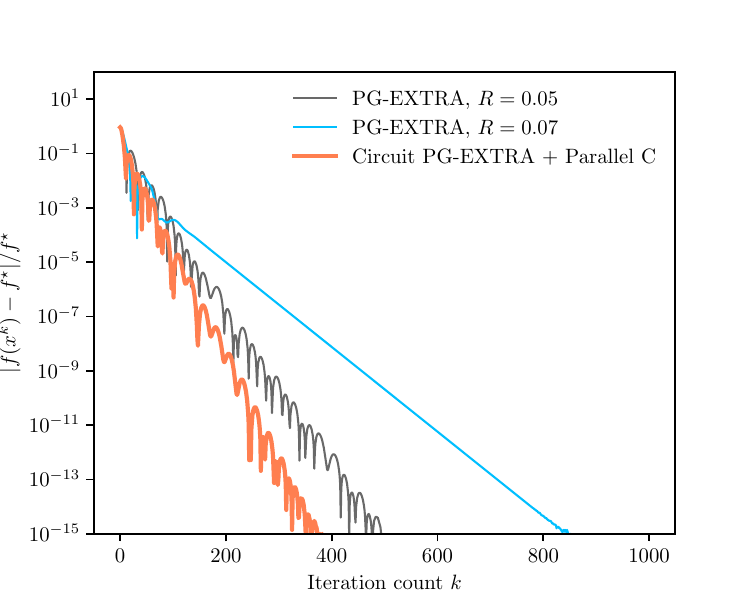}
    \end{subfigure}
    \caption{(Left) Underlying graph $G$. (Right) Relative error $\left|f(x^k) - f^\star\right|/{f^\star}$ vs. $k$.
    }
    \label{fig:pg_extra_par_c_n20_appndx}
\end{figure}

\end{document}